\newtheorem{ccounter}{ccounter}[section]
\newtheorem{thm}[ccounter]{Theorem}
\newtheorem{lem}[ccounter]{Lemma}
\newtheorem{cor}[ccounter]{Corollary}
\newtheorem{defn}[ccounter]{Definition}
\newtheorem{prop}[ccounter]{Proposition}
\newtheorem{ass}[ccounter]{Assumption}
\newtheorem{ex}[ccounter]{Example}
\def\bet{\begin{thm}}
\def\eet{\end{thm}}
\def\bel{\begin{lem}}
\def\eel{\end{lem}}
\def\bas{\begin{ass}}
\def\eas{\end{ass}}
\def\bec{\begin{cor}}
\def\eec{\end{cor}}
\def\bed{\begin{defn}}
\def\eed{\end{defn}}
\def\bep{\begin{prop}}
\def\eep{\end{prop}}
\def\beq{\begin{equation}}
\def\eeq{\end{equation}}
\def\proof{\noindent {\bf Proof.}\ \ }
\def\bea{\begin{equation*}}
\def\eea{\end{equation*}}
\def\tr{\mathrm{tr}}
\def\bex{\begin{ex}}
\def\eex{\end{ex}}
\def\sumAthi{\sum^{\mathcal{A}_3, (i)}}
\def\fl{{f_{\eta_\ell}}}
\def\C{\mathcal{C}}
\def\fc{\mathrm{fc}}
\def\sc{\mathrm{sc}}
\def\Chat{\hat{\mathcal{C}}}
\def\gf{\gamma^{(\mathfrak{f})}}
\def\sig{\sigma}
\def\Galp{\mathcal{G}_\alpha}
\def\zstar{z^*}
\def\gsc{\gamma^{(\mathrm{sc})}}
\def\gV{\gamma^{(V)}}
\def\sumEi{\sum^{\mathcal{E}, (i)}}
\def\sumEic{\sum^{\mathcal{E}^c, (i)}}
\def\xhat{\hat{x}}
\def\yhat{\hat{y}}
\def\epst{{\varepsilon_\tau}}
\def\sumEti{\sum^{\mathcal{E}_2, (i)}}
\def\sumEthi{\sum^{\mathcal{E}\backslash\mathcal{E}_2, (i)}}
\def\sumEtci{\sum^{\mathcal{E}^c_2, (i)}}
\def\R{\mathcal{R}}
\def\F{\mathcal{F}}
\def\rr{\mathbb{R}}
\def\cc{\mathbb{C}}
\def\zz{\mathbb{Z}}
\def\i{\mathrm{i}}
\def\UBth{\mathcal{U}^{\mathcal{B}_3}}
\def\zhat{\hat{z}}
\def\e{\rm{e}}
\def\del{\partial}
\def\d{\mathrm{d}}
\def\e{\mathrm{e}}
\def\Re{\mathrm{Re}}
\def\supp{\mathrm{supp}\,}
\def\Im{\mathrm{Im}\mbox{ }}
\def\pp{\mathbb{P}}
\def\ee{\mathbb{E}}
\def\eps{\varepsilon}
\def\sumit{{\sum_i}^{'}}
\renewcommand\leq\varleq
\renewcommand\geq\vargeq
\def\D{\mathcal{D}}
\def\z1{\zeta_1}
\def\z2{\zeta_2}
\def\z{\zeta}
\def\O{\mathcal{O}}
\def\g{\gamma}
\def\A{\mathcal{A}}
\def\L{\mathcal{L}}
\def\1{\boldsymbol{1}}
\def\sumAqsi{\sum^{\A_{q_*}, (i)}}
\def\sumAqsci{\sum^{\A_{q_*}^c, (i)}}
\def\sumsthi{\sum^{\A_{q_*} \backslash \A_2, (i)}}
\def\equald{\overset{d}{=}}
\def\UBhat{\mathcal{U}^{ \hat{B} }}
\def\E{\mathcal{E}}
\def\I{\mathcal{I}}
\def\ystar{y^*}
\def\xstar{x^*}
\def\tilB{\tilde{B}}
\def\L{L}
\def\tilV{\tilde{V}}
\def\calG{\mathcal{G}}
\def\benr{\begin{enumerate}[label=(\roman*)]}
\def\eenr{\end{enumerate}}
\def\remark{\noindent{\bf Remark. }}
\def\rhofc{\rho_{\mathrm{fc}}}
\def\hatx{\hat{x}}
\def\haty{\hat{y}}
\def\xione{\xi^{(1)}}
\def\xitwo{\xi^{(2)}}
\def\B{\mathcal{B}}
\def\S{\mathcal{S}}
\def\L{\mathcal{L}}
\def\US{\mathcal{U}^{\mathcal{S}}}
\def\UL{\mathcal{U}^{\mathcal{L}}}
\def\G{\mathcal{G}}
\def\mfc{m_{\mathrm{fc}}}
\def\UB{\mathcal{U}^{B}}
\def\fa{\mathfrak{a}}
\def\mfct{m_{\mathrm{fc}, t}}
\def\rhofct{\rho_{\mathrm{fc}, t}}
\def\om{\omega}
\def\nn{\mathbb{N}}
\def\rhosc{\rho_{\mathrm{sc}}}
\def\rhofcto{\rho_{\mathrm{fc}, t_0}}
\def\L{G}
\def\tilz{\tilde{z}}
\def\hatz{\hat{z}}
\def\sumAi{\sum^{\A, (i) }}
\def\sumAci{\sum^{\A^c, (i)}}
\def\sumAti{\sum^{\A_2, (i) }}
\def\Ihat{\hat{\mathcal{I}}}
\def\I{\mathcal{I}}
\def\L{\mathcal{L}}
\def\moj{m_N^{(j)}}
\def\G{G}
\def\g{g}
\def\hatB{\hat{B}}
\def\Ii{\mathcal{I}_i}
\def\Ici{\mathcal{I}^{c}_i}
\def\tilg{\tilde{\gamma}}
\def\sumoj{\sum^{(j)}}
\def\Goj{G^{(j)}}
\def\bGoj{\bar{G}^{(j)}}
\def\dvarp{\partial_{\bar{z}}\tilde{\varphi}_N}
\begin{document}
\title{Deformed GOE}


\begin{table}
\centering
\begin{tabular}{c}
\multicolumn{1}{c}{\Large{\bf Fixed energy universality of Dyson Brownian motion }}\\
\\
\\
\end{tabular}
\begin{tabular}{c c c c c}
Benjamin Landon$^1$ & & Philippe Sosoe$^2$ & & Horng-Tzer Yau$^3$\\
\\
\multicolumn{5}{c}{ \small{$^1$Department of Mathematics} } \\
 \multicolumn{5}{c}{ \small{Massachusetts Institute of Technology} } \\
 \\
 \multicolumn{5}{c}{ \small{$^2$Department of Mathematics} } \\
 \multicolumn{5}{c}{ \small{Cornell University} } \\
 \\
  \multicolumn{5}{c}{ \small{$^3$Department of Mathematics} } \\
 \multicolumn{5}{c}{ \small{Harvard University} } \\
 \\
 
\small{blandon@mit.edu} & & \small{ps934@cornell.edu} & & \small{htyau@math.harvard.edu}  \\
\\
\end{tabular}
\\
\begin{tabular}{c}
\multicolumn{1}{c}{\today}\\
\\
\end{tabular}

\begin{tabular}{p{15 cm}}
\small{{\bf Abstract:} We consider Dyson Brownian motion for classical values of $\beta$ with deterministic initial data $V$.  We prove that the local eigenvalue statistics coincide with the GOE/GUE in the fixed energy sense after time $t \gtrsim 1/N$ if the density of states of $V$ is bounded above and below down to scales $\eta \ll t$ in a window of size $L \gg \sqrt{t}$.  Our results imply that fixed energy universality holds for essentially any random matrix ensemble for which averaged energy universality was previously known.  Our methodology builds on the homogenization theory developed in \cite{homogenization} which reduces the microscopic problem to a mesoscopic problem.   As an auxiliary result we prove a mesoscopic central limit theorem for linear statistics of various classes of test functions for classical Dyson Brownian motion. }
\end{tabular}
\end{table}
{
\hypersetup{linkcolor=black}
\tableofcontents
}
\section{Introduction} \label{sec:int}

{\let\thefootnote\relax\footnote{The work of B.L. is partially supported by NSERC.  The work of H.-T. Y. is partially supported by NSF Grant DMS-1307444,  DMS-1606305 and a Simons Investigator award.}}In the pioneering work \cite{wigner}, Wigner introduced what are known as the Wigner random matrix ensembles.  These ensembles consist of $N \times N$ real symmetric ($\beta=1$) or complex Hermitian ($\beta=2$) random matrices $W = (w_{ij})$ whose entries are centered and independent (up to the symmetry constraint $W=W^*$) with variance
\beq
\ee [ (w_{ij})^2 ] = \frac{ 1 + \delta_{ij} }{N}, \quad \beta=1, \qquad \ee[ |w_{ij}|^2 ] = \frac{1}{N} , \quad \beta =2.
\eeq
If the $w_{ij}$'s are independent real (resp., complex) Gaussians then the ensemble is called the Gaussian Orthogonal Ensemble (resp., Gaussian Unitary Ensemble) (GOE/GUE).  Wigner conjectured that in the limit $N \to \infty$ the local eigenvalue statistics are universal in that they depend only on the symmetry class of the matrix ensemble (real symmetric or complex Hermitian) and are otherwise independent of the underlying distribution of the matrix entries.   After Wigner's seminal work, Gaudin, Dyson and Mehta explicitly calculated the eigenvalue correlation functions in the Gaussian cases.

Mehta formalized the universality conjecture in the book \cite{mehta2004random} and stated that the correlation functions of general Wigner matrices should coincide with the GOE/GUE in the limit $N \to \infty$.  There are several possible topologies in which this convergence could hold. Perhaps the most natural topology to consider is pointwise convergence of the correlation functions.  However,  this cannot hold for random matrix ensembles with discrete entries.  One suitable topology is that of vague convergence of the correlation functions around an energy $E$, which we will call {\it fixed energy universality}.  A weaker topology can be constructed by averaging over energies in a small window near $E$ and asking for vague convergence of the energy-averaged quantities.  We will call this {\it averaged} or unfixed energy universality.  Finally, one can also ask for the vague convergence of the eigenvalue gaps with a fixed label (i.e., vague convergence of the random variable $\lambda_{N/2+1} - \lambda_{N/2}$) which we call {\it gap universality}.

There has recently been spectacular progress in proving the Wigner-Dyson-Mehta conjecture for a wide variety of random matrix ensembles.  Bulk universality for Wigner matrices of all symmetry classes was proven in the works \cite{renyione, erdos2012spectral,erdos2010bulk,erdos2011universality,Gap,erdos2012rigidity}.  Parallel results were established in certain cases in \cite{tao2010random,tao2011random}, with the key result being a ``four moment comparison theorem."  In this paper we are interested in the  robust three-step approach to universality formulated and developed in the works \cite{renyione, erdos2012spectral,erdos2010bulk,erdos2011universality,Gap,erdos2012rigidity}.  This approach consists of:

\begin{enumerate}
\item A high probability estimate of the eigenvalue density down to the almost-optimal scale $\eta \sim N^\eps/N$.  This establishes eigenvalue {\it rigidity}; that is, the bulk eigenvalues are close to their expectations
\beq
| \lambda_i - \ee [ \lambda_i ] | \leq \frac{ N^\eps}{N}
\eeq
with overwhelming probability.  Moreover, the expectations are determined by the quantiles of the macroscopic eigenvalue density.
\item Proving bulk universality for random matrix ensembles with a small additive Gaussian component.  This is usually established by studying the rate of convergence of Dyson Brownian motion to local equilibrium.
\item A comparison or stability argument comparing a given random matrix ensemble to one with a small Gaussian component.
\end{enumerate}

For complex Hermitian ensembles, Step 2 can be established by using an explicit algebraic formula, the Br\'ezin-Hikami formula,  to analyze the correlation functions.  This idea was used by Johansson \cite{johansson2001universality}  and Ben Arous-Peche \cite{ben2005universality} who established bulk universality for ensembles with an order $1$ Gaussian component, 
 i.e., establishing that the time to equilibrium is at most order $1$ in this case.    The optimal time to equilibrium in the second step,  i.e., for $t \gtrsim 1/N$, was established in \cite{erdos2010bulk}  where the Br\'ezin-Hikami formula and  estimates from the local semicircle law were the key tools.  
In this special algebraic case, the second step yields fixed energy universality and so the WDM conjecture was established for complex Hermitian matrices in this strong sense \cite{erdos2010bulk, tao2010random,yautaovu}. 

An analogue of the Br\'ezin-Hikami formula is unknown in the real symmetric case and therefore the approach \cite{erdos2010bulk} could not be extended to real symmetric Wigner ensembles.  A new approach based on the local relaxation flow of {\it Dyson Brownian motion} (DBM) was developed in the works \cite{localrelaxation,erdos2011universality,erdos2012bulk}.   DBM is defined by applying an independent (up to the symmetry constraint $H =H^*$)  Ornstein-Uhlenbeck process to every matrix element; Dyson computed the flow on the eigenvalues and found that they satisfy a closed system of stochastic differential equations.  The approach of \cite{localrelaxation,erdos2011universality,erdos2012bulk} is based on this representation and  applies to all symmetry classes as well as sample covariance matrices and sparse ensembles.  However, it yields only  averaged energy universality, albeit with the averaging taken over a very small window.

The second step was finally completed for real symmetric Wigner ensembles in the sense of fixed energy in \cite{homogenization}.  By developing a sophisticated homogenization theory for a discrete parabolic equation derived from DBM, the authors proved that after a time $t = o (1)$, the local statistics of Dyson Brownian motion started from a Wigner ensemble coincide with that of the GOE in the fixed energy sense.  As the third step in the three-step strategy described above is insensitive to the mode of convergence of the correlation functions, this proved the Wigner-Dyson-Mehta conjecture in the fixed energy sense for all symmetry classes.

The time to equilibrium proven in the work \cite{homogenization} is relatively long, $t \sim N^{-\eps}$ for a small $\eps >0$, which moreover depends on the choice of test function.  This limits the applicability of the work \cite{homogenization} in proving fixed energy universality for other ensembles.  For example, it does not imply fixed energy universality for sparse random graphs, for which averaged energy universality is known \cite{huang, ziliang, erdos2012spectral,renyione}.  Moreover, the approach relies on the fact that the global eigenvalue density  of the initial data is given by the semicircle law.


The analysis of DBM developed in the works \cite{localrelaxation,erdos2011universality,erdos2012bulk} is in some sense global as it relies on the fact that DBM with initial data a Wigner matrix will follow the semicircle law.  
In the work  \cite{Gap}  the correlation functions were expressed as time averages of random walks in a random environment.   This allows for a local analysis of the dynamics and various tools from PDE (such as H\"older regularity via the di-Giorgi-Nash-Moser method) and stochastic analysis can be applied.


In the work \cite{landonyau} the time to equilibrium of DBM for a wide class of initial data (going beyond the Wigner class) was studied (see also \cite{ES} for related results).  For random matrix ensembles that have a local density down to scales $\eta \gtrsim 1/N$, it was proven that the time to local equilibrium is $t \gtrsim 1/N$, in the sense of both averaged energy and gap universality.

There have been several recent works extending the Wigner-Dyson-Mehta conjecture beyond the class of Wigner matrices, such as to sparse random graphs \cite{hl,huang,roland1,roland2,erdos2012spectral,renyione,ziliang}, matrices with correlated entries \cite{ziliang2,ajankicor, ajankigaus}, deformed Wigner ensembles \cite{Kevin1,Kevin2}, certain classes of band matrices \cite{bourgadeband} and the general Wigner-type matrices of \cite{ajanki1,ajanki2,ajanki3}.  These works generally follow the three-step strategy outlined above.   In many of these cases, the works \cite{landonyau, ES} essentially complete the second step of this approach.  As the results \cite{landonyau, ES} imply averaged energy universality, 
any work relying on \cite{landonyau, ES} for the second step establishes the Wigner-Dyson-Mehta conjecture in only the averaged energy sense.

In the current work we establish that the time to local equilibrium for DBM is $t \gtrsim N^{-1}$ for a wide class of initial data, in the fixed energy sense.  The main assumption on the initial data is that the density of states is bounded above and below down to scales $\eta \ll t$ in a window of size $L \gg \sqrt{t}$.   As a consequence, fixed energy universality is established for essentially all random matrix ensembles for which previously only averaged energy universality could be proven.

One of the key insights of \cite{homogenization} is that the difference of two coupled DBM flows obeys a discrete nonlocal parabolic equation.  One of the main results of \cite{homogenization} is a homogenization theory for this parabolic equation.  This theory shows that the solution of the discrete parabolic equation is given by the discretization of the continuum limit, and this reduces the problem of microscopic statistics to an easier mesoscopic problem.

Our approach follows the same high-level strategy in that we couple two DBM flows and develop a homogenization theory for the resulting parabolic equation.  The generator of the parabolic equation of \cite{homogenization} is hard to control.  To deal with this we modify the coupling of \cite{homogenization} and introduce a continuous interpolation.  This gives us a family of parabolic equations whose generators have better properties.  

The homogenization theory of \cite{homogenization} was based around a Duhamel expansion and estimating the coefficients of the generator.  The short range part of the generator is quite singular and was controlled using an energy estimate and the discrete Di-Giorgi-Nash-Moser theorem of \cite{Gap}.  This caused some restriction on the time to equilibrium that could be proven.

Our method is based around the standard $\ell^2$-energy method  
and a discrete Sobolev inequality. 
%
%
The energy method gives us an estimate on the time average of the discrete $\dot{H}^{1/2}$ norm of the difference between the fundamental solution of the discrete equation and its continuum limit.  This allows us to get a time-averaged $\ell^\infty$ estimate on the fundamental solution of the discrete parabolic equation via a discrete Sobolev inequality.  We then use the semigroup property to remove the time average.  

In order to carry this out one needs a good ansatz for comparison with the discrete fundamental solution.  We substitute the particle location coming from the DBM into the fundamental solution of the continuum limit.  With this approach a martingale term, as well as other errors of lower order, arises in the energy method, but we are able to control them using heat kernel bounds for our process.  This ansatz first appeared in \cite{que} and has been used independently in \cite{bourgadeprep} to analyze extremal gap statistics of Wigner ensembles.

We find that the limiting hydrodynamic equation is a fairly simple nonlocal parabolic equation describing a symmetric jump process on $\rr$.  The heat kernels of such processes have been studied recently in, e.g., \cite{CKK, CKK2} and we partially rely on their work in our analysis of the limiting equation.


Our homogenization theory has an advantage over \cite{homogenization} in that our estimates hold with overwhelming probability (i.e., $\geq 1- N^{-D}$ for any large $D>0$).  The homogenization theory \cite{homogenization} relied on certain level repulsion estimates and as a consequence the main estimates were only known to hold with polynomially high probability (i.e., $\geq 1- N^{-\eps}$ for some small $\eps >0$).  While this is not  significant to the application of universality, we believe that this improvement is important for future applications.  For example, if one wishes to study the maximal eigenvalue gap in the bulk of generalized Wigner matrices, then one can use the homogenization result given here together with a union bound over order $N$ eigenvalues.  This approach has been carried out in the work \cite{LLM}.  Note that the result of \cite{homogenization} would not be sufficient to study this spectral statistic. 

 Moreover our method is robust in that it essentially relies only on rigidity; in many random matrix ensembles optimal level repulsion estimates (on which the previous methods \cite{homogenization, landonyau, ES} relied) are not known and can be hard to establish.

 
 As mentioned above, the homogenization theory reduces the microscopic problem of fixed energy universality to a problem involving linear mesoscopic statistics.  
 Central limit theorems for mesoscopic linear statistics of Wigner matrices were established first in certain scales in \cite{boutet1,boutet2,meso1} and then down to the almost-optimal scale $\eta = N^\eps/N$ in \cite{meso2}.  Mesoscopic statistics of compactly supported test functions for the special case of $\beta=2$ for DBM with deterministic initial data was established in \cite{DJ}. 
The analysis in \cite{DJ} relied on the Br\'ezin-Hikami formula special to the $\beta=2$ case and cannot be applied here.  Moreover the test function coming from the homogenization theorem is not of compact support - only its derivative is - and has no spatial decay, which presents a serious complication.  The mesoscopic results \cite{boutet1, boutet2, meso1, meso2, DJ} all apply only to functions with either compact support or at least some spatial decay as $|x| \to \infty$.
 
   In the present work we establish a mesoscopic central limit theorem for DBM for a certain class of non-compactly supported test functions which have no spatial decay.   As an aside, we remark that if our methods are restricted to the compactly supported case, then we can prove that if the scale of the function is less than $t$, then the linear statistic coincides with the GOE.  Here, in the compactly supported case one can remove the restrictions that we encounter in the non-compactly supported case.  This is an extension of some of the results of \cite{DJ} to $\beta=1$. 
    Our main interest in a mesoscopic central limit theorem is to analyze  the statistic coming from the homogenization theory, and so we only settles for a  few remarks concerning test functions of compact support - see Section \ref{sec: mesosection}.
   


The works \cite{landonyau, ES} establishing averaged energy universality for DBM relied heavily on the discrete Di-Giorgi-Nash-Moser theorem of \cite{Gap}.  As a consequence, the rate of convergence was somewhat non-explicit.  While in this work we do not attempt to derive optimal error bounds, our result improves on \cite{landonyau, ES} in the sense that our bounds can be quantified explicitly in terms of the parameters of the model.

\subsection{Applications}

Our homogenization theory also allows us to establish universality of the space-time DBM process --- that is, up to an explicit deterministic shift in space and a re-scaling in space and time, the multitime correlation functions of DBM coincide with the GOE/GUE.   

Averaged energy universality for general one-cut $\beta$-ensembles was established first in \cite{bourgade2012bulk,bourgade2014universality,bourgade2014edge,Gap}.  Further results for bulk universality for multi-cut potentials were established in \cite{shcherbinabeta,transport}.  Fixed energy universality for one-cut $C^4$ potentials was announced in \cite{ICM} and can be proven using the methods of \cite{homogenization}.  Previously,  Shcherbina had established fixed energy universality for analytic potentials in the multi-cut case in \cite{shcherbinabeta}.  For completeness we sketch how our methods can be adapted to re-prove the results of \cite{ICM, homogenization} and moreover establish a polynomial error estimate.

\subsubsection{Fixed energy universality for general random matrix ensembles}

Many of the recent works on universality of general random matrix ensembles have relied on the aforementioned three-step strategy to proving universality.  In the second step these works relied on \cite{landonyau, ES} for universality for the Gaussian divisible ensembles.  The works \cite{landonyau, ES} provided both gap universality and averaged energy universality for the Gaussian divisible ensembles; consequently this form of universality has been proven, for example, for the adjacency matrices of sparse random graphs \cite{hl,huang,roland1,roland2,erdos2012spectral,renyione,ziliang}, matrices with correlated entries \cite{ziliang2,ajankicor, ajankigaus} and the general Wigner-type matrices of \cite{ajanki1,ajanki2,ajanki3}.  By instead relying on the current work, fixed energy universality is established for all of these ensembles.

\subsubsection{Eigenvalue interval probabilities}

 Fixed energy universality has several other consequences which we now outline.  It establishes the existence of the local density of states on microscopic scales as well as the universality of the Jimbo-Miwa-Mori-Sato formula for the gap probability.  In addition, it implies universality of the distribution of the smallest singular value of various random matrix ensembles, including the adjacency matrices of a wide variety of sparse random graphs which is of interest in computer science.  
 
 \subsubsection{Invertibility of symmetric random matrices}

The invertibility problem in random matrix theory is typically divided into two components \cite{vershsymmetric}. The first is whether a random matrix is invertible with high probability, and the second is to determine the typical size of the norm of the inverse, or size of the smallest singular value.  A motivating problem of the former type is the conjecture that an iid Bernoulli matrix is singular with probability less than $(2 +  o (1) )^{-N}$.  Komlos \cite{komlosdet} first proved that the singularity probability is vanishing.  An exponential bound was first obtained in \cite{kks} and later improved in \cite{taosing, bvw}.

The size of the inverse is related to the condition number which plays a crucial role in applied linear algebra.  For example, the condition number controls the complexity or numerical accuracy in solving the linear equation $Ax = b$.  Von Neumann and his collaborators speculated \cite{vninvert} that the least singular value satisfies $s_{\mathrm{min}} (A) \sim N^{-1}$ for matrices with iid entries.

By now a large literature has emerged on the invertibility problem for both symmetric and iid ensembles.  We refer  to the surveys \cite{rvsurvey,vusurvey,nguyensurvey} and the references therein, and mention only a few specific results placing the present work in context.  The invertibility of dense Erd\H{o}s-R\'enyi graphs was established in \cite{ctv} and was later extended to the sparse regime in \cite{cv}.  The first estimate of the form $s_{\mathrm{min}} (H) \sim N^{-1}$ for symmetric matrices was first obtained in \cite{vershsymmetric}.   In the iid case, it is even known that the distribution of the (properly rescaled) smallest singular value at the hard edge is universal \cite{taovuleast}.  However, in the sparse regime little is known about the size of the smallest singular value in the symmetric case.

It is an open conjecture that the adjacency matrix of a random $d$-regular graph is invertible with high probability for $d \geq 3$ \cite{cv,vusurvey,frieze}.  This problem is of interest in universal packet recovery \cite{courtade}.  In the case of random $d$-regular {\it directed} graphs, substantial progress has been made by \cite{cook,tiki}.   It is also conjectured that the adjacency matrices of more general sparse graphs outside the Erdos-Renyi class should be invertible with high probability \cite{cv}.

The invertibility of many classes of random matrices is in fact a corollary of previous works by two of the current authors \cite{landonyau, huang} as well as others \cite{ziliang,ziliang2,roland1,roland2,ajanki1,ajankicor}.  These classes include, for example, adjacency matices of random regular graphs, matrices with correlated entries and general sparse random matrices.   To fix ideas we consider the $d$-regular random graph with adjacency matrix $A$, where 
\beq \label{eqn:dregime}
d \geq N^\eps .
\eeq
The invertibility of $A + G$ where $G$ is a small GOE component follows from Section 5 of \cite{landonyau} as well as the local law of \cite{roland1}.  The comparison methods of \cite{huang,roland2} then allow the invertibility to be transferred back to $A$.  This proves the conjecture of \cite{cv,frieze,vusurvey} in the regime \eqref{eqn:dregime}.  This methodology extends to the other random matrix ensembles considered in \cite{huang}, as well as those considered in  \cite{ziliang,ziliang2,ajanki1,ajankicor} as long as $0$ lies in the bulk of the spectrum.

This strategy  yields an additional effective estimate on the size of the inverse in all cases, which was previously known only in the non-sparse regime.   That is, there is a $c>0$ so that for all sufficiently small $\eps >0$,
\beq
\mathbb{P} [ ||A^{-1} || \geq N^{\eps}/N ] \leq C (\eps) N^{- c \eps}.
\eeq
For example, one can take $A$ to be the (properly rescaled so that the spectrum lies in a window of order $1$) adjacency matrix of a sparse $d$-regular or Erd\H{o}s-R\'enyi graph.

The current work goes beyond this and establishes universality of the smallest singular value of many random matrix ensembles.  Our work implies, for example, that for each $t\in \rr$,
\beq
\lim_{N \to \infty} \left| \mathbb{P} [ || A^{-1} || \geq t N ] - \mathbb{P} [ || H^{-1} || \geq t N ] \right| =0,
\eeq
where $A$ is the (again, properly rescaled) adjacency matrix of a random $d$-regular graph and $H$ is a GOE matrix (again for $d$ in the regime \eqref{eqn:dregime}).


\subsection{Overview}

The remainder of the paper is as follows.  In Section \ref{sec:2} we introduce precisely our model and state our main results, applications and auxilliary results.   Section \ref{sec:homog} contains the main part of the homogenization results and Section \ref{sec:finitespeed} contains the proofs of certain a-priori bounds on the heat kernel.  We study and prove regularity of the limiting continuum equation in Section \ref{sec:hydro}.  In Section \ref{sec: mesosection} we prove our results on mesoscopic linear statistics for DBM.  Section \ref{sec:fixed} contains the proof of fixed energy universality using the homogenization theory and the central limit theorem for mesoscopic linear statistics.  In Section \ref{sec:beta} we sketch the proof of fixed energy universality for $\beta$-ensembles.

\noindent{\bf Acknowledgements.} B.L. thanks Jiaoyang Huang for useful discussions.

\section{Model} \label{sec:2}

Let $V$ be a deterministic diagonal matrix and let $W$ be a standard GOE matrix.  We consider the following model
\beq \label{eqn:Htdef}
H_t =  V + \sqrt{t} W.
\eeq
We make the following assumptions on $V$.  
\bed
Let $\G = \G_N$ and $\g = \g_N$ be $N$-dependent parameters.  For definiteness we assume that there is a $\delta >0$ s.t.  $ N^{-\delta} \geq \g \geq N^\delta/N$ and $\G \leq N^{ - \delta}$.  This $\delta$ will not be important in the method or the main results.  We say that $V$ is $(\g, \G)$-regular if 
\beq \label{eqn:immVassump}
c \leq \Im [ m_V (E + \i \eta ) ] \leq C
\eeq
for $|E| \leq \G$ and $\g \leq \eta \leq 10$, and if there is a  $C_V>0$ s.t.
\begin{equation}\label{eqn: CVdef}
||V|| \leq N^{C_V}.
\end{equation}
\eed
\remark  The assumption \eqref{eqn: CVdef} is technical and can be removed with some minor work.  We omit this from the current paper.  

We will be considering times satisfying $g N^{\sigma} \leq t \leq N^{- \sigma} \G^2$.  We also introduce here the frequently used notation $z = E + \i \eta$ for  $E, \eta \in \rr$.

\subsection{Free convolution} \label{sec:fc}
In this section we introduce the free convolution.   The semicircle law is given by
\beq
\rhosc (E) := \frac{1}{ 2 \pi } \1_{ \{ |E| \leq 2 \} }\sqrt{ 4 - E^2 }.
 \eeq
 It describes the limiting eigenvalue density of the GOE.  The eigenvalue density of $H_t$ does not follow the semicircle law and is given by a free convolution.   We define the free convolution of $V$ with the semicircle law at time $t$ via its Stieltjes transform which we denote by $\mfct$.  The function $\mfct$ is defined as the unique solution to 
\beq \label{eqn:mfcdef}
\mfct (z) = \frac{1}{N} \sum_{i=1}^N \frac{1}{  V_i - z - t\mfct (z) } , \qquad \Im  [ \mfct (z) ] ,\quad \eta  \geq0 .
\eeq
  The free convolution law is defined by
\beq
\rhofct (E) : = \lim_{ \eta \downarrow 0} \frac{1}{\pi} \Im [ \mfct (E + \i \eta ) ].
\eeq
The free convolution is well-studied.  For example,  it is known that a unique solution to \eqref{eqn:mfcdef} exists and that $\rhofct$ is analytic on the interior of its support.  We refer to \cite{biane} for further details.  We will also denote the free convolution law at time $t$ by $\rhofct (E) := \rho_V \boxplus \rho_{\sc, t}$.

\subsection{Fixed energy universality} \label{thm: fixed-energy}

Let $p_{H_t}^{(N)}$ denote the symmetrized eigenvalue density of $H_t$.  The $k$-point correlation functions are defined by
\beq
p_{H_t}^{(k)} ( \lambda_1, \cdots , \lambda_k ) := \int p_{H_t}^{(N)} ( \lambda_1, \cdots , \lambda_N ) \d \lambda_{k+1} \cdots \d \lambda_N.
\eeq
The corresponding objects for the GOE are denoted $p^{(N)}_{GOE}$ and $p^{(k)}_{GOE}$.
The following is our main result which states that the $k$-point correlation functions of $H_t$ converges to those of the GOE in the fixed energy sense.
\bet \label{thm:fixed} Let $V$ be a deterministic $(\g, \G)$-regular diagonal matrix.   Let $\sigma >0$ and let
\beq
g N^{\sigma} \leq t \leq N^{- \sigma} \G^2.
\eeq
Let $0 < q < 1$ and let $|E| \leq q \G$.  There is a constant $\kappa >0$ so that the following holds.  For every $k$ and smooth test function $O \in C^\infty_c ( \rr^k )$ there is a constant $C>0$ such that
\begin{align}
\bigg| &\int O ( \alpha_1, ..., \alpha_k ) p_{H_t}^{(k)} \left( E + \frac{ \alpha_1}{ N \rhofct (E) } , \cdots E + \frac{ \alpha_k }{ N \rhofct (E) } \right) \d \alpha_1 \cdots \d \alpha_k \notag\\
- &\int O ( \alpha_1, ..., \alpha_k ) p_{GOE}^{(k)} \left( E + \frac{ \alpha_1}{ N \rhosc (E) } , \cdots E + \frac{ \alpha_k }{ N \rhosc (E) } \right) \d \alpha_1 \cdots \d \alpha_k  \bigg| \leq C N^{- \kappa}
\end{align}
\eet
\subsubsection{Applications to other ensembles}

Theorem \ref{thm:fixed} implies fixed energy universality for a wide variety of ensembles appearing in random matrix theory.  Recall the three-step strategy to proving universality for random matrix ensembles outlined in the introduction.  Many recent works in random matrix theory used the results of \cite{landonyau, ES} to complete the second step.  The input of \cite{landonyau, ES} is to provide universality of DBM started from the chosen random matrix ensemble in either the fixed gap sense or averaged energy sense.  The third step is relatively insensitive to the type of universality proven in the second step.  Therefore, if one uses Theorem \ref{thm:fixed} instead of \cite{landonyau, ES} one can prove fixed energy universality for following ensembles.
\begin{enumerate}
\item Sparse random matrix ensembles such as the adjacency matrices of random graphs  \cite{huang, roland1, roland2, renyione, erdos2012spectral,ziliang, hl}
\item The general Wigner-type matrices of \cite{ajanki1, ajanki2, ajanki3}.
\item Matrices with correlated entries \cite{ziliang2, ajankicor,ajankigaus}.
\item Deformed Wigner ensembles  \cite{Kevin1, Kevin2}.
\end{enumerate}
Lastly, while fixed energy universality of generalized Wigner matrices was settled in \cite{homogenization}, our methods yield a polynomial rate of convergence which was previously unknown.

\subsection{Further results}
\subsubsection{Multitime correlation functions}
The eigenvalues $\lambda_i$ of $H_t$ at each fixed time are equal in distribution to the unique strong solution of the system of the following system of SDEs, known as Dyson Brownian motion:
\beq \label{eqn:dlambda}
\d \lambda_i = \sqrt{ \frac{2}{N} } \d B_i + \frac{1}{N} \sum_j \frac{1}{ \lambda_i - \lambda_j } \d t
\eeq
with initial data $\lambda_i (0) = V_i$.  Theorem \ref{thm:fixed} implies that at each fixed time, the correlation functions of $\{ \lambda_i (t) \}_i$ coincide with the GOE.  Our methods also allow us to consider multitime correlation functions.  For simplicity we just state the result for two times $t_a < t_b$.  One can also consider any finite set of times $t_a, ..., t_k$.  Given two times $t_a < t_b$ let $p_{t_a, t_b} (\lambda_1 (t_a) , \cdots \lambda_N (t_a), \lambda_1 (t_b ), \cdots \lambda_N ( t_b) )$ denote the symmetrized density of $\{ \lambda_i (t_a), \lambda_j (t_b) \}_{i, j}$.  The multitime $k$-point correlation function is defined by
\begin{align}
&p^{(k)}_{t_a, t_b} ( \lambda_ 1 ( t_a ) , \cdots , \lambda_k (t_a ), \lambda_1 (t_b ) , \cdots , \lambda_k (t_b ) ) \notag\\
:= &\int p_{t_a, t_b} (\lambda_1 (t_a) , \cdots \lambda_N (t_a), \lambda_1 (t_b ), \cdots \lambda_N ( t_b) ) \d \lambda_{k+1} (t_a) \cdots \d \lambda_{N} ( t_a) \d \lambda_{k+1} ( t_ b) \cdots \d \lambda_{N} (t_b).
\end{align}
Denote the analogous object for the GOE by $p^{(k)}_{t_a, t_b, GOE} $ (i.e., start the process $\lambda_i$ from the GOE ensemble).  Fix an energy $|E (t_a)| \leq q\G$ and define $E(t)$ for $t>t_a$ by
\beq
\del_t E = \Re [ \mfct (E) ] 
\eeq
\bet \label{thm:multitime}
Let $V$  be as above and let $\g N^{\sigma} \leq t_a \leq N^{-\sigma} \G^2$.  Let $O$ be a smooth compactly supported test function.    There is a constant $\kappa >0$ so that for any $t_a \leq t_b \leq N^{\kappa}/N$ we have
\begin{align}
\bigg| &\int O ( \alpha_1, \cdots \alpha_k, \beta_1, \cdots \beta_k ) p_{t_a, t_b}^{(k)} \left( E (t_a) + \frac{ \vec{ \alpha}}{ N \rho_{\fc, t_a} (E (t_a ) ) } , E (t_b) + \frac{ \vec{\beta}}{ N \rho_{\fc, t_b } (E (t_b ) ) }  \right) \d \vec{\alpha} \d \vec{\beta} \notag\\
&-\int O ( \alpha_1, \cdots \alpha_k, \beta_1, \cdots \beta_k ) p_{t_a, c_t t_b, GOE}^{(k)} \left( E' + \frac{ \vec{ \alpha}}{ N \rhosc (E') }  , E' + \frac{ \vec{\beta}}{ N \rhosc (E')  }  \right) \d \vec{\alpha} \d \vec{\beta} \bigg| \leq N^{- \kappa},
\end{align}
for any fixed $E' \in (-2, 2)$.  Above the constant  $c_t$ is defined by $c_t := ( \rhosc (0) / \rho_{\fc, t_b} (E (t_b ) ) )^2$.
\eet
\remark One can replace $\rho_{\fc, t_b } (E (t_b ) )   $ by $\rho_{\fc, t_a } (E (t_a ) )  $ as the difference is $o (1)$.

\subsubsection{Jimbo-Miwa-Mori-Sato formula}

Once one establishes fixed energy universality for a random matrix ensemble, it is a standard argument to determine the distribution of the number of eigenvalues in a interval of size $c/N$. 
More precisely, Theorem \ref{thm:fixed} implies that for intervals $I_1, \cdots, I_k$ and integers $n_1, \cdots n_k$ the probability
\beq
\pp \left[  \left| \left\{ \lambda_i \in E + \frac{ I_j}{ N \rho (E) } \right\} \right| = n_j, 1 \leq j \leq k \right]
\eeq
converges to that of the GOE where $\rho (E)$ is the eigenvalue density of the ensemble under consideration.  

For example, for the adjacency matrices of a class of sparse random graphs, we have
\beq \label{eqn:gapprob}
\lim_{N \to \infty} \pp \left[ \left| \left\{ \lambda_i \in \frac{ [0, t]}{ N \pi \rho_{\sc} (0) } \right\} \right| = 0 \right] = E_1 (0, t)
\eeq
where $E_1$ is an explicit function of a solution to Painlev\'e equation. 

\subsubsection{Invertibility of symmetric random matrices}
The result \eqref{eqn:gapprob} provides explicit information on the distribution of the size of the inverse of various random matrix ensembles.  For example, \eqref{eqn:gapprob} implies that for the adjacency matrices $A$ of sparse Erd\H{o}s-R\'enyi and $d$-regular graphs we have for every $t>0$,
\beq \label{eqn:invsize}
\lim_{ N \to \infty} \left| \pp \left[ || A^{-1} || \geq N t \right] - \pp \left[ || H^{-1} || \geq N t \right] \right| = 0
\eeq
where $H$ is a GOE matrix.  From previous results in the literature \cite{huang,landonyau,roland2,roland1} it is easily deduced that the adjacency matrix of a sparse Erd\H{o}s-R\'enyi or $d$-regular graph is invertible with high probability.  The result \eqref{eqn:invsize} is finer, in that it demonstrates that the limiting distribution of the size of the inverse, or equivalently, the size of the smallest singular value of $A$, is universal.


\subsubsection{Fixed energy universality for $\beta$-ensembles} \label{sec:betaresult}
Our methods also imply fixed energy universality for a class of $\beta$-ensembles.  A $\beta$-ensemble is a measure on the simplex $\lambda_1 \leq \cdots \leq \lambda_N$ with probability density proportional to
\beq
\e^{ - \beta N \sum_k \frac{1}{2} V ( \lambda_k ) + \beta \sum_{ i < j } \log | \lambda_j  - \lambda_i | }.
\eeq
We assume that $V$ is a $C^4$ real function with second derivative bounded below and growth condition
\beq
V(x) > (2 + \alpha ) \log (1 + |x|)
\eeq
for all large $x$ and an $\alpha >0$.   The averaged density of the empirical spectral measure converges weakly to a continuous function $\rho_V$, the equilibrium density with compact support.  We assume that $\rho_V$ is supported on a single interval $[A, B]$ and that $V$ is regular in the sense of \cite{generic}.  We denote the $k$-point correlation functions by $p^{(k)}_V$ and those for the Gaussian $\beta$-ensemble (for $V(x) = x^2/2$) by $p^{(k)}_\beta$.

Under these conditions fixed energy universality was announced in \cite{ICM} and can be proven using the methods of \cite{homogenization}.  Previously, M. Shcherbina established fixed energy universality for multi-cut analytic $\beta$-ensembles in \cite{shcherbinabeta}.   
 The following result is an improved version of the result in \cite{ICM} in that it provides an error estimate $N^{- \kappa}$ to the fixed energy universality. Similarly to \cite{ICM}, 
our methodology is  based on the homogenization idea initiated in   \cite{ICM, homogenization}.  

\bet
Let $V$ be as above and assume $\beta \geq 1$.  Let $E \in (A, B)$ and $E' \in (-2, 2)$.  Let $O$ be a smooth test function. There is a $\kappa >0$ such that
\begin{align}
\bigg| &\int O ( \alpha_1, ..., \alpha_k ) p_{V}^{(k)} \left( E + \frac{ \alpha_1}{ N \rho_V (E) } , \cdots E + \frac{ \alpha_k }{ N \rho_V (E) } \right) \d \alpha_1 \cdots \d \alpha_k \notag\\
- &\int O ( \alpha_1, ..., \alpha_k ) p_{\beta}^{(k)} \left( E' + \frac{ \alpha_1}{ N \rhosc (E') } , \cdots E' + \frac{ \alpha_k }{ N \rhosc (E') } \right) \d \alpha_1 \cdots \d \alpha_k  \bigg| \leq C N^{- \kappa}
\end{align}
\eet

\remark It is also possible to deduce analogous results for multitime correlation functions in the following sense.  If one modifies \eqref{eqn:dlambda} to
\beq
\d \lambda_i = \sqrt{ \frac{2}{N} } \d B_i + \frac{1}{N} \sum_j \frac{1}{ \lambda_i - \lambda_j } \d t - \frac{ V' ( \lambda_i )}{2} \d t
\eeq
then the $\beta$-ensemble with potential $V$ is left invariant by this flow.  One can prove that the multitime correlation functions coincide with the process \eqref{eqn:dlambda} started from a Gaussian $\beta$-ensemble.

\subsubsection{Mesoscopic statistics for DBM}
Our methodology of proving fixed energy universality reduces the microscopic problem to a problem involving mesoscopic linear statistics.  In order to complete the proof of fixed energy universality we are forced to calculate mesoscopic statistics for DBM.  Mesoscopic statistics have received some attention in the literature recently and we therefore state our result as it may be of independent interest.
\bet \label{thm: mesostatement}
Let $\varphi$ be a smooth test function satisfying
\beq
 \varphi' (x) = 0, \qquad |x| >  C t', \qquad | \varphi^{(k)} | \leq C  / (t')^k, \quad k=0, 1, 2,
\eeq
where $t' = N^{\alpha}/N$.
  Let $V$ be $(\g, \G)$-regular and let $|E| \leq q \G$.  Let $t = N^{\om}/N$ satisfy $\g N^{\sigma} \leq t \leq \G^2 N^{-\sigma}$.  Assume that $\alpha >0$ satisfies $ \om/2 < \alpha < \om$.  Then the mesoscopic statistic
\beq \label{eqn:mesoresult}
\sum_i \varphi ( \lambda_i )
\eeq
converges weakly to a Gaussian.  If $\varphi$ is not compactly supported, then the variance is bounded below by $c | \log (t'/t)|$.
\eet
\remark Our results are more general --- see Section \ref{sec: mesosection}.  We calculate the characteristic function with an explicit rate of convergence in a growing neighborhood of the origin.

If $\varphi$ is compactly supported, then with some modifications of our methods one can remove the unnatural restriction $\alpha > \om/2$.  As the above theorem will suffice in our application to fixed energy universality we do not provide the details.



\subsection{Local law and rigidity} \label{sec:rigid}
In this section we recall the local law for $H_t$.  These a-priori estimates are the key technical input of our methods.  For times of order $1$ the local law was established in \cite{Kevin1,Kevin2}.  The argument was adapted to short times in \cite{landonyau}.  The empirical Stieltjes transform of $H_t$ will be denoted by
\beq
m_N (z) := \frac{1}{N} \sum_i \frac{1}{ \lambda_i -z}.
\eeq

Under the above hypotheses we have the following rigidity and local law estimates.   We need some notation.  For any $0 < q < 1$ let
\beq
\I_q := [ - q \G, q \G ].
 \eeq
  Let $\eps >0$ and $0 < q < 1$. We  consider the spectral domain
\begin{align}
\D_{\eps, q} &=  \left\{ z = E + \i \eta : E \in \I_q,  N^{10C_V} \geq \eta \geq N^\eps/N \right\} \notag\\
&\cup \left\{ z : D + \i \eta : |E| \leq N^{10 C_V} ,  N^{10 C_V} \geq \eta \geq c \right\}
\end{align}
We have
\bet \label{thm:localrev} Fix $\eps >0$ and $0 < q < 1$.  Let $\sigma >0$ be such that $ \g N^{\sigma} \leq t \leq N^{-\sigma} \G^2$. For any $D>0$ and $\delta >0$ we have
\beq
\pp \left[ \sup_{z \in D_\eps } 
\left| m_N (z) - \mfct (z) \right| \geq \frac{ N^\delta}{N \eta} \right] \leq C N^{-D}.
\eeq
\eet
We fix now a certain index set.  Let $0 < q < 1$.  Let
\beq \label{eqn:cqset}
\C_q := \{ i : V_i \in \I_q \}.
\eeq

\subsubsection{Classical eigenvalue locations}
Given a probability measure $\rho (x) \d x$ and matrix size $N$, we define the classical eigenvalues $\gamma_i = \gamma_{i, N}$ in the following manner.  If $N$ is even then
\beq
\gamma_i = \inf \left\{ x : \int_{-\infty}^x \rho (E ) \d E \geq \frac{i+1}{N} \right\}
\eeq
and if $N$ is odd then
\beq
\gamma_i = \inf \left\{ x : \int_{-\infty}^x \rho (E ) \d E \geq \frac{i+1/2}{N} \right\}.
\eeq
We denote the classical eigenvalue locations of the free convolution law at times $t$ by $\gamma_i (t)$ and the classical eigenvalue locations of the semicircle law by $\gsc_i$.  The above definition is slightly nonstandard, but we take it so that
\beq
\gsc_{\lceil N/2 \rceil} = 0,
\eeq
which will turn out to be convenient later. 


\subsubsection{Rigidity estimates}
We have the following rigidity result for the eigenvalues.
\bet \label{thm:rigidity} Fix $0 < q < 1$ and let $t$ be as above.  For any $\eps >0$ and $D>0$ we have
\beq
\pp \left[ \sup_{ i \in \C_q } | \lambda_i(t) - \gamma_i(t) | \geq \frac{N^{\eps}}{N} \right] \leq N^{-D}.
\eeq
We also have
\beq \label{eqn:lambdaV}
\pp \left[ \sup_j | \lambda_j - V_j | \geq 3 \sqrt{t} \right] \leq N^{-D}.
\eeq
\eet

From the above theorem we see that for any $q_1$ with $q < q_1 < 1$ we have for $N$ large enough,
\beq
i \in \C_q \implies \gamma_i (t) \in \I_{q_1} .
\eeq

\subsection{Proof strategy}

In this section we give an overview of the strategy of the proof of fixed energy universality.  The DBM flow starting from $V$ is given by the SDE
\beq \label{eqn:sketchdxi}
\d x_i = \sqrt{\frac{2}{N} } \d B_i + \frac{1}{N} \sum_j \frac{1}{ x_i - x_j } \d t 
\qquad x_i (0) = V_i ,
\eeq
where the $\d B_i$ are standard Brownian motions.
\begin{enumerate}
\item {\bf Regularization}.  In the next step we will couple the DBM \eqref{eqn:sketchdxi} to an auxilliary process.  Before this coupling, we first allow the DBM \eqref{eqn:sketchdxi} to run freely for an initial time interval of length $t_0$, where $t_0$ satisfies the compatibility conditions $\g \ll t_0 \ll \G^2$.  This is needed for several reasons.  Firstly, after $t_0$, we can apply the results \cite{landonyau} which state that rigidity holds wrt the free convolution law. Secondly, this regularizes the DBM flow in the sense that the free convolution will be regular on this scale; for example $|\rhofc' (E) | \leq C / t_0$.

\item{\bf Matching and coupling}.  For times $t \geq t_0$ we couple the DBM flow to another DBM flow started from an independent GOE ensemble.  That is, we define the process
\beq \label{eqn:sketchdyi}
\d y_i (t) = \sqrt{ \frac{2}{N} } \d B_i + \frac{1}{N} \sum_j \frac{1}{ y_i - y_j } \d t
  \eeq
  where initially $y_i (t_0)$ is distributed as a GOE ensemble independent from $\{ x_i \}_i$.  The point is that the Brownian motions in \eqref{eqn:sketchdxi} and \eqref{eqn:sketchdyi} are the same.     This idea first appeared in \cite{homogenization}.  Moreover, we re-scale and shift the DBM flow so that the classical eigenvalue locations match those of the semicircle law near a chosen energy $E$.  Due to the regularity of the free convolution law, we can match up to $\sqrt{ N t_0}$ eigenvalues.  This matching implies that
  \beq \label{eqn:cc2}
  x_i (t) - y_i (t) \sim \frac{\log(N)}{N},
  \eeq
for eigenvalues that are near the spectral energy $E$. 
At this point we are now regarding $t_0$ as a fixed a-priori scale on which the DBM flow is regular.  By running the coupling for times $t$ satisfying $t_0 \leq t \leq t_0 + t_1$ with $t_1 \ll t_0$, the DBM flow will not see the non-matching eigenvalues.

\item{\bf Discrete parabolic equation}.  The difference $w_i (t):= x_i (t) - y_i (t)$ satisfies the parabolic equation 
\beq
\del_t w_i = ( \L w )_i 
\eeq
where
\beq \label{eqn:Lsketch}
( \L w )_i := \frac{1}{N} \sum_j \frac{ w_j - w_i}{ (x_i - x_j ) ( y_i - y_j ) } .
\eeq
As $N \to \infty$, a natural limit for this equation is
\beq \label{eqn:fsket}
\del_t f (x) = \int \frac{ f(y) - f(x) }{ (x-y)^2} \rhofct (y) \d y \sim \int \frac{ f(y) - f(x) }{ (x-y)^2}  \d y 
\eeq
In order to justify the replacement of $\rhofct$ by a constant we will use its regularity on the scale $t_0$ and a short-range approximation of the DBM flow.  We omit the details in this simple sketch.  
\item{\bf Homogenization theory}. We may now write
\beq \label{eqn:xiyisk}
x_i (t_0 + t) - y_i (t_0 + t) = \sum_{ j} \UL_{ij} (t_0, t_0 + t) (x_j (t_0) - y_j (t_0 ) ),
\eeq
where $\UL$ is the semigroup for the equation \eqref{eqn:Lsketch}. We need to develop a homogenization theory in order to calculate the matrix elements $\UL_{ij}$.  We let $p_t (x, y)$ denote the fundamental solution of \eqref{eqn:fsket}.   
Let
\beq \label{eqn:ffsketch}
f_i (t) := p_t (x_i (t), \gamma_a )
\eeq
and
\beq \label{eqn:uusketch}
g_i (t) := N \UL_{i,a} (t_0+t, t_0 ).
\eeq
Our main calculation is
\beq
\d || f-g ||_2^2 = - c \langle (f-g), \L (f-g ) \rangle \d t + \d M_t + \frac{o(1)}{  t^2} \d t,
\eeq
where $M$ is a martingale.  Integrating this inequality in time (and dropping the time average for simplicity) we will obtain
\beq
\langle (f - g ), \L  (f-g)  \rangle \leq \frac{ o(1)}{ t^2} + \frac{1}{t} || g(0) - f(0) ||_2^2
\eeq
The second term on the RHS is not well-defined, as $f(0)$ is a delta function (as a distribution on $\rr$) and $g(0)$ is a discrete delta function.  In order to make sense of this quantity,  we introduce an additional regularization to the initial data $f(0), g(0)$.  We omit the details from this sketch but say that this roughly corresponds to convolving $f(0)$ and $g(0)$ with a mollifier which lives on a regularization scale $s_r$ (we will only use this notation of $s_r$ in this sketch and it is absent from the remainder of the paper).  This mollification allows us to take $||g(0) - f(0) ||_2^2 = o( t^{-1} )$.  By choosing the regularization scale $s_r \ll t$, we will also see that the regularization does not significantly affect the final value $f(t)$ and $g(t)$ (as the regularization/mollification scale $s_r$ is shorter than the natural scale $t$ of these functions).

We obtain
\beq
\langle (f(t) - g(t) ), \L  (f(t)-g(t))  \rangle \leq \frac{ o(1)}{ t^2}.
\eeq
Using a discrete $\dot{H}^{1/2} - \ell^\infty$ Sobolev inequality, this inequality will imply
\beq \label{eqn:ellinfsketch}
|| g_i   (t) - f_i (t) ||_\infty \leq \frac{N^{-\mathfrak{b}}}{t},
\eeq
for some positive $\mathfrak{b} >0$,
\item {\bf Cut-offs}.  The natural size of $g_i = N \UL_{ia} (t_0, t_0 + t)$ is $1/t$ for indices $i$ near $a$.  Hence, the estimate \eqref{eqn:ellinfsketch} determines the object $\UL_{ia}$ beyond its natural scale and we can use it to control the terms in the sum \eqref{eqn:xiyisk} for $j$ near $i$; that is, we can control approximately $o(N^{\mathfrak{b}} N t)$ terms using \eqref{eqn:ellinfsketch} and \eqref{eqn:cc2}.

For terms satisfying $N t N^{\mathfrak{b}/2} < |i-j| < \sqrt{N t_0 }$ we have the estimate \eqref{eqn:cc2} as well as the a-priori upper bound for $\UL$,
\beq \label{eqn:ULbdsketch}
N \UL_{ij} (t_0+t, t_0  ) \leq N^\eps p_t ( \gamma_i, \gamma_j ) \leq N^\eps \frac{ t}{ t^2 + (\gamma_i - \gamma_j)^2}
\eeq
for any small $\eps >0$.  This allows us to control the contribution to the sum \eqref{eqn:xiyisk} for terms in this range.

Finally we have to deal with the contribution of $j$ so that $|j-i | > \sqrt{N t_0}$.  Here, we do not have the estimate \eqref{eqn:cc2} due to lack of sufficient regularity of the initial data $x_i (0)$, and the fact that the decay \eqref{eqn:ULbdsketch} is not fast enough to counteract the growth of the LHS of \eqref{eqn:cc2} as $j$ moves further from $i$.  Instead, we will modify the processes  \eqref{eqn:sketchdxi} and \eqref{eqn:sketchdyi} and replace them by certain {\it short-range approximations}.  For the purposes of this sketch we will not define the approximations precisely.  We will just say that the local law and rigidity estimates allow us to replace the long-range contribution in  \eqref{eqn:sketchdxi} and \eqref{eqn:sketchdyi} of terms with $|i-j| > \ell$ large with a deterministic drift term.  Here $\ell$ is an additional scale chosen larger than $N t$.  This modifies the operator $\mathcal{L}$ to only allow jumping between sites $|i-j|< \ell$.  The behavior of the kernal $\UL$ is then modified to decay exponentially for $|i-j | > \sqrt{N t} \ell$ (i.e., simple random walk behavior in $1$ dimension with step-size $\ell$).  This latter property enables the cut-off of the non-matching terms where the estimate \eqref{eqn:cc2} fails.

\item {\bf Mesoscopic linear statistics}.  The homogenization theory proves that there is a smooth function $\zeta_t : \rr \to \rr$ so that (up to errors)
\beq
x_i (t_0+t) - y_i (t_0 + t) = \sum_{ |i-j| \leq N t} \zeta_t ( \gamma_i - \gamma_j ) ( x_j (t_0) - y_j (t_0) ) =: \zeta_x - \zeta_y.
\eeq
Roughly, $ \zeta_t (\gamma_i - \gamma_j ) \sim p_t ( \gamma_i, \gamma_j )$ where $p_t$ is the fundamental solution introduced above (to the PDE \eqref{eqn:fsket}). 
This reduces the microscopic problem to a simpler mesoscopic one.   For this mesoscopic observable, we calculate the characteristic function and prove that 
\beq \label{eqn:mesosketch}
 \left| \psi  (\lambda ) \right| := \left| \ee [ \e^{ \i \lambda \zeta_x  } ] \right| \leq \e^{ -\lambda^2 c_x \log (N) } + N^{- \eps}
\eeq
 for some $\eps >0$ and a constant $c_x >0$.
 
 We remark that we will use a slightly different convention for $\zeta_t$ in the full proof than in the simple sketch given here.  The precise definition of $\zeta_t$ is given Section \ref{sec:homog}.   To differentiate the two conventions we use $\zeta(x, t)$ instead of $\zeta_t (x)$ in the rest of the paper.
\item {\bf Fourier cut-off}. We now proceed similarly to \cite{homogenization}.  It suffices to consider for smooth test functions $Q$ sums of terms of the form,
\beq
\ee [ Q ( N ( x_{i} - E), N ( x_{j} - x_i )  ) ] .
\eeq
The homogenization theory shows that
\beq \label{eqn:Qxsketch}
\ee [ Q ( N ( x_i - E), N(x_j - x_i )) ] = \ee  [ Q  ( N (y_i - E + (\zeta_x - \zeta_y ) ) , N (y_j - y_i) ) ] + o (1).
\eeq
By Fourier duality we have
\beq
\ee  [ Q  ( N (y_i - E + (\zeta_x - \zeta_y ) ) , N (y_j - y_i) ) ] = \int  \ee[ \hat{Q} ( \lambda, N ( y_i - y_j ) ) \e^{ \i \lambda N ( y_i - E + \zeta_x- \zeta_y ) } ] \psi  ( \lambda )   \d \lambda.
\eeq
Here $\hat{Q}$ denotes the Fourier transform of $Q$ in the first variable.  By \eqref{eqn:mesosketch} we can cut off the Fourier support of $\hat{Q}$ in the range $| \lambda | \geq \delta$ for any small fixed $\delta >0$.  Therefore it suffices to consider observables $Q$ with Fourier support contained in $| \lambda | \leq \delta$.

\item {\bf Reverse heat-flow}.  Running the same argument with a third ensemble $z$ distributed as the GOE shows that
\beq \label{eqn:Qzsketch}
\ee[ Q ( N (z_i - E), N ( z_j - z_i) ) ] = \ee [  Q ( N (y_i - E  + ( \zeta_z - \zeta_y )), N ( y_j - y_i) ) ] + o (1).
\eeq
As in \cite{homogenization} we see that from \eqref{eqn:Qxsketch} and \eqref{eqn:Qzsketch} that fixed energy universality will follow if we can prove that the function
\beq
F (a ) :=\ee [  Q ( N (y_i - E  + ( a - \zeta_y )), N ( y_j - y_i) ) ]
\eeq
is approximately constant, for $Q$ a function of small Fourier support.  The argument to prove this is the same as in \cite{homogenization}.  What is new is that we have analyzed the mesoscopic statistic $\zeta_x$ and used it to complete the Fourier cut-off in the previous step.  In \cite{homogenization} a Fourier cut-off was also used, but only for $\delta$ a large constant; here, $\delta$ is allowed to be any small constant.  In \cite{homogenization} this caused some restriction in the following argument on how small $t$ can be in proving fixed energy universality.  Here this restriction is removed due to the Fourier cut-off $\delta$.

We would like to prove that $F(a)$ is constant. Define $F_h (a) := F(a +h) - F(a)$.  By translation invariance of the local GOE statistics we know that $\ee[ F_h ( \zeta_z ) ] = o (1)$.  We will prove that $\zeta_z$ is close to a Gaussian with variance $c_z  \log (N)$.  In order to conclude that $F_h$ is small we run the reverse heat flow argument of \cite{homogenization}.  We see that
\beq
\left| \hat{F}_h ( \lambda ) \e^{  - c_z \log (N) \lambda^2} \right| \leq N^{- \eps}
\eeq
for some $\eps >0$, {\it independent} of the $\delta$ chosen above.  By the Fourier support restriction on $Q$ we see that $ | \hat{F}_h ( \lambda ) | \leq N^{ \delta^2 c_z } N^{ - \eps}$.   Hence for $\delta$ small enough we get that $\hat{F}_h = o (1)$ and we conclude that $F_h$ is small.  This proves fixed energy universality.

\end{enumerate}

\subsection{Notation}
We will use the following notion of overwhelming probability.
\bed
We say that an event $\F$ holds with overwhelming probability if for any $D>0$ we have $\pp [ \F^c] \leq N^{-D}$ for large enough $N$.  If we have a family of events $\{ \F (u) \}_u$ then we will say that $\{ \F (u) \}_u$ holds with overwhelming probability if $\sup_u \pp [ \F^c (u) ] \leq N^{-D}$ for large enough $N$.
\eed
For two positive $N$-dependent quantities $a_N$ and $b_N$ we say that $a_N \asymp b_N$ if there are constants $c$ and $C$ s.t. $c a_N \leq b_N \leq C a_N$.  

In our work we use $C$ to denote a  positive constant that can change from line to line.  The constant $C$ will typically only depend on the constants appearing in the assumptions on $V$.

For $A, B \in \rr$ we denote
\beq
[[A, B]] := [A, B] \cap \zz.
\eeq


\section{Homogenization} \label{sec:homog}

In this section we prove a homogenization result for DBM.   This reduces the problem of fixed energy universality of the model $H_t$ to a problem involving mesoscopic statistics.  Given a real symmetric matrix $M$ with eigenvalues $\lambda_1 (M)\leq \cdots \leq \lambda_N (M) $, we define Dyson Brownian motion with $\beta=1$ and initial data $M$ 
  to be the process satisfying
\beq
\d \lambda_i = \sqrt{ \frac{2}{N}} \d B_i + \frac{1}{N} \sum_{j \neq i} \frac{1}{ \lambda_i - \lambda_j } \d t , \qquad \lambda_i (0) = \lambda_i (M)
\eeq
At each fixed time $t \geq 0$, the particles $\{ \lambda_i (t) \}_i $ are equal in distribution to the eigenvalues of the matrix $M + \sqrt{t} W$,  where $W$ is a GOE matrix independent of $M$.  It is well-known that there is a unique strong solution to the above system of SDEs and the sample paths are continuous a.s.  Recall that we want to study the eigenvalues of the matrix $H_t :=V + \sqrt{t} W$.  We will do this by studying the DBM flow for $t$ in the regime 
\beq
t_0 \leq t \leq t_0 + t_1.
\eeq
Here $t_0$ and $t_1$ are times defined by $t_i = N^{\om_i}/N$ and $\om_1 < \om_0$.  The time $t_0$ satisfies $ \g N^{\sigma} \leq t_0 \leq N^{-\sigma} \G^2.$

  Our study requires the choice of an index $i_0 \in \C_q$, with $\C_q$ defined in \eqref{eqn:cqset}.   We will compare eigenvalues near $i_0$ to the GOE. At time $t_0$ the eigenvalue density of $H_{t_0}$ is given by the free convolution law $\rho_{\fc, t_0}$ as defined in Section \ref{sec:fc}.  In this section we are going to assume that 
\beq
\gamma_{i_0} (t_0) = 0, \qquad \rho_{\fc, t_0} (0) = \rhosc(0). 
\eeq
In applications of the homogenization theorem this will be implemented by a re-scaling and shift of $V$ and a re-scaling of time.  For times $t \geq t_0$ we define $x_i (t)$ to be the solution of 
\beq \label{eqn:xdef}
\d x_i (t) := \sqrt{\frac{2}{N}}  \d B_i  + \frac{1}{N} \sum_{j \neq i} \frac{1}{ x_i (t) - x_j (t)} \d t
\eeq
with initial data $x_i (t_0) = \lambda_i ( H_{t_0} )$.

We now introduce the coupled GOE process. For times $t \geq t_0$ define $y_i (t)$ as the solution to
\beq \label{eqn:ydef}
\d y_i (t) =  \sqrt{\frac{2}{N}}  \d B_i  + \frac{1}{N} \sum_{j \neq i} \frac{1}{ y_i  (t) -y_j (t)} \d t
\eeq
where the initial data $y_i (t_0)$ are the eigenvalues of a GOE matrix independent of $\{ x_i (t) \}_i$.  Above, the Brownian motions are the same as those appearing in \eqref{eqn:xdef}.  At times $t \geq t_0$ the particles $\{ y_i (t) \}_i$ are distributed as
\beq
\{ y_i (t) \}_i \equald \left\{ \lambda_i  \left( \sqrt{ 1 + (t- t_0 ) } W' \right) \right\}_i.
\eeq
At times $t \geq t_0$ the $y_i (t)$ satisfy a rigidity estimate with respect to the classical eigenvalue locations $\sqrt{ 1 + (t-t_0 ) } \gsc_i$ where $\gsc_i$ denote the classical eigenvalue locations of the semicircle law $\rhosc$.  For our purposes we adopt the convention $N/2 :=\lceil N/2 \rceil$ as well as $\gsc_{N/2} =0$.

In order to state the following theorem we introduce a function $\zeta(x, t) : \rr^2 \to \rr$.  It is defined in terms of a fundamental solution $p_t (x, y)$ to a specific non-local integral equation that we will compare our process to.  This non-local integral equation is  \eqref{eqn: Ketadef} below; its definition requires a cut-off which will be introduced over the next part of the proof, so we defer the definition for now.  For the time being it will suffice to just assert the existence of this function $\zeta(x, t)$ and summarize some of its properties  in Proposition \ref{prop:zetaprop} below.  In terms of $p_t(x, y)$ we have,
\beq \label{eqn:zetarev1}
\zeta (x, t) = \frac{1}{N} p_t (0, x/\rhosc (0) )
\eeq

The following theorem is the main result of this section.
\bet \label{thm:mainhomog}
Fix $t_0 = N^{\om_0}/N$ satisfying $g N^\sigma \leq t_0 \leq N^{ - \sigma} \G^2$ for $\sigma >0$.  Let $t_1 = N^{\om_1}/N$ with $0 < \om_1 < \om_0/ 2$. Let
\beq
t_2 = \max \{ N^{-\om_1/15} t_1 , N^{ - 4 ( \om_0/2-\om_1)/3} t_1 \}.
\eeq
 Let $0 < \eps_b < \min \{ ( \om_0/2 - \om_1)/3, \om_1/60 \}$.  Let $i_0 \in \C_q$.  Assume that
\beq \label{eqn:oldrev1}
\gamma_{i_0} (t_0) = 0, \qquad \rho_{\fc, t_0} (0) = \rhosc (0).
\eeq
With overwhelming probability we have the following estimates.  For every $|u| \leq t_2$ and $|i| \leq N t_1 N^{\eps_b}$ we have
\begin{align} \label{eqn:oldrev2}
& \left( x_{i_0+i} (t_0 + t_1 + u ) -  \gamma_{i_0 } ( t_0 + t_1 + u ) \right)- y_{ N/2+i} (t_0 + t_1 + u ) \notag\\
= & \sum_{ |j| \leq N^{\om_1 + (\om_0/2-\om_1)/3 } } \zeta \left(\frac{i-j}{N}, t_1 \right) \left[  x_{i_0+j} (t_0 )  - y_{ N/2+j} (t_0   )  \right]+\frac{N^\eps}{N} \O \Bigg( \frac{ N^{\om_1/3} }{ N^{\om_0/6}}+   \frac{1}{ N^{\om_1/60}}\Bigg)
\end{align}
\eet

Theorem \ref{thm:mainhomog} will be a consequence of Theorem \ref{thm:xyref} below.   For the mesoscopic statistic $\zeta (x, t_1)$ we have the following properties.
\bep \label{prop:zetaprop}
The function $\zeta (x, t)$ satisfies for $0 \leq t \leq 1$ and $x \in \rr$ the following.  We have,
\beq
\int \zeta (x, t) \d x = 1, \qquad 0 \leq \zeta (x, t) \leq C \frac{ t}{ x^2 + t^2} 
\eeq
and
\beq
\left| ( \del_x)^k \zeta (x, t) \right| \leq \frac{ C}{ t^k} \frac{ t}{ x^2 + t^2 } , \qquad k=1, 2, 3.
\eeq
\eep

\subsection{Re-indexing} \label{sec:resc}
In this subsection we are going to make some assumptions which will greatly simplify notation.  
In Appendix \ref{a:reindex} we present an argument which reduces the general case to these assumptions.  

Let $i_0$ be as in Theorem \ref{thm:mainhomog}.  We assume that $N$ is odd and that $i_0 = (N+1)/2$.  
Note that with our convention, $\gsc_{i_0} = 0$. 

Presently the eigenvalues are labelled by the integers $[[1, N]]$.  We re-label the eigenvalues so that they are indexed by $[[-(N-1)/2, (N-1)/2]]$.  The eigenvalues are then $x_{-(N-1)/2} \leq x_{-(N-3)/2}  \leq \cdots \leq x_{(N-1)/2} $ and $i_0 = 0$.  We furthermore have that $\gamma_0 (t_0 ) = 0$.

We adopt these assumptions for the remainder of Section \ref{sec:homog}, apart from Section \ref{sec:xythmproof} which is where we prove Theorem \ref{thm:mainhomog} and must therefore unravel the re-labelling.  We also adopt this convention for Section \ref{sec:finitespeed}.  It will not be used in the other sections.  

\subsection{Interpolation}

\label{sec:interpolation}

In the work \cite{homogenization}, the parabolic equation satisfied by the differences $u_i := x_i - y_i$ was directly considered.  The jump rates of the generator of this equation are hard to control as they involve both of the differences $(x_i - x_j)$ and $(y_i - y_j)$.  In this paper we define a continuous interpolation which allows us to consider a family of parabolic equations whose generators are easier to control.

We now introduce this interpolation.  For $0 \leq \alpha \leq 1$, we define $z_i (t, \alpha )$ as the solution to 
\beq \label{eqn:dzi}
\d z_i (t, \alpha ) = \sqrt{ \frac{2}{N}} \d B_i +  \left( \frac{1}{N} \sum_{j} \frac{1}{ z_i (t, \alpha ) - z_j (t, \alpha ) } 
 \right) \d t, \qquad z_i (0, \alpha) = \alpha x_i (t_0) + (1- \alpha ) y_i (t_0 ).
\eeq
Note that $z_i (t, 0) = y_i (t_0 + t)$ and $z_i (t, 1) = x_i (t_0 + t)$.  Note that we have effectively introduced a time shift which sets $t_0 = 0$.  In the remainder of Section \ref{sec:homog} we will refer to $\{ z_i (t , \alpha ) \}_i$ as ``particles,'' instead of using the terminology of eigenvalues.  Below we will introduce some other processes with similar notation that we will also refer to as particles. 

We will soon see that like the $x_i$ and $y_i$, the $z_i (t, \alpha )$ satisfy a rigidity estimate.  However, we have some freedom in choosing the measure with which to construct the classical particle (eigenvalue) locations (note that here we are still referring to quantiles of a measure with which the empirical density of the $z_i (t, \alpha)$ but in accordance with calling the $z_i (t, \alpha)$ particles we will call them classical particle locations).  One choice is the free convolution of the empirical measure of the initial data $z_i (0, \alpha )$ with the semicircle law.  However, this law is somewhat singular for short times $t$, and does not reflect the fact that at time $t_0$, the particles $x_i (t_0)$ satisfy a rigidity estimate wrt $\rhofc (t_0)$ which has some regularity properties (e.g., $|\rhofc' (t_0) | \leq C / t_0$).

To compensate for this, we construct a measure $\nu (\d x, \alpha )$ that has a  density near $0$ that is at least as smooth as $\rhofc (t_0)$.   The construction is described in detail in Appendix \ref{a:free}, and for now we just sketch its construction.  This measure constructed is random but has good properties with overwhelming probability.  

We need $\nu ( \d x, \alpha )$ to satisfy two properties which motivate its construction.  Firstly, we would like it to have a smooth density near $0$ which is at least as regular as $\rhofc (t_0)$.  Secondly, we need the initial data $z_i (0, \alpha )$ to be approximated by $\nu ( \d x, \alpha )$ down to the optimal scale $\eta \gtrsim 1/N$, so that at later times $t$, the particles $z_i (t, \alpha )$ follow the free convolution of $\nu ( \d x , \alpha )$ with the semicircle distribution.  

We now sketch the construction of the measure $\nu ( \d x, \alpha )$; complete details are given in Appendix \ref{a:free}.  The construction requires the choice of a parameter $0 < q^* <1$ which we now fix.  This $q^*$ is the same as that which appears in Appendix \ref{a:free}.  In the interval $[-q^* \G, q^* \G]$,  one can construct, using the inverse function theorem, a density whose quantiles (in this case defined by starting the integration of the density from $0$) equal $\alpha \gamma_i (t_0) + (1-\alpha ) \gsc_i$.  This density has as good regularity properties as $\rhofc (t_0)$.  Since $z_i (0, \alpha )$ satisfy a rigidity estimate in the interval $[-q^* \G, q^* \G]$, this density gives the required approximation for $z_i$ in this interval.  To approximate the $z_i$ outside the interval $[-q^* \G, q^* \G]$ we can take $\nu ( \d x , \alpha )$ to consist of a Dirac delta mass at each $z_i (t, \alpha )$ such that $|z_i (t, \alpha ) | > q^* \G$.  Since the delta functions are outside of the interval $[-q^* \G, q^* \G]$ we do not  affect the regularity inside this interval.  Clearly $\nu ( \d x , \alpha )$ gives a good approximation to $z_i (t, \alpha)$.

We now record formally some of the needed properties of the measure $\nu ( \d x , \alpha )$.  Again, we mention that the explicit construction appears in Appendix \ref{a:free}.   One of the key properties will be that, although the measure $\nu ( \d x, \alpha )$ and its free convolution are {\it random}, for $\alpha=0, 1$ the quantitative properties inside $|x| \leq q^* \G$ coincide with $\rhofct$ and $\rhosc$ and are {\it deterministic}.  In particular, certain quantiles of the measure at later times are deterministic up to an error term.  This is summarized in Lemma \ref{lem:newquant} below.

Let $k_0$ be the largest index so that 
\beq
|\gamma_{k_0} (t_0 ) |  \leq q^* \G, \qquad |\gamma_{- k_0 }  (t_0) |\leq q^* \G , \qquad | \gsc_{-k_0} | = | \gsc_{k_0} | \leq q^* \G.
\eeq
Note that $k_0 \asymp N \G$.  The measure $\nu ( \d x, \alpha )$ has a nonvanishing density on the interval
\beq
\Galp :=[ \alpha \gamma_{-k_0} (t_0) + (1 - \alpha ) \gsc_{ - k_0} (t_0) , \alpha \gamma_{k_0} (t_0) + (1- \alpha ) \gsc_{k_0} ] 
\eeq
but has a singular part which may overlap with an $o ( \G)$ portion of $\Galp$ at its boundary; for any $0 < q < 1$, $\nu (\d x, \alpha )$ is purely a.c. on $q \Galp$.

We now define $\rho (E, t, \alpha)$ to be the free convolution of $\rho (E, 0, \alpha) \d E := \nu ( \d E , \alpha )$ and the semicircle distribution with Stieltjes transform $m (z, t, \alpha )$.  We have abused notation here slightly as at time $t=0$, the measure $\nu (\d x, \alpha )$ is the sum of an absolutely continuous part and a sum of delta functions and it is only for times $t>0$ that it has a true density.  However, this will not affect anything as whenever we write $\rho (E, 0, \alpha)$ we only be referring to $E$ near $0$ where the measure $\nu (\d x, \alpha )$ is purely a.c. 

The following holds for the free convolutions. We defer the proof to Appendix \ref{a:free}. 
\bel \label{lem:interprho}
Let $\delta >0$.  All of the following holds for $|E| \leq N^{-\delta} t_0$ and $t \leq N^{-\delta} t_0$, and $N^\delta/N \leq \eta \leq 10$, and with overwhelming probability.  For the Stieltjes transform we have
\beq
| m (t, E, \alpha ) - m (t, 0 , \alpha ) - ( m (t, E, 0) - m (t, 0, 0) ) | \leq C \log(N) \left( \frac{|E|}{t_0} + \frac{t}{t_0} \right)
\eeq
We have
\beq \label{eqn:mderalph}
| \del_z m (z, t, \alpha ) | \leq \frac{C}{t_0}.
\eeq
For the free convolution laws we have
\beq
\left| \frac{\d }{ \d E} \rho (E, t, \alpha ) \right| \leq \frac{C}{t_0}, \qquad 
\rho (0, 0, \alpha ) = \rho (0, 0, 0) = \rhosc (0),
\eeq
and
\beq
\left| \rho(E, t, \alpha ) - \rhosc (0) \right| \leq C \log(N) \frac{ |E| + t}{t_0}.
\eeq
Moreover, for $0 < q < 1$ and $E \in q \Galp$, $N^\delta/N \leq \eta \leq 10$,
\beq
| m (z, t, \alpha ) | \leq C \log (N), \qquad c \leq \Im [ m (t, z, \alpha ) ] \leq C.
\eeq
\eel
The classical particle locations of the measure $\rho(E, t, \alpha )$ are denoted by $\gamma_i (t, \alpha)$ and are defined by
\beq
\frac{1}{2} + \frac{i}{N} =  \int_{-\infty}^{ \gamma_i (t, \alpha ) } \rho (E, t, \alpha ) \d E.
\eeq
We have that $\gamma_0 (0, \alpha ) = 0$ by the definition of $\rho (E, 0, \alpha)$.
We will also need to relate $\gamma_i (t, 0)$ and $\gamma_i (t , 1)$ back to $\gsc_i$ and $\gamma_i (t)$, respectively.
\bel \label{lem:newquant}
We have for any $\eps >0$ and $\om_1 < \om_0/2$ with overwhelming probability,
\beq \label{eqn:newq1}
\sup_{ 0 \leq t \leq 10 t_1} | \gamma_0 (t, 1) - \gamma_0 (t_0 + t) | \leq \frac{ N^\eps}{N} \frac{ N^{\om_1}}{ N^{\om_0/2}}
\eeq
and
\beq \label{eqn:newq2}
\sup_{ 0 \leq t \leq 10 t_1} | \gamma_0 (t, 0) - 0 | \leq \frac{ N^\eps}{N} \frac{ N^{\om_1}}{ N^{\om_0/2}}.
\eeq
For $|j|, |k| \leq N^{\om_0/2}$ we have with overwhelming probability,
\beq \label{eqn:newgaps}
\gamma_k (t, \alpha ) - \gamma_j (t, \alpha ) = \frac{1}{N} \frac{k-j}{\rhosc(0) } + \O \left( \frac{1}{N} \right),
\eeq
for any $t \leq 10 t_1$, with $\om_1 \leq \om_0/2$ and $0 \leq \alpha \leq 1$.
\eel
We again defer the proof to Appendix \ref{a:free}.

We  define the empirical Stieltjes transforms by
\beq
m_N (z, t, \alpha ) = \frac{1}{N} \sum_{i=1}^N \frac{ 1}{ z_i (t, \alpha ) -z }.
\eeq
The measures $\rho (E, t, \alpha )$ are $\alpha$ dependent.  We are eventually going to introduce some short-range and long-range cut-offs, and differentiate certain objects in $\alpha$.  As the cut-offs are inherently discrete (they involve the particle indices), we want to choose them independent of $\alpha$ to interact nicely with the differentiation.  This requires the introduction of the following index sets.  Let $k_1$ be the largest index so that
\beq
\bigcup_{0 \leq \alpha \leq 1 } [ \alpha \gamma_{-k_1} (t_0) + (1 - \alpha ) \gsc_{ - k_1} (t_0) , \alpha \gamma_{k_1} (t_0) + (1- \alpha ) \gsc_{k_1} ] \subseteq \bigcap_{ 0 \leq \alpha \leq 1 } \calG_\alpha \cap \{ - \calG_\alpha \} 
\eeq
where $- \F := \{ x : -x \in \F \}$.

Finally, for any $0  < q < 1$, we define
%
\beq
\Chat_q = \{ j: |j| \leq q k_1 \}.
\eeq
The cardinality $\Chat_q$ satisfies $| \Chat_q | \asymp q \G N$.  This definition is just so that particles $z_k ( t, \alpha )$, for $k \in \Chat_q$ have nice qualitative properties uniformly in $\alpha$ (the constants degenerate as $q\to 1$).  For example, the optimal rigidity estimate holds for any $k \in \Chat_q$.  Moreover, the classical particle locations $\gamma_k (t, \alpha )$ for $k \in \Chat_q$ will all be contained in a symmetric interval $[-q' \G, q' \G]$ for some $q'$ on which all the densities $\rho(E, t, \alpha )$ have good properties.

We will also use tacitly that for any $q$ we have for $j,k \in \Chat_{q}$ that
\beq
c \frac{ |j-k|}{N} \leq | \gamma_{j} (t, \alpha ) - \gamma_k (t, \alpha ) | \leq C \frac{|j-k|}{N}.
\eeq
We have the following rigidity and local law estimates.
\bel \label{lem:rig} Let $\eps >0$, $\delta >0$, $\delta_1 >0$, $D>0$ and $0 < q < 1$.  We have
\beq \label{eqn:opt}
\pp\left[ \sup_{0 \leq t \leq N^{-\delta_1 } t_0} \sup_{ i \in \Chat_q } \sup_{ 0 \leq \alpha \leq 1}  |z_i (t, \alpha ) - \gamma_i (t, \alpha ) | \geq \frac{N^{\eps}}{N} \right] \leq N^{-D}.
\eeq
We have also, 
\beq \label{eqn:semi}
\pp \left[ \sup_{ N^{\delta}/N \leq \eta \leq 10}  \sup_{ 0 \leq t \leq N^{-\delta_1 }t_0} \sup_{0 \leq \alpha \leq 1 } \sup_{ E \in q \Galp }  | m_N (z, t, \alpha) - m (z, t, \alpha ) | \geq \frac{N^\eps}{ N \eta} \right] \leq N^{-D}.
\eeq
\eel
We defer the proof to Appendix \ref{a:free}.

\subsubsection{Reformulation of homogenization} \label{sec:homogref}

We have proven already a few estimates about the processes $\{ z_i (t, \alpha ) \}_i$ introduced above.  In order to clarify what properties of the processes we are using we are going to reformulate the relevent estimates as \emph{hypotheses} and use them to prove a theorem.  This theorem will then be used to prove Theorem \ref{thm:mainhomog}.   Lemmas \ref{lem:interprho} and \ref{lem:rig} imply that the hypotheses \ref{item:zgoe}-\ref{item:supz} are satisfied by the processes $\{ z_i (t, \alpha)\}_i$ considered above.

The point of this is to give a black box result which can be applied to other DBM processes.  Essentially the two key inputs are the rigidity estimates and the regularity of the free convolution measure that the particles are being compared to.  Here, we have proven the rigidity estimates using the fact that the process comes from a matrix model.  In other settings, e.g., non-classical $\beta$, such methods may not be available and the rigidity could be proven by other means.  The regularity of the measure $\rho_{\fc, t_0}$  describing the empirical eigenvalue distribution of $x_i (t_0)$ comes from the fact that we were able to run the DBM for a regularization period before running the coupling; however, the initial data under consideration in different contexts may obey this regularity condition due to other reasons.  In such cases, the following reformulation will apply.



Our starting point is that there are two processes $z_i (t, 0)$ and $z_i (t, 1)$ which satisfy \eqref{eqn:dzi} (for $\alpha=0, 1$, of course) with initial data which we denote by $z_i (0, 0)$ and $z_i (0, 1)$.  Given these two processes, we then construct the interpolating processes $z_i (t, \alpha)$ by \eqref{eqn:dzi}, except that the initial data is $z_i (0, \alpha ) = \alpha z_i (0, 1) + (1-\alpha ) z_i (0, 0)$.  
We assume the following on the processes $z_i (t, \alpha)$.  
\benr
\item \label{item:zgoe}$z_i (0, 0)$ is a GOE ensemble.
\item \label{item:rho} There is a law $\rho(E)$ with Stieltjes transform $m(z)$ and parameters $t_0 = N^{\om_0}/N$ and $\G$ with $t_0 \leq N^{-\sigma} \G^2$ so that the following hold.
\beq
c \leq \rho (E) \leq C, \qquad \left| \del_z^k m (z) \right| \leq C/ (t_0)^k, \qquad |E| \leq \G, \quad k=1, 2.
\eeq
The classical particle locations of $\rho (E)$ satisfy $\gamma_0  = 0$ and $\rho (0) = \rhosc (0)$.  
\item \label{item:estsref} 
We need also measures that give the empirical particle density of the $z_i (t, \alpha)$.  For this, we proceed the same as in Section \ref{sec:interpolation}, with $\rho (E)$ from the previous assumption taking the place of $\rho_{\fc, t_0}$.  That is, we fix a $0 < q^* < 1$, and first construct $\nu ( dE, \alpha)$ as a density on $[- q^* \G, q^* \G]$, add to it some delta functions and then take the free convolution.  We denote the resulting measure by $\rho (E,t, \alpha ) d E$.  We then use much of the same notation as introduced in Section \ref{sec:interpolation}; we have the classical particle locations $\gamma_i (t, \alpha)$, Stieltjes transform $m(z, t, \alpha)$ and index sets $\Chat_q$ and energy windows $\Galp$.

The assumption is then the following.  We assume the following rigidity and local laws,
\beq \label{eqn:rigref}
\pp \left[ \sup_{0 \leq t \leq N^{-\delta_1} t_0 } \sup_{ i \in \Chat_q} \sup_{ 0 \leq \alpha \leq 1 } | z_i (t, \alpha ) - \gamma_i (t, \alpha ) | \geq \frac{N^\eps}{N} \right] \leq C N^{-D}
\eeq
and
\beq \label{eqn:lawref}
\pp \left[ \sup_{ 0 \leq t \leq N^{-\delta_1} t_0 } \sup_{ N^{\delta-1} \leq \eta \leq 10} \sup_{ 0 \leq \alpha \leq 1 }  \sup_{ E \in q \Galp } | m_N (z, t, \alpha ) - m (z, t, \alpha ) | \geq \frac{ N^\eps}{ N \eta } \right] \leq C N^{-D}
\eeq
for any $\delta, \eps, \delta_1, D>0$ and $0 < q < 1$.
\item \label{item:supz} There is a $C>0$ so that for any $D >0$, 
\beq \label{eqn:supzref}
\pp \left[ \sup_{ 0 \leq t \leq t_0 }  \sup_i | z_i (t, 1) | \geq N^C \right] \leq N^{-D}.
\eeq
for large enough $N$.
\eenr

Since the only properties of $\rho_{\fc, t_0}$ used in the proofs of Lemma \ref{lem:interprho} and  \eqref{eqn:newgaps} are those assumed in \ref{item:rho}, the estimates hold for new measures $\rho (E, t, \alpha ) d E$ and their quantiles as introduced in \ref{item:estsref}.  We record this in the following lemma.
\bel \label{lem:rev1}
Under the above set-up and assuming \ref{item:zgoe}-\ref{item:supz}, the estimates of Lemma \ref{lem:interprho} and the estimate \eqref{eqn:newgaps} 
 hold for the measures $\rho (E, t, \alpha)$ and the classical particle locations $\gamma_i (t, \alpha)$. 
\eel

 Under the above assumptions we will prove the following, from which Theorem \ref{thm:mainhomog} will be deduced.

\bet \label{thm:xyref} Let $z_i (t, 0)$ and $z_i (t, 1)$ be defined as at the start of Section \ref{sec:homogref}. 
Suppose that assumptions \ref{item:zgoe}-\ref{item:supz} hold.  Let $0 < \eps_b < \eps_a$.  Let $t_2 := t_1 N^{-\eps_2}$ with $\om_1 - \eps_1 >0$.  Let $\eps >0$.   There is a parameter $\ell = N^{\om_\ell}$ so that the following holds (see \eqref{eqn:ccdef} below for its definition). There is an event with overwhelming probability on which the following holds.  For any $|i| \leq N t_1 N^{\eps_b}$ and $|u| \leq t_2$ we have
\begin{align}
&z_i (t_1 + u, 1 ) - z_i (t_1 + u, 0)= \left( \gamma_0 (t_1 + u, 1) - \gamma_0 (t_1 +u, 0) \right) \notag\\
+& \sum_{ |j| \leq N t_1 N^{\eps_a} } \zeta \left( \frac{i-j}{N}, t_1 \right) ( z_j (0, 1) - z_j (0, 0) ) \notag\\
+ &\frac{N^\eps}{N} \O \left( N^{\om_1} \left( \frac{ N^{\om_A}}{N^{\om_0}} + \frac{1}{ N^{\om_\ell}} + \frac{1}{ \sqrt{ N \G } } \right) + \frac{1}{ N^{\eps_a}}+ N^{\eps_2+\eps_a} \left( \frac{ ( N t_1 )^2}{ \ell^2} + \frac{1}{ ( N t_1 )^{1/10} } \right) + N^{\eps_a-\eps_2/2 } \right)
\end{align}
\eet
\remark 
In the set-up considered here, there is no assumption about whether or not the initial data $z_i (0, 1)$ satisfy the local law with respect to $\rho (E)$.  In the case that this is true, the analog of \eqref{eqn:newq1} holds.  That is, consider the free convolution of $\rho (E)$ with the semicircle distribution at time $t$ and its quantiles $\gamma_i (t).$  Note that this is in general different than $\rho (E, t, 1)$ and $\gamma_i (t, 1)$.  Under the assumption that the local law holds for $z_i (0, 1)$ with respect to $\rho (E)$ and the regularity assumption \ref{item:rho} we see that the estimate \eqref{eqn:newq1} for the new quantities $\gamma_0 (t, 1)$ and $\gamma_0 (t)$.  On the other hand, the estimate \eqref{eqn:newq2} holds for the new quantity $\gamma_0 (t, 0)$ as $z_i (0, 0)$ obey a local law with respect to the semicircle distribution.  Hence, the classical particle locations appearing above can be replaced by deterministic counterparts which do not depend on the realization of the initial data $z_i (0, 1)$, under the assumption of a local law for the initial data $z_i (0, 1)$. \qed


\subsection{Short-range DBM} \label{sec:shortrange}
The centered $\tilz_i$'s are given by
\beq
\tilz_i (t, \alpha ) := z_i (t, \alpha ) - \gamma_0 (t, \alpha ).
\eeq
We also define the classical locations of the centered $\tilz_i (t, \alpha)$ by
\beq
\tilg_i (t, \alpha ) = \gamma_i (t, \alpha)- \gamma_0 (t, \alpha ).
\eeq
Note that $\gamma_i (t, \alpha )$ satisfies (see \cite{Kevin2})
\beq
\del_t \gamma_i (t, \alpha ) = - \Re [ m ( \gamma_i (t, \alpha ), t, \alpha ) ] 
\eeq
and so for $i \in \Chat_q$ we have
\beq
| \del_t \gamma_i (t, \alpha ) | \leq C \log (N)
\eeq
by Lemma \ref{lem:rev1}.  The $\tilz_i (t, \alpha)$ satisfy the equations
\beq \label{eqn:tilz}
\d \tilz_i (t, \alpha ) = \frac{\d B_i } { \sqrt{N}} + \left( \frac{1}{N} \sum_j \frac{1}{ \tilz_i (t, \alpha ) - \tilz_j (t, \alpha ) }  + \Re [ m (\gamma_0 (t, \alpha ) ,t, \alpha ) ]
 \right) \d t
\eeq
We introduce the following cut-off dynamics for the $\{z_i\}_i$.  Its definition will use the parameters
\beq \label{eqn:ccdef}
\om_\ell >0, \qquad \om_A > 0, \qquad 0 < q_* <  1.
\eeq
Before defining the short-range approximation we outline the role of each of these parameters in the definition.  The parameter $\ell = N^{\om_\ell}$ is the most fundamental.  It is the ``range'' of the short-range approximation.  In our short-range dynamics we allow particles $i$ and $j$ to interact iff $|i-j| \leq \ell$.  In order for this approximation to be effective we need $\ell \gg N t_1$; that is $\ell$ must exceed the ``range'' of DBM which is $t_1$.  

For particles $i$ near $0$ we can use the rigidity estimates to replace the long-range part of the dynamics (i.e., the force coming from particles $j$ s.t.  $|i-j| > \ell$) by a deterministic quantity.  Since the free convolution law is regular the dependence of this deterministic quantity on the particle index $i$ is smooth; we can therefore replace it by something independent of the particle index $i$ if $|i| \leq N^{\om_A}$, as long as we choose $\om_A$ to be smaller than the regularity scale of the free convolution law which is governed by $t_0$.

Finally, since the rigidity estimates only hold for particles near $0$ we need to make a different cut-off for particles away from $0$; this is the role of $q_*$.  We will not make a short range cut-off for particles $i \notin \Chat_{q_*}$.

We now turn to the definition of the short-range approximation.  We first introduce some notation. Let $\om_\ell >0$ be as above. For each $0 < q < 1$, define the short range index set $\A_q$ by
\begin{align}
\A_q := \{ (i, j) : |i - j | \leq N^{\om_\ell} \} \cup \{ (i, j) : ij >0, i \notin \Chat_q, j \notin \Chat_q \}.
\end{align}

We introduce the following notation.  Let
\beq
\sum^{\mathcal{A}_q, (i)}_j := \sum_{ j : (i, j) \in \A_q }, \qquad \sum^{\mathcal{A}^c_q, (i)}_j := \sum_{j : (i, j) \notin \A_q }.
\eeq
For a fixed $i$ let $j_{\leq, i}$ be the smallest index s.t. $(i, j_{\leq, i}) \in \A_{q_*}$ and $j_{\geq, i}$ be the largest index s.t. $(i, j_{\geq, i}) \in \A_{q_*}$.  Then define the interval
\beq
\Ii (\alpha, t) = [ \gamma_{j_{\leq, i}} (\alpha, t) , \gamma_{j_{\geq, i}} (\alpha, t) ].
\eeq
The interval $\Ii ( \alpha, t)$ corresponds to the classical spatial locations of the particles $j$ that are allowed to interact with particle $i$.

Let $\om_A>0$ be as above.  We define the short-range approximation $\hatz_i (t, \alpha )$ as the solution to the following system of SDEs.  For $|i| \leq N^{\om_A}$ let
\begin{align} \label{eqn:zhat1}
\d \hatz_i (t, \alpha ) &= \sqrt{ \frac{2}{ N} } \d B_i+ \frac{1}{N} \sumAqsi_j \frac{1}{ \hatz_i (t, \alpha ) - \hatz_j (t, \alpha ) } \d t
\end{align}
and for $|i| > N^{\om_A}$ let
\begin{align} \label{eqn:zhat2}
\d \hatz_i (t, \alpha ) &= \sqrt{ \frac{2}{N} } \d B_i + \frac{1}{N} \sumAqsi_j \frac{1}{ \hatz_i (t, \alpha ) - \hatz_j (t, \alpha ) } \d t + \frac{1}{N} \sumAqsci_{ j } \frac{1}{ \tilz_i (t, \alpha ) - \tilz_j (t, \alpha ) } \d t \notag\\
&
+ \Re [ m (t, \gamma_0 (\alpha, t ), \alpha ) ] \d t 
 \d t.
\end{align}

The initial condition is $\hatz_i (0, \alpha ) = \tilz_i (0, \alpha)$.   Like the $\tilz_i (t, \alpha)$, the $\hatz_i (t, \alpha )$ retain the ordering $\hatz_i (t, \alpha ) < \hatz_{i+1} (t, \alpha)$ for all positive times.  The above parameters are chosen so that
\beq
0<\om_1 < \om_\ell < \om_A < \om_0/2.
\eeq
The following lemma shows that the $\hatz_i$'s are a good approximation for the $z_i$'s - that is $\hatz_i = \tilz_i + o (1/N)$ with overwhelming probability.
\bel \label{lem:shortrange} Let $\hatz_i (t, \alpha)$ be defined as in \eqref{eqn:zhat1}-\eqref{eqn:zhat2} and $\tilz_i (t, \alpha )$ be defined as in \eqref{eqn:tilz}.  Let $\eps >0$ and $D >0$.  Then we have
\beq
\pp \left[ \sup_{ 0 \leq t \leq t_1 } \sup_{i } \sup_{ 0 \leq \alpha \leq 1 } | \hatz_i (t, \alpha ) - \tilz_i (t, \alpha ) | \geq N^{\eps} t_1  \left( \frac{ N^{\om_A}}{N^{\om_0}} + \frac{1}{  N^{\om_\ell}} + \frac{1}{ \sqrt{ N \G }}    \right) \right] \leq N^{-D},
\eeq
for large enough $N$.

\eel
\proof Define $w_i (t, \alpha ) := \hatz_i (t, \alpha ) - \tilz_i (t, \alpha )$.  The $w_i$ satisfy the equations
\beq
\del_t w_i = \sumAqsi_j \hatB_{ij} (w_j - w_i ) + A_i
\eeq
where
\beq
\hatB_{ij} = \frac{1}{N} \frac{1}{ ( \hatz_i (t, \alpha ) - \hatz_j ( t, \alpha ) ) ( \tilz_i (t, \alpha ) - \tilz_j (t, \alpha ) ) }.
\eeq
The error term $A_i$ satisfies $A_i = 0$ for $|i| > N^{\om_A},$ and for $|i| \leq N^{\om_A}$ it is given by
\begin{align}
A_i &= \frac{1}{N} \sumAqsci_j \frac{1}{ \tilz_i (t, \alpha ) - \tilz_j (t, \alpha ) } + \Re [ m ( \gamma_0 ( t , \alpha ) , t, \alpha )]  \notag\\
&= \left( \frac{1}{N} \sumAqsci_j \frac{1}{ z_i (t, \alpha) - z_j (t, \alpha) }  - \int_{ \I^c_i (t, \alpha ) } \frac{ \rho (x, t, \alpha ) \d x }{ z_i  (t, \alpha) - x }  \right)   \notag\\
&+ \left(\int_{ \I^c_i (t, \alpha ) } \frac{ \rho (x, t, \alpha ) \d x }{ z_i (t, \alpha) - x }  - \int_{ \I^c_i (t, \alpha ) } \frac{ \rho (x, t, \alpha ) \d x }{ \gamma_i (t, \alpha) - x } \right)  \notag \\
&+ \left( \Re[ m ( \gamma_0 (t, \alpha ), t, \alpha ) ] - \Re [ m ( \gamma_i(t, \alpha ), t, \alpha ) ] \right) + \left( \int_{ \I_i } \frac{ \rho (x, t, \alpha ) \d x }{ \gamma_i (t, \alpha) - x } \right) \notag\\
&
=: E_1 + E_2 + E_3 + E_4 
\label{eqn:xi1}
\end{align}
The proof of the lemma is as follows.  Since both $\tilz_i$ and $\hatz_i$ are ordered the kernel $\hatB_{ij}$ are the coefficients of a jump process on $[[- (N-1)/2, (N-1)/2]]$.  Hence the semigroup $\UBhat$ is a contraction on every $\ell^p$ space.  Since at time $t=0$ we have $\hatz_i (0, \alpha ) = \tilz_i (0, \alpha )$, we have $w (t) = \int_0^t \UBhat (s, t) A (s) \d s$ by the Duhamel formula.  Therefore, 
\beq \label{eqn:wxiduh}
||w (t) ||_\infty \leq t \sup_{ 0 \leq s \leq t } || A (s) ||_\infty.
\eeq
The remainder of the proof consists of estimating $A_i$ using the rigidity estimates.

Let $\eps >0$.  For the remainder of the proof we work on the event that the estimates \eqref{eqn:rigref} and \eqref{eqn:lawref} of Section \ref{sec:homogref} \ref{item:estsref} hold with this $\eps$ and a $q$ satisfying $q_* < q < 1$, and a small $\delta >0$ to be determined and large $D>0$.   We take $\delta_1$ to satisfy $\om_0 - \delta_1 > \om_1$.  

 We fix $\eta >N^{2 \delta}/N$ satisfying $\eta \ll \G$.   We write the term $E_1$ as 
\begin{align}
E_1 &= \frac{1}{N} \sumAqsci_j \frac{1}{z_i (t, \alpha) - z_j (t, \alpha)  } - \int_{\Ici (t, \alpha ) } \frac{ \rho (x, t, \alpha ) \d x }{ z_i (t, \alpha) - x } \label{eqn:E1dec} \\
&= \Bigg( \frac{1}{N} \sumAqsci_{ j \in \Chat_q} \frac{1}{ z_i - z_j } - \int_{\Ihat_1 }  \frac{ \rho (x, t, \alpha ) \d x } { z_i -x } \Bigg) + \Bigg( \frac{1}{N} \sumAqsci_{ j \notin \Chat_q} \left(  \frac{1}{z_i - z_j} - \frac{1}{ z_i -z_j - \i \eta } \right)  \Bigg) \notag\\
&+ \left( \int_{ \Ihat_2} \left( \frac{1}{ z_i - x - \i \eta } - \frac{1}{z_i - x  } \right) \rho (x, t, \alpha ) \d x \right) + \left( m_N (z_i + \i \eta ) - m (z_i + \i \eta , t, \alpha ) \right) \notag\\
&+  \Bigg( \int_{ \Ihat_3 } \frac{ \rho (x, t, \alpha ) }{ z_i -x - \i \eta } - \frac{1}{N} \sum_{j \in \Chat_q} \frac{1}{ z_i - z_j + \i \eta } \Bigg) \notag\\
&=: F_1 + F_2 + F_3 + F_4 + F_5.
\end{align}
Above, the intervals $\Ihat_1, \Ihat_2$ and $\Ihat_3$ are defined as follows. They depend on $i, \alpha$ and $t$ but we suppress this for notational simplicity.   Let $a>0$ be the first index not in $\Chat_q$.  Define $\Ihat_1 :=[\gamma_{-a} (t, \alpha ), \gamma_{a} (t, \alpha ) ] \backslash \I_i (t, \alpha )$.  We define $\Ihat_3 := [ \gamma_{-a} (t, \alpha ) , \gamma_a (t, \alpha ) ]$.  Lastly, $\Ihat_2 := \Ihat_3^c$.  Before estimating each of the $F_k$ let us explain the motivation for the above decomposition.  First we remark that since $|i| \leq N^{\om_A}$ the interval $\I_i ( t, \alpha )$  has length $ | \I_i ( t, \alpha ) | \asymp \ell/N$ and is contained in $[\gamma_{-a} (t, \alpha ), \gamma_{a} (t, \alpha )]$.  Moreover, $| \gamma_{\pm a} (t, \alpha ) - \gamma_i (t, \alpha ) |\asymp \G$.   We want to use rigidity to estimate \eqref{eqn:E1dec}.  However, we only know that rigidity holds for the particles in $\Chat_q$.  Hence we break up the terms in \eqref{eqn:E1dec} into two parts.  The first is $F_1$ which is estimated using rigidity.  The remaining particles are distance at least $\G$ from $\hatz_i (t, \alpha )$ and we can use the local law \eqref{eqn:lawref} on a scale $\eta \ll \G$ to estimate this contribution.  The terms $F_2 - F_5$ just correspond to some gymnastics to rewrite these particles in a form that can be estimated by the local law \eqref{eqn:lawref}.

  The term $F_1$ is estimated using rigidity;  using \eqref{eqn:rigref} we easily see that  $|F_1 | \leq C N^{\eps}/N^{\om_\ell}.$  For $F_2$ we use the fact that the restriction $|i| \leq N^{\om_A}$ and $j \notin \Chat_q$ enforces that $|z_i -z_j | \geq c \G$.  Since $\eta \ll \G$ we may bound $F_2$ by
\beq
|F_2| \leq \eta C \sum_{ j} \frac{1}{ (z_i - z_j )^2 + c \G^2 } \leq C \frac{\eta}{ \G} \Im [ m_N ( z_i + \i c \G ) ] \leq C \frac{ \eta }{ \G}.
\eeq
By similar reasoning we get  $| F_3 | \leq C \eta/ \G$.   For $F_4$ we use the local law estimate \eqref{eqn:lawref} and get  $|F_4 | \leq N^{\eps}  / ( N \eta ) $.   Lastly for $F_5$ we use the optimal rigidity estimate \eqref{eqn:rigref} and get
\beq
|F_5 | \leq \frac{N^{\eps}}{N \eta} C \left( \Im [ m (z_i + \i \eta, t, \alpha ) ] + \Im [ m_N (z_i + \i \eta ) ] \right) \leq C\frac{N^{\eps}}{N \eta} .
\eeq
Hence,
\beq
|E_1| \leq C N^{\eps} \left( \frac{1}{N^{\om_\ell}} + \frac{ \eta }{ \G } + \frac{1}{ N \eta } \right) \leq C N^{\eps} \left( \frac{1}{N^{\om_\ell}} + \frac{1}{ \sqrt{ N \G } } \right)
\eeq
where we optimized and chose $\eta = ( \G / N )^{1/2}$ (and chose $\delta >0$ small enough to allow this choice).

We can bound
\beq
|E_2 | = \left|  \int_{ \I^c_i (t, \alpha ) } \frac{ \rho (x, t, \alpha ) \d x }{ z_i - x }  - \int_{ \I^c_i (t, \alpha ) } \frac{ \rho (x, t, \alpha ) \d x }{ \gamma_i - x }  \right| \leq C \frac{N^\eps}{N^{\om_\ell}} \Im [ m (\gamma_i + \i c N^{\om_\ell}/N , t, \alpha ) ] \leq C \frac{N^{\eps}}{N^{\om_\ell}}.
\eeq
For $E_3$ we use \eqref{eqn:mderalph} and get $|E_3| \leq C N^{\om_A} / N^{\om_0}$.   We now have to bound $E_4$.  Note that $\I_i$ is almost symmetric about $\gamma_i (t, \alpha)$.  We have
\beq
|\gamma_{i + k} - \gamma_i |= | \gamma_{i-k} - \gamma_i | \left( 1 + \O \left( \frac{k}{N t_0 } \right) \right),
\eeq
and so
\beq
|E_4| \leq \left| \int_{ \I_i } \frac{ \rho (x, t, \alpha ) - \rho (\gamma_i, t, \alpha ) }{ \gamma_i - x } \d x \right| + C \frac{ N^{\om_\ell}}{ N t_0  } \leq C \frac{ N^{\om_{\ell}}}{N t_0 }.
\eeq
This proves that with overwhelming probability we have 
\beq
|| A ||_\infty \leq C N^{\eps} \left( \frac{ N^{\om_A}}{N^{\om_0}} + \frac{1}{  N^{\om_\ell}}  + \frac{1}{ \sqrt{ N \G }}    \right).
\eeq
This yields the claim via Duhamel's formula \eqref{eqn:wxiduh}. \qed

\subsection{Derivation of parabolic equation} \label{sec:deriveequation}
Define now
\beq
u_i := \del_\alpha \hatz_i (t, \alpha ).
\eeq
The $u_i$ satisfy the equation
\beq \label{eqn:ueqn}
\del_t u_i = \sumAqsi_j B_{ij} (u_j - u_i ) + \xi_i =: - ( \B u )_i + \xi_i
\eeq
where
\beq
B_{ij} = \frac{1}{N} \frac{1}{ (\hatz_i - \hatz_j )^2}
\eeq
and $\xi_i = 0$ for $|i| \leq N^{\om_A}$ and for $|i| > N^{\om_A}$,
\beq
\xi_i = \frac{1}{N} \sumAci_j \frac{\del_\alpha \tilz_i - \del_\alpha \tilz_j}{ (\tilz_i - \tilz_j  )^2} + \del_\alpha \left( \Re [ m (t, \gamma_0 (\alpha, t), \alpha ) ] \right). 
\eeq
Moreover, the initial data is 
\beq
u_i (0, \alpha ) = \alpha z_i (0, 1)  + (1-\alpha ) z_i (0, 0) = \alpha \hatz_i (0, 1) + (1- \alpha ) \hatz_i (0, 0).
\eeq
Note that for $\alpha_1$ and $\alpha_2$, the differences $\tilde{u}_i :=\tilz_i (\alpha_1 ) - \tilz_i ( \alpha_2)$ satisfy
\beq
\del_t \tilde{u}_i = \frac{1}{N} \sum_j \frac{ \tilde{u}_j - \tilde{u}_i }{ ( \tilz_i ( \alpha_1 ) - \tilz_j (\alpha_1 )) ( \tilz_i ( \alpha_2 ) - \tilz_j (\alpha_2 )) } 
\eeq
with $\tilde{u}_i (0) = ( z_i (0, 1) - z_i (0, 0) ) ( \alpha_1 - \alpha_2)$.  Since $| z_i (0, 0) | + |z_i (0, 1)| \leq N^{C}$ with overwhelming probability for some $C>0$ by \eqref{eqn:supzref} we see that
\beq
|| \del_\alpha \tilz (\alpha, t) ||_\infty \leq N^C
\eeq
for $0 \leq t \leq 1$ with overwhelming probability.  It is not hard to see that
\beq \label{eqn:xiestimate}
| \xi_i | \leq \1_{ \{ |i| > N^{\om_A} \} } N^C
\eeq
for some $C>0$ with overwhelming probability.

The parabolic equation \eqref{eqn:ueqn} is the key starting point.  We will treat $\xi_i$ as an error term.  Since it vanishes for indices $|i| \leq N^{\om_A}$ and the operator $\B$ involves jumps only for particles distance $N^{\om_\ell} \ll N^{\om_A}$ apart we expect that $\xi_i$ will have a negligible contribution near $0$.  This is in fact true as we will see below.

\subsection{The kernel $\UB$} \label{sec:ubref}
At this point essentially the entire remainder of Section \ref{sec:homog} and all of Section \ref{sec:finitespeed} are concerned only with properties of the semigroup $\UB$ of the kernel $\B$.  The semigroup $\UB$ depends on the $\zhat_i (t, \alpha)$.  The method that we are going to present for analyzing the semigroup $\UB$ is more general and works for any semigroup whose kernel consists of random coefficients satisfying a system of SDEs with certain properties and certain a-priori bounds.

In this section we will pass to a more general set-up involving the hypotheses \ref{item:rho2}-\ref{item:fiji} below.  The set-up consists of a semigroup $\UB$ and kernel $\B$ with coefficients $B_{ij} = N^{-1} ( \hatz_i (t, \alpha ) - \hatz_i (t, \alpha ) )^{-2}$.  Here we are abusing notation slightly and re-using $\hatz_i$, $\B$, $\UB$, etc.  
  In the next few subsections and in Section \ref{sec:finitespeed} we will then use these hypotheses to derive various facts about $\UB$.  The main result about $\UB$ is Theorem \ref{thm:homogref} which will be proven at the end of Section \ref{sec:actualhomog}.   After proving Theorem \ref{thm:homogref}, we will return to the previous set-up and use Theorem \ref{thm:homogref} to prove Theorem \ref{thm:xyref} in Section \ref{sec:xyrefthmproof}.

We let $\hatz_i (t, \alpha )$ (we will leave in the $\alpha$ notation even though it is unnecessary - for the next few sections $\alpha$ should be regarded as fixed) be the solution to
\beq
\d \hatz_i (t, \alpha ) = \sqrt{ \frac{2}{N} } \d B_i + \frac{1}{N} \sumAqsi_j \frac{1}{ \hatz_i - \hatz_j } \d t + \1_{ \{ |i| \leq N^{\om_A} \} } F_i \d t+ \1_{ \{ |i| > N^{\om_A} \}} J_i \d t
\eeq
where $F_i$ and $J_i$ are adapted bounded processes.    The parameters $\om_A$, $\om_\ell$ and $q_*$ are the same as before, and $\A_{q_*}$ is defined as above.  Previously we also introduced the index $k_0$ in the definition of $\Chat_q := \{ i : |i| \leq q k_0 \}$.  Here, we take $k_0$ as a given parameter in the set-up and assume that $k_0 \asymp N \G$.   Let $\rho( E, t, \alpha ) \d E$ be measures with densities on $|E| \leq q \G$ for any $0 < q < 1$ (here, we just sent $\G \to c \G$ in order to simplify notation).  Suppose that the following hold.
\begin{enumerate}[label=(\Roman*)]
\item \label{item:rho2} We have $\rho (0, 0, \alpha ) = \rhosc( 0)$ and $\gamma_0 (0, \alpha ) = 0$ and 
\beq
c \leq \rho (E, t, \alpha ) \leq C, \quad \left| \del_E \rho (E, t, \alpha ) \right| \leq \frac{ C }{ (t_0)}, \quad \left| \del_t \rho (E, t, \alpha ) \right| \leq \frac{C}{t_0} \quad |E| \leq q\G , \quad 0 \leq t \leq 10 t_1.
\eeq
Moreover, $\G^2 \geq N^\sigma t_0 = N^{\om_0}/N$ for some $\sigma >0$ and $\om_0 >0$.  We assume that the classical particle locations $\gamma_i ( \alpha, t)$ satisfy
\beq
i \in \Chat_q \implies \gamma_i (\alpha, 0) \in [-\G q', \G q' ]
\eeq
for some $0 < q' < 1$ depending on $q$.  We also assume
\beq
| \del_t \gamma_i (t, \alpha ) | \leq C \log (N)
\eeq
for $i \in \Chat_q$.
\item \label{item:zrigref} We have the rigidity estimate
\beq \label{eqn:zrigref}
\pp \left[ \sup_{ i \in \Chat_q } \sup_{ 0 \leq t \leq 10 t_1 } | \zhat_i (t, \alpha ) - \gamma_i (t, \alpha ) | \geq \frac{ N^\eps}{N} \right] \leq N^{-D}
\eeq
for any $\eps, D>0$ and $0 < q < 1$.
\item \label{item:fiji} For the terms $F_i$ and $J_i$ we have for some fixed $C_J >0$ and $\om_F >0$ and every $0 < q < 1$,
\beq \label{eqn:Jiest2}
\pp \left[ \sup_{i \in \Chat_q}  \sup_{ 0 \leq t \leq 10 t_1} |J_i | \geq C_J \log (N)\right] \leq N^{-D}
\eeq
and
\beq \label{eqn:Jiest}
\pp \left[  \sup_i \sup_{ 0 \leq t \leq 10 t_1} |J_i | \geq N^{C_J}\right] \leq N^{-D}
\eeq
and
\beq \label{eqn:Fiest}
\pp \left[ \sup_i \sup_{ 0 \leq t \leq 10 t_1} |F_i | \geq \frac{N^\eps}{ N^{ \om_F}} \right] \leq N^{-D}
\eeq
for any $\eps, D>0$.
\end{enumerate}

\remark In our case $F_i =0$ but we have added it for the following reason.  In our set-up we have $F_i=0$ because we  differentiated the short-range approximation to arrive at the parabolic equation \eqref{eqn:ueqn}.  In other applications it is conceivable that one would like a homogenization result for the full process $\d \tilz_i$.  This is covered by the above set-up by rigidity - in this case $\om_F = \om_\ell$ (i.e., the long-range $z_i - z_j$ terms cancel with $\del_t \gamma = - \Re [ m ( \gamma ) ]$).  Finally, while the $\om_\ell$ appearing in the definition of the $\zhat_i$ and $\B$ are the same, this is not crucial as extra terms can just be absorbed into the $F_i$ term using rigidity and the smoothness of the density $\rho (E, t, \alpha )$.

With $\zhat_i$ satisfying \ref{item:rho2}-\ref{item:fiji} we will consider the operator
\beq
( \B u )_i := \sumAqsi_j B_{ij} (u_i - u_j )
\eeq
with semigroup $\UB$.  Before we write down the main result about the semigroup $\UB$ we record some estimates on it.  First, we have the following finite speed of propogation estimate.
\bel \label{lem:fs1}
Let $0 \leq s \leq t\leq t_1$.  Let $q_*$ and $\om_\ell$ be in the definition of the short-range set $\A_{q_*}$ for $\B$.  Suppose that \ref{item:rho2}-\ref{item:fiji} hold.  Let $0 < q_1 < q_2 < q_*$ be given.  Let $D >0$ and $\eps >0$.   For each $\alpha$ there is an event $\F_\alpha$ with probability $\pp [ \F_\alpha ] \geq 1 - N^{-D}$ on which the following estimates hold. If $i \in \Chat_{q_2}$ and $0 \leq s \leq t \leq 10 t_1$, then
\beq \label{eqn:fscl}
| \UB_{ji} (s, t, \alpha) | \leq \frac{1}{N^D}, \quad |i-j| > N^{\om_{\ell} + \eps}.
\eeq
If $i \notin \Chat_{q_2}$ and $j \in \Chat_{q_1}$ and $0 \leq s \leq t \leq 10 t_1$ then
\beq \label{eqn:fsfar}
| \UB_{ji} (s, t, \alpha) | \leq \frac{1}{N^D}.
\eeq
\eel
Lemma \ref{lem:fs1} is an immediate consequence of Theorem \ref{thm:finitespeed}.  Similar estimates appeared earlier in \cite{que} and our proof follows closely the one appearing there.

 Lemma \ref{lem:fs1} contains two estimates.    The first \eqref{eqn:fscl} is almost-optimal in that the kernel decays quickly when $|i-j| \gtrsim \ell$, where $\ell$ is the range of the jump kernel. Its proof requires the optimal rigidity estimate.  We also need the second estimate \eqref{eqn:fsfar} which is weaker but holds for particles $i$ for which rigidity does not hold.

We also have the following estimate for the kernel $\UB$ which says that for the purposes of upper bounds we can think of $\UB \sim t/ (x^2 + t^2 )$.
\bel \label{lem:prof1} Let $q_*$ be from the definition of $\A_{q_*}$ in the definition of $\B$.  Suppose that \ref{item:rho2}-\ref{item:fiji} hold.  Let $0 < q_1 < q_*$, $D>0$ and $\eps >0$.  For each $\alpha$ there is an event with $\pp [ \F_\alpha ] \geq 1 - N^{-D}$ on which the following estimates hold.  For $i, j \in \Chat_{q_1}$ and $0 \leq s \leq t \leq 10 t_1$ we have
\beq\label{eqn:decay}
\left| \UB_{ij} (s, t) \right| \leq \frac{N^\eps}{N} \frac{|t-s|\vee N^{-1}}{ (( i-j)/N)^2 +( |t-s|\vee N^{-1})^2 }
\eeq
\eel
\remark Note that the above estimate will not hold for $\UB$ if $(s-t) \gg N^{\om_\ell}/N$.

The proof of the above estimate is deferred to the next section and is stated there as Theorem \ref{thm:sec4prof}.  Roughly, the proof consists of the following steps.  First we derive the general estimate 
\beq
|\UB_{ij} (0, t) | \leq \frac{C}{Nt}
\eeq
using the Nash method.  This argument is similar to that in \cite{Gap} - it is slightly different as we only have a short range operator $\B$ living on the scale $N^{\om_\ell}$, but this does not affect things as long as $t \ll N^{\om_\ell}/N$.  Then we decompose
\beq
\B = \S + \R
\eeq
where $\S$ is a short-range operator on the scale $\ell_2 \sim Nt$.  (Although $\B$ is already a short-range operator, we are interested in time scales $Nt \ll \ell$, where $\ell$ is the scale that $\B$ lives on - hence we must make a further long range/short range decomposition of $\B$).  We then prove finite speed estimates for $\S$ and use this together with a Duhamel expansion to derive the estimate.

\subsection{Homogenization of $\UB$} \label{sec:actualhomog}
In this section we will prove that $\UB$ is given by a deterministic quantity, plus random corrections of lower order.  This is the main calculation of Section \ref{sec:homog}.
Fix $\eps_B >0$ s.t.
\beq
\om_A - \eps_B > \om_\ell,
\eeq
 and let 
\beq
|a| \leq N^{\om_A-\eps_B}.
\eeq 
 We consider a solution $w$ of the equation
\beq \label{eqn:wdef}
\del_t w_i = - (\B w )_i, \qquad w_i (0) = N \delta_a (i).
\eeq
Let $\mu$ be the counting measure on $[[- (N-1)/2, (N-1)/2]]$ normalized to have mass $1$.    We introduce the $\ell^p$-norms
\beq
|| u||^p_p := \int |u_i |^p \d \mu (i), \qquad ||u||_\infty = \sup_i |u_i|.
\eeq

The particle density is smooth on the scale $t_0$.  Our operator $\B$ instead lives on the scale $N^{\om_\ell}/N \ll t_0$, and we are working with times $t \ll \ell/N \ll t_0$, and so our solutions $w_i$ will never see the density fluctuations.  Hence it makes sense to compare $w$ with the solution (on $\rr$) of
\beq \label{eqn: Ketadef}
\del_t f (x) = \int_{|x-y| \leq \eta_\ell} \frac{ f(y) - f(x) }{ (x-y)^2 } \rho_{\sc} (0) \d y
\eeq
where
\beq
\eta_\ell := \frac{N^{\om_\ell}}{N \rho_{\sc} (0) }.
\eeq
Let $p_t (x, y)$ be the kernel of the above equation.  We define the ``flat'' classical eigenvalue/particle locations by
\beq
\gf_j := \frac{ j}{ N \rho_{\sc} (0) }.
\eeq
Note that for $|j| \leq N^{\om_0/2}$ we have
\beq
| \gf_j - \tilg_j (t, \alpha ) | \leq \frac{ C}{N}.
\eeq
The main result of this section is the following.
\bet \label{thm:homogref}
Suppose that \ref{item:rho2}-\ref{item:fiji} of Section \ref{sec:ubref} hold.  Fix an index $|a| \leq N^{\om_A - \eps_B }$.  Let $i$ satisfy $|i-a| \leq \ell/10$.  Let $t_1$ be as above and let
\beq
t_2 := N^{-\eps_2} t_1
\eeq
for $\om_1 - \eps_2 >0$.  Let $\eps >0$ and $D>0$.  There is an event $\F_\alpha$ with $\pp [ \F_\alpha] \geq 1 - N^{-D}$ on which the following estimate holds.  For every $u$ with $|u| \leq t_2$ we have
\beq
\left| \UB_{ia} (0, t_1 + u ) - \frac{1}{N} p_{t_1} ( \gf_i, \gf_a ) \right| \leq N^\eps \frac{N^{\eps_2}}{ N t_1 } \left\{ \frac{ ( N t_1)^2}{ \ell^2} + \frac{1}{ ( N t_1 )^{1/10}} + \frac{ 1}{ N^{\om_F/3}} \right\} + N^\eps \frac{ N^{-\eps_2/2}}{ N t_1 }.
\eeq
\eet

In the remainder of Section \ref{sec:actualhomog} we will work under the assumption that \ref{item:rho2}-\ref{item:fiji} hold.   The proof of the following lemma is deferred to Section \ref{sec:hydro}.
\bel \label{lem: Ketalemma} Let $\eps_1 >0$ and $D_1>0$.  
We have for $N^{-D_1} \leq t \leq N^{-\eps_1} \eta_\ell$,
\beq \label{eqn: Keta-shortrange}
p_t (x, y) \leq C \frac{t}{ (x-y)^2 + t^2 }.
\eeq
For any $\eps_2 >0$ if $|x-y| > N^{\eps_2} \eta_\ell$ and $N^{-D_1} \leq t \leq N^{-\eps_1} \eta_\ell$,
\beq \label{eqn: Keta-expdecay}
p_t (x, y) \leq \frac{1}{N^{D_2}}
\eeq
for any $D_2>0$.  

For spatial derivatives we have, for $ N^{-D_1} \leq t \leq N^{-\eps_1} \eta_\ell$,
\beq
p^{(k)}_t (x, y) \leq \frac{C}{ t^k} p_t (x, y) + \frac{1}{N^{D_2}}, \label{eqn: Kderivbd}
\eeq
and
\beq
p^{(k)}_t (x, y) \leq \frac{1}{N^D}
\eeq
for any $D_2$ if $|x-y| > N^{\eps_2} \eta_\ell$.

For the time derivative we have for $ N^{-D_1} \leq t \leq N^{-\eps_1} \eta_\ell$,
\beq
| \del_t p_t (x, y) | \leq \frac{C}{ x^2 + y^2} + N^{-D_2}.
\eeq
\eel
\remark The short time cut-off $t \geq N^{-D_1}$ is technical.  In our application we will only take $p_t$ with $t \geq N^{-1}$.

We need to introduce two auxilliary scales $s_0$ and $s_1$.  They will satisfy
\beq
N^{-1} \ll s_0 \ll s_1 \ll t_1 \ll t_0.
\eeq

Define now
\beq \label{eqn:fdef}
f (x, t) = \sum_j \frac{1}{N} p_{s_0+t-s_1} (x, \gf_j )  w_j (s_1 ),
\eeq
and
\beq
f_i (t) := f ( \hatz_i (t, \alpha ), t).
\eeq
We are going to compare $w_i (t)$ to $f_i (t)$.  A more natural choice would perhaps be $f_i (t) = p_t ( \hatz_i, \gf_a)$.  We explain here the motivation for the above choice of $f_i$ and the introduction of the scales $s_1$ and $s_0$.   Our method relies on differentiating the $\ell^2$ norm of the difference $w_i - f_i$, and then integrating it back.  We therefore require an estimate on the  $\ell^2$ norm  of the difference at the beginning endpoint of the time interval over which the integration occurs.  One choice could be $t=0$ for the start point of this interval.  However, $w_i$ is quite singular at this point so we allow it to evolve for a short time $s_1$ before comparing $f_i$ to $w_i$.  At this point one might want to take $f (t) = p_{t-s_1} \star w (s_1)$.  However at $t=s_1$, the kernel $p_{t-s_1}$ is a $\delta$-function and so this convolution operation does not make sense.  We therefore introduce the regularizing scale $s_0$.  By the standard energy estimate $w (s_1)$ has some smoothness on the scale $s_1$.  This allows us to control the $\ell^2$ distance between $w(s_1)$ and its convolution with the approximate $\delta$-function $p_{s_0}$.

An additional technical complication is that the standard energy estimate involves a time average and so we will have to average the startpoint $s_1$ over the interval $[s_1, 2 s_1 ]$.

We have the normalization condition 
\beq \label{eqn:ptnormfull}
\sum_j \frac{1}{N} p_t (\hatz_i, \gf_j) = 1 + \O ( (Nt)^{-1} )
\eeq
and also for $\ell_1 \gg Nt$,
\beq \label{eqn:ptnorm}
\sum_{|i-j| \leq \ell_1} \frac{1}{N} p_t (\hatz_i, \gf_j ) = 1 + \O  ( (Nt)^{-1} ) + \O ( (Nt)/\ell_1 )
\eeq
and
\beq \label{eqn:ptl1dec}
\sum_{ |i-j| > \ell_1 } \frac{1}{N} p_t (\hatz_i, \gf_j) \leq C \frac{Nt}{\ell_1}
\eeq
The following lemma provides an estimate on the $\ell^2$ norm of the difference $w-f$.  The error is in terms of the scales $s_0, \ell$ and $s_1$ as well as a quantity which can be controlled via the standard energy estimate for $w$.
\bel \label{lem:initell2} Let $w$ be as in \eqref{eqn:wdef} and $f$ as in \eqref{eqn:fdef}.  For any $\eps_1 >0$ and $\eps_2 >0$ and $D>0$ there is for each $\alpha$ an event $\F_\alpha$ with $\pp [ \F_\alpha ] \geq 1 - N^{-D}$ on which the following holds,
\begin{align}
& ||w (s_1) - f (s_1) ||_2^2 \notag\\
\leq &s_0C  \sum_{|i|\leq N^{\om_A-\eps_B}+N^{\om_\ell+\eps_2}} \sum_{|i-j| \leq \ell} \frac{ (w_i (s_1) - w_j (s_1 ))^2}{ (i-j)^2} + N^{\eps_1} \left( \frac{1}{ (N s_0)^2} + \frac{ (N s_0)^2}{\ell^2} \right) \frac{1}{s_1}.
\end{align}
\eel
\proof For notational simplicity let
\beq
\sumit :=  \sum_{|i|\leq N^{\om_A-\eps_B}+N^{\om_\ell+\eps_1}}.
\eeq
We also drop the argument $s_1$ and write $w_i = w_i (s_1)$, $f_i = f_i (s_1)$.  With overwhelming probability we have,
\begin{align}
\frac{1}{N} \sum_i (w_i -f_i )^2 &= \frac{1}{N}  \sumit \left( w_i - \sum_j \frac{p_{s_0} (\hatz_i, \gf_j) }{N}w_j \right)^2 + \O ( N^{-D} ) \notag \\
&\leq \frac{C}{N} \sumit \left( \sum_{|j-i| \leq \ell }\frac{ p_{s_0} (\hatz_i, \gf_j ) }{N}(w_i - w_j ) \right)^2 + C \left( \frac{1}{ (N s_0)^2 } + \frac{(N s_0)^2}{\ell^2} \right) ||w||_2^2  \label{eqn:ffref1}
\end{align}
In the first line we used the decay estimates from Lemmas \ref{lem:fs1} and  \ref{lem: Ketalemma} to change the sum from $\sum$ to $\sum'$.   In the  inequality we used the normalization condition \eqref{eqn:ptnorm}, as well as \eqref{eqn:ptl1dec} which together with Young's inequality shows that
\beq
\frac{1}{N} \sum_i \left( \sum_{|j-i|>\ell} \frac{1}{N} p_t (\hatz_i, \gf_j) w_j \right)^2 \leq C \frac{ (Nt)^2}{\ell^2} ||w||_2^2.
\eeq
We then bound the first term in \eqref{eqn:ffref1} by
\beq
\frac{1}{N}  \sumit \left( \sum_{|j-i| \leq \ell }\frac{ p_{s_0} (\hatz_i, \gf_j ) }{N}(w_i - w_j ) \right)^2 \leq \frac{1}{N} \sumit\left( \sum_{|j-i| \leq \ell }\frac{ p^2_{s_0} (\hatz_i, \gf_j )|i-j|^2}{N^2}\right) \left( \sum_{|j-i|\leq \ell} \frac{ (w_i - w_j )^2}{|i-j|^2} \right).
\eeq
We have
\begin{align}
 \sum_{j : |j-i| \leq \ell }\frac{ p^2_{s_0} (x_i, \gf_j )|i-j|^2}{N^2} \leq C N \int \frac{ (s_0)^2 x^2}{ (x^2 + (s_0)^2 )^2} \d x \leq C N s_0.
\end{align}
Lastly we can estimate the $\ell^2$ norm of $w$ using Lemma \ref{lem:prof1} by
\beq
||w||_2^2 \leq \frac{N^{\eps_1}}{N} \sum_i \frac{(s_1)^2}{ ((i-a)/N)^2+(s_1)^2)^2} \leq \frac{N^{\eps_1}}{s_1}.
\eeq
These inequalities yield the claim. 
\qed

The main calculation of the homogenization theorem is the following.  We use the Ito lemma to differentiate $||w-f||_2^2$.  Roughly what we find is that
\beq
\d ||w-f||_2^2= - ||w-f||^2_{\dot{H}^{1/2}}  \d t+ \mbox{lower order}
\eeq
where the lower order terms contains a martingale term as well as other errors$^1$\footnote{$^1$  A similar idea was independently discovered in a forthcoming work by Jun Yin and Antti Knowles \cite{kyprep}.}. 
Integrating this back gives us control over the homogeneous $\dot{H}^{1/2}$ norm of $w-f$.

\bel  \label{lem:maincalc} 
Let $w$ be as in \eqref{eqn:wdef} and $f$ as in \eqref{eqn:fdef} with parameters $s_1$ and $s_0$.  For $t \geq s_1$ we can write the Ito differential of $||w (t) - f(t) ||_2^2$ in the form
\beq \label{eqn:dl2}
\d \frac{1}{N} \sum_i (w_i - f_i)^2 = -  \langle w (t)-f (t), \B (w (t)-f (t)) \rangle \d t +  X_t \d t + \d M_t
\eeq
where $M_t$ is a martingale and $X_t$ is a process implicitly defined by the above equality.  We have the following estimates for $X_t$ and $M_t$.
Let $\eps >0$ and $ D>0$ be given.   For each $\alpha$ there is an event $\F_\alpha$ with $\pp [ \F_\alpha ] \geq 1 - N^{-D}$ on which the following estimates hold. For $s_1 \leq t \leq 9 t_1$ we have for $X_t$
\begin{align} \label{eqn:xtest}
|X_t|& \leq \frac{1}{5} \langle w-f, \B (w-f) \rangle \notag\\
&+  \frac{C}{ t+s_1} \frac{N^\eps}{ (t-s_1 +s_0 ) } \bigg\{  \frac{1}{ \sqrt{ N (t-s_1+s_0 ) } } + \frac{1}{ N^{\om_F}} \bigg\}.
\end{align}
 For any $u_1$ and $u_2$ with  $ 9 t_1 > u_2 > u_1 \geq s_1$ we have
\beq \label{eqn:martest}
\left| \int_{u_1}^{u_2} \d M_t \right| \leq  \frac{N^\eps}{N} \frac{1}{ (u_1+s_1)^{3/2} } \frac{1}{ (u_1-s_1+s_0)^{1/2} }.
\eeq
\eel
\proof The estimates
\beq \label{eqn:fders}
f'(t, \hatz_i ) \leq \frac{C}{ (t-s_1+s_0 ) }f(t, \hatz_i) + N^{-D}\qquad f''(t, \hatz_i ) \leq \frac{C}{(t-s_1+s_0)^2} f(t, \hatz_i) + N^{-D}
\eeq
and
\beq
f(t, \hatz_i) \leq \frac{C}{ (t-s_1+s_0)}
\eeq
are immediate corollaries of Lemma \ref{lem: Ketalemma}.  Since $||w (s_1) ||_\infty \leq C (s_1)^{-1}$ we   obtain
\beq \label{eqn:finf}
f(t, \hatz_i ) \leq \frac{C}{ t + s_1}
\eeq
In the calculations below we implicitly use that for any $\eps_1 >0$,
\beq \label{eqn:fidecay}
|f_i| \leq \frac{1}{N^D}  , \qquad w_i \leq \frac{1}{N^D}, \qquad \mbox{if } |i| \geq |a| + N^{\om_{\ell} + \eps_1}
\eeq
with overwhelming probability by the finite speed estimates of Lemma \ref{lem:fs1} above.  For example by our assumption on $a$ this holds for 
\beq
|i| >  N^{\om_A-\eps_B} + N^{\om_\ell+\eps_1}, \quad \mbox{or} \quad |i| > N^{\om_A}.
\eeq
  It will be convenient to use the notation  $f_i^{(k)} := f^{(k)} (\hatz_i ).$  It is clear that similar estimates to \eqref{eqn:fidecay} hold for the derivatives $f'_i$ and $f''_i$ and $(\del_t f)_i$.

We calculate by the Ito formula,
\begin{align}
&d \frac{1}{N} \sum_i (w_i - f_i )^2\notag\\
 = &\frac{2}{N} \sum_i (w_i - f_i) \left[ \del_t w_i \d t - (\del_t f) (t, \hatz_i ) \d t - f'(t, \hatz_i ) \d \hatz_i -f''(t, \hatz_i ) \frac{\d t}{2 N} \right] + (f' (t, z_i ) )^2 \frac{\d t}{ 2 N}.
\end{align}

Let us start with the Ito terms.  Using \eqref{eqn:fders} and \eqref{eqn:finf} we obtain
\beq \label{eqn:ito1}
\left| \frac{1}{N^2} \sum_i (f'_i)^2 \right| \leq \frac{C}{N (t+s_0-s_1)^2} \frac{1}{t+s_1} \frac{1}{N} \sum_i f_i \leq \frac{C}{N (t+s_0-s_1)^2} \frac{1}{t+s_1}
\eeq
and similarly,
\begin{align} \label{eqn:ito2}
\Big | \frac{1}{N^2} \sum_i (w_i - f_i  ) f''_i \Big | 
\leq \frac{C}{N (t+s_0-s_1)^2} \frac{1}{t+s_1} \frac{1}{N}\sum_i w_i + f_i  
\leq \frac{C}{N (t+s_0-s_1)^2} \frac{1}{t+s_1} .
\end{align}

We write
\begin{align}
& \frac{1}{N} \sum_i (w_i - f_i ) (\del_t w_i - (\del_t f)_i )= \frac{1}{N} \sum_i (w_i - f_i ) \Big ( \frac{1}{N}\sumAqsi_j \frac{w_j-w_i}{(\hatz_i - \hatz_j )^2} -\frac{1}{N} \sumAqsi_j \frac{ f_j - f_i}{(\hatz_i -\hatz_j)^2} \Big ) \label{eqn:timedermain}\\
&+\frac{1}{N} \sum_i (w_i - f_i ) \Bigg( \frac{1}{N} \sumAqsi_j \frac{f_j -f_i}{ (\hatz_i - \hatz_j )^2} - \int_{|\hatz_i - y | \leq \eta_\ell } \frac{ f (y) - f (\hatz_i )}{ (\hatz_i -y)^2} \rho_{\sc} (0) \d y \Bigg) \label{eqn:timederer}
\end{align}
The term \eqref{eqn:timedermain} equals
\beq
 \frac{1}{N} \sum_i (w_i - f_i ) \Bigg( \frac{1}{N}\sumAqsi_j \frac{w_j-w_i}{(\hatz_i - \hatz_j )^2} -\frac{1}{N} \sumAqsi_j \frac{ f_j - f_i}{(\hatz_i -\hatz_j)^2} \Bigg) = - \frac{1}{2} \langle w-f, \B (w-f) \rangle. \label{eqn:timederminequals}
\eeq
Note that this term is negative and is the first term appearing on the RHS of \eqref{eqn:dl2}.  It will be used to account for terms on which we cannot use rigidity.   
For $0< \om_{\ell, 2} < \om_\ell$ define
\begin{align}
\A_2 := \{ (i, j) : |i - j | \leq N^{\om_{\ell, 2}} \} \cup \{ (i, j) : ij >0, i, j \notin \Chat_{q_*} \}.
\end{align}
We write the term \eqref{eqn:timederer} as
\begin{align}
\frac{1}{N} &\sum_i (w_i - f_i ) \left( \frac{1}{N} \sumAqsi_j \frac{f_j -f_i}{ (\hatz_i - \hatz_j )^2} - \int_{| \hatz_i - y | \leq \eta_\ell} \frac{ f (y) - f (\hatz_i )}{ (\hatz_i -y)^2} \rhosc (0) \d y \right) \notag\\
&= \frac{1}{N} \sum_i (w_i - f_i ) \left(\frac{1}{N} \sumAti_j \frac{f_j -f_i}{ (\hatz_i - \hatz_j )^2} \right) \label{eqn:timederer1} \\
&+\frac{1}{N} \sum_i (w_i - f_i ) \left( \frac{1}{N} \sumsthi_j \frac{f_j -f_i}{ (\hatz_i - \hatz_j )^2} - \int_{ |\hatz_i - y | \leq \eta_\ell} \frac{ f (y) - f (\hatz_i )}{ (\hatz_i -y)^2} \rhosc(0) \d y \right) \label{eqn:timederer4}
\end{align}
We first deal with \eqref{eqn:timederer1}.  We will later use rigidity to deal with \eqref{eqn:timederer4}. 
Write
\beq
v_i := w_i - f_i.
\eeq
Using a second order Taylor expansion for $f_i$ we have for $|i| \leq N^{\om_A}$,
\beq \label{eqn:aa1}
\frac{1}{N} \sumAti_j \frac{f_j - f_i }{ (\hatz_j - \hatz_i )^2} = \frac{1}{N} \sumAti_j \frac{ f_i'}{ \hatz_j - \hatz_i } + \O \left( \frac{ N^{\om_{\ell,2}}}{ N (t-s_1+s_0)^2 } \frac{1}{ (t+s_1)} \right) .
\eeq
To estimate the remainder term we used the fact that $||f''||_\infty \leq C (t-s_1+s_0)^{-2} (t+s_1)^{-1}$ as well as the fact that since $ |i| \leq N^{\om_A}$, the cardinality of $\{j : (j, i )\in\A_2 \}$ is less than $C N^{\om_{\ell,2}}$.  For $|i| > N^{\om_A}$ we just use
\beq \label{eqn:aa2}
\frac{1}{N} \sumAti_j \frac{f_j - f_i }{ (\hatz_j - \hatz_i )^2} = \frac{1}{N} \sumAti_j \frac{ f_i'}{ \hatz_j - \hatz_i } + \O ( N^8 ).
\eeq
Using \eqref{eqn:aa1} and \eqref{eqn:aa2} and the estimate \eqref{eqn:fidecay} we can write the term \eqref{eqn:timederer1} as
%
\begin{align}
\frac{1}{N} \sum_i (w_i - f_i ) \left(\frac{1}{N} \sumAti_j \frac{f_j -f_i}{ (\hatz_i - \hatz_j )^2} \right) &=\frac{1}{N^2} \sum_{i} v_i \sumAti_j \frac{f'_i}{ \hatz_i -\hatz_j } \label{eqn:timederer2} + \O \left( \frac{ N^{\om_{\ell,2}}}{ N (t-s_1+s_0)^2 } \frac{1}{ t+s_1} \right) .
\end{align}
We then write the first term on the RHS of \eqref{eqn:timederer2} as
\beq \label{eqn:timederer3}
\frac{1}{N^2} \sum_{i} v_i \sumAti_j \frac{f'_i}{ \hatz_i -\hatz_j } = \frac{1}{2} \frac{1}{N^2} \sum_{(i, j) \in \A_2} \frac{ (v_i -v_j ) f'_i + v_j (f'_i - f'_j ) }{ \hatz_j - \hatz_i }.
\eeq
Using again the estimates \eqref{eqn:fidecay} we bound the second term by
\begin{align}
\left| \frac{1}{2} \frac{1}{N^2} \sum_{(i, j) \in \A_2} \frac{  v_j (f'_i - f'_j ) }{ \hatz_j - \hatz_i } \right| &\leq  \left| \frac{1}{2} \frac{1}{N^2} \sum_{|i-j| \leq N^{\om_{\ell,2} }, |j| \leq 2 N^{\om_A}} \frac{  v_j (f'_i - f'_j ) }{ \hatz_j - \hatz_i } \right| + N^{-D} \notag \\
&\leq C \frac{ N^{\om_{\ell,2}}}{ N (t-s_1+s_0)^2 } \frac{1}{ t+s_1} .
\end{align}
The second inequality used \eqref{eqn:fders}.  We bound the first term on the RHS of \eqref{eqn:timederer3} using Schwarz by
\begin{align}
\left| \frac{1}{2} \frac{1}{N^2} \sum_{(i, j) \in \A_2} \frac{ (v_i -v_j ) f'_i  }{ \hatz_j - \hatz_i } \right| &\leq \frac{1}{10} \frac{1}{N^2} \sum_{(i, j) \in \A_2 } \frac{ (v_i -v_j )^2}{ (\hatz_i - \hatz_j )^2} + \frac{C}{N^2} \sum_{i} (f_i')^2 \sumAti_j 1 \notag\\
&\leq \frac{1}{10} \frac{1}{N^2} \sum_{(i, j) \in \A_2 } \frac{ (v_i -v_j )^2}{ (\hatz_i - \hatz_j )^2}  +  C \frac{ N^{\om_{\ell,2}}}{ N (t-s_1+s_0)^2 } \frac{1}{ t+s_1 } \label{eqn:timederer10}
\end{align}
where we used again the decay estimate \eqref{eqn:fidecay}, the estimate for $f'_i$ and the fact that the cardinality of the set $\{j : (i, j ) \in \A_2 \}$ is bounded by $C N^{\om_{\ell,2}}$ for $|i| \leq N^{\om_A}$.  The first term will be absorbed into the $\langle (w-f),  \B (w-f) \rangle $ term.  

In summary, the estimates \eqref{eqn:timederer2}-\eqref{eqn:timederer10}  prove that for the term \eqref{eqn:timederer1} we have
\begin{align} \label{eqn:timederer12}
 \left| \frac{1}{N} \sum_i (w_i - f_i ) \left(\frac{1}{N} \sumAti_j \frac{f_j -f_i}{ (\hatz_i - \hatz_j )^2} \right) \right| \leq\frac{1}{10} \frac{1}{N^2} \sum_{(i, j) \in \A_2 } \frac{ (v_i -v_j )^2}{ (\hatz_i - \hatz_j )^2}  +  C \frac{ N^{\om_{\ell,2}}}{ N (t-s_1+s_0)^2 } \frac{1}{ t+s_1}.
\end{align}

In order to complete the bound of \eqref{eqn:timederer} we need to estimate \eqref{eqn:timederer4}.  This term will be estimated by rigidity. Due to the decay estimates \eqref{eqn:fidecay} we can safely ignore the terms with $|i| > N^{\om_A}$; i.e., for $|i| > N^{\om_A}$ the term inside the brackets is estimated by
\beq \label{eqn:aa5}
 \left| \frac{1}{N} \sumsthi_j \frac{f_j -f_i}{ (\hatz_i - \hatz_j )^2} - \int_{ |\hatz_i - y | \leq \eta_\ell } \frac{ f (y) - f (\hatz_i )}{ (\hatz_i -y)^2} \rhosc(0) \d y \right| \leq N^{10}.
\eeq
For the terms with $|i| \leq N^{\om_A}$ we use the rigidity estimates \eqref{eqn:zrigref} of Section \ref{sec:ubref} \ref{item:zrigref}.  We write
\begin{align}
 &\left( \frac{1}{N} \sumsthi_j \frac{f_j -f_i}{ (\hatz_i - \hatz_j )^2} - \int_{ |y -\hatz_i | \leq \eta_\ell }  \frac{ f (y) - f (\hatz_i )}{ (\hatz_i -y)^2} \rhosc(0)  \d y \right)  \notag\\
 =  &\left( \frac{1}{N} \sumsthi_j \frac{f_j -f_i}{ (\hatz_i - \hatz_j )^2} - \int_{ \eta_{\ell,2} \leq |y -\hatz_i | \leq \eta_\ell }  \frac{ f (y) - f (\hatz_i )}{ (\hatz_i -y)^2} \rhosc(0)  \d y \right) \label{eqn:aa3} \\
 -&\left(  \int_{|y -\hatz_i | \leq \eta_{\ell,2} }  \frac{ f (y) - f (\hatz_i )}{ (\hatz_i -y)^2} \rhosc(0)  \d y \right),\label{eqn:aa4}
\end{align}
where $\eta_{\ell,2} = N^{\om_{\ell,2}} / ( N \rhosc(0) )$.
The term \eqref{eqn:aa3} is estimated using rigidity by
\beq \label{eqn:aa6}
\left| \frac{1}{N} \sumsthi_j \frac{f_j -f_i}{ (\hatz_i - \hatz_j )^2} - \int_{ \eta_{\ell,2} \leq |y -\hatz_i | \leq \eta_\ell }  \frac{ f (y) - f (\hatz_i )}{ (\hatz_i -y)^2} \rhosc(0)  \d y \right|\leq  \frac{CN^\eps}{ N^{\om_{\ell,2}}} \frac{1}{ t-s_1+s_0} \frac{1}{ t + s_1} .
\eeq
The term \eqref{eqn:aa4} is estimated using a second order Taylor expansion. We write it as
\beq \label{eqn:aa7}
 \int_{|y -\hatz_i | \leq \eta_{\ell,2} }  \frac{ f (y) - f (\hatz_i )}{ (\hatz_i -y)^2} \rhosc(0)  \d y = \int_{|y -\hatz_i | \leq \eta_{\ell,2} }  \frac{ f' (\hatz_i )}{ (\hatz_i -y)} \rhosc(0)  \d y + \O \left( \frac{ N^{\om_{\ell,2}} }{ N ( t-s_1+s_0)^2}  \frac{1}{ t+s_1}  \right).
\eeq

We then have that
\beq
\int_{|y -\hatz_i | \leq \eta_{\ell,2} }  \frac{ f' (\hatz_i )}{ (\hatz_i -y)} \rhosc(0)  \d y = 0.
\eeq
Combining \eqref{eqn:fidecay} with \eqref{eqn:aa5} for the terms with $|i| >N^{\om_A}$ and then \eqref{eqn:aa6} and \eqref{eqn:aa7} for the remaining terms yields the following estimate for \eqref{eqn:timederer4}.
\begin{align} \label{eqn:timederer11}
&\left| \frac{1}{N} \sum_{ i }  (w_i - f_i ) \left( \frac{1}{N} \sumsthi_j \frac{f_j -f_i}{ (\hatz_i - \hatz_j )^2} - \int_{ |y - \hatz_i | \leq \eta_\ell } \frac{ f (y) - f (\hatz_i )}{ (\hatz_i -y)^2} \rhosc(0) \d y \right) \right| \notag\\
\leq & \frac{C N^\eps}{ N^{\om_{\ell,2}}} \frac{1}{ t-s_1+s_0} \frac{1}{ t + s_1} +C \frac{ N^{\om_{\ell,2}} }{ N ( t-s_1+s_0)^2}  \frac{1}{ t+s_1 } .
\end{align}

In summary, we see that \eqref{eqn:timederminequals} and the estimates \eqref{eqn:timederer12} and \eqref{eqn:timederer11} imply
\begin{align} \label{eqn:timedereqy}
\frac{1}{N} \sum_i (w_i - f_i ) (\del_t w_i - (\del_t f)_i )  = - \frac{1}{2} \langle w-f, \B (w-f) \rangle + Y_t
\end{align}
where 
\beq \label{eqn:ytest} 
|Y_t| \leq \frac{1}{10} \langle w-f, \B (w-f ) \rangle + \frac{C N^\eps}{ N^{\om_{\ell,2}}} \frac{1}{ t-s_1+s_0} \frac{1}{ t + s_1} +C \frac{ N^{\om_{\ell,2}} }{ N ( t-s_1+s_0)^2}  \frac{1}{ t+s_1 } .
\eeq

The remaining term to deal with is
\begin{align}
\frac{1}{N} \sum_i (w_i -f_i )  f'_i \d \hatz_i &= \d M_t + \frac{1}{N} \sum_i (w_i -f_i )f'_i \bigg\{  \frac{1}{N} \sumAti_j \frac{1}{ \hatz_i - \hatz_j } + \frac{1}{N} \sumsthi_j \frac{1}{\hatz_i -\hatz_j} \notag\\
&+ \1_{ \{ |i| \leq N^{\om_A}\}} F_i + \1_{ \{ |i | > N^{\om_A} \} } J_i\bigg\} \d t. \label{eqn:dz1}
\end{align}
The martingale term is
\beq
d M_t= \frac{1}{N}\sum_i (w_i - f_i ) f'_i \sqrt{ \frac{2}{N} } \d B_i
\eeq
which we estimate later.
The first non-martingale term appearing on the RHS of \eqref{eqn:dz1} is identical to \eqref{eqn:timederer3} (we comment here that they actually appear with the same sign and so do not cancel as one might hope) and so we have, proceeding as above,
\begin{align}
\left| \frac{1}{N} \sum_i (w_i -f_i )f'_i \frac{1}{N} \sumAti_j \frac{1}{ \hatz_i - \hatz_j } \right| \leq \frac{1}{10} \langle (w-f ), \B (w-f) \rangle  +  C \frac{ N^{\om_{\ell,2}}}{ N (t-s_1+s_0)^2 } \frac{1}{ t+s_1 }.
\end{align}
Using \eqref{eqn:fidecay} and \eqref{eqn:Jiest} we can drop all terms in \eqref{eqn:dz1} with $|i| > N^{\om_A}$; i.e., we have with overwhelming probability
\begin{align}
&\bigg| \sum_{ |i| >N^{\om_A} } (w_i - f_i ) f_i' \bigg\{  \frac{1}{N} \sumsthi_j \frac{1}{\hatz_i -\hatz_j} + J_i   \bigg\} \bigg| \leq \frac{1}{N^D}.
\end{align}
For the $F_i$ terms we have by \eqref{eqn:Fiest} with overwhelming probability,
\beq
\left| \sum_{ |i| \leq N^{\om_A}} (w_i - f_i ) f'_i F_i \right| \leq  C \frac{N^\eps}{ N^{\om_F} ( t- s_1 + s_0 )( t +  s_1 ) }
\eeq
For $|i| \leq N^{\om_A}$, since
\beq
0 = \int_{ \eta_{\ell,2} \leq |y- \hatz_i | \leq \eta_\ell } \frac{1}{ \hatz_i - y } \rhosc (0) \d y,
\eeq
we have by the rigidity estimate \eqref{eqn:zrigref}
\begin{align}
\left| \frac{1}{N} \sumsthi_j \frac{1}{\hatz_i -\hatz_j}  \right| &= \left| \frac{1}{N} \sumsthi_j \frac{1}{\hatz_i -\hatz_j}  -  \int_{ \eta_{\ell,2} \leq |y- \hatz_i | \leq \eta_\ell } \frac{1}{ \hatz_i - y } \rhosc (0) \d y \right| \leq C \frac{ N^\eps}{ N^{\om_{\ell,2}}}.
\end{align}
Hence, ignoring the martingale term (and slightly abusing notation) we have obtained the following bound for \eqref{eqn:dz1}.
\begin{align}
\left| \frac{1}{N} \sum_i (w_i-f_i ) f_i' \d \hatz_i \right| &\leq \frac{ \langle (w-f), \B (w-f) \rangle}{10} + \frac{C}{ t+s_1 } \frac{N^\eps}{ (t-s_1+s_0 ) } \bigg( \frac{N^{\om_{\ell,2}}}{N (t-s_1+s_0)} + \frac{1}{ N^{\om_{\ell,2}} } + \frac{1}{ N^{\om_F}} \bigg)\label{eqn:dzrig}
\end{align} 
for any $\eps >0$.  The equality \eqref{eqn:dl2} and estimate \eqref{eqn:xtest}  follow from \eqref{eqn:ito1}, \eqref{eqn:ito2}, \eqref{eqn:timedereqy}, \eqref{eqn:ytest} and \eqref{eqn:dzrig}, after optimizing and choosing $N^{\om_{\ell,2} } \asymp \sqrt{ N (t-s_1  + s_0 ) }$.

The quadratic variation of the martingale term satisfies
\beq
\d \langle M \rangle_t = \frac{1}{N^3} \sum_i (w_i -f_i)^2 (f_i')^2 \d t \leq \frac{C}{N^2} \frac{1}{ (t+s_1)^3} \frac{1}{ (t-s_1-s_0)^2} \d t
\eeq
with overwhelming probability.  Hence by the BDG inequality,
\begin{align}
\ee \left[\sup_{u_2 : 9 t_1 \geq u_2 \geq u_1 } \left| \int_{u_1}^{u_2} \d M_t \right|^p \right] \leq C_p \frac{1}{N^p} \frac{1}{ (u_1+s_1)^{3p/2}} \frac{1}{ (u_1-s_1+s_0)^{p/2} } 
\end{align}
and so
\beq
 \sup_{u_2 : 9 t_1 \geq u_2 \geq u_1 } \left| \int_{u_1}^{u_2} \d M_t \right| \leq \frac{N^\eps}{N} \frac{1}{ (u_1+s_1)^{3/2} } \frac{1}{ (u_1-s_1+s_0)^{1/2} }
\eeq
with overwhelming probability.  A simple argument using a union bound over $u_1$ in a set of cardinality at most $N^2$ extends this estimate to all $s_1 < u_1 < u_2 < 9 t_1$.  This yields \eqref{eqn:martest}. \qed

Lemma \ref{lem:maincalc} yields the following corollary, after integration in $t$.  Note that the boundary term at $2 t_1$ appears with a negative sign so we can drop it from the RHS of \eqref{eqn:rev1} below. 
\bec \label{cor:maincalc}  Let $w$ be as in \eqref{eqn:wdef} and $f$ as in \eqref{eqn:fdef} with parameters $s_1$ and $s_0$.
Let $\eps >0$ and $D>0$.  For each $\alpha$ there is an event $\F_\alpha$ with $\pp [ \F_\alpha ] \geq 1 - N^{-D}$  on which the following holds.
\begin{align} \label{eqn:rev1}
 \int_{s_1}^{2 t_1} \langle (w-f), \B (w-f ) \d s \rangle \leq ||(w-f) (s_1  ) ||_2^2 + \frac{N^\eps}{s_1} \left\{ \frac{ 1}{ (N s_0)^{1/2} }  + \frac{1}{N^{\om_F}}\right\}  
\end{align}
\eec

Putting together the last two lemmas yields the following homogenization theorem.  It is essentially Theorem \ref{thm:homogref} but with a time average.  In the next subsection we will remove the time average.
\bet \label{thm:techhomog}
Let $a$ and $i$ satisfy
\beq
|a| \leq N^{\om_A-\eps_B} , \qquad
|i - a| \leq \ell/10.
\eeq
For any $\eps >0$ and $D>0$ there is an event  $\F_{\alpha}$ with $\pp [ \F_{\alpha} ] \geq 1 - N^{-D}$ on which
\begin{align}
&\frac{1}{t_1} \int_0^{t_1} \left( \UB_{ia}(0, t_1+u) - \frac{1}{N} p_{t_1+u} (\gf_i, \gf_a ) \right)^2 \d u \leq   \frac{  N^{ \eps}}{ ( N t_1 )^2 } \bigg\{  \frac{ ( N t_1 )^4}{ \ell^4} + \frac{ s_1^2}{t_1^2}  + \frac{t_1}{s_1} \left( \frac{1}{ ( N s_0 )^{1/2} } + \frac{1}{ N^{\om_F}}+  \frac{s_0}{s_1}    \right) \bigg\} 
\end{align}
\eet
\proof   Define $w$ and $f$ as in \eqref{eqn:wdef} and \eqref{eqn:fdef}, except replace $s_1$ by an auxilliary $s_1' \in [s_1, 2 s_1]$.  The reason for doing this is that will eventually have to average $s_1'$ over $[s_1 , 2 s_1]$

 We estimate for $0\leq u \leq t_1$,
\begin{align} \label{eqn:reva1}
\left( \UB_{ia}(0, t_1+u) - \frac{1}{N} p_{t_1+u} (\gf_i, \gf_a) \right)^2 &\leq C \left( \frac{1}{N} w_{t_1+u} (i) - \frac{1}{N} f_{t_1+u} (i) \right)^2 \\
& + C \left( \frac{1}{N} p_{t_1+u} (\gf_i , \gf_a) - \frac{1}{N} p_{t_1+u-s_1'+s_0} ( \hatz_i, \gf_a ) \right)^2 \label{eqn:up1}\\
&+ C \left( \frac{1}{N} p_{t_1+u-s_1'+s_0} (\hatz_i, \gf_a ) -\frac{1}{N} f_{t_1+u} (i) \right)^2 \label{eqn:up2}
\end{align}
The terms \eqref{eqn:up1} and \eqref{eqn:up2} are estimated using essentially the regularity of $p_t (x, y)$.  The remaining term \eqref{eqn:reva1} is estimated using the last corollary.

We can estimate the term \eqref{eqn:up1} using the results from Lemma \ref{lem: Ketalemma} and the optimal rigidity estimate \eqref{eqn:zrigref} from Section \ref{sec:ubref} \ref{item:zrigref}.  We obtain,
\begin{align}
&\left( \frac{1}{N} p_{t_1+u} (\gf_i , \gf_a) - \frac{1}{N} p_{t_1+u-s_1'+s_0} ( \hatz_i, \gf_a ) \right)^2 \leq  C \left( \frac{1}{N} p_{t_1+u} (\gf_i , \gf_a) - \frac{1}{N} p_{t_1+u} ( \hatz_i, \gf_a ) \right)^2 \notag\\
+&C\left( \frac{1}{N} p_{t_1+u} (\gf_i , \gf_a) - \frac{1}{N} p_{t_1+u-s_1'+s_0} ( \gf_i, \gf_a ) \right)^2 \leq \frac{N^\eps}{ (N t_1)^4} + \frac{C}{ (N t_1)^2} \frac{ s_1^2}{t_1^2}.
\end{align}
For \eqref{eqn:up2} we have, using the normalization $N^{-1} \sum w (j) = 1$, the profile from Lemma \ref{lem:prof1}, and the estimates from Lemma \ref{lem: Ketalemma},
\begin{align}
&\left(\frac{1}{N} p_{t_1+u-s_1'+s_0} ( \hatz_i, \gf_a ) -\frac{1}{N} f_{t_1+u} (i) \right)^2\notag\\
= &\left(\frac{1}{N^2} \sum_j w_{s_1'} (j) \left( p_{t_1+u-s_1'+s_0} ( \hatz_i , \gf_a ) - p_{t_1+u-s_1'+s_0} (\hatz_i, \gf_j ) \right) \right)^2 \notag\\
\leq& N^\eps\left( \frac{1}{N^2} \sum_j \frac{s_1}{ (|j-a|/N)^2+s_1^2} \frac{1}{t_1} \left[ \left( \frac{|j-a|}{N t_1} \right) \wedge 1 \right] \right)^2 \notag\\
\leq& C N^\eps \left( \frac{1}{N t_1} \frac{1}{t_1}\int_{|x| \leq t_1} \frac{s_1|x| }{x^2 +s_1^2} \d x \right)^2+ C N^\eps \left( \frac{1}{N t_1} \int_{ |x| > t_1} \frac{s_1}{s_1^2+x^2} \d x\right)^2 \leq \frac{C N^{2\eps}}{ (N t_1)^2} \frac{ s_1^2}{t_1^2}.
\end{align}
Above, we used the fact that
\beq
\left| p_{t_1+u-s_1'+s_0} ( \hatz_i , \gf_a ) - p_{t_1+u-s_1'+s_0} (\hatz_i, \gf_j ) \right| \leq \frac{C}{t_1} \min\left\{ \frac{ |j-a|}{N t_1}, 1 \right\} .
\eeq
We now deal with \eqref{eqn:reva1}. 
We estimate
\begin{align}
&\left( \frac{1}{N} w_{t_1+u} (i) - \frac{1}{N} f_{t_1+u} (i) \right)^2 \notag\\
  \leq & C \left( \frac{1}{N} w_{t_1+u} (i)-\frac{1}{N} \frac{1}{\ell} \sum_{|j-i| \leq \ell }w_{t_1+u} (j)  - \frac{1}{N} f_{t_1+u} (i) + \frac{1}{N} \frac{1}{\ell} \sum_{|j-i| \leq \ell }f_{t_1+u} (j) \right)^2  \notag\\
  + &C\left( \frac{1}{N} \frac{1}{\ell} \sum_{|j-i| \leq \ell }w_{t_1+u} (j) -\frac{1}{N} \frac{1}{\ell} \sum_{|j-i| \leq \ell }f_{t_1+u} (j) \right)^2
\end{align}
We apply the Sobolev inequality of  Lemma \ref{lem:sobrev} to the difference of the sequences on $\{ k: |i-k| \leq \ell \}$ (with the $N$ in Lemma \ref{lem:sobrev} being $\ell$) 
\beq
\left \{ w_{t_1+u} (k) - \frac{1}{N} \frac{1}{\ell} \sum_{|j-i| \leq \ell }w_{t_1+u} (j) \right\}_k, \left\{ f_{t_1+u} (k) - \frac{1}{N} \frac{1}{\ell} \sum_{|j-i| \leq \ell }f_{t_1+u} (j)\right\}_k
\eeq
which now have mean $0$.  We find,
\begin{align}
\left( \frac{1}{N} w_{t_1+u} (i) - \frac{1}{N} f_{t_1+u} (i) \right)^2 \leq &\frac{N^\eps}{N^2} \langle (w-f) (t_1+u), \B (w-f) (t_1+u) \rangle \notag \\
+ &C\left( \frac{1}{N} \frac{1}{\ell} \sum_{|j-i| \leq \ell }w_{t_1+u} (j) -\frac{1}{N} \frac{1}{\ell} \sum_{|j-i| \leq \ell }f_{t_1+u} (j) \right)^2 .
\end{align}
We used the fact that $|z_i - z_j | \geq N^{ - \eps/4} |i-j|/N$. 
We have
\beq
\frac{1}{N} \frac{1}{\ell} \sum_{|j-i| \leq \ell }w_{t_1+u} (j) = \frac{1}{\ell} + \O \left( \frac{ N t_1}{\ell^2 }\right).
\eeq
Similarly, (using \eqref{eqn:ptnormfull}) we have
\beq
\frac{1}{N} \frac{1}{\ell} \sum_{|j-i| \leq \ell }f_{t_1+u} (j) = \frac{1}{\ell} + \O \left( \frac{1}{\ell N t_1} +\frac{ N t_1}{\ell^2} \right).
\eeq
Hence,
\begin{align}
\left( \frac{1}{N} w_{t_1+u} (i) - \frac{1}{N} f_{t_1+u} (i) \right)^2 \leq &\frac{N^\eps}{N^2} \langle (w-f) (t_1+u), \B (w-f) (t_1+u) \rangle \notag + C\frac{1}{ \ell^2 (N t_1)^2} +C \frac{ (N t_1)^2}{ \ell^4}
\end{align}

   We now apply Corollary \ref{cor:maincalc}  to obtain that there is an event with overwhelming probability (which depends on the choice of $s_1'$), such that
\begin{align}
\int_{s_1'}^{2 t_1} \langle (w-f), \B (w-f ) \rangle \d s \leq \frac{N^\eps}{s_1}\left( \frac{1}{ ( N s_0 )^{1/2} } + \frac{1}{ N^{\om_F}} \right) + || (w-f) (s_1') ||_2^2 .
\end{align}
We can average over $s_1' \in [s_1, 2 s_1]$ (even though the event described above is $s_1'$-dependent, since each holds with overwhelming probability and $\UB$ and $p_t$ are bounded, we can apply Lemma \ref{lem:fubini}) and obtain that with overwhelming probability,
\begin{align}
&\frac{1}{t_1} \int_{0}^{t_1} \left( \UB_{t_1+u}(i, a) - \frac{1}{N} p_{t_1+u} ( \gf_i, \gf_a ) \right)^2 \d u \leq \frac{1}{N^2 t_1 s_1} \int_{0}^{s_1}  || (w-f) (s_1 + u ) ||_2^2 \d u  \notag\\
+& \frac{C N^\eps}{ ( N t_1)^2} \left(\frac{1}{ ( N t_1 )^2 } + \frac{ s_1^2}{ t_1^2} + \frac{1}{ \ell^2} + \frac{ ( N t_1)^4}{ \ell^4}  + \frac{ t_1}{ s_1 ( N s_0 )^{1/2} } + \frac{ t_1}{s_1 N^{\om_F}}  \right) .
\end{align} 
Note that on the RHS the choice of $f$ itself has an $s_1 + u$ dependence. 
By Lemma \ref{lem:initell2} we have
\begin{align}
& \int_{0}^{s_1}  || (w-f) (s_1 + u ) ||_2^2 \d u \leq N^\eps \left( \frac{1}{ ( N s_0 )^2} + \frac{ ( N s_0 )^2}{ \ell^2} \right) \notag\\
+ & C s_0 \int_{0}^{s_1} \sum_{ |i| \leq N^{\om_A} } \sum_{ |i-j| \leq \ell} \frac{ ( w_i (s_1 + u ) - w_j (s_1 + u ) )^2}{ (i-j)^2} \d u .
\end{align}
With overwhelming probability we have
\begin{align}
\int_{0}^{s_1} \sum_{ \substack { |i| \leq N^{\om_A}  \\ |i-j| \leq \ell}} \frac{ ( w_i (s_1 + u ) - w_j (s_1 + u ) )^2}{ (i-j)^2} \d u &\leq N^\eps \int_{0}^{s_1}  \langle w , \B w (s_1 +u ) \rangle \d u  \leq N^\eps ||w (s_1)||_2^2  \leq \frac{ C N^{2 \eps} }{ s_1}
\end{align}
where in the second inequality we used the standard energy estimate $\del_t ||w||_2^2 = -  \langle w, \B w \rangle$.  The claim now follows after simplifying the errors.  In particular we use,
\beq
\frac{1}{ \ell^2} \leq \frac{ ( N s_0 )^2}{ \ell^2} \leq \frac{ s_0}{s_1}, \qquad \frac{1}{ ( N t_1 )^2 } \leq \frac{t_1}{s_1} \frac{1}{ ( N s_0 )^{1/2} }
 \eeq \qed

\subsubsection{Removal of time average}
Let $\eps_2 > 0$ and let 
\beq
t_2 := t_1 N^{- \eps_2}.
\eeq
In this section we show how to remove the time average in Theorem \ref{thm:techhomog}.    More precisely, we prove the following theorem.  It is deduced from Theorem \ref{thm:techhomog} using only the fact that $\UB$ is a semigroup, the decay properties of $\UB$ given by Lemma \ref{lem:prof1} and the regularity of $p_t (x, y)$.
\bet \label{thm:notime}Let $a$ satisfy 
\beq
|a| \leq N^{\om_A - \eps_B}/2
\eeq
and $i$ satisfy
\beq
|i-a | \leq \frac{ \ell}{20}.
\eeq
For any $\eps >0$ and $D >0$ there is an event $\F_\alpha$ with $\pp [ \F_\alpha ] \geq 1 - N^{-D}$ on which
\begin{align}
& \left| \UB_{ia} (0, t_1 + 2 t_2)- \frac{1}{N}p_{t_1 } ( \gf_i, \gf_a ) \right| \notag\\
\leq & C N^\eps \frac{N^{\eps_2}}{ N t_1 }  \bigg\{  \frac{ s_1^2}{ t_1^2} + \frac{ ( N t_1 )^4}{ \ell^4 } + \frac{t_1}{s_1} \left( \frac{1}{ ( N s_0 )^{1/2} } + \frac{ 1}{ N^{\om_F}}+ \frac{ s_0}{s_1} \right) \bigg\}^{1/2} \notag\\
+& \frac{N^\eps}{ N t_1} N^{-\eps_2/2}.
\end{align}

\eet
\proof Theorem \ref{thm:techhomog} implies that we have with overwhelming probability
\begin{align} \label{eqn:rta1}
&\frac{1}{t_2} \int_0^{t_2} \left|  \UB_{jk}(0, t_1+u) - \frac{1}{N} p_{t_1 + u } ( \gf_j , \gf_k ) \right| \d u \notag \\
\leq &N^\eps \frac{ N^{\eps_2}}{ N t_1 }   \bigg\{  \frac{ s_1^2}{ t_1^2} + \frac{ ( N t_1 )^4}{ \ell^4 } + \frac{t_1}{s_1} \left( \frac{1}{ ( N s_0 )^{1/2} } + \frac{1}{ N^{\om_F}}+ \frac{ s_0}{s_1} \right) \bigg\}^{1/2},
\end{align}
for  $k \leq N^{\om_A-\eps_B}$ and $|j-k| \leq \ell/10$.  For notational simplicity let us denote
\beq
\Phi :=  N^{\eps_2} \bigg\{  \frac{ s_1^2}{ t_1^2} + \frac{ ( N t_1 )^4}{ \ell^4 } + \frac{t_1}{s_1} \left( \frac{1}{ ( N s_0 )^{1/2} } + \frac{1}{N^{\om_F}}+ \frac{ s_0}{s_1} \right) \bigg\}^{1/2}
\eeq
By the semigroup property we can write for any $0 \leq u \leq t_2$,
\beq
\UB_{a i} (0, t_1 + 2 t_2 ) = \sum_j \UB_{a j } ( t_1 + u, t_1 + 2 t_2 ) \UB_{j i } ( 0, t_1 + u ) 
\eeq
and so we can take an average over $u$ and obtain
\beq
\UB_{a i} (0, t_1 + 2 t_2 ) = \sum_j  \frac{1}{t_2} \int_0^{t_2}  \UB_{a j } ( t_1 + u, t_1 + 2 t_2 ) \UB_{j i } ( 0, t_1 + u )  \d u.
\eeq
We now rewrite the RHS as
\begin{align}
\UB_{a i} (0, t_1 + 2 t_2 ) &= \sum_j  \frac{1}{t_2} \int_0^{t_2}  \UB_{a j } ( t_1 + u, t_1 + 2 t_2 ) \left( \UB_{j i } ( 0, t_1 + u ) - \frac{1}{N} p_{t_1 + u } ( \gf_j, \gf_i ) \right) \d u \label{eqn:time1} \\
&+ \sum_j  \frac{1}{t_2} \int_0^{t_2}  \UB_{a j } ( t_1 + u, t_1 + 2 t_2 ) \frac{1}{N} \left(  p_{t_1 + u } ( \gf_j, \gf_i ) - p_{t_1 + u } (\gf_a, \gf_i ) \right) \d u \label{eqn:time2} \\
&+ \sum_j  \frac{1}{t_2} \int_0^{t_2}  \UB_{a j } ( t_1 + u, t_1 + 2 t_2 ) \left( \frac{1}{N} p_{t_1 + u } ( \gf_a, \gf_i ) \right) \d u
\end{align}
From Lemma \ref{lem:prof1}, we have the estimate
\beq
| \UB_{aj} ( t_1 + u, t_1 + 2 t_2 )  | \leq \frac{1}{N} \frac{ N^\eps t_2}{ ( (a-j)/N )^2 + t_2^2 } 
\eeq
from which we see that for any $ \delta > 0$,
\beq
\sum_{ j : |j-a | > N t_2 N^{\delta} } | \UB_{aj} (t_1 + u, t_1 + 2 t_2 ) | \leq \frac{ N^\eps}{ N^{\delta}}
\eeq
and also
\beq
\sum_{ j : |j-a|  \leq N t_2 N^{\delta} }  \UB_{aj} (t_1 + u, t_1 + 2 t_2 ) = 1 +  N^\eps \O \left( \frac{1}{N^\delta } \right).
\eeq
Fix a $\delta >0$ s.t. $\delta < \eps_2$.  We also have the estimate
\beq
\left| \UB_{ji} (0, t_1 + u ) \right| + \left| \frac{1}{N} p_{t_1+u} (\gf_j, \gf_i ) \right| \leq \frac{ N^\eps}{ N t_1 }.
\eeq
We use these to estimate the term \eqref{eqn:time1} by
\begin{align}
&\left| \sum_j  \frac{1}{t_2} \int_0^{t_2}  \UB_{a j } ( t_1 + u, t_1 + 2 t_2 ) \left( \UB_{j i } ( 0, t_1 + u ) - \frac{1}{N} p_{t_1 + u } ( \gf_j, \gf_i ) \right) \d u \right| \notag\\
\leq & \sum_{j : |j-a| > N t_2 N^{\delta} } \frac{1}{t_2} \int_0^{t_2}  \UB_{a j } ( t_1 + u, t_1 + 2 t_2 ) \frac{N^\eps }{ N t_1 } \d u  \notag\\
+ & \sum_{ j : |j - a | \leq N t_2 N^{\delta} } \frac{1}{N} \frac{ N^\eps t_2}{ (( j-a )/N )^2 + t_2^2 } \frac{1}{t_2} \int_0^{t_2}  \left| \UB_{j i } ( 0, t_1 + u ) - \frac{1}{N} p_{t_1 + u } ( \gf_j, \gf_i ) \right| \d u \notag\\
\leq & \frac{ N^{2\eps}}{ N t_1 N^{\delta}} + \frac{ N^{2\eps}}{ N t_1} \Phi.
\end{align}
Note that we are allowed to apply the estimate \eqref{eqn:rta1} because $|j-a| \leq N t_2 N^\delta \ll \ell$ which implies $|j-i| < \ell/10$.  For $ |j-a | \leq N^{\delta} ( N t_2 ) $ we have the estimate
\beq
 \frac{1}{N} \left|  p_{t_1 + u } ( \gf_j, \gf_i ) - p_{t_1 + u } (\gf_a, \gf_i ) \right| \leq N^\eps \frac{1}{ N t_1 } \frac{ N t_2 N^{\delta}}{ N t_1 }.
\eeq
Therefore we can estimate \eqref{eqn:time2} by 
\begin{align}
& \left| \sum_j  \frac{1}{t_2} \int_0^{t_2}  \UB_{a j } ( t_1 + u, t_1 + 2 t_2 ) \frac{1}{N} \left(  p_{t_1 + u } ( \gf_j, \gf_i ) - p_{t_1 + u } (\gf_a, \gf_i ) \right) \d u \right| \notag\\
\leq & \sum_{j : |j-a | > N t_2 N^{\delta}}  \frac{1}{t_2} \int_0^{t_2}  \UB_{a j } ( t_1 + u, t_1 + 2 t_2 ) \frac{N^{\eps}}{ N t_1 } \d u \notag\\
+ & \sum_{j: |j-a | \leq N t_2 N^{\delta } }  \frac{1}{t_2} \int_0^{t_2}  \UB_{a j } ( t_1 + u, t_1 + 2 t_2 ) N^\eps \frac{1}{ N t_1 } \frac{ N t_2 N^{\delta}}{ N t_1 } \d u \notag\\
&\leq  N^{2 \eps } \frac{ 1}{ N t_1 }  \left( \frac{1}{ N^\delta} +  \frac{ N^\delta}{ N^{\eps_2}}\right).
\end{align}
Lastly, since $\sum_j \UB_{aj } (t_1 + u, t_1 + 2 t_2 ) = 1$ we get
\begin{align}
\sum_j  \frac{1}{t_2} \int_0^{t_2}  \UB_{a j } ( t_1 + u, t_1 + 2 t_2 ) \left( \frac{1}{N} p_{t_1 + u } ( \gf_a, \gf_i ) \right) \d u &= \frac{1}{t_2} \int_0^{t_2}  \left( \frac{1}{N} p_{t_1 + u } ( \gf_a, \gf_i ) \right) \notag\\
&= \frac{1}{N} p_{t_1 + 2 t_2 } (\gf_a, \gf_i ) + \frac{1}{ N t_1 } \O \left( N^{-\eps_2} \right).
\end{align}
Here we used
\beq
\frac{1}{N} \left| p_{t_1+u} ( \gf_i , \gf_a ) - p_{t_1} ( \gf_i, \gf_a ) \right| \leq C \frac{N^{-\eps_2}}{ N t_1 }.
 \eeq
 This yields the claim after taking $\delta = \eps_2/2$.
  \qed
  
  At this point we just have to choose the parameters $s_0$ and $s_1$ to conclude the homogenization result for $\UB$.
  
  \noindent{\bf Proof of Theorem \ref{thm:homogref}}.   We use the result of Theorem \ref{thm:notime} and just make a choice of $s_0$ and $s_1$.  First we optimize over $s_0$ and take
  \beq
  ( N s_0 ) = ( N s_1 )^{2/3}.
  \eeq
  The error then simplifies to
  \beq
 N^\eps \frac{ N^{\eps_2}}{ N t_1} \left\{ \frac{ s_1^2}{t_1^2} + \frac{ ( N t_1 )^4}{ \ell^4 } + \frac{t_1}{s_1} \left( \frac{1}{ N^{\om_F}} + \frac{1}{ ( N s_1)^{1/3} } \right) \right\}^{1/2} + N^\eps \frac{ N^{-\eps_2/2} }{ N t_1}.
  \eeq
To optimize over $s_1$ we have two cases.  If $\om_F \geq \om_1 3/10$ then we take $N s_1 = ( N t_1 )^{9/10}$.  If $\om_F \leq \om_1 3/10$ then we take $N s_1 = N t_1 N^{-\om_F/3}$.  The error simplifies to
\beq
N^\eps \frac{ N^{\eps_2}}{ N t_1} \left\{  \frac{ ( N t_1 )^4}{ \ell^4 } + \frac{1}{( N t_1)^{1/5}} + \frac{1}{ N^{\om_F 2/3} } \right\}^{1/2} + N^\eps \frac{ N^{-\eps_2/2} }{ N t_1}.
\eeq
This is the claim. \qed

\subsection{Completion of proof of Theorem \ref{thm:xyref}} \label{sec:xyrefthmproof}
First of all we see by the definitions of $\tilz_i$ and $\hatz_i$ (see \eqref{eqn:tilz} for the former and \eqref{eqn:zhat1}-\eqref{eqn:zhat2} for the latter) and  Lemma \ref{lem:shortrange} that with overwhelming probability,
\begin{align}
z_i ( t_1 + u, 1 ) - z_i ( t_1 + u, 0 ) &= \left( \tilz_i ( t_1 + u , 1) - \tilz_i ( t_1 + u , 0) \right) + \left(\gamma_0 ( t_1 + u, 1 ) - \gamma_0 (t_1 + u , 0) \right) \notag\\
&= \left( \hatz_i ( t_1 + u, 1 ) - \hatz_i ( t_1 + u, 0 ) \right) + \left(\gamma_0 ( t_1 + u, 1 ) - \gamma_0 ( t_1 + u , 0) \right)  \notag\\
&+ \O \left( N^\eps t_1 \left( \frac{ N^{\om_A}}{ N^{\om_0}} + \frac{1}{ N^{\om_\ell}} + \frac{1}{ \sqrt{ N \G } } \right) \right).
\end{align}
Note that the first equation is just by definition - the classical particle locations are defined together with $\tilde{z}_i$ at the start of Section \ref{sec:shortrange}.

With $u_i = \del_\alpha \hatz_i$ we have
\beq
\left( \hatz_i (t_1 + u , 1) - \hatz_i ( t_1 + u, 0 ) \right)  = \int_0^1 u_i (t_1 + u, \alpha ) \d \alpha.
\eeq
Recall $u$ satisfies $\del_t u ( \alpha ) = \B (\alpha ) u ( \alpha) + \xi ( \alpha)$ with $\xi ( \alpha)$ defined as in Section \ref{sec:deriveequation}, and initial data
\beq
u_i ( 0, \alpha ) = \alpha \hat{z}_i (0, 1) + (1- \alpha) \hat{z}_i (0, 0).
\eeq
  By the bound \eqref{eqn:xiestimate}, the assumption \eqref{eqn:supzref} and Lemma \ref{lem:fs1} we see that for any small $\delta_B >0$
\beq
\sup_{|i| \leq N^{\om_A - \delta_B}, |u| \leq t_1} \left| u_i (t_1 + u, \alpha ) - v_i (t_1 + u, \alpha ) \right| \leq \frac{1}{N^{10}}
\eeq
with overwhelming probability where $v_i (\alpha)$ is defined by
\beq
\del_t v (t, \alpha) = \B (\alpha )  v (t, \alpha), \qquad v_i (0, \alpha)) = \1_{ \{ |i| \leq N^{\om_A} \} } u_i (0, \alpha).
\eeq
Fix an $\eps_a >0$ and consider the solution
\beq
\del_t w (\alpha) = \B (\alpha) w (\alpha), \qquad w_i (0, \alpha)) = \1_{ \{ |i| \leq ( N t_1 ) N^{\eps_a} \} } u_i (0, \alpha).
\eeq
Since $|u_i (0, \alpha) | \leq N^\eps/N$ for any $|i| \leq N^{\om_0/2}$ with overwhelming probability, we see by Lemma \ref{lem:prof1} that for $|i| \leq N t_1 N^{\eps_b}$ with $\eps_b < \eps_a$,
\begin{align}
\left| v_i (t_1 + u , \alpha) - w_i (t_1 + u , \alpha) \right| &\leq \left| \sum_{ N^{\om_1+\eps_a} < |j| \leq N^{\om_A}  } \UB_{ij} (0, t_1 + u, \alpha ) u_j (0, \alpha) \right| \notag\\
&\leq \frac{ N^\eps}{N} N t_1 \sum_{ |j| > N^{\om_1 + \eps_a} } \frac{1}{ (i - j)^2 } \leq C \frac{ N^\eps}{ N N^{\eps_a}}.
\end{align}
Therefore,
\begin{align}
z_i (t_1 + u, 1) - z_i (t_1 + u, 0) &= \left( \gamma_0 (t_1 + u, 1) - \gamma_0 (t_1 + u, 0 ) \right) \notag\\
&+ \int_0^1 \sum_{ |j| \leq N t_1 N^{\eps_a} } \UB_{ij} (0, t_1 + u, \alpha) ( z_j (0, 1) - z_j (0, 0) ) \d \alpha \notag\\
&+ \frac{N^\eps}{N} \O \left( N^{\om_1} \left( \frac{ N^{\om_A}}{N^{\om_0}} + \frac{1}{ N^{\om_\ell}} + \frac{ 1}{ \sqrt{ N \G } } \right)  + \frac{1}{N^{\eps_a}} \right)
\end{align}
with overwhelming probability for $|i| \leq N t_1 N^{\eps_b}$.  Note that we used Lemma \ref{lem:fubini}, and that $u (\alpha), v(\alpha), w ( \alpha)$ are all bounded by $N^C$ on an $\alpha$-independent event of overwhelming probability.  Theorem \ref{thm:xyref} now follows from an application of Theorem \ref{thm:homogref} with $\om_F = \infty$. \qed

\subsection{Proof of Theorem \ref{thm:mainhomog}} \label{sec:xythmproof}

Theorem \ref{thm:xyref} implies, after re-writing $z_i (t, 1)$ in terms of $x_i$, that with overwhelming probability we have,
\begin{align}
& \left(  x_{i_0+i} (t_0 + t_1 + u ) -  \gamma_{i_0 } ( t_0 + t_1 + u ) \right)- y_{ N/2+i} (t_0 + t_1 + u ) \notag\\
= & \sum_{ |j| \leq N t_1 N^{\eps_a } } \zeta (N^{-1}(i-j), t_1 ) \left[  \left(  x_{i_0+j} (t_0 + t_1 ) -  \gamma_{i_0 } ( t_0 + t_1 ) \right)  - y_{ N/2+j} (t_0 + t_1  )  \right] \notag\\
+&\frac{N^\eps}{N} \O \left( N^{\om_1} \left( \frac{ N^{\om_A}}{N^{\om_0}} + \frac{1}{ N^{\om_\ell}} + \frac{1}{ \sqrt{ N \G } } \right) + \frac{1}{N^{\eps_a}}+ N^{\eps_2+\eps_a} \left( \frac{ ( N t_1 )^2}{ \ell^2} + \frac{1}{ ( N t_1 )^{1/10} } \right) + N^{\eps_a-\eps_2/2 } + \frac{N^{\om_1}}{N^{\om_0/2}} \right),
\end{align}
for any $|i| \leq N t_1 N^{\eps_b}$ and $|u| \leq t_2$.   Recall that $\zeta$ was defined in \eqref{eqn:zetarev1} (and then $p_t$ as the fundamental solution to \eqref{eqn: Ketadef}).  We also used Lemma \ref{lem:newquant} to replace the classical particle locations from the interpolating measures with those coming from the original free convolution $\rhofct$ and the semicircle law. We choose $\om_A = (\om_\ell+\om_0/2)/2$.  
The error simplifies to
\beq
\frac{N^{2\eps}}{N} \O \left( N^{\om_1} \left(\frac{1}{ N^{\om_\ell}}  \right) + \frac{1}{N^{\eps_a}}+ N^{\eps_2+\eps_a} \left( \frac{ ( N t_1 )^2}{ \ell^2} + \frac{1}{ ( N t_1 )^{1/10} } \right) + N^{\eps_a-\eps_2/2 }  \right).
\eeq
There are two cases.  First if $\om_1 \geq 10 \om_0 / 21 $ then the error simplifies to 
\beq
\frac{N^{2\eps}}{N} \O \left( N^{\om_1} \left(\frac{1}{ N^{\om_\ell}}  \right) + \frac{1}{N^{\eps_a}}+ N^{\eps_2+\eps_a} \left( \frac{ ( N t_1 )^2}{ \ell^2}  \right) + N^{\eps_a-\eps_2/2 }  \right).
\eeq
In this case we then take $\eps_2 = 4 ( \om_\ell - \om_1 ) / 3 $ and then $\eps_a = ( \om_\ell - \om_1 )/3$.  The error simplifies to 
\beq
\frac{ N^{3 \eps} }{N} \O \left(  \frac{ N^{ \om_1/3} }{ N^{\om_\ell/3}} \right) .
\eeq
We then take $\om_\ell = \om_0/2 - \eps$ and so the error is 
\beq
\frac{ N^{4 \eps} }{N} \O \left(  \frac{ N^{ \om_1/3} }{ N^{\om_0 /6}} \right)  \leq  \frac{ N^{4 \eps} }{N} \O \left(  \frac{ N^{ \om_1/3} }{ N^{\om_0 /6}}  + \frac{1}{ N^{\om_1/60}} \right)
\eeq
In the case $\om_1 < 10 \om_0 / 21 $ we take $\om_\ell = 21 \om_1 / 20 < \om_0/2$.  The error simplifies to 
\beq
\frac{ N^{ 2 \eps }}{N} \O \left( \frac{1}{ N^{\om_1/20}} + \frac{1}{ N^{\eps_a}} + N^{ \eps_2 + \eps_a} \frac{1}{ N^{\om_1/10}} + N^{ \eps_a - \eps_2/2} \right)
\eeq
Choose $\eps_2 = \om_1/15$ and $\eps_a = \om_1/60$.  The error then simplifies to
\beq
\frac{N^{3\eps}}{N} \O \left(   \frac{1}{N^{\om_1/60}}   \right) \leq \frac{N^{4\eps}}{N} \O \left( \frac{1}{N^{\om_1/60}}  + \frac{N^{ \om_1/3} }{ N^{\om_0 /6}}   \right).
\eeq
\qed

\section{Finite speed estimates} \label{sec:finitespeed}
\subsection{Estimate for short-range operator} \label{sec:expfs}

In this section we work in the set-up of Section \ref{sec:ubref} and assume that \ref{item:rho2}-\ref{item:fiji} hold.  Let $\hatz_i$ be defined as in that section. Fix a parameter $\ell_3 = N^{\om_{\ell,3}}$ satisfying
\beq
0 < \om_{\ell,3} \leq \om_\ell.
 \eeq
 For the current Section \ref{sec:expfs} we fix a $0 < q < 1$ and let $\A_3$ be the set
\begin{align}
\A_3 := \{ (i, j) : |i - j | \leq N^{\om_{\ell,3}} \} \cup \{ (i, j) : ij >0, i \notin \Chat_q, j \notin \Chat_q \}.
\end{align}
Define the operator 
\beq
(\B_3 u )_i := \sumAthi_j \frac{1}{N} \frac{u_j - u_i }{ (\hatz_j - \hatz_i )^2}.
\eeq


We want to prove the following theorem.  Lemma \ref{lem:fs1} is an immediate consequence.   The method is based on that appearing in \cite{que}.
\bet  \label{thm:finitespeed}
Let $\ell_3$ as above.  Let $D_1, D_2 >0$, and let $\eps >0$.  Let $0 < q_3 < q$.  We assume that Section \ref{sec:ubref} \ref{item:rho2}-\ref{item:fiji} hold.  Fix a time $u \leq 10 t_1 $.  There is an event $\F ( \alpha, u )$ s.t. $\pp [ \F ( \alpha, u ) ] \geq 1 - N^{-D_1}$ for $N$ large enough (independent of $\alpha$) such that all of the following estimates hold.  For every $u \leq s \leq t \leq \left( u + 2 \ell_3/N \right) \wedge 10 t_1$ we have the estimate
\beq
\UBth_{ba} (s, t) \leq \frac{1}{ N^{D_2}}.
\eeq
provided one of the following three criteria holds.
\begin{enumerate}[label=(\roman*)]
\item $a \in \Chat_{q_3}$ and $|a-b | > N^{\om_{\ell_3}+\eps}$. 
\item $b \in \Chat_{q_3}$ and $|a-b| > N^{\om_{\ell,3}+\eps}$.
\item $a \notin \Chat_{q_3}$, $b \notin \Chat_{q_3}$ and $ab <0$.
\end{enumerate}
Hence, the same estimate holds for any $0 \leq s \leq t \leq 10 t_1$ that satisfy $t-s \leq \ell_3/N$, with overwhelming probability.
\eet

In the proof of Theorem \ref{thm:finitespeed} we take $u=0$ for notational simplicity.  The first step in proving Theorem \ref{thm:finitespeed} is to establish the estimate for $s=0$, which is the content of the following lemma.  We then use the semigroup property to extend the estimate to all $s$.

\bel \label{lem:prefs} Fix $0 < q_3 < q$.  Let $\eps >0$ and $D_1 , D_2 >0$.  Assume that Section \ref{sec:ubref} \ref{item:rho2}-\ref{item:fiji} hold.  There is an event $\F_\alpha$ with $\pp [ \F_\alpha ] \geq 1 - N^{-D_1}$ on which the following estimates hold.  For every $0  \leq t \leq \ell_3/N \wedge 10 t_1$ we have the estimate
\beq
\UBth_{ba} (0, t) \leq \frac{1}{ N^{D_2}}.
\eeq
provided one of the following three criteria holds.
\begin{enumerate}[label=(\roman*)]
\item $a \in \Chat_{q_3}$ and $|a-b | > N^{\om_{\ell_3}+\eps}$. 
\item $b \in \Chat_{q_3}$ and $|a-b| > N^{\om_{\ell,3}+\eps}$.
\item $a \notin \Chat_{q_3}$, $b \notin \Chat_{q_3}$ and $ab <0$.
\end{enumerate}
\eel
\proof Define for $t \geq 0$, $f_i (t) := \UB_{ia} (0, t)$; i.e.,  $f_i$ satisfies the equation
\beq
\del_t f_i = ( \B_3 f)_i, \qquad f_i (0) = \delta_a.
\eeq
WLOG, take $\eps >0$ s.t. $\om_{\ell,3} + \eps < \om_0/2$. We can assume $a \geq 0$. Fix $q_4$ satisfying $q_3 < q_4 < q$.  It then suffices to prove the statement for the following two cases.
\begin{enumerate}
\item $a \in \Chat_{q_4}$, $|a-b| > N^{\om_{\ell,3} + \eps}$.
\item $a \notin \Chat_{q_4}$, $b \in \Chat_{q_3}$ or $b <0$.
\end{enumerate}
Let us first consider the case $a \in \Chat_{q_4}$.   Let $\nu >0$ and define
\beq
\phi_k := \e^{ \nu \psi ( \hatz_k  ( t, \alpha) - \tilg_a (t, \alpha ))/2 }
\eeq
where $\psi$ is the following smooth function.  Fix a scale $\ell_4 = N^{\om_{\ell,4}}>0$ and a $\delta_1 >0$ s.t.
\beq
\delta_1 + \om_\ell < \om_0, \qquad  0 < \om_{\ell,4} \leq \om_{\ell,3}.
\eeq
Assume $
0 < \eps < \delta_1.
$
We choose $\psi$ s.t. 
\beq
\psi (x) = -x , \qquad |x| \leq   \frac{N^{\delta_1+ \om_\ell}}{N},
\eeq
and 
\beq
\psi ( x ) = \mp \left( \frac{N^{\delta_1+\om_\ell}}{N} + \frac{\ell_4}{2N}\right), \qquad \pm x > \frac{N^{\delta_1+\om_\ell}}{N} + \frac{\ell_4}{N}
\eeq
We can choose $\psi$ so that $| \psi'| \leq 1$ and $| \psi''| \leq C N/\ell_4$.

Our proof is based around a Gronwall argument and we will need to take an expectation of a martingale.  For this we need to introduce the following stopping time $\tau_r$.  Let $q < q_r < 1$ and $\eps_r > 0$ with $\eps_r < \eps/100$.  Let $\tau_i$, $i=1,2,3$ be the stopping time
\begin{align}
\tau_1 &:= \inf \{ t > 0 : \exists i  \in \Chat_{q_r} : | \hatz_i (t ) - \gamma_i (t) | > N^{\eps_r}/N \} \notag\\
\tau_2 &:= \inf \{ t >0 : \exists i \in \Chat_{q_r} : | J_i | > C_J \log (N)\} \notag \\ 
\tau_3 &:= \inf \{ t > 0 : \exists i : |F_i | \geq \log (N) \} .
\end{align}
We set $\tau_r := \tau_1 \wedge \tau_2 \wedge \tau_3 \wedge (10 t_1)$.  We know that $\tau_r = 10 t_1$ with overwhelming probability by the assumptions \ref{item:zrigref} and \ref{item:fiji} of Section \ref{sec:ubref}.

Define now $v_k(t) = \phi_k f_k$ and $F = \sum_k v^2_k (t) \1_{ \{ \tau_r > 0 \} }$.  By the same calculation as in \cite{que} we obtain
\begin{align}
\d F (t) &= - \frac{1}{2} \sum_{ (i, j) \in \A_3 } \frac{1}{N} \frac{ ( v_k - v_j )^2}{ (\hatz_k - \hatz_j )^2 } \d t \label{eqn:dissip} \\
&- \sum_{(j, k) \in \A_3} \frac{1}{N} \frac{1}{ ( \hatz_j - \hatz_k )^2 } \left[ \frac{ \phi_k }{ \phi_j } + \frac{ \phi_j }{ \phi_k } -2 \right] v_k v_j \d t \label{eqn:fserr1} \\
&+\sum_k \nu v_k^2 \psi_k'  \d ( \hatz_k - \tilg_a ) \label{eqn:fserr2}\\
&+ \sum_k v_k^2 \left( \nu^2 ( \psi_k' )^2 + \nu \psi''_k \right) \frac{ \d t}{N} \label{eqn:fserr3}.
\end{align}
We now deal with each term individually, applying Gronwall at the end of the proof.  In the remainder of the argument we work on times $t < \tau_r$.  We start with \eqref{eqn:fserr1}.  Fix $q_5$ satisfying $q_4 < q_5 < q$.  By rigidity and choice of $\psi$ we have that the term
\beq
\frac{\phi_k }{ \phi_j } + \frac{ \phi_j }{ \phi_k } -2, \quad (j, k) \in \A_3
\eeq
vanishes unless $j, k \in \Chat_{q_5}$.  In this case by rigidity and the fact that $| \psi'| \leq 1$ we have that
\beq
\left| \frac{\phi_k }{ \phi_j } + \frac{ \phi_j }{ \phi_k } -2 \right| \leq  C \nu^2 | \hatz_j - \hatz_k |^2
\eeq
as long as we choose $\nu$ so that $
\nu \ell_3 \leq C N $. 
Hence,
\beq
\left| \sum_{(j, k) \in \A_3} \frac{1}{N} \frac{1}{ ( \hatz_j - \hatz_k )^2 } \left[ \frac{ \phi_k }{ \phi_j } + \frac{ \phi_j }{ \phi_k } -2 \right] v_k v_j  \right| \leq C \frac{ \nu^2}{N} \ell_3 \sum_k v_k^2.
\eeq
Above we used the fact that the cardinality of the set $\{ j : (j, k) \in \A_3 \}$ is bounded by $C \ell_3$ if $k \in \Chat_{q_5}$.
The Ito terms \eqref{eqn:fserr3} are bounded by
\beq
\left| \sum_k v_k^2 \left( \nu^2 ( \psi_k' )^2 + \nu \psi''_k \right) \frac{ \d t}{N} \right| \leq C \left( \frac{ \nu^2}{N}+ \frac{ \nu}{\ell_4} \right) \sum_k v_k^2.
\eeq
We now deal with the terms \eqref{eqn:fserr2}.  By rigidity we have $\psi'_k = 0$ if $k \notin \Chat_{q_5}$.  We can therefore assume $k \in \Chat_{q_5}$.  We fix a $ \delta_2 >0$ s.t.
\beq
\delta_2 < \om_{\ell,3}.
\eeq
From the definition of the $\zhat_k$ process  and the definition of $\tau_r$ we see that we can write for $k \in \Chat_{q_5}$,
\beq \label{eqn:fserr4}
\d ( \hatz_k - \tilg_a )  = \sum_{ |j - k | \leq N^{\delta_2}} \frac{1}{N} \frac{1}{ \hatz_k - \hatz_j } \d t + X_t \d t+ \sqrt{ \frac{2}{N} } \d B_k
\eeq
where we have the bound $
|X_t | \leq C \log N.
$
The first term on the RHS of \eqref{eqn:fserr4} corresponds to 
\begin{align}
\nu \sum_k v_k^2 \psi'_k \sum_{ |j-k| \leq N^{\delta_2} } \frac{1}{N} \frac{1}{ \hatz_k - \hatz_j } &= \frac{\nu}{2} \sum_{ |j-k | \leq N^{\delta_2} } \frac{ \psi'_k}{N} \frac{ v_k^2 - v_j ^2}{ \hatz_k - \hatz_j } + \frac{ \nu}{2} \sum_{ |j-k| \leq N^{\delta_2}} \frac{v_k^2}{N} \frac{ \psi'_j - \psi'_k}{ \hatz_k - \hatz_j } \label{eqn:fserr5}
\end{align}
The second term of \eqref{eqn:fserr5} is bounded by
\beq
\left|  \frac{ \nu}{2} \sum_{ |j-k| \leq N^{\delta_2}} \frac{v_k^2}{N} \frac{ \psi'_j - \psi'_k}{ \hatz_k - \hatz_j }  \right| \leq C \frac{ \nu N^{\delta_2}}{\ell_4} \sum_k v_k^2.
\eeq
We use the Schwarz inequality to bound the first term of \eqref{eqn:fserr5} by
\beq
\left| \frac{\nu}{2} \sum_{ |j-k | \leq N^{\delta_2} } \frac{ \psi'_k}{N} \frac{ v_k^2 - v_j ^2}{ \hatz_k - \hatz_j } \right| \leq \frac{1}{100} \sum_{|j-k| \leq N^{ \delta_2}} \frac{ (v_k - v_j)^2}{ N ( \hatz_k - \hatz_j )^2} + \frac{ C \nu^2}{N} N^{\delta_2} \sum_k v_k^2.
\eeq
The first term on the RHS is absorbed into the term \eqref{eqn:dissip}. Collecting everything we have proven that under the assumption $ \nu \ell_3 \leq C N$,
we have
\beq \label{eqn:ftest}
\del_t \ee [ F (t) ] \leq  C \left( \frac{ \nu^2 \ell_3}{N} + \frac{ \nu N^{\delta_2}}{\ell_4} + \nu \log (N) \right) \ee [ F (t) ].
\eeq
We can take $\ell_4 = \ell_3$ and $\nu = N / ( \ell_3 N^{\eps/2})$.  Then by Gronwall we see that for $t \leq \ell_3/N$ we get
\beq
\ee [ F (t) ] \leq C \ee [ F(0) ] \leq C
\eeq
where the second inequality follows from rigidity, the definition of $\tau$ and the initial condition $f_k(0) = \delta_{ak}$. By construction, for any $\eps >0$ we have that if $j \leq a - N^{\om_{\ell,3}+\eps}$,
\beq
\psi ( \hatz_j - \tilg_a ) > \frac{c}{N} \min\{ N^{\om_{\ell,3}+\eps}, N^{\om_{\ell,3}+\delta_1} \} = \frac{c}{N} N^{\om_{\ell,3}+\eps}
\eeq
and hence
\beq
\nu\psi ( \hatz_j - \tilg_a ) > c N^{\eps/2}.
\eeq
We conclude the claim for $b \leq a - N^{\om_{\ell,3}-\eps}$ and $a \in \Chat_{q_4}$ from Markov's inequality.  For $b > a+ N^{\om_\ell+\eps}$ the argument is similar; one just replaces $\psi$ by $-\psi$.  

We now consider the case $a \notin \Chat_{q_4}$.  Recall that we assumed $a \geq0$.   The argument is identical except one considers, instead of $\psi$ above, 
\beq
\varphi (x) := \psi (x-\tilg_d (t, \alpha) )
\eeq
where $d$ is an index chosen in the following way.  If $d_1>0$ is the largest index in $\Chat_{q_3}$ and $d_2>0$ is the largest index in $\Chat_{q_4}$ (recall $q_3 < q_4$) then $d = (d_1+d_2)/2$.   One then defines
\beq
\varphi_k := \varphi (\hatz_k), \qquad \phi_k := \e^{ \nu \varphi_k/2}, \qquad v_k = \e^{ \nu \varphi_k/2} f_k, \qquad F(t) = \sum_k v_k^2.
\eeq
With this choice rigidity implies that $\varphi'_k = 0$ for $k \notin \Chat_{q_4}$.  Rigidity also implies that the term 
\beq
\frac{\phi_k }{ \phi_j } + \frac{ \phi_j }{ \phi_k } -2 , \qquad (j, k) \in \A_3
\eeq
vanishes unless both $j, k \in \Chat_{q_4}$.  With these considerations the argument can proceed exactly as above.  We  again arrive at \eqref{eqn:ftest} and choose $\ell_4 = \ell_3$ and $\nu = N / ( \ell_3 N^{\eps/2})$.  We see that for $k \in \Chat_{q_3}$ we have
\beq
\psi ( \hatz_k - \tilg_d ) \geq c N^{ \delta_1 + \om_\ell}/N
\eeq
and so $\nu \psi (\hatz_k - \tilg_d ) \geq c N^{\eps/2}$.  We conclude as before.  Note that now we only need that $\psi( \zhat_a - \tilg_d ) < 0$ which follows by the ordering of particles and rigidity to satisfy
\beq
\ee [ F(0) ] \leq C.
\eeq
  \qed
  
  \noindent{\bf Proof of Theorem \ref{thm:finitespeed}}.  For notational simplicity we set $u=0$.  Let $\eps$ and $q_3$ be as in the statement of Theorem \ref{thm:finitespeed}. Wlog, we can assume that $\om_{\ell,3} + \eps < \om_0/2$.   We can assume that the estimates of Lemma \ref{lem:prefs} hold for $q_4$ satisfying $q_3 < q_4 < q$ and $\eps' = \eps/2$.  For any $i$ we can write
  \beq
  \UBth_{bi} (0, t)= \sum_j \UBth_{bj} (s, t) \UBth_{ji} (0, s) \geq \UBth_{ba} (s, t) \UBth_{ai} (0, s).
  \eeq
  We just need to find an $i$ s.t. the LHS is bounded above and $\UBth_{ai} (0, s)$ is bounded below.  Fix $q_5$ satisfying $q_3 < q_5 < q_4$.  As before, it suffices to assume $ a\geq 0$ and to consider the following two cases.
  \begin{enumerate}
  \item $a \in \Chat_{q_5}$, and $|b-a| > N^{\om_{\ell,3} + \eps }$.
  \item $a \notin \Chat_{q_5}$ and $b \in \Chat_{q_3}$ or $b <0$.
  \end{enumerate}
  Let us first consider the case $a \in \Chat_{q_5}$.  Since the estimates of Lemma \ref{lem:prefs} hold, and $a \in \Chat_{q_5}$, we have that
  \beq
  \UBth_{ai} (0, s) \leq \frac{1}{ N^{D_2}}, \qquad |i - a | > N^{\om_{\ell_3} + \eps/2}.
  \eeq
  Since
  \beq
  \sum_i \UBth_{ai} (0, s) = 1,
  \eeq
 this implies that there is an $i_0$ s.t. $|i_0 - a | \leq N^{\om_{\ell,3} + \eps/2}$ and $\UB_{a i_0} (0, s) \geq N^{-1}$.  Moreover, $i_0 \in \Chat_{q_4}$. Then since $| a - b | > N^{\om_{\ell,3} + \eps }$ we see that $|i_0 - b | > N^{\om_{\ell,3} + \eps/2}$.  Hence,
 \beq
 \UBth_{bi_0} (0, t) \leq \frac{1}{ N^{D_2}}.
 \eeq
  Therefore,
  \beq
  \UBth_{ba} (s, t) \leq \frac{1}{ N^{D_2-1}}.
  \eeq
  Let us now consider the case $a \notin \Chat_{q_5}$ and $b \in \Chat_{q_3}$ or $b <0$.  Wlog we can take $a>0$.   Fix $q_6$ s.t. $q_3 < q_6 < q_5$.  If $i$ is such that either $i \in \Chat_{q_6}$ or $i \leq 0$, then $\UBth_{ai} (0, s) \leq N^{-D_2}$.  Hence, there is an $i_0$ s.t. $\UBth_{ai_0 } \geq N^{-1}$ and $i_0 >0$ and $i_0 \notin \Chat_{q_6}$.  But then since $b \in \Chat_{q_3}$ or $b<0$ we get that
  \beq
  \UBth_{b i_0} (0, t) \leq \frac{1}{ N^{D_2}}
  \eeq
  and this yields the claim as before.  \qed

\subsection{Kernel estimate}
In this section we prove Lemma \ref{lem:prof1}.  It is split into two parts, an energy estimate and a Duhamel expansion.
\subsubsection{Energy estimate}
  Let $\B$ be  as in Section \ref{sec:shortrange}.  In this subsection our goal is to prove the following energy estimate for $\UB$.    The argument is very similar to that in \cite{Gap}.  The major difference is that in the duality part of Nash's argument we have to be a little careful with the support of the functions, as we do not know that rigidity holds for all particles $i$.  To compensate for this we use the finite speed estimates from the previous section.

We recall the semigroup $\UB$ for the short-range operator $\B$ associated with in short-range set $\A_{q_*}$ with parameters $q_*$ and $\om_\ell$ from Section \ref{sec:ubref}.
\bel \label{lem:energ}
Fix $0 < q_3 < q_*$.  Let $a \in \Chat_{q_3}$.  Let $\eps >0, D>0$.  Assume that Section \ref{sec:ubref} \ref{item:rho2}-\ref{item:fiji} hold.  There is an event $\F_\alpha$ which holds with probability $\pp [ \F_\alpha ] \geq 1 - N^{-D}$ on which the following estimates hold.  For every $0 \leq s \leq t \leq 10 t_1$ and every $i$,
\beq
\UB_{ia} (s, t) \leq \frac{ N^{\eps}}{ N (t-s)}.
\eeq
\eel
\proof 
Recall from \cite{Gap} the inequality
\beq
||u||_4^4 ||u||_2^{-2} \leq C \sum_{i, j \in \zz} \frac{ (u_i - u_j )^2}{ (i-j)^2}
\eeq
which holds for sequences $u : \zz \to \rr$.   Fix $q_3 < q_5 < q_*$.  Let $g (u) = \UB (s,u) g (s)$ where $g(s)$ has support only in the indices $i \in \Chat_{q_5}$ and
\beq
||g(s)||_1=1.
\eeq
Extending $g (u)$ by $0$ to all of $\zz$ we apply the above inequality to $g(u)$ and obtain, with overwhelming probability,
\begin{align}
||g (u) ||_4^4 ||g (u) ||_2^{-2} & \leq C \sum_{ i, j \in \zz} \frac{ ( g_i (u) - g_j (u) )^2}{ (i-j)^2} 
\leq C \sum_{ |i-j| \leq \ell, i, j \in \Chat_{q_*} } \frac{ (g_i (u) - g_j (u) )^2}{ (i-j)^2} + C \frac{N}{\ell} ||g (u) ||_2^2 + \frac{ 1}{N^D} \notag\\
&\leq N^\eps \langle g (u) , \B g (u) \rangle + C ||g (u)||_2^2 \frac{N}{\ell} + \frac{1}{N^{D}}.
\end{align}
We used the fact that for $0 \leq u_1 \leq u_2 \leq t_1$,
\beq \label{eqn:ubtijbd}
|\UB_{ij} (u_1, u_2) | \leq \frac{1}{N^D}, \qquad \mbox{for }(i, j) \in \{ i, j :  i \in \Chat_{q_5}, j \notin \Chat_{q_*} \} \cup \{i, j :    j \in \Chat_{q_5}, i \notin \Chat_{q_*} \}, 
\eeq
which holds due to Theorem \ref{thm:finitespeed}, with overwhelming probability.
Therefore
\beq
\del_t ||g (u) ||_2^2 = - \langle g(u), \B g(u) \rangle \leq - N^{-\eps} ||g (u) ||_2^4 +C  ||g (u)||_2^2  \frac{N}{\ell} + \frac{1}{N^{D}}.
\eeq
Above, we used the Holder inequality $||g(u)||_2\leq ||g(u)||_1^{1/3} ||g(u)||_4^{2/3} \leq ||g(s)||_1^{1/3} ||g(u)||_4^{2/3}= ||g(u)||_4^{2/3}$ for $u \geq s$.   Since $t_1 N/\ell \ll 1$ we see that this implies
\beq
||g(t) ||_2 \leq C N^\eps (t-s)^{-1/2}.
\eeq
We have therefore proven that 
\beq
|| \UB(s, t) g ||_2 \leq  (t-s)^{-1/2} N^\eps ||g||_1
\eeq
for every $g$ supported in $i \in \Chat_{q_5}$.

The above argument clearly also applies to $(\UB(s, t))^T$ (in particular note that the bound \eqref{eqn:ubtijbd} is symmetric in $i$ and $j$). 

Fix now $q_4$ satisfying $q_3 < q_4 < q_5$.   Let now $f$ be supported in $i \in \Chat_{q_4}$ and have $||f||_2 =1$.  Then we have with overwhelming probability,
\beq
|| \UB(s, t) f||_\infty = \sup_{||g||_1=1} \langle \UB(s, t) f, g \rangle \leq \sup_{ ||g||_1=1, g_i = 0, i \notin \Chat_{q_5}} \langle \UB(s, t) f, g \rangle + \frac{1}{N^D} \label{eqn:energ1}
\eeq
where we used the fact that $|( \UB(s, t) f)_i | \leq N^{-D}$ for any $D>0$ for $i \notin \Chat_{q_5}$ which holds due to Theorem \ref{thm:finitespeed}.  For $g$ as in the RHS of \eqref{eqn:energ1} we have,
\beq
\langle \UB(s, t) f, g \rangle  = \langle  f, (\UB(s, t))^T g \rangle \leq ||f||_2 || (\UB(s, t))^T g ||_2 \leq N^{\eps} (t-s)^{-1/2}.
\eeq
This proves that for $f$ supported in $i \in \Chat_{q_4}$ we have
\beq
|| \UB(s, t) f||_\infty \leq N^{\eps} (t-s)^{-1/2} ||f||_2.
\eeq
Lastly, let $f$ have support in $\Chat_{q_3}$ and $||f||_1 = 1$.  Applying what we have proved above we have with overwhelming probability, with $u = s+(t-s)/2$,
\begin{align}
|| \UB (s, t) f ||_\infty &= ||  \UB (u, t)\UB (s, u) f ||_\infty  \leq  || \UB (u,t) \1_{ \Chat_{q_4} }\UB (s, u) f ||_\infty + \frac{1}{N^D} \notag\\
 & \leq C\frac{ N^\eps}{(t-s)^{1/2} } || \1_{ \Chat_{q_4}} \UB (s, u) f ||_2 + \frac{1}{N^D} \leq C\frac{ N^\eps}{(t-s)^{1/2} } || \UB (s, u)f ||_2 + \frac{1}{N^D} \notag\\
 &\leq C \frac{ N^{2 \eps}}{ t-s} ||f||_1.
\end{align}
This yields the claim. \qed

\subsubsection{Duhamel expansion}\label{sec:duham}
We want to prove the following.
\bel \label{lem:duhamell}
Let $D>0$ and $\eps_1, \eps_2, \eps_3 >0$ and $0 < q_3 < q_*$.  Assume that Section \ref{sec:ubref} \ref{item:rho2} - \ref{item:fiji} hold.  Let $\ell_3$ be a scale satisfying
\beq
N^{\eps_3} \leq \ell_3 \leq N.
\eeq
 There exists an event $\F_\alpha = \F_\alpha (\ell_3)$ with probability $\pp [ \F_\alpha ] \geq 1 - N^{-D}$ on which the following estimates hold.  For every $0 \leq s \leq t \leq 10  t_1$ which satisfy $N (t-s) \leq N^{-\eps_1} \ell_3$ and indices $a$ and $p$ satisfying $a, p \in \Chat_{q_3}$ and 
\beq
|a-p| \geq N^{\eps_1} \ell_3
\eeq
we have
\beq
\UB_{ap} (s, t) \leq  N^{\eps_2} \frac{  N(t-s) + 1}{ (a-p)^2}.
\eeq
\eel
By taking a sequence of at most $N$ scales $\ell_3 = N^\eps, N^\eps+1, \cdots $ we easily see that Lemma \ref{lem:duhamell} implies the following estimate.
\bel \label{lem:duham}
Let $D >0$ and $\eps_1, \eps_2$ and $0 < q_3 < q_*$.  There is an event $\F_\alpha$ with probability $\pp [ \F_\alpha ] \geq 1 - N^{-D}$ on which the following holds.  For every $0 \leq s \leq t \leq 10 t_1$ and pair of indices $a, p$ satisfying $a, p \in \Chat_{q_3}$ and
\beq
|a-p | \geq N^{\eps_1} \left[ 1 \vee (N (t-s) ) \right]
\eeq
we have
\beq
\UB_{ap} (s, t) \leq N^{\eps_2} \frac{ N (t-s) + 1 }{ (a-p)^2}.
\eeq
\eel

\noindent{\bf Proof of Lemma \ref{lem:duhamell}}. We can work under the assumption that the estimates of Lemma \ref{lem:rig} hold.  We assume $a < p$.  The proof for $a > p$ is identical.  Write 
 \beq
 \B = \S + \R
 \eeq
where $\S =\B_3$ is defined as at the start of Section \ref{sec:expfs} with $\ell_3$ as in the statement of Lemma \ref{lem:duhamell},  and $\R$ is defined implicitly by the above equality. 
For notational simplicity we set $s=0$, but this has no effect on the proof.  For each $M$ we have
\begin{align}
\UB (0, t) &= \US (0, t) + \sum_{i=1}^M \int_{0 \leq s_1 \leq \cdots s_i \leq t } \US (s_i, t) \R \US (s_{i-1}, s_i) \cdots \R \US (0, s_1) \d s_1 \cdots \d s_i \notag\\
&+ \int_{0 \leq s_1 \cdots \leq s_{M+1} \leq t } \UB (s_{M+1}, t) \R \US (s_{M}, s_{M+1} ) \cdots \R \US (0, s_1) \d s_1 \cdots \d s_{M+1} \notag\\
&=: \US (0, t) + \sum_{i=1}^M \int_{0 \leq s_1 \cdots \leq t } A_i \d s_1 \cdots \d s_{i} + B_{M+1}.
\end{align}
By Theorem \ref{thm:finitespeed} we have $\US_{ap} (0, t) \leq N^{-100}$.  We next deal with the term $B_{M+1}$.  Using the estimates of Lemma \ref{lem:rig}, it is easy to check that for every $i$ we have
\beq
\sum_j | \R (i, j) | \leq C \frac{N}{\ell_3}
\eeq
and so 
\beq
|| \R ||_{\ell^p \to \ell^p } \leq C \frac{N}{\ell_3}
\eeq
for every $p$.  Hence,
\beq
|B_{M+1} (i, j) | \leq C_M N^2 \left( \frac{ N t } { \ell_3 } \right)^M \leq \frac{1}{N^{100}}
\eeq
for $M$ a large constant depending only on $\eps_1$.  By Lemma \ref{lem:aibd} below we see that
\beq
\left| \int_{0 \leq s_1 \cdots \leq t } A_i (a, p) \d s_1 \cdots \d s_{i} \right| \leq C  \frac{Nt}{ (a-p)^2} \left( \frac{ Nt}{\ell_3} \right)^{i-1}
\eeq
Note that we used that $i \leq M$ and so $a \leq L_i$.  To estimate the integral we used the bound \eqref{eqn:fsrev1} on the integrand and that the region of integration has size $C t^i$.  
This concludes the proof. \qed
\bel \label{lem:aibd} Let $A_i$, $\R$, etc. be as above.  Let $a<p$, $M$ be as above.  Define $L_i$ as
\beq
L_i := p - i\frac{ p-a}{100 M}.
\eeq
Then for $j \leq L_i$ we have
\beq \label{eqn:fsrev1}
A_i (j, p) \leq  C_M \frac{ N^i}{ (p-a)^2 \ell_3^{i-1} }.
\eeq
\eel
\remark The proof will be by induction on $i$, using the estimate from the case $i-1$.  We rewrite $A_{i+1}(j, p)$ in terms of a sum which involes matrix elements $A_i(l, p)$.  If $j \leq L_{i+1}$ and $ l \geq L_{i}$ then by definition $|l-j| \geq c |p-a|$ and so we can apply Theorem \ref{thm:finitespeed} to simplify the sum.  In the case that $l \leq L_{i}$ we can use the induction assumption. 
\proof The proof is by induction on $i$.  Define 
\beq
R:= N^{\eps_1/4} \ell_3.
\eeq
Fix $q_3 < q_5 < q_*$.  Theorem \ref{thm:finitespeed} implies that for $(a, b)$ s.t. $|a-b| > R$  and either $a$ or $b \in \Chat_{q_5}$, we have with overwhelming probability
\beq \label{eqn:fsduham}
| \US_{a b} (s, t) | \leq \frac{1}{N^D}
\eeq
for any $D>0$, and $0 \leq s \leq t \leq \ell_3$.  We have that 
\beq \label{eqn:bb1}
A_1 (j, p) = \sum_{k, l} \US_{j k } \R_{kl} \US_{l p}. 
\eeq
Since the matrix elements of $\R$ are bounded by (say) $N^2$, it suffices by \eqref{eqn:fsduham} to consider only terms in \eqref{eqn:bb1} that satisfy $|l-p| \leq R$.  
Since $j \leq L_1$ we can apply Theorem \ref{thm:finitespeed} and ignore terms satisfying
\beq
k \geq L_1+ R.
\eeq
  For such $l$ and $k$, using the fact that $j \leq L_1$ and $R \ll |p-a|$ we see that $|l-k| \geq c (p-a)$ and so 
\beq
|A_1 (j, p) | \leq \sum_{k, l} \US_{j, k} \frac{ N}{(p-a)^2} \US_{l, p} \leq \frac{N}{(p-a)^2} ,
\eeq
where in the second inequality we used the $\ell^p \to \ell^p$ boundedness of $\US$.   For $\mathcal{R}$ we used in the first inequality that for $|l - k | \geq c (p-a)$,
\beq
\mathcal{R} (k, l) = \frac{1}{N} \frac{1}{ ( \hatz_k - \hatz_{l} )^2} \leq \frac{CN}{(p-a)^2}
\eeq
with overwhelming probability. 
 Now for $A_{i+1}$ and $j \leq L_{i+1}$ we write
\beq \label{eqn:duhai1}
A_{i+1} (j, p) = \sum_{k}\sum_{  l \geq L_i} \US (j, k) \R (k, l) A_i (l, p) + \sum_{ k}\sum_{ l \leq L_i } \US (j, k) \R (k, l) A_i (l, p)
\eeq
We start with estimating the first sum.  By Theorem \ref{thm:finitespeed} we can restrict the $k$ summation to terms satisfying
\beq
k \leq L_{i+1} + R.
\eeq
Then since $l \geq L_i$ we get that $|k-l| \geq c| p-a|$ and so
\begin{align}
\left| \sum_{k}\sum_{  l \geq L_i} \US (j, k) \R (k, l) A_i (l, p)  \right| &\leq C \frac{ N}{ (p-a)^2} \sum_{k, l} \US (j, k) A_i (l, p) \leq C\frac{ N}{ (p-a)^2} || \US||_{\infty \to \infty} ||A_{i-1}||_{1\to1 } \notag\\
& \leq C \frac{N}{ (p-a)^2} \frac{ N^i}{ \ell_3^{i} }.\label{eqn:duhamai4}
\end{align}
For the second sum in \eqref{eqn:duhai1} we apply the induction assumption  and obtain
\begin{align}
\left| \sum_{ k}\sum_{ l \leq L_i } \US (j, k) \R (k, l) A_i (l, p) \right| &\leq \sum_{ k}\sum_{ l \leq L_i } \US (j, k) \R (k, l) C \frac{ N^i}{ \ell^{i-1} (p-a)^2 } \leq C || \US \R ||_{\infty \to \infty} \frac{ N^i}{ \ell_3^{i-1} (p-a)^2 } \notag\\
&\leq C \frac{N}{ \ell_3} \frac{ N^i}{ \ell_3^{i-1} (p-a)^2 } .
\end{align}
This completes the proof. \qed
\subsection{Profile of random kernel}
Combining Lemma \ref{lem:energ} and \ref{lem:duham} we easily obtain the following.
\bet \label{thm:sec4prof} Fix $0 < q_3 < q_*$.  Let $D$ and $\eps >0$.  Assume that Section \ref{sec:ubref} \ref{item:rho2}-\ref{item:fiji} hold.  There is an event $\F_\alpha$ with probability $\pp [ \F_\alpha ] \geq 1 - N^{-D}$ on which the following estimate holds.  We have for every $0 \leq s \leq t \leq t_1$ and $j, k \in \Chat_{q_3}$,
\beq
\UB_{ij} (s, t) \leq N^\eps \frac{|t-s|\vee N^{-1}}{ (( i-j)/N)^2 +( (t-s)\wedge N^{-1})^2 }
\eeq
\eet


\section{Regularity of hydrodynamic equation}\label{sec:hydro}
In this section we analyze the limiting equation \eqref{eqn: Ketadef}, deriving in particular the estimates in Lemma \ref{lem: Ketalemma}. We introduce the kernel
\begin{equation}
 K_{\eta_1} f(x) :=\int_{|x-y|\le \eta_1} \frac{f(x)-f(y)}{(x-y)^2}\mathrm{d}y. \label{eqn: K}
\end{equation}
The integral is understood in a principal value sense.
 
By \cite[Theorem 1.4]{CKK}, for the heat kernel corresponding to $K_1$, we have the estimates:
\begin{equation}\label{eqn: pt1smalldist}
p_1(t,x,y)\asymp \frac{1}{t} \wedge \frac{t}{|x-y|^2}, \quad 0< t\le 1,
\end{equation}
for $|x-y|<1$ and
\begin{equation}
\label{eqn: pt1largedist}
p_1(t,x,y) \asymp e^{-c|x-y|\log\frac{|x-y|}{t}}, \quad 0< t\le 1,
\end{equation}
for $|x-y|\ge 1$.

If $f(t,x)$ satisfies $\partial_t f(t,x) = K_1 f(t,x)$, then $g(t,x)=f(t/\eta_\ell, x/\eta_\ell )$ satisfies $\partial_t g(t,x) = K_{\eta_\ell} g(t,x)$, so from \eqref{eqn: pt1smalldist}, \eqref{eqn: pt1largedist} we obtain:
\begin{equation} \label{eqn: Ketadiag}
p_{\eta_\ell}(t,x,y)=(1/\eta_\ell)p_1(t/\eta_\ell ,x/\eta_\ell,y/\eta_\ell)\asymp \frac{1}{t} \wedge \frac{t}{|x-y|^2}, \quad 0\le t\le \eta_\ell,
\end{equation}
when $|x-y|< \eta_1$ and
\begin{equation} \label{eqn:hydro8}
p_{\eta_\ell}(t,x,y) \asymp \frac{1}{\eta_\ell}e^{-c|x-y|/\eta_\ell \log \frac{|x-y|}{t}},
\end{equation}
for $t\le \eta_\ell$ and $|x-y|\ge \eta_\ell$. \eqref{eqn: Keta-shortrange} and \eqref{eqn: Keta-expdecay} follow directly from this. In the rest of the proof, we will also use that the upper bound in \eqref{eqn: Ketadiag} remains true for all $x,y$, $t\le \eta_\ell$ (See \cite[Proposition 2.2, i)]{CKK2}.):
\begin{equation}\label{eqn: Ketadiag-global}
p_{\eta_\ell}(t,x,y)\le \frac{C}{t} \wedge \frac{t}{|x-y|^2}, \quad 0< t\le \eta_\ell.
\end{equation}

We will first estimate the spatial derivatives of $p_{\eta_\ell} (t, x, y)$.  Letting $\Lambda=(-\Delta)^{1/2}$ be the half-Laplacian with kernel
\[\int \frac{f(x)-f(y)}{(x-y)^2}\mathrm{d}y.\]
The corresponding heat kernel is 
\begin{equation}
\label{eqn: Cauchy-kernel}
p_\Lambda(t,x,y):= (e^{t\Lambda}\delta)(x) = \frac{1}{\pi}\frac{t}{t^2+(x-y)^2}.
\end{equation}
We have the Duhamel formula
\begin{align}
p_{\eta_\ell}(t,x,y)&=(e^{tK_{\eta_\ell}}\delta_y)(x)=p_\Lambda(t,x,y) +\int_0^{t} e^{(t-s)\Lambda}(K_{\eta_\ell}-\Lambda)f_{\eta_\ell}(s)\,\mathrm{d}s. \label{eqn: duhamelKeta2}
\end{align}
Here we have denoted for simplicity
\[f_{\eta_\ell}(x, s) := p_{\eta_\ell}(s,x,y).\]
Since $e^{t\Lambda}$ is smoothing, the equality \eqref{eqn: duhamelKeta2} shows in particular that $p_\eta(t,\cdot,y)$ is in $C^\infty(\mathbb{R})$. 

We will estimate the first spatial derivative by differentiating \eqref{eqn: duhamelKeta2}. The operators $\Lambda$, $K_{\eta_\ell}$ are translation invariant, so for general $k$ we have
\[\partial^k_x\int_0^{t} e^{(t-s)\Lambda}(K_{\eta_\ell}-\Lambda)f_{\eta_\ell}(s)\,\mathrm{d}s = \int_0^{t} e^{(t-s)\Lambda}\partial^k_x (K_{\eta_\ell}-\Lambda) f_{\eta_\ell}(s)\,\mathrm{d}s.\]
By direct computation we have
\[(K_{\eta_\ell}-\Lambda) f(x)= \frac{2}{\eta_\ell}f(x)-\int_{|x-z|\ge \eta_\ell}\frac{f(z)}{(x-z)^2}\,\mathrm{d}z.\]
Differentiating, we find,
\begin{equation} \label{eqn: dkKeta}
\partial^k_x (K_{\eta_\ell}-\Lambda)f(x) = \frac{2}{\eta_\ell}f^{(k)}(x)+(-1)^{k+1} (k!)\int_{|x-z|\ge \eta_\ell}{f(z)}{(x-z)^{-k-2}}\,\mathrm{d}z.
\end{equation}
We will first derive an estimate on the second term.  In order to estimate the first we later derive a H\"older estimate for $p_{\eta_\ell} (t, x, y)$.


We first derive the following estimate.
\bel \label{lem:gbd}
We have,
\beq \label{eqn:hydro1}
  \int_{ |x-z| \geq \eta_\ell}  \frac{ | \fl (z) |}{ |x-z|^{k+2} } \d z \leq \frac{C}{ \eta_\ell^{k+1}}\left( \frac{s}{ s^2 + (x-y)^2} +  \frac{\eta_\ell}{ \eta_\ell^2 + (x-y)^2} \right).
\eeq
for $0 \leq  s \leq t \leq \eta_\ell$.
\eel
\proof
 By \eqref{eqn: Ketadiag-global} we can estimate
\begin{align}
\int_{ |x-z| \geq \eta_\ell}  \frac{ | \fl (z) |}{ |x-z|^{k+2} } \d z| &\leq C \int_{ |z| \geq \eta_\ell, |z| \leq \frac{1}{2} |x-y|} \frac{1}{ |z|^{k+2}} \frac{ s}{ s^2 + (z - (x -y))^2 } \d z  \notag\\
&+ C \int_{ |z| \geq \eta_\ell, |z| \geq \frac{1}{2} |x-y|} \frac{1}{ |z|^{k+2}} \frac{ s}{ s^2 + (z - (x -y))^2 } \d z
\end{align}
For the first term we have
\beq
 \int_{ |z| \geq \eta_\ell, |z| \leq \frac{1}{2} |x-y|} \frac{1}{ |z|^{k+2}} \frac{ s}{ s^2 + (z - (x -y))^2 } \d z   \leq C \frac{s}{s^2 + (x-y)^2} \int_{ |z| \geq \eta_\ell} \frac{1}{|z|^{k+2}} \leq  \frac{C}{\eta_\ell^{k+1}} \frac{s}{s^2 + (x-y)^2} .
\eeq
For the second term we first consider the case $|x-y | \geq \eta_\ell/2$.  We have,
\beq
\int_{ |z| \geq \eta_\ell, |z| \geq \frac{1}{2} |x-y|} \frac{1}{ |z|^{k+2}} \frac{ s}{ s^2 + (z - (x -y))^2 } \d z \leq \frac{C}{ |x-y|^{k+2} } \int \frac{s}{ s^2 + z^2} \d z  \leq \frac{C}{ |x-y|^{k+2} } \leq \frac{C}{\eta_\ell^{k+1}} \frac{ \eta_\ell}{ \eta_\ell^2 + (x-y)^2}.
\eeq
In the case $|x-y| \leq \eta_\ell/2$ we have $|z - (x-y) | \geq c  |z|$ for $|z| \geq \eta_\ell$ and so
\beq
\int_{ |z| \geq \eta_\ell, |z| \geq \frac{1}{2} |x-y|} \frac{1}{ |z|^{k+2}} \frac{ s}{ s^2 + (z - (x -y))^2 } \d z \leq \frac{C s}{ \eta_\ell^{k+1}} \int_{ |z| \geq \eta_\ell} \frac{1}{ |z|^3} \d z \leq C \frac{s}{ \eta_\ell^{k+3}} \leq \frac{C}{ \eta_\ell^{k+1}} \frac{s}{ s^2 + (x-y)^2}
\eeq
because $\eta_\ell \geq s$ and $\eta_\ell \geq c |x-y|$. \qed

We now derive the following H\"older estimate for $p_{\eta_\ell}$.
\bel \label{lem:holder}
For any $0 \leq \alpha < 1$ and $0 \leq t \leq \eta_\ell$ we have
\beq
\left| p_{\eta_\ell} (t, x, y) - p_{\eta_\ell} (t, z, y)  \right| \leq \frac{C |x-z|^\alpha}{ t^{1 + \alpha} }.
\eeq
\eel
\proof We have 
\beq
p_{\eta_\ell} (t, x, y) = p_\Lambda (t, x, y) + \int_0^t ( \e^{ (t-s) \Lambda } g (s ) ) (x) \d s
\eeq
where we denoted $g(s, w) = \left[ ( K - \Lambda ) \fl (s) \right] (w)$. The first term satifies the desired bound so we nned only estimate the second term.  From \eqref{eqn: Ketadiag-global} and \eqref{eqn:hydro1} we have the estimate
\beq \label{eqn:hydro2}
|g (s, w) | \leq \frac{C}{ \eta_\ell} \left( \frac{s}{s^2 + w^2} + \frac{\eta_\ell}{ \eta_\ell^2 + w^2 } \right).
\eeq
  We estimate
\begin{align}
& \left|  ( \e^{ (t-s) \Lambda } g (s ) ) (x)- ( \e^{ (t-s) \Lambda } g (s ) ) (z) \right| \notag\\
\leq & \frac{1}{\pi} \int \left| \frac{t-s}{ (t-s)^2 + (x-w)^2} - \frac{ t-s}{ (t-s)^2 + (z-w)^2} \right| g (s, w) \d w \notag\\
\leq & C (t-s) \int \frac{ |x-z|^\alpha ( |x-w|^{2-\alpha} + |z-w|^{2-\alpha} )}{  ( ( t-s)^2 + (x-w)^2) ( (t-s)^2 + (z-w)^2)} |g (s, w) | \d w \notag\\
\leq &  \frac{ C  |x-z|^\alpha }{ (t-s)^\alpha } \int \frac{ t-s}{ (t-s)^2 + (x-w)^2} |g (s, w) | \d w \notag\\
+ &\frac{ C  |x-z|^\alpha }{ (t-s)^\alpha } \int \frac{ t-s}{ (t-s)^2 + (z-w)^2} |g (s, w) | \d w
\end{align}
Using \eqref{eqn:hydro2} and the bound
\begin{align} \label{eqn:hydro5}
\int \frac{a}{a^2 + w^2} \frac{b}{b^2 +(w-c)^2} \d w \leq  C \frac{ b \vee a}{ ( b \vee a )^2 + c^2 }
\end{align}
we get
\beq
 \left|  ( \e^{ (t-s) \Lambda } g (s ) ) (x)- ( \e^{ (t-s) \Lambda } g (s ) ) (z) \right| \leq \frac{C |x-z|^\alpha}{ (t-s)^\alpha} \frac{1}{t}.
\eeq
The claim follows after integrating in $s$. \qed

We now prove the following. 
\bel
Let  $ 0 < \alpha <1$.    We have,
\beq \label{eqn:hydro6}
\left| \del_x p_{\eta_\ell} (t, x, y) \right| \leq C \left( \frac{1 + | \log \delta | t/\eta_\ell}{ t^2 + (x-y)^2} + \frac{ \delta^\alpha}{t} \right)
\eeq
for any $t \leq \eta_\ell$ and $0 < \delta < 1$.
\eel
\proof We have
\beq
\del_x p_{\eta_\ell}(t, x, y) = \del_x p_\Lambda (t, x, y) - \int_{0}^t \e^{ (t-s) \Lambda} \del_x [ ( K - \lambda ) \fl ](x) \d s.
\eeq
We write
\begin{align} 
 \e^{ (t-s) \Lambda} \del_x [ ( K - \lambda ) \fl ] (x) &= - \int  \left( \del_w  \frac{ t-s}{ (t-s)^2 + (w-x)^2 } \right) \fl (w) \d w \label{eqn:hydro3} \\
&+ \int \frac{t-s}{ (t-s)^2 + (w-x)^2 } g_1 (s, w) \d w \label{eqn:hydro4}
\end{align}
where
\beq
g_1 (s, w) := \int_{ |w-z| \geq \eta_\ell } \frac{1}{ (w-z)^3} \fl (z) \d z.
\eeq
From \eqref{eqn:hydro1} we have
\beq
|g_1 (s, w) | \leq \frac{C}{ \eta_\ell^2} \left( \frac{s}{s^2 + (w-y)^2} + \frac{ \eta_\ell}{ \eta_\ell^2 + (w-y)^2 }\right).
\eeq
For \eqref{eqn:hydro4} we have
\begin{align}
& \left| \int \frac{t-s}{ (t-s)^2 + (w-x)^2 } g_1 (s, w) \d w \right| \leq   \frac{C}{\eta_\ell^2} \int \frac{t-s}{ (t-s)^2 + (w-(x-y))^2 }  \left( \frac{s}{s^2 + w^2} + \frac{ \eta_\ell}{ \eta_\ell^2 + w^2 }\right) \d w.
\end{align}
Using \eqref{eqn:hydro5} and $t \leq \eta_\ell$ one easily concludes
\beq
 \frac{C}{\eta_\ell^2} \int \frac{t-s}{ (t-s)^2 + (w-(x-y))^2 }  \left( \frac{s}{s^2 + w^2} + \frac{ \eta_\ell}{ \eta_\ell^2 + w^2 }\right) \d w \leq \frac{C}{t} \frac{1}{ t^2 + (x-y)^2}.
\eeq
 For the term \eqref{eqn:hydro3} we first note that
\begin{align*}
 &\int \partial_w \frac{t-s}{(t-s)^2+(w-x)^2} f_{\eta_\ell}(s,w) \,\mathrm{d}w=\int \partial_w \frac{t-s}{(t-s)^2+(w-x)^2} (f_{\eta_\ell}(s,w)-f_{\eta_\ell}(s,x)) \,\mathrm{d}w.
\end{align*}
Hence, using \eqref{eqn: Ketadiag-global} and Lemma \ref{lem:holder} we can estimate
\begin{align}
\left|\int_0^t\int \partial_w \frac{t-s}{(t-s)^2+(w-x)^2} f_{\eta_\ell}(s,w) \,\mathrm{d}w\mathrm{d}s\right| &\leq C \int_0^{t(1-\delta)}\int \frac{(t-s)|x-w|}{((t-s)^2+(w-x)^2)^2} \frac{s}{s^2+(w-y)^2} \,\mathrm{d}w\mathrm{d}s \label{eqn: dxKeta-1}\\
&\ + C \int_{t(1-\delta)}^t \int \frac{(t-s)|x-w|^{1+\alpha}}{((t-s)^2+(w-x)^2)^2} s^{-1-\alpha}\,\mathrm{d}w \mathrm{d}s \label{eqn: dxKeta-2}
\end{align}
For \eqref{eqn: dxKeta-1}, we use \eqref{eqn:hydro5} to obtain
\begin{align*}
\int_0^{t(1-\delta)}\int \frac{(t-s)|x-w|}{((t-s)^2+(w-x)^2)^2} \frac{s}{s^2+(w-y)^2} \,\mathrm{d}w\mathrm{d}s &\leq  \int_0^{t(1-\delta)} \frac{1}{t-s}\frac{t-s}{(t-s)^2+(w-x)^2}\frac{s}{s^2+(w-y)^2}\,\mathrm{d}w\mathrm{d}s\\
\leq C \frac{t}{t^2+ (x-y)^2} \int_0^{ t (1-\delta)} \frac{1}{t-s} &\leq  C | \log{\delta} | \cdot \frac{t}{t^2+(x-y)^2}.
\end{align*}
For \eqref{eqn: dxKeta-2}, we use
\[\frac{|w-x|^{1+\alpha}}{(t-s)^2+(w-x)^2}\le \frac{1}{(t-s)^{1-\alpha}}\]
to obtain
\begin{align*}
\int_{t(1-\delta)}^t \int \frac{(t-s)|x-w|^{1+\alpha}}{((t-s)^2+(w-x)^2)^2} s^{-1-\alpha}\,\mathrm{d}w \mathrm{d}s &\le \int_{t(1-\delta)}^t (t-s)^{-1+\alpha} \int \frac{t-s}{(t-s)^2+(w-x)^2} s^{-1-\alpha}\,\mathrm{d}w\mathrm{d}s\\
&\le \frac{C}{1-\alpha}\frac{\delta^\alpha}{t}.
\end{align*}
This yields the claim. \qed

We now can conclude with estimates of the spatial derivatives of $p_{\eta_\ell} (t, x, y)$.  
\bet  Fix $D_1 >0$ and $D_2 >0$ and $\delta >0$.   Let $\eta_\ell = N^{\om_\ell}/N$ for each $k$ we have
\beq \label{eqn:hydro7}
\left| \del_x^k p_{\eta_\ell} (t,x, y)  \right| \leq \frac{C}{t^k} \frac{t}{t^2 + (x-y)^2 } + N^{-D_2},
\eeq
for $N^{-D_1} \leq t \leq N^{-\delta} \eta_\ell$.  For $|x-y|> N^{\eps} \eta_\ell$ we have
\beq \label{eqn:hydro8}
\left| \del_x^k p_{\eta_\ell} (t,x, y)  \right|  \leq N^{-D_2}.
\eeq
\eet
\proof  The bound \eqref{eqn:hydro7} for $k=1$ follows by taking $\delta = N^{-D}$ in \eqref{eqn:hydro6}.  For $k\geq 2$ we have by the Chapman-Kolmogorov equation and translation invariance:
\[\partial_x^{(k-1)}p_{\eta_\ell}(t,x,y) = \int p(t/2,x,w)p^{(k-1)}(t/2,w,y)\,\mathrm{d}y.\]
Differentiating once more, we have
\begin{equation} \label{eqn: der-chapman}
\partial_x^{(k)}p_{\eta_\ell}(t,x,y) = \int \partial_x p(t/2,x,w)p^{(k-1)}(t/2,w,y)\,\mathrm{d}y.
\end{equation}
The bounds \eqref{eqn:hydro7} for higher $k$ then follow by strong induction and \eqref{eqn:hydro5}.

For \eqref{eqn:hydro8} we use translation invariance and the Chapman-Kolmogorov equation to write for $x>0$,
\begin{align}
\del_x p_{\eta_\ell} (t, x, 0 ) &= \int  \del_w p_{\eta_\ell} (t/2, x, w)  p_{\eta_\ell} (t/2, w, 0) \d w \notag\\
&= \int_{w \leq x/2}  \del_w p_{\eta_\ell} (t/2, x, w)  p_{\eta_\ell} (t/2, w, 0) \d w + \int_{w > x/2}  \del_w p_{\eta_\ell} (t/2, x, w)  p_{\eta_\ell} (t/2, w, 0) \d w \notag\\
&= -\int_{w \leq x/2}   p_{\eta_\ell} (t/2, x, w) \del_w p_{\eta_\ell} (t/2, w, 0) \d w + \int_{w > x/2}  \del_w p_{\eta_\ell} (t/2, x, w)  p_{\eta_\ell} (t/2, w, 0) \d w \notag\\
& + p_{\eta_\ell} (t/2, x, x/2) p_{\eta_\ell} (t/2, x/2, 0).
\end{align}
If $|x| > N^\eps \eta_\ell$ we see that each term is $O (N^{-D})$ using \eqref{eqn:hydro8}.  The higher order derivatives can be handled similarly. \qed

Finally we handle the time derivative.
\bet Fix $D_1, D_2 >0$ and $\delta >0$.  For $N^{-D_1} \leq t \leq N^{-\delta} \eta_\ell$ we have
\beq
\left| \del_t p_{\eta_\ell} (t, x, y) \right| \leq \frac{C}{t^2 + (x-y)^2} + N^{-D_2}.
\eeq
\eet
\proof We write
\begin{align}
\del_t p_{\eta_\ell} (t, x, 0) = \int_{ |x-z| \leq \eta_{\ell}} \frac{ p_{\eta_\ell} (t, x, 0 ) - p_{\eta_\ell} (t, z, 0) }{(x-z)^2} \d z &= \int_{ |x-z| \leq t}  \frac{ p_{\eta_\ell} (t, x, 0 ) - p_{\eta_\ell} (t, z, 0) }{(x-z)^2} \d z \label{eqn:hydro9} \\
&+ \int_{\eta_\ell > |x-z| > t}  \frac{ p_{\eta_\ell} (t, x, 0 ) - p_{\eta_\ell} (t, z, 0) }{(x-z)^2} \d z 
\end{align}
An argument similar to the proof of Lemma \ref{lem:gbd} yields 
\begin{align}
&\left| \int_{\eta_\ell > |x-z| > t}  \frac{ p_{\eta_\ell} (t, x, 0 ) - p_{\eta_\ell} (t, z, 0) }{(x-z)^2} \d z  \right| \leq C \int_{|x-z| > t} \frac{1}{ (x-z)^2} \left( \frac{t}{t^2 + x^2} + \frac{t}{t^2 + z^2 } \right) \d z \leq \frac{C}{t^2 + x^2}.
\end{align}
We now turn to \eqref{eqn:hydro9}.  Note that
\beq
\sup_{|u-x| \leq t } \frac{1}{t^2 + u^2} \leq \frac{C}{t^2 + x^2}.
\eeq
Hence by a second order Taylor expansion,
\begin{align}
& \int_{ |x-z| \leq t}  \frac{ p_{\eta_\ell} (t, x, 0 ) - p_{\eta_\ell} (t, z, 0) }{(x-z)^2} \d z \notag\\
= & \int_{ |x-z| \leq t} \frac{ \del_x p_{\eta_\ell} (t, x, 0) }{ (x-z)} + \O\left( \frac{1}{t} \frac{1}{t^2 + x^2 } + N^{-D} \right) \d z  = \O \left( \frac{1}{t^2 + x^2} + N^{-D}\right).
\end{align}
This yields the claim. \qed

\section{Mesoscopic linear statistics}\label{sec: mesosection}
Let $\varphi_N$ be a sequence of twice differentiable functions such that:
\begin{gather}
\|\varphi_N\|_{L^\infty}, \|\varphi_N'\|_{L^1} \le C, \label{eqn: varphiLinfty}\\
\mathrm{supp} \varphi_N'(x) \cap [-t^{1/4},t^{1/4}] \subset (-t_1N^r,t_1N^r), \label{eqn: offIq-decay}
\end{gather}
for some $0<r<\omega_1/100$.

We will assume throughout that $t_1=N^{\omega_1}/N$. As for the parameter $t$, we assume $t_1N^r\ll t \ll 1$, and we will denote $t= N^{\omega_0}/N$. 

Finally, since the spectrum of $H_t$ is contained in $-N^{C_V}-1,N^{C_V}+1$ with overwhelming probability, there is no loss of generality in making the following assumption on the support of $\varphi_N$:
\begin{equation}\label{eqn: varphisupport}
\mathrm{supp} \varphi_N \subset [-N^{C_V}-2,N^{C_V}+2].
\end{equation}

We can now state the main result of this section.  After introducing some notation in Section \ref{sec:resolvnotation} we give an outline of the proof in  Section \ref{sec:mesooutline}.  The majority of the remainder of Section \ref{sec: mesosection} is concerned with the proof.
\begin{thm}\label{thm: linstat}
Let $\varphi_N$ be a sequence of real-valued $C^{2}(\mathbb{R})$ functions satisfying \eqref{eqn: varphiLinfty}, \eqref{eqn: offIq-decay}, \eqref{eqn: varphisupport} in addition to the following growth conditions on the derivatives:
\begin{equation}
\|\varphi_N^{(k)}\|_{L^\infty}\le C t_1^{-k+1}, \quad k=1,2,
\end{equation}
and 
\begin{equation}\label{eqn: H12-lwrbd}
\int\int\left(\frac{\varphi_N(x)-\varphi_N(y)}{x-y}\right)^2\,\mathrm{d}x\mathrm{d}y\ge c.
\end{equation}
Let the parameters $t=N^{\omega_0}/N$ and $t_1=N^{\omega_1}/N$ satisfy
\begin{align}
\omega_0 > \omega_1 > \omega_0/2 \label{eqn: omega-condition-1}
\end{align}
Then, uniformly in $|x|\le N^{\omega_1/8-\omega_0/16}$,
\beq \label{eqn:revd1}
\mathbb{E}[e^{i x[(\mathrm{tr}\varphi_N)(H_t)-\mathbb{E}\mathrm{tr}\varphi_N(H_t)]}] = \exp\left(-\frac{x^2}{2}V(\varphi_N)\right)+ \O_\prec(N^{\omega_0/4-\omega_1/2}).
\eeq
Here $V(\varphi_N)$ is a quadratic functional in $\varphi_N$ such that
\begin{equation}
\begin{split}
V(\varphi_N) &= -\frac{1}{\pi^2}\int_{-Ct}^{Ct}\varphi_N(\tau)(H\varphi_N')(\tau)\,\mathrm{d}s\mathrm{d}\tau +\O(1)\\
&= \frac{1}{2\pi^2}\int_{-Ct}^{Ct}\int_{-Ct}^{Ct}\left(\frac{\varphi_N(\tau)-\varphi_N(s)}{\tau-s}\right)^2\,\mathrm{d}s\mathrm{d}\tau+\O(1). \label{eqn:cc5}
\end{split}
\end{equation}
Here $C>0$ is some (small) constant, and $H$ denotes the Hilbert transform (see \ref{eqn: HT-def}).
In particular,
\[V(\varphi_N)\ge  c\log t/t_1 + \O(1)\]
if $\varphi_N = \int_0^x\chi(y/(t_1N^\alpha))p_{t_1}(0,y)\,\mathrm{d}y$.

If $\mathrm{supp} \varphi_N\subset (-N^rt_1,N^rt_1)$, then we have the more precise evaluation:
\begin{equation}\label{eqn: Vvar-compact}
\begin{split}
V(\varphi_N) &= -\frac{1}{\pi^2}\int\varphi_N(\tau)(H\varphi_N')(\tau)\,\mathrm{d}\tau+\O(N^{\omega_0/2-\omega_1}N^{2r})\\
&=\frac{1}{2\pi^2}\int\int\left(\frac{\varphi_N(\tau)-\varphi_N(s)}{\tau-s}\right)^2\,\mathrm{d}s\mathrm{d}\tau+\O(N^{\omega_0/2-\omega_1}N^{2r}).
\end{split}
\end{equation}
\end{thm}

\remark We make several comments concerning Theorem \ref{thm: linstat} and the many conditions in the statement.
\begin{enumerate}
\item The first inequality of \eqref{eqn: omega-condition-1} ensures that the scale of the function is smaller than the time scale of DBM. See the remark after the statement of Theorem \ref{thm: mesostatement}. 
\item The second inequality of \eqref{eqn: omega-condition-1} is technical, but removing it requires substantial modification of some of the estimates below. It is used in particular to simplify the handling of some error terms in the proof of Proposition \ref{prop: I2prop}, which is key in deriving the theorem. 
\item The condition \eqref{eqn: H12-lwrbd} ensures the limiting random variable is non-degenerate, that is, its variance is bounded below. It is used only in the final integration at \eqref{eqn: psi-int}. Our method can be extended to cover the case of vanishing variance, but we will have no need for such an extension.
\item A typical situation in which Theorem \ref{thm: linstat} holds with the approximation \eqref{eqn: Vvar-compact} is when 
\[\varphi_N(x) = \varphi(x/t_1),\]
where $\varphi$ is some smooth   function either compactly supported or vanishing as $|x| \to \infty$. This setting has been studied extensively in the random matrix theory literature (see for example \cite{DJ},  \cite{meso1}, \cite{meso2}), and is typically what is being referred to when one speaks of ``linear statistics of mesoscopic observables''. The more general theorem above is essential for the main result of this paper.
\item If the functions $\varphi_N$ do not have spatial decay
, the variance of the linear statistics grows logarithmically. This should be compared to the well-known fact that the variance of the number of eigenvalues in an interval grows like $\log N$. See \cite{costinlebowitz, orourke}. A function $\varphi_N$ with ``large'' (compared to $t$) support, but whose derivative is supported in a region of size $t_1$ is, up to a linear transformation, an approximation on scale $t_1$ of an indicator function.
\end{enumerate}

Concerning the last remark above, we note that the fact that we allow for non-compactly supported functions (which is required for the proof) causes substantial technical difficulties. Many alternative approaches would be viable if it sufficed to consider compactly supported $\varphi$.

\subsection{Notation for resolvents} \label{sec:resolvnotation}

A central role will be played by the resolvent matrix
\begin{equation}\label{eqn: Gdef}
G(z) = (H_t-z)^{-1},
\end{equation}
where $z=\tau+\i\eta\in \mathbb{C}$.
The normalized trace of $G$ is denoted by $m_N(z)$:
\begin{equation}
m_N(z) = \frac{1}{N}\mathrm{tr} G(z).
\end{equation}
The latter quantity closely approximates the Stieltjes transform $\mfct(z)$ of the deformed semicircle law.

Let $H^{(j)}$ be the $(j,j)$-submatrix of $H_t$, that is, the $(N-1)\times(N-1)$ matrix obtained from the Wigner matrix $H$ by removing the $j$th row and column. We introduce the following notation for the resolvent of $H^{(j)}$ and its normalized trace:
\begin{align}
G^{(j)}(z) := (H_t^{(j)}-z)^{-1}, \qquad m^{(j)}_N(z) := \frac{1}{N}\mathrm{tr} (H^{(j)}_t-z)^{-1}.
\end{align}
Following \cite{shcherbina}, we reserve special symbols for two quantities involving  $G$ and $G^{(j)}$ which will play a role in the computations to come. First, we denote, for $j=1,\ldots, N$,
\begin{equation}
\label{eqn: Adef}
A_j=A_j(z):= -\frac{1}{G_{jj}(z)}.
\end{equation}
Next, we let $h^{(j)} := (h_{ji})_{1\le i \le N}$, and then define
\begin{equation} \label{eqn: Bdef}
B_j=B_j(z):= \langle (G^{(j)}(z))^2 h^{(j)}, h^{(j)}\rangle,
\end{equation}
where $\langle \mathbf{u},\mathbf{v}\rangle$ denotes the inner product of the vectors $\mathbf{u}, \mathbf{v}\in \mathbb{C}^N$. The importance of these quantities for us comes mainly through the identity \eqref{eqn: ABidentity}.

Following \cite{landonyau}, we also define
\begin{equation}\label{eqn: gj-def}
 g_i(z) = \frac{1}{V_i-z-tm_{\mathrm{fc},t}(z)},
\end{equation}
so that
\[m_{\mathrm{fc},t}(z) = \frac{1}{N}\sum_{i=1}^N g_i(z).\]
Finally, we define
\begin{align*} 
R_2(z) &= \frac{1}{N}\sum_{i=1}^N g_i(z)^2, \qquad \tilde{R}_2(z) = \frac{1}{N}\sum_{i=1}^N \frac{1}{\mathbb{E}[A_j(z)]^2}.
\end{align*}

We will often deal with centered random variables. For a random variable $X$ with $\mathbb{E}|X|<\infty$, we denote by
\begin{equation}
X^\circ := X - \ee [X],
\end{equation}
the corresponding centered random variable.

\subsection{Outline of the proof} \label{sec:mesooutline}
Let us now outline the strategy of proof and provide a guide for the reader. The main idea is to compute the derivative of the characteristic function of the random variable 
\[Z:=\mathrm{tr}\varphi_N(H)-\mathbb{E}\mathrm{tr}\varphi_N(H).\]
This approach has been previously applied to linear statistics of random matrices by Shcherbina \cite{shcherbina}. Denoting $\psi(x)=\mathbb{E}[\e^{i x Z}]$, we have
\beq \label{eqn:cc4}
\psi'(x)=-\i \mathbb{E}[Z\e^{\i x Z}],
\eeq
If we can show that the left side is close to $-x V(\varphi_N) \times \psi (x)$, then it follows by direct integration in $x$ that $Z$ is approximately normal.

Next, the problem of computing $\mathbb{E}[Ze^{\i xZ}]$ is reduced to computations involving the matrix $H$ (more precisely, the resolvent $G(z)$) through the Helffer-Sj{\"o}strand representation \eqref{eqn: HS-formula}:
\[\mathbb{E}[Ze^{\i x Z}]=\int_{\mathbb{C}} \mathbb{E}[\mathrm{tr}G(\tau+\i \eta)e(x)^\circ]\partial_z\tilde{\varphi}(z)\,\mathrm{d}z.\]
Here 
\begin{equation}
\label{eqn: outline-ecirc}
e^\circ(x)= \exp(\i x Z)-\mathbb{E}[\exp(\i x Z)],
\end{equation}
and the function $\tilde{\varphi_N} (z)$ is a quasi-analytic extension of $\varphi_N$ to the complex plane $\cc$,
\beq
\tilde{\varphi}_N (x + \i \eta ) = \chi ( \eta ) ( \varphi_N (x) + \i \eta \varphi_N' (x) )
\eeq
where $\chi ( \eta)$ is a smooth compactly supported cut-off function equal to $1$ in neighborhood of $0$.  
The quantity $\mathbb{E}[ Ze^{\i x Z}]$ is then decomposed into two pieces:
\begin{align*}
\int_{\mathbb{C}} \mathbb{E}[\mathrm{tr}G(\tau+\i \eta)e^\circ]\partial_z\tilde{\varphi}_N(z)\,\mathrm{d}z&=\int \partial_z \tilde{\varphi}_N (z)(T_1(z)+T_2(z))\,\mathrm{d}z,\\
T_1(\tau,\eta)&:=\ee \sum_{j=1}^N\left[ G_{jj}^\circ(\tau+\i \eta) e_j^\circ \right],\\
T_2(\tau,\eta) &:= \ee \sum_{j=1}^N\left[ G_{jj}^\circ(\tau+\i \eta) ( e - e_j ) \right].
\end{align*}
Above, $e_j$ is the same as $e$, but with the minor $H^{(j)}$ replacing $H$.  
Through careful resolvent expansions, it will be found that the integral involving $T_1$ is close to a multiple of $\mathbb{E}[Z e^{\i x Z}]$ itself:
	\[\int \partial_z \tilde{\varphi}_N(z)T_1(z)\,\mathrm{d}z=t\int \partial_z \tilde{\varphi}_N(z) \tilde{R}_2(z) \cdot \mathbb{E}[\mathrm{tr}G(\tau+\i \eta) e(x)^\circ]\,\mathrm{d}z+\text{error}.\]
This relatively straightforward computation appears in Section \ref{sec: comp-T1}.  The main input here is that the dominant contribution to the fluctuations of $G_{jj}$ are caused by the $j$th row and column of $H$.  Since $e_j$ is independent of these matrix entries, a resolvent expansion based on the Schur complement formula allows for the calculation of the expectation over the $j$th row and column (i.e, the expectation conditional on $H^{(j)}$), ultimately leading to the above expression.
 
The computation of $T_2$ is more involved. It results in the appearance of a deterministic kernel depending on $\mfct$ which will ultimately generate the covariance kernel $\sqrt{-\Delta}$ (the square root of the Laplacian having integral kernel $(\cdot - y)^{-2}$) appearing in the statement of the theorem. Part of this is the statement of Proposition \ref{prop: I2prop}, which is:
\begin{align} \label{eqn:cc3}
&\int_{\Omega_N} \dvarp(z) \frac{T_2(z)}{1-t\tilde{R}_2(z)}\,\mathrm{d}z\\
=&-\frac{2\i x}{\pi}\mathbb{E}[e(x)]\int_{\Omega_N}\int_{\Omega_N}\dvarp(z)\dvarp(z')\frac{1}{1-tR_2(z)} S_{2,1}(z,z')\,\mathrm{d}z\mathrm{d}z'\\
&+\frac{2\i x}{\pi}\mathbb{E}[e(x)]\int_{\Omega_N}\int_{\Omega_N}\dvarp(z)\dvarp(z')\frac{1}{1-tR_2(z)} (S_{2,2}(z,z')+S_{2,3}(z,z'))\,\mathrm{d}z\mathrm{d}z' +\text{error}.
\end{align}
The kernels $S_{2,1}$, $S_{2,2}$, $S_{2,3}$ are defined in \eqref{eqn: S21-def}, \eqref{eqn: S22-def}, \eqref{eqn: S23-def}.   They are deterministic functions of the initial data $V$ and $\mfct$.   We remark that 
 the transition between Gaussian statistics with a universal variance profile when $t\gg t_1$ and a distribution depending on $V$ when $t\ll t_1$ alluded to in the introduction to this paper can essentially be understood by looking at  the behavior of the quantities $S_{1,2}$, $S_{2,2}$ appearing in \eqref{eqn: S21-def}  and \eqref{eqn: S22-def} when $t$ depends on $N$. The computation of $T_2$ appears in Sections \ref{sec: comp-T2}-\ref{sec: comp-T22}. 

We now summarize the proof of Proposition \ref{prop: I2prop}.  In the initial step, we Taylor expand the difference $e-e_j$ in powers of $\i x (\tr \varphi_N (H) - \tr \varphi_N ( H^{(j)}))^{\circ}$.  Already, the quadratic term will be negligible.  We will use the Helffer-Sj{\"o}strand formula to evaluate the linear term.  This is the source of the second integration over $\cc$ and the term $\partial_{z'} \varphi_N (z')$ in \eqref{eqn:cc3} above, as well as the prefactor $x$ which must appear on the RHS of \eqref{eqn:cc4} in order to conclude the Gaussian statistics.  A resolvent expansion yields the following expression for $T_2$:
\begin{align}
T_2 =& -\sum_{j=1}^N \frac{1}{\mathbb{E}[A_j(\tau+\i\eta)]^2} \frac{\i x}{\pi}\int\dvarp(z')  \ee \left[e_j (x)  \left(N [ \moj - m_N ](s+\i\eta')\right)^\circ  A_{j}(\tau+\i\eta)^\circ  \right] \mathrm{d}z'  \label{eqn: T2-out}\\
&+\text{error}. \nonumber 
\end{align}
where $A_j (z) = - (G_{jj} (z) )^{-1}$. 
In Section \ref{sec: comp-T2} we expand the main term in \eqref{eqn: T2-out}. The leading terms resulting in $S_{2,1}$, $S_{2,2}$, $S_{2,3}$ are computed in Sections \ref{sec: T21-comp} and \ref{sec: comp-T2}, while the error terms are estimated in Section \ref{sec: comp-T22}.

In Section \ref{sec: variance}, 
we consider the quadratic expression in \eqref{eqn:cc3}, ultimately deriving the simplified expression \eqref{eqn:cc5} for the asymptotic variance.  This expression is approximately equal to a constant factor times the Sobolev norm $( \varphi, \sqrt{-\Delta}\varphi )^2$. The reader will note that this section could be drastically simplified if we were dealing with functions with compact support.  After some simple manipulations involving $\mfct$ (see Proposition \ref{prop: sum-gg}), the main work is in transforming, up to some errors, the area integrals over $\cc$ which appear in \eqref{eqn: I2-1}, \eqref{eqn: I2-2} into line integrals over $\rr$ (using Green's theorem) and isolating the main terms. It is found that ultimately the only non-vanishing contribution to the variance comes from the expression $S_{2,1}$ \eqref{eqn: I2-1}. Once the error terms are dealt with the variance kernel \eqref{eqn:cc5} comes out of some essentially exact computations involving $\mfct$.

\subsubsection{A simple example}

In order to illustrate the guiding principles of these calculations, let us consider the simpler case of the Stieltjes transform instead of the general test function $\varphi$ above (in fact, in a moment we will just consider the calculation of its expectation).  
This central limit theorem is best understood as an extension of the local law.  Recall that the local law states that $|m_N ( z ) - \mfct (z) | \leq N^{\eps} / (N \eta)$ with overwhelming probability.  The (imaginary and real parts of the) quantity $(N \eta ) (m_N (z) -\mfct (z) )$ is expected to satisfy a central limit theorem.  In order to prove the local law, one uses the Schur complement formula and after some simplification arrives at,
\beq \label{eqn:cc1}
m_N (z) = \frac{1}{N} \sum_{j=1}^N G_{jj} (z) = \frac{1}{N} \sum_{j=1}^N \frac{1}{ V_j - z - t \mfct (z) + \eps_j }
\eeq
where $\eps_j$ is an error term.  The local law can be viewed as finding large deviations estimates for the error terms $\eps_j$.  The central limit theorem can be viewed as a more careful consideration of the error terms $\eps_j$.  This analysis is aided by the fact that one can, e.g., consider moments of the quantity $(N \eta) (m_N (z) - \mfct (z) )$ which results in one only having to calculate the first few moments of $\eps_j$. 

As a simple example, let us consider the task of calculating the expectation of $(N \eta) (m_N (z) - \mfct (z))$ up to $o(1)$ errors.  One can Taylor expand each term on the RHS of \eqref{eqn:cc1} in powers of $\eps_j$.  The large deviations estimates on $\eps_j$ are sufficient to truncate this expansion after a few terms (in this case the $3$rd order).  The form of $\eps_j$ is roughly $\eps_j = \sum_{k, l \neq j } h_{jk}( G^{(j)}_{kl} - N^{-1} \delta_{kl} ) h_{lj}$.  The difference between this and the local law is that instead of proving large deviations estimates on $\eps_j$, we can use them as a starting point in order to calculate a few moments of $\eps_j$, thanks to the presence of the expectation infront of $\ee[ ( N \eta ) ( m_N (z) - \mfct (z) )]$.  The moments of $\eps_j$ are most easily calculated by first taking the partial expectation over the $j$th row of $H$.  The term $\ee[ \eps_j^2]$ is seen to be negligible and $\ee[ \eps_j]$ gives $\ee[ m_N^{(j)} - \mfct]$.  This expression can be rewritten as $\ee[ m_N^{(j)} - \mfct] = \ee[ m_N - \mfct] + \ee[ m_N^{(j)} - m_N ]$.  The first expression is the analogue of $T_1$ described above, and is the expression we wished to calculate in the first place.  It appears with a coefficient (in this case $R_2$) and is moved back over to the LHS of the equation.  The second term is the analogue of $T_2$, and an algebraic identity (this is \eqref{eqn: ABidentity} below) together with a further resolvent expansion allows for its calculation.    

In a full proof of a central limit theorem, one will be calculating the expectation of $(N \eta) (m_N - \mfct)$ times either its characteristic function as above, or a monomial in it and its conjugate if one is proceeding by the method of moments.  In these cases, this factor must also be expanded around the corresponding expression involving $H^{(j)}$, which coordinates well with the expansion of $G_{jj}$ using the Schur complement formula.

In summary, the local law determines $m_N$ down to $N^{\eps} / (N \eta)$.  In order to remove the $N^{\eps}$ factor, we proceed similarly the proof of the local law based around resolvent expansions, except that we use the independence structure of the matrix ensemble to calculate expectations of the first order of error terms.

\subsection{Estimates for $A_j$ and $B_j$}
The following definition will be useful.

\begin{defn}[Stochastic Domination]
Let 
\[X= (X^{(N)}(u): N\in \mathbb{N}, u \in U^{(N)}), \qquad Y=(Y^{(N)}: N\in\mathbb{N}, u\in U^{(N)}) \]
be two families of nonnegative random variables, where $U^{(N)}$ is a possibly $N$-dependent parameter set. We say that $X$ is \emph{stochastically dominated} by $Y$, uniformly in $u$, if for all small $\epsilon>0$ and all (large) $D$ we have
\[\sup_{u\in U^{(N)}}\mathbb{P}(X^{(N)}> N^\epsilon Y^{(N)}(u)) \le N^{-D}\]
for all $N\ge N_0(\epsilon,D)$. If $X$ is stochastically dominated by $Y$, uniformly in $u$, we write
\begin{equation}
X\prec Y.
\end{equation}
For complex valued $Y$, we write $Y= \O_{\prec}(X)$ if $|Y|\prec X$.
\end{defn}

Recall the definition of $\mathcal{D}_{\epsilon,q}$. We define $\Omega_N$ as the union of this region with its reflection about the real axis, with a choice $\epsilon= \xi$ to be determined.
\begin{equation}
\begin{split}
\Omega_N &= (\mathcal{D}_{\xi,q} \cup \overline{\mathcal{D}_{\xi,q}})\\
&= \left\{ z = E + \i \eta : E \in \I_q,  N^{10C_V} \geq |\eta| \geq N^\xi/N \right\} \cup \left\{ z : E + \i \eta : |E| \leq N^{2 C_V} ,  N^{C_V} \geq |\eta| \geq c \right\}.
\end{split}
\end{equation}
Since $m_N(\bar{z})= \overline{m_N(z)}$ and $\mfct(\bar{z}) = \overline{\mfct(z)}$, the local law extends to $z\in \Omega_N$.

The following estimates for the quantities $A$ and $B$ defined in \eqref{eqn: Adef} and $\eqref{eqn: Bdef}$:
\begin{thm}\label{thm: ABthm}
We have, uniformly in $z\in \Omega_N$,
\begin{align}
\mathbb{E}_jA_j(z) &= z + tm_{\mathrm{fc},t}(z) -V_j + \O_\prec(t(N|\eta|)^{-1}), \label{eqn: EA}\\
A_j^\circ &= \O_\prec( \sqrt{t}N^{-1/2} + t(N|\eta|)^{-1/2})) \label{eqn: Ahalf}\\
B_j(z) &= t\del_z m_{\mathrm{fc},t}(z) + \O_{\prec} ( t|\eta|^{-1} ( N |\eta| )^{-1/2} ).\label{eqn: Bbbound}
\end{align}
\end{thm}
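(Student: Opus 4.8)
The plan is to derive the three estimates by combining the Schur complement identity for $G_{jj}$ with the local law for $H_t$ and standard large deviation estimates for quadratic forms in the Gaussian entries $h^{(j)}$. First I would record the Schur complement formula: writing $h^{(j)}$ for the $j$th column of $\sqrt{t}W$ with the $j$th entry removed, we have
\beq
\frac{1}{G_{jj}(z)} = -z + V_j + \sqrt{t} w_{jj} - \langle h^{(j)}, G^{(j)}(z) h^{(j)} \rangle,
\eeq
so that $A_j = z - V_j - \sqrt{t} w_{jj} + \langle h^{(j)}, G^{(j)} h^{(j)} \rangle$. Taking the conditional expectation $\ee_j$ over the $j$th row/column, and using $\ee_j \langle h^{(j)}, G^{(j)} h^{(j)} \rangle = \frac{t}{N} \tr G^{(j)} = t m_N^{(j)}(z)$, together with the interlacing bound $|m_N^{(j)}(z) - m_N(z)| \leq C(N|\eta|)^{-1}$ and the local law $|m_N(z) - \mfct(z)| \prec (N|\eta|)^{-1}$ from Section \ref{sec:rigid} (extended to $\Omega_N$ by the symmetry $m_N(\bar z) = \overline{m_N(z)}$), gives \eqref{eqn: EA}. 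The term $\sqrt{t}w_{jj}$ has conditional expectation zero, so it does not enter $\ee_j A_j$; it does contribute to $A_j^\circ$, of size $O_\prec(\sqrt{t}/\sqrt{N})$, which matches the first error term in \eqref{eqn: Ahalf}.

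For \eqref{eqn: Ahalf}, I would write $A_j^\circ = \sqrt{t} w_{jj} + \big( \langle h^{(j)}, G^{(j)} h^{(j)} \rangle - \frac{t}{N}\tr G^{(j)} \big) + t (m_N^{(j)}(z))^\circ$ and bound each piece. The Gaussian entry gives $O_\prec(\sqrt{t/N})$. The centered quadratic form is handled by the standard large deviation bound for quadratic forms (the entries of $h^{(j)}$ are independent centered Gaussians of variance $t/N$): it is bounded by $\prec \frac{t}{N}\big(\sum_{kl}|G^{(j)}_{kl}|^2\big)^{1/2} = \frac{t}{N}\big(\tr (G^{(j)} (G^{(j)})^*)\big)^{1/2} = \frac{t}{N}\big(\frac{1}{\eta}\Im \tr G^{(j)}\big)^{1/2} \prec \frac{t}{N} \big(\frac{N \Im m_N}{\eta}\big)^{1/2} \prec t(N|\eta|)^{-1/2}$, using $\Im m_N(z) \prec 1$ from the local law. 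The fluctuation $t(m_N^{(j)})^\circ$ is of lower order by the fluctuation averaging bound $\prec t (N|\eta|)^{-1}$. Collecting terms yields \eqref{eqn: Ahalf}.

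For \eqref{eqn: Bbbound}, note $B_j = \langle (G^{(j)})^2 h^{(j)}, h^{(j)} \rangle$ and $\frac{d}{dz}\langle h^{(j)}, G^{(j)} h^{(j)}\rangle = \langle h^{(j)}, (G^{(j)})^2 h^{(j)}\rangle = B_j$. Hence $\ee_j B_j = \frac{t}{N}\tr (G^{(j)})^2 = t \del_z m_N^{(j)}(z)$, and by the local law together with a Cauchy-integral/Lipschitz argument $\del_z m_N^{(j)}(z) = \del_z \mfct(z) + O_\prec((N|\eta|)^{-1}|\eta|^{-1})$, giving the main term $t \del_z \mfct(z)$. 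The centered part $B_j^\circ$ is again a centered quadratic form, bounded by $\prec \frac{t}{N}\big(\tr (G^{(j)})^2 ((G^{(j)})^2)^*\big)^{1/2} \leq \frac{t}{N}\cdot\frac{1}{\eta}\big(\tr G^{(j)}(G^{(j)})^*\big)^{1/2} \prec \frac{t}{N|\eta|}(N/|\eta|)^{1/2} = t|\eta|^{-1}(N|\eta|)^{-1/2}$, which is the stated error. The one point requiring care throughout — and the main obstacle — is making all of these bounds \emph{uniform} in $z \in \Omega_N$, including the region where $|\eta|$ is small ($\sim N^\xi/N$) and where $|E|$ ranges up to $N^{2C_V}$; this requires the $a$-priori upper bound $\|V\| \leq N^{C_V}$ of \eqref{eqn: CVdef} to control $G^{(j)}$ crudely in the far regime, and the local law of Section \ref{sec:rigid} in the spectral domain, patched together by a standard net/continuity argument in $z$.
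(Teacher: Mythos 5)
Your proposal is correct and follows essentially the same route as the paper: the Schur complement identity for $A_j$, conditional expectation plus the local law for \eqref{eqn: EA}, the standard quadratic-form large deviation bound for \eqref{eqn: Ahalf} (which the paper simply cites from \cite{landonyau}), and the observation $B_j=\del_z\langle \Goj h^{(j)},h^{(j)}\rangle$ with a Cauchy-integral step to pass from $\del_z m_N^{(j)}$ to $\del_z\mfct$ for \eqref{eqn: Bbbound}. The only cosmetic difference is that the paper bounds $B_j^\circ$ via an explicit second-moment computation, $\ee[|B_j^\circ|^2]=\frac{2t^2}{N^2}\ee\,\tr|\Goj|^4+\O(t(N|\eta|)^{-2})$, before invoking the large-deviation estimates, whereas you apply the quadratic-form bound directly; both yield the same error $t|\eta|^{-1}(N|\eta|)^{-1/2}$.
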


\begin{proof}
Recall the definition of $A_j (x + \i \eta)$ above.
By the Schur complement formula \cite[Lemma 7.7]{landonyau},
\begin{equation}\label{eqn: justA}
\begin{split}
A_j(z) = z-h_{jj}+ \langle \Goj h^{(j)}, h^{(j)}\rangle.
\end{split}
\end{equation}
Taking the partial expectation over $h^{(j)}=(h_{ji})_{ i \neq j}$, we have
\begin{equation}\label{eqn: justEA}
\mathbb{E}_jA_j(z) =  z- V_j + \frac{t}{N}\tr \Goj (z).
\end{equation}

Using the local law, we obtain
\begin{equation*}
\mathbb{E}_j A_j(z) = z -V_j + t m_{\mathrm{fc},t}(z)+\O_\prec(t(N|\eta|)^{-1}),
\end{equation*}
which is \eqref{eqn: EA}.

The estimate \eqref{eqn: Ahalf} is proved in \cite[Lemma 7.9]{landonyau} using the local law.

For \eqref{eqn: Bbbound}, note that
\[B_j= \langle (G^{(j)})^2 h^{(j)},h^{(j)}\rangle = \del_z \langle G^{(j)} h^{(j)},h^{(j)}\rangle.\]
Taking the expectation with respect to $h^{(j)}$ first, and then using the local law, we find
\begin{equation} \label{eqn:Bexpect}
\begin{split}
\ee [B_j] &= t\ee [ \del_z \moj ]\\ 
&= t\del_z \oint_{|z-\zeta|=\eta/2} \frac{
\mathbb{E}[\moj](\zeta)}{z-\zeta}\,\mathrm{d}\zeta\\
&= t\del_z m_{\mathrm{fc},t}(z) + \O_\prec (t |\eta|^{-1} ( N |\eta|)^{-1} ).
\end{split}
\end{equation}
For the second moment, we have
\begin{align}
\ee [ | B_j^\circ |^2 ] &= \ee [ \left|  \sumoj_{i, r, k} \Goj_{ir} \Goj_{rk} ( h_{ji}h_{kj} - \delta_{ik}tN^{-1} ) \right|^2 ] + \O (t^2 |\eta|^{-2}(N |\eta|)^{-2} ) \notag \\
&= \ee \sumoj_{i, l, k, r, m, n} \Goj_{il} \Goj_{lk} \bGoj_{rm}  \bGoj_{mn}  \ee_j [ ( h_{ji } h_{kj} - N^{-1}t \delta_{ik} ) ( h_{j r } h_{nj} - N^{-1} t\delta_{rn} ) ] +  \O ( t(N |\eta|)^{-2} ) \notag \\
&= \frac{2t^2}{N^2} \ee \tr |\Goj|^4  +  \O ( t(N |\eta|)^{-2} ) = \O ( t^2|\eta|^{-2} ( N |\eta|)^{-1} ).
\end{align}
\eqref{eqn: Bbbound} now follows from the large deviation type estimates in \cite[Lemma 7.7]{landonyau}, and the local law. \qed
\end{proof}

We will repeatedly use the identity:
\beq \label{eqn: ABidentity}
 N (m_N - \moj )= G_{jj} \left( 1 + \sumoj_{i, l, k} h_{ji} \Goj_{il} \Goj_{lk} h_{kj} \right) = -A_j^{-1} ( 1 + B_j ).
\eeq
The following lemma collects the main estimates we need for this quantity.

\begin{lem}\label{lem: AinverseB-lemma}
Uniformly for  $\tau+\i\eta \in \Omega_N$,
\begin{align}
A_j^{-1} ( 1 + B_j ) &= \frac{1 + t\del_z m_{\mathrm{fc,t}} }{ \ee [A_j] } + \O_\prec ( |\eta|^{-1}  ( N |\eta|)^{-1/2} ), \label{eqn: AinverseB-estimate}\\
\left( (A_j^{-1}) ( 1 + B_j ) \right)^\circ &=  \frac{ B_j^\circ} { \ee [A_j] } - \frac{ A_j^\circ(1+ \mathbb{E}B_j)}{ \ee[A_j]^2} +\frac{1}{ \ee[A_j]^2}(A_j^\circ B_j^\circ)^\circ + \frac{1}{\mathbb{E}[A_j]^2}\left(\frac{(A_j^\circ)^2}{A_j}(1+B_j)\right)^\circ. \label{eqn: AinverseB-expand}
\end{align}

\end{lem}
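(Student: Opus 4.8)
The identity \eqref{eqn: AinverseB-expand} is purely algebraic and requires no probabilistic input; the plan is to expand $A_j^{-1}$ about its mean. Starting from the exact relation $A_j^{-1} = \ee[A_j]^{-1} - A_j^\circ (A_j\ee[A_j])^{-1}$ and applying it once more inside the correction term gives
\beq
\frac{1}{A_j} = \frac{1}{\ee[A_j]} - \frac{A_j^\circ}{\ee[A_j]^2} + \frac{(A_j^\circ)^2}{A_j\,\ee[A_j]^2},
\eeq
which is valid since $|A_j| = |G_{jj}|^{-1}\geq |\eta| > 0$ deterministically. Multiplying through by $1+B_j = (1+\ee[B_j]) + B_j^\circ$, grouping the resulting terms by their order in the centered variables $A_j^\circ,B_j^\circ$, and subtracting the expectation produces a deterministic part $(1+\ee[B_j])/\ee[A_j]$ together with a fluctuating part of the form displayed on the right-hand side of \eqref{eqn: AinverseB-expand}.

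For \eqref{eqn: AinverseB-estimate} I would start from the decomposition $A_j^{-1}(1+B_j) = (1+\ee[B_j])\,\ee[A_j]^{-1} + \bigl(A_j^{-1}(1+B_j)\bigr)^\circ$ just obtained, replace $\ee[B_j]$ by $t\del_z\mfct(z)$ using \eqref{eqn:Bexpect} (which gives $\ee[B_j] = t\del_z\mfct(z) + \O_\prec(t|\eta|^{-1}(N|\eta|)^{-1})$), and bound each of the four fluctuation terms in \eqref{eqn: AinverseB-expand} using Theorem \ref{thm: ABthm}. The relevant inputs are $B_j^\circ = \O_\prec(t|\eta|^{-1}(N|\eta|)^{-1/2})$, $A_j^\circ = \O_\prec(\sqrt{t}N^{-1/2} + t(N|\eta|)^{-1/2})$ and $|1+\ee[B_j]| = \O(1)$, together with the a priori lower bounds $|\ee[A_j]| \gtrsim |\eta| + t$ and, off an event of negligible probability, $|A_j|\gtrsim |\eta| + t$. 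Given these, the term $B_j^\circ/\ee[A_j]$ is the largest and is exactly of the claimed order $\O_\prec(|\eta|^{-1}(N|\eta|)^{-1/2})$ (using $|\ee[A_j]|\gtrsim t$); the $A_j^\circ$ term, the quadratic cross term $(A_j^\circ B_j^\circ)^\circ$, the cubic remainder, and the error from replacing $\ee[B_j]$ by $t\del_z\mfct(z)$ are all strictly smaller once one uses $t\leq 1$ and $N|\eta|\geq N^\xi$ on $\Omega_N$.

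The remaining point to supply is the lower bound on $|\ee[A_j]|$. Since $V_j\in\rr$, \eqref{eqn: EA} yields $|\ee_j A_j(z)| \geq |\Im\ee_j A_j(z)| = |\eta| + t\,|\Im\mfct(z)| + \O_\prec(t(N|\eta|)^{-1})$; by the regularity of the free convolution on $\I_q$, namely $c\leq \rhofct(E)\leq C$ for $|E|\leq q\G$ as established in \cite{landonyau}, one has $|\Im\mfct(z)|\asymp 1$ for $z\in\Omega_N$ with $|\eta|\lesssim 1$, hence $|\ee[A_j]|\gtrsim |\eta| + t$; combining with \eqref{eqn: Ahalf} transfers this bound to $|A_j|$ with overwhelming probability. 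The only delicate step in the whole argument is this final bit of bookkeeping: because $|\ee[A_j]|$ can be as small as order $t$ when $|\eta|\ll t$, one must check that the powers of $t$, $|\eta|$ and $N$ in the error terms of Theorem \ref{thm: ABthm} are compensated by the factors $\ee[A_j]^{-1}$ and $\ee[A_j]^{-2}$ — which they are — but there is no conceptual obstacle.
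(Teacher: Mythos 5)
Your proposal is correct and takes essentially the same route as the paper: the identity \eqref{eqn: AinverseB-expand} comes from the same second-order expansion of $1/A_j$ around $\ee[A_j]$ (the paper's \eqref{eqn: Aexp} with $k=2$), and the estimate \eqref{eqn: AinverseB-estimate} from the same inputs — Theorem \ref{thm: ABthm} together with the lower bound $|\ee[A_j]|\gtrsim \max(t,|\eta|)$, which you rederive via imaginary parts while the paper simply cites the stability estimate \eqref{eqn: stability}. One remark: a careful bookkeeping of the cross term actually yields $-\frac{1}{\ee[A_j]^2}(A_j^\circ B_j^\circ)^\circ$ rather than $+$; this sign discrepancy is present in the stated lemma (and the paper's proof) and is harmless, since that term is only ever bounded in absolute value.
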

\begin{proof}
The first estimate follows directly from  \eqref{eqn: Ahalf}, \eqref{eqn: Bbbound} and the stability estimate \cite[Eqn (7.8)]{landonyau}
\begin{equation}\label{eqn: stability}
|V_i-z-tm_{\mathrm{fc},t}(z)|\ge c\max(t,|\eta|).
\end{equation}

We begin by using the expansion
\begin{equation}
\label{eqn: Aexp}
\begin{split}
\frac{1}{A}&= \frac{1}{\mathbb{E}A}\cdot \frac{1}{1+\frac{A^\circ}{\mathbb{E}A}} =\frac{1}{\mathbb{E}A}\left(1-\frac{A^\circ}{\mathbb{E}A}+\frac{(A^\circ)^2}{(\mathbb{E}A)^2}-\ldots+ (-1)^k \frac{1}{\mathbb{E}[A]^k}\frac{(A^\circ)^k}{1+\frac{A^\circ}{\mathbb{E}A}}\right).
\end{split}
\end{equation}

For \eqref{eqn: AinverseB-expand}, we expand using \eqref{eqn: Aexp} with $k=2$,
\begin{equation}\label{eqn: ABexp}
\begin{split}
\left( (A_j^{-1} ) ( 1 + B_j ) \right)^\circ &=  \frac{ B_j^\circ} { \ee [A_j] } - \frac{1}{\ee[A_j]^2} \left( A_j^\circ (1 + B_j ) \right)^\circ + \left(\frac{(A_j^\circ)^2(1+B_j)}{\mathbb{E}A_j+A_j^\circ}\right)^{\circ} \\
&=  \frac{ B_j^\circ} { \ee [A_j] } - \frac{ A_j^\circ(1+ \mathbb{E}B_j)}{ \ee[A_j]^2} +\frac{1}{ \ee[A_j]^2}(A_j^\circ B_j^\circ)^\circ + \frac{1}{\mathbb{E}[A_j]^2}\left(\frac{(A_j^\circ)^2}{A_j}(1+B_j)\right)^\circ.
\end{split}
\end{equation}
\end{proof}
\qed

\subsection{Computation of the characteristic function}
\label{sec: characteristic}
We derive an equation for the derivative of the  characteristic function of the linear statistic. Let $z=\tau+\i\eta$. Recall the definition of $C_V$ in \eqref{eqn: CVdef}. Without loss of generality, we can assume $C_V\ge 5$. We let $\chi$ be a smooth cut-off function such that $\chi(x)=1$, for $|x|\le N^{10C_V}-1$ and $\chi(x)=0$, for $|x|\ge N^{10C_V}$. Next, define the almost analytic extension of $\varphi_N$ to $\mathbb{C}$.
\[\tilde{\varphi}_N(z)= \chi(\eta)(\varphi(\tau)+\i\eta\varphi_N'(\tau)).\]
The Helffer-Sj\"ostrand formula is the following representation of $\varphi_N$:
\begin{equation}
\label{eqn: HS-formula}
\begin{split}
\varphi_N(\lambda)&= \frac{1}{\pi}\int \frac{\partial_{\bar{z}}\tilde{\varphi}_N(\tau+\i\eta)}{\lambda-\tau-\i\eta}\,\mathrm{d}\tau\mathrm{d}\eta = \frac{1}{\pi}\int_{\mathbb{R}^2} \frac{\i\eta \varphi_N''(\tau)\chi(\eta)+\i(\varphi_N(\tau)+\i\eta\varphi'_N(\tau))\chi'(\eta)}{\lambda-\tau-\i\eta}\,\mathrm{d}\tau\mathrm{d}\eta.
\end{split}
\end{equation}

Define
\beq
e (x) := \exp\left( \i x  (\tr [ \varphi_N ] -\mathbb{E}\tr[\varphi_N])\right), \qquad \psi (x) := \ee[ e (x) ] \label{eqn: psidef}.
\eeq
By \eqref{eqn: HS-formula}, the derivative $\psi'(x)$ equals
\begin{gather}
\label{eqn: drunkenman}
\frac{\i}{\pi}\int_{\mathbb{R}^2} (\i\eta \varphi_N''(\tau)\chi(\eta)+\i(\varphi_N(\tau)+\i\eta\varphi'_N(\tau))\chi'(\eta)) E(z)\,\mathrm{d}\tau\mathrm{d}\eta,\\
E(z): = N\mathbb{E}[e(x)(m_N(\tau+\i\eta)-\mathbb{E}m_N(\tau+\i\eta))].
\end{gather}
The rest of this section is concerned with computing $E(z)$. Let
\[
e_j(x) := \exp\left(\i x\int_{\mathbb{R}^2} \partial_{\bar{z}}\tilde{\varphi}_N(\tau) \,\mathrm{tr} G^{(j)}(\tau+\i\eta)^\circ\,\mathrm{d}\eta \mathrm{d}\tau\right) .\]
We write
\begin{equation} \label{eqn: dec}
\begin{split}
-\i\psi'(x)=&\int_{\mathbb{C}}  \partial_{\bar{z}}\tilde{\varphi}_N(z) \ee [ \tr G (\tau+\i\eta) e^\circ ]\,\mathrm{d}z = \int_{\cc} \partial_{\bar{z}}\tilde{\varphi}_N \sum_{j=1}^N \ee \left[  G_{jj} (\tau+\i\eta) e^\circ \right]\,\mathrm{d}z \\
=&\int_{\cc} \partial_{\bar{z}}\tilde{\varphi}_N \  \sum_{j=1}^N \ee \left[ G_{jj}^\circ(\tau+\i\eta) e_j^\circ \right]\,\mathrm{d}z + \int_{\cc} \partial_{\bar{z}}\tilde{\varphi}_N\sum_{j=1}^N \ee \left[ G_{jj}^\circ ( e - e_j ) \right]\,\mathrm{d}z.
\end{split}
\end{equation}
In view of \eqref{eqn: dec}, we define
\begin{equation}\label{eqn:tdecomp}
\begin{split}
T_1(\tau,\eta) &:= \ee \sum_{j=1}^N\left[ G_{jj}^\circ(\tau+\i\eta) e_j^\circ \right], \qquad T_2(\tau,\eta) := \ee \sum_{j=1}^N\left[ G_{jj}^\circ(\tau+\i\eta) ( e - e_j ) \right].
\end{split}
\end{equation}
We compute these two terms in Propositions \ref{T1prop} and \ref{prop: I2prop}. The result of Proposition \ref{T1prop} is
\begin{align*}
T_1&=t\tilde{R}_2(z) \cdot \mathbb{E}[e(x)\mathrm{tr}G(\tau+\i\eta)^\circ] +\O_\prec(N^{-1/2}|\eta|^{-3/2} )+|x|\O_\prec\left(|\eta|^{-1}N^{-1/2}\|\varphi_N''\|^{1/2}\|\varphi_N'\|_{L^1}^{1/2}\right).
\end{align*}
By the definition of $T_1$ and $T_2$ \eqref{eqn:tdecomp}, we have, for $z\in \Omega_N$,
\begin{align*}
(1-t\tilde{R}_2(z))\mathbb{E}[e^\circ(x) \mathrm{tr} G(z)]&= T_2(z)+ \O_\prec(N^{-1/2}|\eta|^{-3/2} )+|x|\O_\prec\left(|\eta|^{-1}N^{-1/2}\|\varphi_N''\|^{1/2}\|\varphi_N'\|_{L^1}^{1/2}\right),
\end{align*}
so, since $|1-t\tilde{R}_2|\ge c$ by \cite[Eqn (7.10)]{landonyau} and Proposition \ref{thm: ABthm},
\begin{equation}\label{eqn: EetrG-exp}
\begin{split}
\mathbb{E}[e^\circ(x) \mathrm{tr} G(z)]&= \frac{T_2(z)}{1-t\tilde{R}_2(z)}+ \O_\prec(N^{-1/2}|\eta|^{-3/2} )+|x|\O_\prec\left(|\eta|^{-1}N^{-1/2}\|\varphi_N''\|^{1/2}\|\varphi_N'\|_{L^1}^{1/2}\right).
\end{split}
\end{equation}

Write:
\begin{align*}
\int \bar{\del}_{z}\tilde{\varphi}_N(z) \mathbb{E}[e^\circ(x) \mathrm{tr} G(z)]\,\mathrm{d}z &= \int_{\Omega_N} \bar{\del}_{z}\tilde{\varphi}_N(z) \mathbb{E}[e^\circ(x) \mathrm{tr} G(z)]\,\mathrm{d}z + \int_{\Omega_N^c} \bar{\del}_{z}\tilde{\varphi}_N(z) \mathbb{E}[e^\circ(x) \mathrm{tr} G(z)]\,\mathrm{d}z \\
 =:& I_1+I_2.
\end{align*}

For $I_2$, note that $\Omega_N^c\cap \supp \chi(\eta) \subset \{z: |\Im z|< N^{-1+\xi}\}$,
so  we have
\begin{equation}\label{eqn: I2error}
\begin{split}
I_2 &= 2\int_{0<\eta<N^{-1+\xi}} \i\eta \varphi''_N(\tau+i\eta)\chi(\eta)\mathbb{E}[e(x)(\Im \mathrm{tr}G(z))^\circ]\,\mathrm{d}z\\
&= \O_\prec(N^{-1+\xi}\|\varphi_N''\|_{L^1}).
\end{split}
\end{equation}
For $I_1$, we use \eqref{eqn: EetrG-exp}:
\begin{align}
I_1 &= \int_{\Omega_N} \dvarp(z) \frac{T_2(z)}{1-t\tilde{R}_2(z)}\,\mathrm{d}z \label{eqn: I1-main}\\
&+ \int_{\Omega_N} \dvarp(z) \Delta_1(z) \,\mathrm{d}z =: I_1' + \int_{\Omega_N} \dvarp(z) \Delta_1(z) \,\mathrm{d}z,
\end{align}
where 
\[\Delta_1 = \mathbb{E}[e^\circ(x) \mathrm{tr} G(z)]-  \frac{T_2(z)}{1-t\tilde{R}_2(z)}\]
is a holomorphic function in $\Omega_N$ satisfying the bounds:
\[\Delta_1=\O_\prec(N^{-1/2}|\eta|^{-3/2}) +|x|\O_\prec\left(|\eta|^{-1}N^{-1/2}\|\varphi_N''\|^{1/2}_{L^1}\|\varphi_N'\|_{L^1}^{1/2}\right).\]
Using integration by parts in $\tau=\Re z$ when $|\eta|\ge \|\varphi_N''\|_{L^1}^{-1}$ as in the proof of Lemma \ref{lem: ejminuse} (see \eqref{eqn: ibp-example}), it is easily shown that
\begin{equation}
\label{eqn: I1error}
\int_{\Omega_N} \dvarp(z) \Delta_1 \,\mathrm{d}z =(1+|x|)\O_\prec(N^{-1/2}\log N)\|\varphi_N''\|^{1/2}\|\varphi_N'\|_{L^1}^{3/2}.
\end{equation}

We compute the main term in $I_1'$. We need an expression for $T_2$. The next proposition will be proved in the following sections.
\begin{prop}\label{prop: I2prop}
Let 
\begin{equation}
\label{eqn: S21-def}
S_{2,1}(z,z') = \frac{t^2}{N}\sum_{j=1}^N g_j(z)^2g_j(z') \partial_{z'} \frac{\mfct(z)-\mfct(z')}{z-z'},
\end{equation}
\begin{equation}\label{eqn: S22-def}
S_{2,2}(z,z') := \frac{t^2}{N}\sum_{j=1}^Ng_j(z)^2 g_j(z')^2(1+t\partial_z \mfct(z'))\frac{\mfct(z)-\mfct(z')}{z-z'},
\end{equation}
and
\begin{equation}\label{eqn: S23-def}
S_{2,3}(z,z') := \frac{t}{N}\sum_{j=1}^Ng_j(z)^2 g_j(z')^2(1+t\partial_z \mfct(z')).
\end{equation}

The quantity $I_1'$ \eqref{eqn: I1-main} is given by
\begin{equation}\label{eqn: I2-display}
\begin{split}
I_1'&=-\frac{2\i x}{\pi}\mathbb{E}[e(x)]\int_{\Omega_N}\int_{\Omega_N}\dvarp(z)\dvarp(z')\frac{1}{1-tR_2(z)} S_{2,1}(z,z')\,\mathrm{d}z\mathrm{d}z'\\
&\quad +\frac{2\i x}{\pi}\mathbb{E}[e(x)]\int_{\Omega_N}\int_{\Omega_N}\dvarp(z)\dvarp(z')\frac{1}{1-tR_2(z)} (S_{2,2}(z,z')+S_{2,3}(z,z'))\,\mathrm{d}z\mathrm{d}z'\\
&\quad + |x|\O(t^{1/2}N^{-1/2+2\xi})\|\varphi''_N\|_{L^1}\|\varphi_N'\|_{L^1}\nonumber\\
&\quad +(1+|x|)^2\O(N^{-1/2}(\log N)^2)\|\varphi_N'\|^{5/2}_{L^1}\|\varphi_N''\|^{1/2}_{L^1} \nonumber \\
&\quad +|x|\O(N^{-1/2}\log N)(t^{1/2}\|\varphi_N''\|_{L^1}\|\varphi_N\|_{L^1}+ t^{-1/2}\|\varphi_N'\|_{L^1}^2). \nonumber
\end{split}
\end{equation}
\end{prop}

Recall the definition of $\psi$ in \eqref{eqn: psidef}. By Proposition \ref{prop: I2prop}, \eqref{eqn: I2error}, \eqref{eqn: I1error}, we have
\begin{equation}\label{eqn: dpsieq}
\begin{split}
\psi'(x)&= \frac{\i}{\pi} \int \dvarp(z) \mathbb{E}[\mathrm{tr} G(z) e^\circ]\,\mathrm{d}z\\
&= -xV(\varphi_N) \psi(x)\\
&\quad + |x|\O(t^{1/2}N^{-1/2+2\xi})\|\varphi''_N\|_{L^1}\|\varphi_N'\|_{L^1}\nonumber\\
&\quad +|x|(1+|x|)\O(N^{-1/2}(\log N)^2)\|\varphi_N'\|^{5/2}_{L^1}\|\varphi_N''\|^{1/2}_{L^1} \nonumber \\
&\quad +|x|\O(N^{-1/2}\log N)(t^{1/2}\|\varphi_N''\|_{L^1}\|\varphi_N\|_{L^1}+ t^{-1/2}\|\varphi_N'\|_{L^1}^2). \nonumber
\end{split}
\end{equation} 
where 
\begin{equation}\label{eqn: Vvarphidef}
\begin{split}
V(\varphi_N) &:= -\frac{2}{\pi^2}\int_{\Omega_N}\int_{\Omega_N}\dvarp(z)\dvarp(z')\frac{1}{1-tR_2(z)} S_{2,1}(z,z')\,\mathrm{d}z\mathrm{d}z'\\
&\quad +\frac{2}{\pi^2}\int_{\Omega_N}\int_{\Omega_N}\dvarp(z)\dvarp(z')\frac{1}{1-tR_2(z)} S_{2,2}(z,z')\,\mathrm{d}z\mathrm{d}z'\\
&\quad +\frac{2}{\pi^2}\int_{\Omega_N}\int_{\Omega_N}\dvarp(z)\dvarp(z')\frac{1}{1-tR_2(z)} S_{2,3}(z,z')\,\mathrm{d}z\mathrm{d}z.
\end{split}
\end{equation}
By our assumptions \eqref{eqn: offIq-decay}, \eqref{eqn: varphiLinfty}, \eqref{eqn: omega-condition-1}, the error term in \eqref{eqn: dpsieq} is bounded by
\[N^{3\xi}(1+|x|)\O\left(\frac{t^{1/2}}{N^{1/2}t_1}\right)+N^{3\xi}|x|(1+|x|)\O\left(\frac{1}{(Nt_1)^{1/2}}\right)=(1+|x|)\O(N^{\omega_0/2-\omega_1+3\xi})+|x|^2\O(N^{-\omega_1/2+3\xi}).\]
Integrating \eqref{eqn: dpsieq} from $x=0$ to $|x|\le N^{\omega_1/4-\omega_0/8-3\xi}$ using \eqref{eqn: H12-lwrbd}, we find:
\begin{equation}\label{eqn: psi-int}
\psi(x) = \exp\left(-\frac{x^2}{2}V(\varphi_N)\right) + \O(N^{\omega_0/4-\omega_1/2}),
\end{equation}
which is the assertion of Theorem \ref{thm: linstat}.

\subsection{Computation of $T_1$}\label{sec: comp-T1}
\begin{prop}\label{T1prop}
We have the estimate:
\begin{equation}\label{eqn: T1estimate}
\begin{split}
T_1&=t\tilde{R}_2(z) \cdot \mathbb{E}[e(x)\mathrm{tr}G(\tau+\i\eta)^\circ]+ \O_\prec(N^{-1/2}|\eta|^{-3/2} )\\
&\quad +|x|\O_\prec\left(|\eta|^{-1}N^{-1/2}\|\varphi_N''\|^{1/2}\|\varphi_N'\|_{L^1}^{1/2}\right).
\end{split}
\end{equation}
uniformly for $z=\mathcal{D}_{\xi,q}\cup \overline{\mathcal{D}}_{\xi,q}$.
\end{prop}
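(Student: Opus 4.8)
The plan is to express $G_{jj}$ through the quantity $A_j$, reduce $T_1$ to a self-consistent relation by conditioning on the minor $H^{(j)}_t$, and then control the resulting discrepancies with the estimates of Theorem~\ref{thm: ABthm} and the identity~\eqref{eqn: ABidentity}. Since $G_{jj} = -1/A_j$, the expansion~\eqref{eqn: Aexp} with $k=2$ gives
\[
G_{jj}^\circ = \frac{A_j^\circ}{\ee[A_j]^2} - \left(\frac{(A_j^\circ)^2}{\ee[A_j]^2\, A_j}\right)^\circ,
\]
the second term being a pure error term. For the first term, $e_j$ is $H^{(j)}_t$-measurable, so the tower property yields $\ee[A_j^\circ e_j^\circ] = \ee\big[(\ee_j A_j)^\circ e_j^\circ\big]$, and~\eqref{eqn: justEA} identifies $\ee_j A_j = z - V_j + \tfrac tN \tr G^{(j)}$, whence $(\ee_j A_j)^\circ = \tfrac tN (\tr G^{(j)})^\circ$. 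Summing over $j$ and using $\tfrac tN\sum_j \ee[A_j]^{-2} = t\tilde R_2(z)$ already produces the correct prefactor; it then remains to replace $(\tr G^{(j)})^\circ e_j^\circ$ by $\tr G^\circ\, e^\circ$ and to dispose of the second-order term.

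For the first replacement I would use~\eqref{eqn: ABidentity} in the form $\tr G - \tr G^{(j)} = -A_j^{-1}(1+B_j)$; conditioning on $H^{(j)}_t$, the centered part of $\ee_j[A_j^{-1}(1+B_j)]$ is controlled by~\eqref{eqn: AinverseB-estimate} (its leading, deterministic part drops out under centering), so after multiplication by $\tfrac tN\ee[A_j]^{-2}$, use of the stability bound $|\ee[A_j]|\ge c\max(t,|\eta|)$, and summation over $j$ one gets an error of the claimed order $\O_\prec(N^{-1/2}|\eta|^{-3/2})$. For the replacement of $e_j$ by $e$, I would bound $|e - e_j|$ by $|x|$ times the Helffer--Sj\"ostrand integral of $(\tr G - \tr G^{(j)})^\circ$ against $\partial_{\bar z}\tilde\varphi_N$; integrating by parts in $\tau = \Re z$ on the range $|\eta|\ge \|\varphi_N''\|_{L^1}^{-1}$ exactly as in the proof of Lemma~\ref{lem: ejminuse} (cf.~\eqref{eqn: ibp-example}), and estimating crudely on the complementary sliver, produces the term $|x|\,\O_\prec(|\eta|^{-1}N^{-1/2}\|\varphi_N''\|^{1/2}\|\varphi_N'\|_{L^1}^{1/2})$.

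Finally, for the second-order term $\sum_j \ee\big[\big(\tfrac{(A_j^\circ)^2}{\ee[A_j]^2 A_j}\big)^\circ e_j^\circ\big]$ I would once more condition on $H^{(j)}_t$, split $A_j^\circ$ into its conditional fluctuation plus $\tfrac tN(\tr G^{(j)})^\circ$, evaluate the conditional second moment via the Gaussian structure, and reduce to a centered sum handled by the fluctuation-averaging mechanism together with the bound $A_j^\circ = \O_\prec(\sqrt t\, N^{-1/2} + t(N|\eta|)^{-1/2})$ from~\eqref{eqn: Ahalf} and the stability bound for $A_j$; this contributes below the error level.

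The main obstacle lies in these last two points. For $|\eta|$ near $N^\xi/N$ the naive sizes of $A_j^{-1}(1+B_j)$ and of $(A_j^\circ)^2/A_j$ are far larger than the target $N^{-1/2}|\eta|^{-3/2}$, so a term-by-term bound is useless: the gain must come from the cancellation in the centering $(\,\cdot\,)^\circ$ and from the conditional-independence structure (equivalently, from fluctuation averaging), together with the fact that $e_j$ sees the resolvent only through the slowly varying trace. Tracking precisely the interplay between the powers of $|\eta|^{-1}$, the small factor $t$, and the norms $\|\varphi_N^{(k)}\|_{L^1}$ through this bookkeeping is the delicate part of the proof.
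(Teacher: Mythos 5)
Your proposal follows essentially the same route as the paper's proof: expand $G_{jj}=-1/A_j$ around $\ee[A_j]$, identify the main term $t(m_N^{(j)})^\circ$ by conditioning on the minor, pass from $m_N^{(j)}$ to $m_N$ via \eqref{eqn: ABidentity} and \eqref{eqn: AinverseB-estimate}, replace $e_j$ by $e$ through Lemma \ref{lem: ejminuse}, and exploit the centering against $e_j^\circ$ to kill the deterministic part of the quadratic term. The only real difference is that you truncate \eqref{eqn: Aexp} at $k=2$, so your quadratic remainder still carries the random factor $1/A_j$ and the conditional Gaussian second-moment computation cannot be applied directly; expanding that denominator once more (the paper's $k=3$ choice) fixes this, and the resulting cubic remainder is then controlled exactly by the \eqref{eqn: Ahalf} and stability bounds you already invoke.
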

We choose $k=3$ in \eqref{eqn: Aexp} and write:
\begin{equation}
\label{eqn: T1split}
\begin{split}
T_1 =& \sum_{j=1}^N \frac{\ee [  e_j^\circ \ee_j [ A_j^\circ(\tau+\i\eta) ] ] }{ \ee [A_j(\tau+\i\eta)]^2 }- \sum_{j=1}^N \frac{\ee [ e_j^\circ \ee_j [ ( A^\circ_j(\tau+\i\eta) )^2 ]] }{ \ee [A_j(\tau+\i\eta)]^3 }  + \sum_{j=1}^N \frac{1}{\mathbb{E}[A_j(\tau+\i\eta)]^4}\ee\left[ \frac{e_j^\circ (A^\circ_j)^3 }{1 + \frac{A^\circ_j}{\ee A_j}}\right],
\end{split}
\end{equation}
where we have denoted by $\ee_j$ integation over the first row of $H$ and have used that $e_j$ is independent of this row.
The first term on the right of \eqref{eqn: T1split} will be seen to be the main term in \eqref{eqn: T1estimate}. To deal with the second term, we compute
\begin{equation}
\label{eqn: Acirc2}
\begin{split}
\ee_j [ ( A^\circ_j )^2 ] &= \ee_j [ (  - \sqrt{t}w_{jj}  + \sum_{kl}^{(j)} h_{jk} \Goj_{kl} h_{lj} - \frac{t}{N} \ee [ \tr \Goj ] )^2 ] \\
&= \frac{t}{N}  + \ee_j [ ( \sum^{(j)}_{kl}  \Goj_{kl} ( h_{jk}  h_{jl} - N^{-1}t \delta_{kl} ) )^2 ] + N^{-2} t^2\ee_j [ ( \tr \Goj - \ee \tr \Goj )^2].
\end{split}
\end{equation}
We further compute, using the local law:
\begin{equation}
\label{eqn: Acirc2var}
\begin{split}
\ee_j [ ( \sum^{(j)}_{kl}  \Goj_{kl} ( h_{jk}  h_{jl} - N^{-1}t \delta_{kl} ) )^2 ] &= \frac{t^2}{N^2} \sum^{(j)}_{kl} \Goj_{kl} \Goj_{kl}= \frac{t^2}{N} \del_z m_N+ \O_\prec ( t^2 N^{-2}|\eta|^{-2}).
\end{split}
\end{equation}

Inserting \eqref{eqn: Acirc2}, \eqref{eqn: Acirc2var} into \eqref{eqn: T1split} and using $|e^\circ_j|\le 2$, $\mathbb{E}e^\circ_j=0$, we find:
\begin{align}
T_1(\tau,\eta) &= \sum_{j=1}^N\frac{ \ee [  e_j^\circ \ee_j [ A_j^\circ(\tau+\i\eta) ] ] }{ \ee [A_j(\tau+\i\eta)]^2 } \label{eqn: T1stterm}\\
& + \sum_{j=1}^N\frac{1}{|\mathbb{E}[A_j]|^3}\cdot \O ( t^2 N^{-2}|\eta|^{-2}) + \sum_{j=1}^N \frac{1}{|\mathbb{E}[A_j]|^4}\O(t^{3/2}N^{-3/2}+t^3N^{-2}|\eta|^{-2})).
\end{align}
For the last term we have also used \eqref{eqn: Ahalf}.

Note that
\begin{align*}
\mathbb{E}_j(A_j)^\circ &= \frac{t}{N}\mathrm{tr}\Goj-\frac{t}{N}\mathbb{E}\mathrm{tr}\Goj =t(m_N^{(j)})^\circ,
\end{align*}
and so \eqref{eqn: AinverseB-estimate} implies
\beq\label{eqn: mNdiff}
N  \left| \ee [ \mathbb{E}_j[(A_j)^\circ] e_j ] - \ee [ (t m_N )^\circ e_j ] \right| \le  2t\ee [ | ( A_j^{-1} (1 + B_j ) )^\circ | ] \prec tN^{-1/2} |\eta|^{-3/2}.
\eeq

It now follows from \eqref{eqn: stability} that
\begin{equation*}
\begin{split}
T_1&=\sum_{j=1}^N \frac{t \ee [  e_j^\circ m_N ] }{ \ee [A_j(\tau+i\eta)]^2 }+ \sum_{j=1}^N \frac{1}{|\mathbb{E}[A_j(\tau+i\eta)]|^2}\cdot \O_\prec(tN^{-3/2}|\eta|^{-3/2}).
\end{split}
\end{equation*}

We now replace $e_j$ in \eqref{eqn: T1stterm} by $e(x)$. Using that $| \e^{\i a } - \e^{ \i b } |\le |a-b|$, we find
\begin{equation}\label{eqn: Eejminuse}
\begin{split}
&\left| \ee [ (m_N)^\circ e_j ] - \ee [ ( m_N )^\circ e ] \right| \leq C ( 1 + | x|  ) \ee\left[ \left|\int_{\mathbb{C}}\partial_{\bar{z}}\tilde{\varphi}_N(z')N[  ( (\moj)^\circ(z') - (m_N)^\circ(z')) ]\,\mathrm{d}z'\right||(m_N)^\circ(z)|\right].
\end{split}
\end{equation}
Here $z'=s+\i\eta'$.

To evaluate \eqref{eqn: Eejminuse}, we use $|(m_N)^\circ|\prec (N|\eta|)^{-1}$, together with the following lemma, for which we will also have use in the next section. 

\begin{lem}\label{lem: ejminuse}
We have the estimate,
\[\int_{\mathbb{C}} \dvarp(z')N[(\moj)^\circ(z') - (m_N)^\circ(z')) ]\,\mathrm{d}z' = |x|\O_\prec (\|\varphi_N''\|_{L^1}\|\varphi_N'\|_{L^1}/N)^{1/2}.\]
\end{lem}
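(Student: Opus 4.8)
The plan is to estimate the difference $\moj - m_N$ after integrating against $\bar\partial_z \tilde\varphi_N$, exploiting the fact that $N(m_N - \moj)$ has an explicit Schur-type representation. Recall from \eqref{eqn: ABidentity} that $N(m_N - \moj) = -A_j^{-1}(1+B_j)$, and from Lemma \ref{lem: AinverseB-lemma} that $A_j^{-1}(1+B_j) = (1 + t\partial_z m_{\mathrm{fc},t})/\ee[A_j] + \O_\prec(|\eta|^{-1}(N|\eta|)^{-1/2})$. The deterministic leading term $(1+t\partial_z m_{\mathrm{fc},t}(z'))/\ee[A_j(z')]$ is a smooth, bounded-on-$\Omega_N$ function of $z'$, so when integrated against $\bar\partial_z \tilde\varphi_N(z')$ it can be handled by the Helffer--Sj\"ostrand calculus: since the integrand is holomorphic on $\Omega_N$ away from the support issues, one can integrate by parts in $s = \Re z'$ to trade a factor $\varphi_N$ for $\varphi_N'$, or directly use the $L^1$ bounds on $\varphi_N', \varphi_N''$ together with the analyticity to collapse the $\eta'$-integral. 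This is exactly the ``\eqref{eqn: ibp-example}'' integration-by-parts maneuver referenced in the text. The upshot is that the contribution of the deterministic part is $\O(\|\varphi_N'\|_{L^1}/N)$ or smaller, which is absorbed into the claimed bound.

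The main term to control is therefore the fluctuating remainder. Here I would split $A_j^{-1}(1+B_j)$ into its expectation and its centered part, or more efficiently use the expansion \eqref{eqn: AinverseB-expand}: the dominant fluctuating piece is $B_j^\circ/\ee[A_j] - A_j^\circ(1+\ee B_j)/\ee[A_j]^2$, with higher-order terms $(A_j^\circ B_j^\circ)^\circ$ and $((A_j^\circ)^2 A_j^{-1}(1+B_j))^\circ$ being smaller by the estimates \eqref{eqn: Ahalf}, \eqref{eqn: Bbbound}. For the linear-in-fluctuation terms, one writes $A_j^\circ$ and $B_j^\circ$ as quadratic forms in the row $h^{(j)}$ minus their partial expectations, as in \eqref{eqn: justA}--\eqref{eqn: Bexpect}, and then the $z'$-integral $\int \bar\partial_z\tilde\varphi_N(z') (\cdots)\,\mathrm{d}z'$ against these quadratic forms is estimated by a large-deviation bound for quadratic forms (the one cited as \cite[Lemma 7.7]{landonyau}) \emph{after} performing the Helffer--Sj\"ostrand integration. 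The key point is that $\int \bar\partial_z \tilde\varphi_N(z') G^{(j)}(z')\,\mathrm{d}z' = \varphi_N(H^{(j)})$ up to the $\chi'$ boundary term, so the quadratic form becomes $\langle \varphi_N(H^{(j)}) h^{(j)}, h^{(j)}\rangle$ and similar, whose fluctuation is governed by $\frac{t}{N}\,\mathrm{tr}\,\varphi_N(H^{(j)})^2$-type quantities; a functional-calculus / Cauchy--Schwarz estimate bounds this by $C \|\varphi_N'\|_{L^1}^2$ on the relevant spectral range, giving fluctuation size $\prec (\|\varphi_N''\|_{L^1}\|\varphi_N'\|_{L^1}/N)^{1/2}$ after accounting for the $|x|$ prefactor coming from $|e^{ia}-e^{ib}|\le|a-b|$ and the smoothing in the remaining $z$-integral.

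I would organize the argument as follows: (i) insert \eqref{eqn: ABidentity} and pull out the deterministic part, bounding its HS-integral by integration by parts in $\Re z'$; (ii) expand the fluctuating part via \eqref{eqn: AinverseB-expand}, noting the higher-order terms are negligible by the a priori bounds of Theorem \ref{thm: ABthm}; (iii) for the leading fluctuating quadratic forms, commute the HS-integral inside to obtain $\langle \varphi_N(H^{(j)})^k h^{(j)}, h^{(j)}\rangle$-type objects, and apply the quadratic-form large-deviation estimate together with the bound $\frac{t}{N}\,\mathrm{tr}\,|\varphi_N(H^{(j)})|^2 \le C\|\varphi_N'\|_{L^1}^2$ (using rigidity / the spectral support assumption \eqref{eqn: varphisupport} and finite-rank interlacing between $H_t$ and $H_t^{(j)}$); (iv) collect the errors, all of which are dominated by $|x|(\|\varphi_N''\|_{L^1}\|\varphi_N'\|_{L^1}/N)^{1/2}$.

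The hard part will be step (iii): carefully justifying the interchange of the Helffer--Sj\"ostrand $z'$-contour integral with the quadratic form in $h^{(j)}$, handling the $\chi'(\eta')$ boundary contributions (which live at $|\eta'|\sim N^{10C_V}$ and are harmless but must be tracked), and obtaining the $\|\varphi_N''\|_{L^1}^{1/2}\|\varphi_N'\|_{L^1}^{1/2}$ combination rather than a weaker power — this requires splitting the $\eta'$-integral at scale $\|\varphi_N''\|_{L^1}^{-1}$ and using analyticity of $(m_N)^\circ - (\moj)^\circ$ (equivalently the $\partial_{s}$ integration by parts) on the large-$\eta'$ piece while using the crude $|\eta'|^{-1}(N|\eta'|)^{-1/2}$ bound on the small-$\eta'$ piece, then optimizing. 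Controlling the interlacing-based trace bound uniformly in $j$ is the other technical wrinkle, but it follows from the deterministic eigenvalue interlacing inequality $|\{\lambda_i(H_t)\le E\}| - |\{\lambda_i(H_t^{(j)})\le E\}|\le 1$ combined with $\|\varphi_N'\|_{L^1}$ and the support constraint.
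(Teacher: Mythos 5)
Your concluding paragraph contains the paper's actual proof: the decisive step there is exactly to use the identity \eqref{eqn: ABidentity} so that $N[(m_N^{(j)})^\circ-(m_N)^\circ]=(A_j^{-1}(1+B_j))^\circ$, which by \eqref{eqn: AinverseB-estimate} is $\O_\prec(N^{-1/2}|\eta'|^{-3/2})$ uniformly on $\Omega_N$ (the deterministic term cancels under the centering, so it needs no separate treatment — and note it is not bounded on $\Omega_N$, it is of size $\max(t,|\eta'|)^{-1}$), and then to split the $\eta'$-integration at an intermediate scale $N^{\epsilon_2-1}$: on the small-$\eta'$ piece one integrates $|\eta'\varphi_N''|$ against the crude bound, on the large-$\eta'$ piece one integrates by parts in $s=\Re z'$ and uses a Cauchy estimate for $\partial_{z'}(A_j^{-1}(1+B_j))^\circ$ as in \eqref{eqn: ibp-example}, and optimizing $\epsilon_2$ produces the geometric mean $N^{-1/2}\|\varphi_N''\|_{L^1}^{1/2}\|\varphi_N'\|_{L^1}^{1/2}$. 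So the mechanism you sketch as "the hard part" is the proof; had you organized the argument around it, you would essentially match the paper.

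The route you actually put at the center, steps (ii)--(iii), does not work as stated. First, after the expansion \eqref{eqn: AinverseB-expand} the fluctuating pieces are $B_j^\circ(z')/\ee[A_j(z')]$ and $A_j^\circ(z')(1+\ee B_j(z'))/\ee[A_j(z')]^2$, i.e.\ quadratic forms multiplied by $z'$-dependent deterministic weights of size up to $\max(t,|\eta'|)^{-1}$ and $\max(t,|\eta'|)^{-2}$; commuting the Helffer--Sj\"ostrand integral inside therefore does not give $\langle\varphi_N(H^{(j)})h^{(j)},h^{(j)}\rangle$ or $\langle\varphi_N'(H^{(j)})h^{(j)},h^{(j)}\rangle$, but a $j$-dependent weighted functional calculus whose analysis you have not supplied. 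Second, the trace bound $\frac{t}{N}\tr\varphi_N(H^{(j)})^2\le C\|\varphi_N'\|_{L^1}^2$ is unjustified and generally false in this setting: only $\varphi_N'$ is localized (assumption \eqref{eqn: offIq-decay}), $\varphi_N$ itself has no spatial decay, so $\tr\varphi_N(H^{(j)})^2$ can be of order $N$ and the left-hand side of order $t$, with no relation to $\|\varphi_N'\|_{L^1}^2$; nor is it shown that this route yields the required size $(\|\varphi_N''\|_{L^1}\|\varphi_N'\|_{L^1}/N)^{1/2}$ rather than something weaker. Finally, your plan does not cover the region $|\eta'|<N^{\xi}/N$, where $\dvarp\neq0$ but the local law and hence \eqref{eqn: AinverseB-estimate} are unavailable; the paper handles it (after taking real parts, using that $\varphi_N$ is real and that $\varphi_N'$ has small support) via the monotonicity bound $|\Im (m_N)^\circ|,|\Im(m_N^{(j)})^\circ|\prec(N\eta')^{-1}$, giving an extra error $N^{-1+\epsilon}\|\varphi_N''\|_{L^1}$ which is then optimized in $\epsilon$, together with the trivial treatment of the $\chi'$-boundary terms at $|\eta'|\sim N^{10C_V}$ that you do mention. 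Incidentally, the $|x|$ in the statement plays no role in the proof of the lemma itself; it enters only when the lemma is applied in \eqref{eqn: Eejminuse}.
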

\begin{proof}
Let $\epsilon>0$ be a parameter to be determined later. Split the integral into two regions, using the real-valuedness of $\varphi_N$:
\begin{equation}\label{eqn: ejminuse-split}
\begin{split}
\int_{\mathbb{C}} \dvarp(z')N[(\moj)^\circ(z') - (m_N)^\circ(z')) ]\,\mathrm{d}z' &=\Re \int_{\D_{\epsilon,q}} \partial_{\bar{z}}\tilde{\varphi}_N(z')N[  ( (\moj)^\circ(z') - (m_N)^\circ(z')) ]\,\mathrm{d}z'\\
&+\Re \int_{\D_{\epsilon,q}^c} \dvarp(z')  N\left((\moj)^\circ(z') - (m_N)^\circ(z')\right)\,\mathrm{d}z'.
\end{split}
\end{equation}

For the first integral, we simply estimate the real part by the full modulus. Our task is thus to estimate the sum
\begin{align}
& N\left|\int_{\D_{\epsilon,q}}\dvarp(z') ((\moj)^\circ(z') - (m_N)^\circ(z'))\,\mathrm{d}z'\right| \label{eqn: phiN-first} \\
+ & N\left|\int_{\D_{\epsilon,q}^c}\eta' \varphi_N''(s) \chi(\eta')[\Im ((\moj)^\circ(z') - (m_N)^\circ(z'))]\, \mathrm{d}z'\right| \label{eqn: phiN''-bound}\\
+& N\int_{\D_{\epsilon,q}^c}|\varphi_N(s)||\chi'(\eta')||(\moj)^\circ(z') - (m_N)^\circ(z')|\,\mathrm{d}z' \label{eqn: phiN-bound}\\
+& N\int_{\D_{\epsilon,q}^c}|\varphi_N'(s)||\eta'||\chi'(\eta')| |(\moj)^\circ(z') - (m_N)^\circ(z')|\, \mathrm{d}z'. \label{eqn: phiN'-bound}
\end{align}

Below, we will repeatedly use \eqref{eqn: ABidentity} and \eqref{eqn: AinverseB-estimate} to approximate the quantity $(\moj(z)-m_N(z))^\circ$, resulting in the bound
\begin{equation}\label{eqn: mprec}
|(\moj(z)-m_N(z))^\circ|\prec (N|\eta|)^{-3/2}.
\end{equation}
 Since  $ \{\chi'(\eta')\neq 0\}\subset \{|\eta'|\ge N^{10C_V}-1\},$
using \eqref{eqn: AinverseB-estimate}, \eqref{eqn: phiN-first} is bounded by
\begin{equation} \label{eqn: pre-2-way-split}
\left|\int_{\mathcal{D}_{\epsilon,q}}\i\eta'\varphi_N''(s) N((\moj)^\circ(z') - (m_N)^\circ(z'))\,\mathrm{d}z'\right| + N^{-4C_V}(\|\varphi_N'\|_{L^1}+\|\varphi_N'\|_{L^1}).
\end{equation}
The error term here is $\O(N^{-2})$. Introducing a new parameter $\epsilon_2$, we split the $\eta'$ integral in the first term in \eqref{eqn: pre-2-way-split} into the regions
\begin{align}
&\{N^\epsilon/N<|\eta'|\le N^{\epsilon_2}/N\}, \label{eqn: T1-split-1}\\
&\{N^{\epsilon_2}/N \le |\eta'|\le N^{10C_V}\}. \label{eqn: T1-split-2}
\end{align}
In the region \eqref{eqn: T1-split-1}, we use \eqref{eqn: mprec} to find a bound of 
\[\int_{\{N^\epsilon/N<|\eta'|\le N^{\epsilon_2}/N\}}|\eta'||\varphi_N''(s)|\O(N^{-1/2}|\eta'|^{-3/2})\,\mathrm{d}s\mathrm{d}\eta'\le CN^{\epsilon_2/2-1}\|\varphi_N''\|_{L^1}.\]
In \eqref{eqn: T1-split-2}, we integrate by parts in $s$, and combine \eqref{eqn: mprec} and analyticity, to find that the term \eqref{eqn: phiN-first} is bounded by
\begin{equation}\label{eqn: ibp-example}
\begin{split}
&N\int_{N^{-1+{\epsilon_2}}<|\eta'|<N^{10C_V}}|\varphi_N'(s)||\eta'||\partial_{z'}(A^{-1}_j(1+B_j)(s))^{\circ} |\mathrm{d}z'\\
\leq &N\int_{N^{-1+{\epsilon_2}}<|\eta'|<10}|\varphi_N'(s)|N^{-3/2}|\eta'|^{-3/2}\mathrm{d}z' \leq  N^{-\epsilon_2/2}\|\varphi_N'\|_{L^1}.
\end{split}
\end{equation}
Optimizing $\epsilon_2$, we find that \eqref{eqn: phiN-first} is bounded by $\O(N^{-1/2}\|\varphi_N'\|_{L^1}^{1/2}\|\varphi_N''\|_{L^1}^{1/2}).$  For \eqref{eqn: phiN''-bound}, we use the assumption \eqref{eqn: offIq-decay} on the support of $\varphi_N'$. The integration is over 
\[\{0<|\eta'|<N^\epsilon/N\}\cup\{10 < |\eta'| < N^{10C_V}\}.\] 
In the first region, we have by monotonicity -- see \cite[Lemma 7.19]{landonyau} for details -- $|\Im m_N^\circ(z')|, |\Im(\moj)^\circ(z')| \prec (N\eta')^{-1}$, so this term is
\[\O_{\prec}\left(\int_{0<|\eta'|<N^{\epsilon}/N}| \varphi_N''(s)|\, \mathrm{d}z' \right) =\O( N^{-1+\epsilon}\|\varphi''\|_{L^1}).\]

For the integral over $|\eta'|>10$, we integrate by parts and use $\partial_{\eta'}\Im m_N = -\partial_s \Re m_N$ to find the estimate
\begin{align*}
&N\left|\int_{|\eta'|>10} \varphi_N'(s) \partial_{\eta'}(\eta'\chi(\eta')) ((\moj)^\circ(z') - (m_N)^\circ(z'))\,\mathrm{d}z'\right|\\
+&N\left|\int \varphi_N'(s) 10\chi(10) ((\moj)^\circ(s+10\i) - (m_N)^\circ(s+10\i))\,\mathrm{d}s\right|.
\end{align*}
We use \eqref{eqn: ABidentity}, \eqref{eqn: AinverseB-estimate} to find that the expectation of both terms is bounded by  $\O_\prec(N^{-1/2}\|\varphi_N'\|_{L^1})$.

Recalling \eqref{eqn: varphisupport}, the term \eqref{eqn: phiN-bound} is estimated by
\begin{align*}
&N\int_{\Omega_N^c\cap \{N^{C_V}-1 \le|y|\le N^{C_V}\}}|\varphi_N(s)|\, \mathrm{d}z' \leq N \frac{N^{10C_V}}{N^{20C_V}}.
\end{align*}
Assuming (without loss of generality) that $C_V\ge 5$, this is $\O(N^{-2})$.
Using $|m_N^\circ(\eta')|\prec (N\eta')^{-1}$, the term \eqref{eqn: phiN'-bound} is $\O(N^{-1}\|\varphi_N'\|_{L^1})$. Combining all the above, we find that, for any $\epsilon>0$:
\begin{equation}\label{eqn: ejminuse}
\begin{split}
 &\left|\int_{\mathbb{C}} \dvarp(z')N[(\moj)^\circ(z') - (m_N)^\circ(z')) ]\,\mathrm{d}z'\right| =\O_\prec(N^{-1/2}\|\varphi_N''\|_{L^1}^{1/2}\|\varphi_N'\|_{L^1}^{1/2}+N^{-1+\epsilon}\|\varphi_N''\|_{L^1}).
\end{split}
\end{equation}
The result now follows by optimizing in $\epsilon$.
\end{proof}\qed

Using the previous lemma in \eqref{eqn: Eejminuse}, we have
\[\left| \ee [ (m_N)^\circ e_j ] - \ee [ ( m_N )^\circ e ] \right| = \O((N|\eta|)^{-1}) \|\varphi_N''\|^{1/2}_{L^1}\|\varphi_N'\|_{L^1}^{1/2}N^{-1/2}.\]

By \eqref{eqn: stability} and \eqref{eqn: EA}, we can estimate
\begin{equation}\label{eqn: R2sum-bound}
\frac{t}{N}\sum_{j=1}^N \frac{1}{|\mathbb{E}A_j|^2} \prec 1.
\end{equation}
From this we get that, for all $\tau+\i\eta \in \Omega_N$,
\begin{align} \label{eqn:t1final}
&T_1(\tau,\eta) = \ee [ e (x)  \tr G(\tau+\i\eta)^\circ ]\cdot  \frac{t}{N}\sum_{j=1}^N\frac{1}{\ee [A_j]^2 }\\
&+ \frac{t}{N}\sum_{j=1}^N\frac{1}{|\mathbb{E}[A_j]|^2}\left(\O_\prec(N^{-1/2}|\eta|^{-3/2})\right) + \frac{t}{N}\sum_{j=1}^N\frac{1}{|\mathbb{E}[A_j]|^2}|x|\O_\prec\left(|\eta|^{-1}N^{-1/2}\|\varphi_N''\|^{1/2}\|\varphi_N'\|_{L^1}^{1/2}\right) \notag \\
& =  t\tilde{R}_2(z)\cdot \ee [ e (x) \tr G(\tau+\i\eta)^\circ ]+ \O_\prec(N^{-1/2}(|\eta|^{-3/2}+t^{-1/2}) ) +|x|\O_\prec\left(|\eta|^{-1}N^{-1/2}\|\varphi_N''\|^{1/2}\|\varphi_N'\|_{L^1}^{1/2}\right).\nonumber
\end{align} 
This is the claim of Proposition \ref{T1prop}.
\qed

\subsection{Computation of $T_2$}\label{sec: comp-T2}
We now compute $T_2$. Recall from the definition \eqref{eqn:tdecomp} that
\[T_2(\tau,\eta) = \sum_{j=1}^N \ee \left[ G_{jj}^\circ(\tau+\i\eta) ( e - e_j ) \right].\]
By \eqref{eqn: Aexp}, we have:
\begin{equation}\label{eqn: expandforT2}
\begin{split}
\ee \left[ G_{jj}^\circ(\tau+\i\eta) ( e - e_j ) \right]  &= \frac{1}{\mathbb{E}[A_j]^2}\ee \left[ A_j^\circ(\tau+\i\eta) ( e - e_j ) \right]\\
&\quad - \frac{1}{\mathbb{E}[A_j]^2}\mathbb{E}\left[\frac{1}{\mathbb{E}[A_j]+A_j^\circ} (A_j^\circ(\tau+\i\eta))^2 ( e - e_j ) \right].
\end{split}
\end{equation}

Using the expansion
\begin{align*}
\exp(iX^{(j)})-\exp(\i X) &= \exp(iX^{(j)})\cdot (1-\exp(\i(X-X^{(j)})))\\
&= \exp(\i X^{(j)})(\i(X^{(j)}-X)+O(|X-X^{(j)}|)^2),
\end{align*}
we have, by Lemma \ref{lem: ejminuse},
\begin{equation}\label{eqn: eminuse1}
\begin{split}
&e_j(x) - e(x) - \frac{\i x}{\pi} e_j (x) \int \dvarp(z') (N \cdot [ \moj - m_N](s+\i\eta') )^\circ\,\mathrm{d}z'\\ 
= &|x|^2\O_\prec(\|\varphi_N''\|_{L^1}\|\varphi_N'\|_{L^1}N^{-1}),
\end{split}
\end{equation}
with overwhelming probability.

Using \eqref{eqn: eminuse1} and \eqref{eqn: Ahalf} in \eqref{eqn: expandforT2}, we get the following expression for $T_2$, which holds for $\tau+i\eta \in \Omega_N$:
\begin{align}
T_2 =& -\sum_{j=1}^N \frac{1}{\mathbb{E}[A_j(\tau+\i\eta)]^2} \frac{\i x}{\pi}\int\dvarp(z')  \ee \left[e_j (x)  \left(N [ \moj - m_N ](s+\i\eta')\right)^\circ  A_{j}(\tau+\i\eta)^\circ  \right] \mathrm{d}z' \label{eqn: T2-display-1}\\
&+ \frac{1}{N}\sum_{j=1}^N\frac{t|x|^2}{|\mathbb{E}A_j(\tau+\i\eta)|^2}\cdot \O((N|\eta|)^{-1/2}+t^{-1/2}N^{-1/2})\|\varphi_N''\|_{L^1}\|\varphi_N'\|_{L^1} \label{eqn: T2-error-1}\\
&+ \frac{1}{N}\sum_{j=1}^N\frac{|x|}{|\mathbb{E}A_j(\tau+\i\eta)|^3}\cdot \O( t^2N^{-1/2}|\eta|^{-1}+tN^{-1/2})\|\varphi_N''\|^{1/2}_{L^1}\|\varphi_N'\|^{1/2}_{L^1} \label{eqn: T2-error-2}.
\end{align}


We now compute the main term in \eqref{eqn: T2-display-1}. We begin by splitting:
\begin{align}
&\int_{\Omega_N}\dvarp(z')  \ee \left[e_j (x)  \left(N [ \moj - m_N ](s+\i\eta')\right)^\circ  A_{j}(\tau+\i\eta)^\circ  \right] \mathrm{d}z' \label{eqn: T2-insideD} \\
+&\int_{\Omega_N^c}\dvarp(z')  \ee \left[e_j (x)  \left(N [ \moj - m_N ](s+\i\eta')\right)^\circ  A_{j}(\tau+\i\eta)^\circ  \right] \mathrm{d}z'. \label{eqn: T2-outsideD}
\end{align}
The term \eqref{eqn: T2-outsideD} is estimated in the same way as the second term in \eqref{eqn: ejminuse-split}. Together with \eqref{eqn: Ahalf}, This gives a bound of $\O_\prec((t(N|\eta|)^{-1/2}+t^{1/2}N^{-1/2})N^{-1+\xi})\|\varphi_N''\|_{L^1}$.  We see that the total contribution to $T_2$ of the sum over $j$ of \eqref{eqn: T2-outsideD} is bounded by
\begin{equation}\label{eqn: T1-error-3}
 \frac{1}{N}\sum_{j=1}^N\frac{t|x|}{|\mathbb{E}[A_j(\tau+\i\eta)]|^2}(\O((N|\eta|)^{-1/2})+\O(t^{-1/2}N^{-1/2}))N^\xi \|\varphi_N''\|_{L^1}.
\end{equation}

For the first term \eqref{eqn: T2-insideD}, we use the expansion \eqref{eqn: ABexp}. The main terms are
\begin{equation}\label{eqn: T2-display}
\begin{split}
T_{2,1} &= -\sum_{j=1}^N \frac{ix}{\pi}\int_{\Omega_N}\dvarp(z')  \frac{1}{\mathbb{E}[A_j(z)]^2}\ee \left[e_j (x) \frac{B_j^\circ(z')}{\mathbb{E}[A_j(z')]}  A_{j}(z)^\circ  \right] \mathrm{d}z',\\
T_{2,2}&=\sum_{j=1}^N \frac{ix}{\pi}\int_{\Omega_N}\dvarp(z')  \frac{1}{\mathbb{E}[A_j(z)]^2}\ee \left[e_j (x) \frac{A_j^\circ(z')(1+\mathbb{E}B_j(z'))}{\mathbb{E}[A_j(z')]^2}  A_{j}(z)^\circ  \right] \mathrm{d}z'.
\end{split}
\end{equation}
The remaining terms will be shown to be error terms:
\begin{align}
T_{2,3}&=\sum_{j=1}^N\frac{x}{\mathbb{E}[A_j(z)]^2} \int_{\Omega_N}  \dvarp(z')\frac{1}{\mathbb{E}[(A_j(z')]^2}\mathbb{E}[A^\circ_j(z')B^\circ_j(z') A_j^\circ(z)] \mathrm{d}z',\\
T_{2,4}&=\sum_{j=1}^N\frac{x}{\mathbb{E}[A_j(z)]^2} \int_{\Omega_N} \dvarp(z')\frac{1}{\mathbb{E}[A_j(z')]^2}\mathbb{E}\left[\frac{(A_j^\circ(z'))^2}{A_j(z')}(1+B_j(z')) A_j^\circ(z)\right]\mathrm{d}z'.
\end{align}

Collecting the error terms obtained so far and using \eqref{eqn: stability}, we find
\begin{equation}
\begin{split}
I_1&=\int_{\Omega_N} \dvarp(z) \frac{T_2(z)}{1-t\tilde{R}(z)}\,\mathrm{d}z\\
&= \int_{\Omega_N} \dvarp(z) \frac{(T_{2,1}(z)+T_{2,2}(z)+T_{2,3}(z)+T_{2,4}(z))}{1-t\tilde{R}(z)}\,\mathrm{d}z+ \int_{\Omega_N} \dvarp(z) \Delta_{1,1}(z)\mathrm{d}z,\\
\end{split}
\end{equation}
where $\Delta_{1,1}$ is $1/(1-t\tilde{R}(z))$ times the difference between $T_2$ and the main term \eqref{eqn: T2-display-1}, restricted to the region $\Omega_N$. $|\Delta_{1,1}|$ is bounded by the sum of the errors \eqref{eqn: T2-error-1}, \eqref{eqn: T2-error-2} and \eqref{eqn: T1-error-3}.

We have:
\begin{align}
\int_{\Omega_N} \dvarp(z) \Delta_{1,1}(z) \,\mathrm{d}z &= \int_{\Omega_N} \i\varphi_N''(\tau)\eta\chi(\eta)  \Delta_{1,1}(z) \,\mathrm{d}z \label{eqn: Delta11-phi''}\\
&+ \int_{\Omega_N} \i\varphi_N(\tau)\chi'(\eta)  \Delta_{1,1}(z) \,\mathrm{d}z \label{eqn: Delta11-phi}\\
&- \int_{\Omega_N} \varphi_N'(\tau)\eta\chi'(\eta)  \Delta_{1,1}(z) \,\mathrm{d}z. \label{eqn: Delta11-phi'}
\end{align}
We first estimate \eqref{eqn: Delta11-phi''}. After integration by parts in $\tau$, and using 
$$|\partial_{z}\Delta_{1,1}(z)|\le 2|\eta|^{-1}\max_{|w-z|=|\eta|/2}|\Delta_{1,1}(w)|,$$
 this is bounded by
\begin{align}
&\int_{\{z:N^{-1+\xi}<|\eta|<N^{10C_V}\}}|\varphi_N'(\tau)\eta\chi(\eta)| |\partial_{z}\Delta_{1,1}(z)|\,\mathrm{d}z \nonumber\\
\leq & \|\varphi_N'\|_{L^1} \int_{N^{-1+\xi}<|\eta|<N^{10C_V}} \sup_{\tau}\left( \frac{1}{N}\sum_{j=1}^N\frac{tN^\xi|x|}{|\mathbb{E}[A_j(\tau+\i\eta)]|^2}\O((N|\eta|)^{-1/2}+t^{-1/2}N^{-1/2}) \|\varphi_N''\|_{L^1}\right)\,\mathrm{d}\eta \label{eqn: eta-integral-1}\\
+ & \|\varphi_N'\|_{L^1}\int_{N^{-1+\xi}<|\eta|<N^{10C_V}} \sup_\tau\left(  \frac{1}{N}\sum_{j=1}^N\frac{|x|}{|\mathbb{E}A_j(\tau+\i\eta)|^3}\cdot \O( t^2N^{-1/2}|\eta|^{-1}+tN^{-1/2} )\|\varphi_N''\|^{1/2}_{L^1}\|\varphi_N'\|^{1/2}_{L^1}\right)\,\mathrm{d}\eta \label{eqn: eta-integral-2}
\end{align}
Split the $\eta$ integral \eqref{eqn: eta-integral-1} into $\{|\eta|\le t\}$, $\{|\eta|>t\}$, and
\[\frac{1}{N}\sum_{j=1}^N \frac{1}{|\mathbb{E}[A_j]|^2}\le C\log N/(\max(t,|\eta|)).\]
This gives the estimate
\begin{equation}\label{eqn: Delta11-error}
|x|\|\varphi_N'\|_{L^1}\|\varphi_N''\|_{L^1}\O(t^{1/2}N^{-1/2+2\xi}).
\end{equation}
With $t=N^{\omega_0}/N$ and $\|\varphi_N''\|_{L^1}\le N/N^{\omega_1}$, this is $\O(N^{\omega_0/2-\omega_1+2\xi}),$ 
which is $O(N^{-c})$ if $\xi$ is small enough. By direct computation and \eqref{eqn: R2sum-bound}, the term \eqref{eqn: eta-integral-2} is
$|x|\O(tN^{-1/2}\log N)\|\varphi_N''\|_{L^1}^{1/2}\|\varphi_N'\|_{L^1}^{3/2}. $
For the terms \eqref{eqn: Delta11-phi}, \eqref{eqn: Delta11-phi'}, the integrands are supported in the region 
$\{z:N^{10C_V}-1<|\Im z|<N^{10C_V}\}.$ 
In this region, we use the bound 
$|\mathbb{E}A_j(\tau+\i\eta)|\ge c|\eta|,$
to obtain a bound of the form
$C\|\varphi_N'\|_{L^1}N^{-2}.$
The remaining terms $T_{2,1}$, $T_{2,2}$, $T_{2,3}$, $T_{2,4}$ are computed in the following sections.

\subsection{Computation of $T_{2,1}$.}  \label{sec: T21-comp}
We now compute the term $T_{2,1}$ \eqref{eqn: T2-display}. Since $e_j(x)$ is independent of $(h_{ij})_{i=1}^N$, we first compute
\begin{equation}
\ee_j [A_j^\circ(\tau+\i\eta)  \frac{ B_j^\circ(s+\i\eta') }{ \mathbb{E}[A_j(s+\i\eta')] }]. \label{eqn: EAB}
\end{equation}
For simplicity of notation, we will write $G(s)$ for $G(s+\i\eta')$ and $G(\tau)$ for $G(\tau+\i\eta)$. Similar notational simplifications apply to $\Goj(s+\i\eta')$, $\moj(s+\i\eta')$, $\moj(s+\i\eta')$, etc.

The result of the following computation is:
\begin{prop}\label{EAmscBprop}
Uniformly for $\tau+\i\eta, s+\i\eta' \in \Omega_N$,
\begin{equation}\label{eqn: EAmscB}
\begin{split}
N\ee_j [ A_j^\circ(\tau+\i\eta) \frac{ B_j^\circ(s+\i\eta')}{\mathbb{E}[A_j(s+\i\eta')]} ]=& \frac{2t^2}{\mathbb{E}[A_j(s+\i\eta')]} \cdot \partial_s \frac{m_{\mathrm{fc},t}(\tau)-m_{\mathrm{fc},t}(s)}{\tau-s+\i(\eta-\eta')}\\
&+ g_j(s)\cdot\O_\prec(t^2N^{-1}|\eta'|^{-2}|\eta|^{-1}).
\end{split}
\end{equation}
\end{prop}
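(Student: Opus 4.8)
The plan is to compute the partial expectation $\ee_j[A_j^\circ(\tau) B_j^\circ(s)]$ by expanding both quantities in terms of the matrix elements $h_{ji}$ and the submatrix resolvent $G^{(j)}$, and then applying the local law and large deviation estimates of Section~\ref{sec: characteristic} (essentially Theorem~\ref{thm: ABthm} and Lemma~\ref{lem: AinverseB-lemma}). Using the Schur complement identity \eqref{eqn: justA}, we have $A_j^\circ(\tau) = -\sqrt{t}w_{jj} + \sum^{(j)}_{kl}\Goj_{kl}(\tau)(h_{jk}h_{jl} - N^{-1}t\delta_{kl})$, up to the centered trace correction $N^{-1}t(\tr\Goj(\tau))^\circ$ which is of lower order. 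Similarly $B_j(s) = \sum^{(j)}_{kl}(\Goj(s)^2)_{kl}h_{jk}h_{jl}$, so $B_j^\circ(s) = \sum^{(j)}_{kl}(\Goj(s)^2)_{kl}(h_{jk}h_{jl} - N^{-1}t\delta_{kl}) + N^{-1}t(\tr\Goj(s)^2)^\circ$. First I would substitute these and use independence of $G^{(j)}$ from the $j$-th row together with the fourth-moment Gaussian computation: $\ee_j[(h_{jk}h_{jl} - N^{-1}t\delta_{kl})(h_{jp}h_{jq} - N^{-1}t\delta_{pq})] = N^{-2}t^2(\delta_{kp}\delta_{lq} + \delta_{kq}\delta_{lp})$ for the relevant (real symmetric) entries, plus the diagonal $w_{jj}$ contribution which pairs only with the $k=l$ diagonal terms.

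The main term then collapses to a trace: $\ee_j[A_j^\circ(\tau)B_j^\circ(s)] = \frac{2t^2}{N^2}\sum^{(j)}_{kl}\Goj_{kl}(\tau)(\Goj(s)^2)_{kl} + (\text{lower order})= \frac{2t^2}{N^2}\tr[\Goj(\tau)\Goj(s)^2] + \cdots$. Next I would use the resolvent identity $\Goj(\tau)\Goj(s) = \frac{\Goj(\tau) - \Goj(s)}{\tau - s + i(\eta - \eta')}$ to rewrite $\tr[\Goj(\tau)\Goj(s)^2] = \partial_s \tr[\Goj(\tau)\Goj(s)] = \partial_s \frac{\tr\Goj(\tau) - \tr\Goj(s)}{\tau - s + i(\eta-\eta')} = N\partial_s\frac{\moj(\tau) - \moj(s)}{\tau - s + i(\eta-\eta')}$. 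Applying the local law (valid on $\Omega_N$ after extension by reflection) to replace $\moj$ by $m_{\mathrm{fc},t}$ — being careful that the $\partial_s$ derivative costs a factor $|\eta'|^{-1}$ via a Cauchy integral, as in \eqref{eqn:Bexpect} — produces $N\partial_s\frac{m_{\mathrm{fc},t}(\tau) - m_{\mathrm{fc},t}(s)}{\tau - s + i(\eta-\eta')}$ with error $\O_\prec(N^{-1}|\eta'|^{-2}|\eta|^{-1})$ after accounting for the derivative and the trace normalization. Dividing through by $\mathbb{E}[A_j(s)]$ and noting $g_j(s) = \mathbb{E}[A_j(s)]^{-1} + \O_\prec(t(N|\eta'|)^{-1})$ from \eqref{eqn: EA} and the stability bound \eqref{eqn: stability}, one obtains the stated formula with the error carrying the factor $g_j(s)$ as claimed (the $g_j(s)$ simply tracking the size $\asymp \max(t,|\eta'|)^{-1}$ of this quantity).

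The remaining pieces to control are: (i) the diagonal $w_{jj}$ term, which contributes $\sqrt{t}\cdot\ee_j[w_{jj}\sum_k(\Goj(s)^2)_{kk}h_{jk}^2]$-type expressions — these vanish by oddness in $w_{jj}$ except for the $\ee[w_{jj}^2]$ pairing which gives a lower-order $N^{-1}$ term; (ii) the centered-trace corrections $N^{-1}t(\tr\Goj(\cdot))^\circ$ and $N^{-1}t(\tr\Goj(s)^2)^\circ$, whose variances are $\O_\prec(t^2N^{-2}|\eta|^{-2})$ and $\O_\prec(t^2N^{-2}|\eta'|^{-4})$ respectively by standard fluctuation bounds, and whose cross terms are handled by Cauchy-Schwarz; (iii) the replacement $\moj \to m_N \to m_{\mathrm{fc},t}$, which is the step where the derivative in $s$ must be controlled by a contour integral of radius $|\eta'|/2$ to avoid losing a full power of $|\eta'|$. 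The main obstacle I anticipate is precisely (iii): bookkeeping the interaction between the $\partial_s$ derivative, the singular kernel $(\tau - s + i(\eta - \eta'))^{-1}$, and the local-law error so that the final error is genuinely $\O_\prec(t^2 N^{-1}|\eta'|^{-2}|\eta|^{-1})$ rather than something larger; one must use that on $\Omega_N$ the quantity $\frac{m_{\mathrm{fc},t}(\tau) - m_{\mathrm{fc},t}(s)}{\tau - s}$ and its $s$-derivative are bounded by $C|\eta'|^{-1}$ and that differences of $\moj$ are controlled uniformly. The other steps are routine moment computations of the type already carried out in the proof of Theorem~\ref{thm: ABthm}.
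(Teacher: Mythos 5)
Your proposal follows essentially the same route as the paper's proof: the conditional (row-$j$) Gaussian moment computation collapsing $N\ee_j[A_j^\circ B_j^\circ]/\ee[A_j(s+i\eta')]$ to $\frac{2t^2}{\ee[A_j(s+i\eta')]}\,\partial_s\frac{\moj(\tau)-\moj(s)}{\tau-s+i(\eta-\eta')}$ (with the trace-fluctuation/recentering corrections absorbed into the stated error), followed by a local-law replacement of $\moj$ by $\mfct$. The only point of divergence is your step (iii): where you invoke a Cauchy-integral bound, the paper splits into cases and, in the delicate regime $\eta\eta'>0$, $|\eta-\eta'|<\max(|\eta|,|\eta'|)/2$, uses the identity \eqref{eqn: f-diffquot} with $f=\moj-\mfct$ to bound the derivative of the difference quotient by $\max|f''|$ along the segment, which avoids the singular denominator entirely; both devices give the error $\O_\prec(N^{-1}|\eta'|^{-2}|\eta|^{-1})$.
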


\begin{proof}
We first recenter around the conditional expectations $\mathbb{E}_jA_j$, $\mathbb{E}_jB_j$ instead of the full expectations, using the identity
\begin{align*}
&\mathbb{E}_j[(A-\mathbb{E}[A](B-\mathbb{E}[B]] = \mathbb{E}_j[(A-\mathbb{E}_j[A])(B-\mathbb{E}_j[B])]+(\mathbb{E}_j[A]-\mathbb{E}[A])(\mathbb{E}_j[B]-\mathbb{E}[B]).
\end{align*}

 This produces an error $\O(t^2N^{-1}|\eta'|^{-2}|\eta|^{-1})$. We then write
\begin{align}
&N \ee_j [ (A_j(z)-\mathbb{E}_jA_j(z))\frac{(B_j(z')-\mathbb{E}_j B_j(z'))}{\mathbb{E}[A_j(z')]} ] \nonumber \\
= &N \ee_j \left( \sumoj_{i, k}  \Goj_{ik}(\tau) ( h_{ji} h_{kj} - N^{-1}t \delta_{ik} ) \right) \nonumber \times \left( \sumoj_{lkm} \frac{1}{\mathbb{E}[A_j(z')]} \Goj_{lk}(s) \Goj_{km}(s) ( h_{jl} h_{mj} - N^{-1}t \delta_{lm}) \right) \nonumber \\
= & \frac{2t^2}{N \mathbb{E}[A_j(s+\i\eta')]} \tr ( \Goj(\tau)( \Goj(s) )^2 ). \label{eqn: conjugate}
\end{align}

Now we use
\begin{align*}
\frac{1}{N}\mathrm{tr} \Goj(\tau)(\Goj(s))^2&= \partial_{z}\mathrm{tr}(\Goj(\tau)\Goj(z))|_{z=s+i\eta'}.
\end{align*}
We can write this as
\[ \frac{1}{N}\partial_s \frac{\mathrm{tr}\Goj(\tau)-\mathrm{tr}\Goj(s)}{\tau-s+\i(\eta-\eta')}= \partial_s\frac{\moj(\tau)-\moj(s)}{\tau-s+\i(\eta-\eta')} .\]
Note the identity:
\begin{equation}
\label{eqn: f-diffquot}
\partial_{z'}\frac{f(z)-f(z')}{z-z'} = \int_0^1 (1-\alpha)f''(z'+\alpha(z-z'))\,\mathrm{d}\alpha.
\end{equation}
If $\eta\eta'>0$ and $|\eta-\eta'|<\max(|\eta|,|\eta'|)/2$, we use \eqref{eqn: f-diffquot} with
\[f(z) = m_N^{(j)}(z)-\mfct(z)\]
to find
\begin{align*}
&\partial_s \frac{m_{\mathrm{fc},t}(\tau)-m_{\mathrm{fc},t}(s)}{\tau-s+\i(\eta-\eta')}+ \O_\prec(\max_{\alpha\in [z,z']} \frac{ |\alpha\eta+(1-\alpha)\eta'|^{-3}}{N})= \partial_s \frac{m_{\mathrm{fc},t}(\tau)-m_{\mathrm{fc},t}(s)}{\tau-s+\i(\eta-\eta')}+ \O_\prec(N^{-1}|\eta'|^{-2}|\eta|^{-1}).
\end{align*}
If $|\eta-\eta'|>\max(|\eta|,|\eta'|)/2$, we perform the differentiation
\[\frac{-\partial_sm_N^{(j)}(s)}{(\tau-s)+\i(\eta-\eta')}+\frac{m_N^{(j)}(z)-m_N^{(j)}(z')}{((\tau-s)+\i(\eta-\eta'))^2}.\]
Using the local law, we replace $m_N^{(j)}(s), m_N^{(j)}(\tau)$ by $\mfct(s), \mfct(\tau)$ with an error $\O(N^{-1}|\eta'|^{-2}|\eta|^{-1})$.

If $\eta\eta'<0$, applying the local law again we find
\[\partial_s \frac{m_{\mathrm{fc},t}(\tau)-m_{\mathrm{fc},t}(s)}{\tau-s+\i(\eta-\eta')}+ \O_\prec(N^{-1}|\eta'|^{-2})\frac{1}{|\eta-\eta'|}.\]
\end{proof}\qed


Using Proposition \ref{EAmscBprop} in the main term of \eqref{eqn: T2-display}, and using \eqref{eqn: EA} to replace $1/\mathbb{E}[A_j]$, $1/\mathbb{E}[A_j]^2$ by $g_j$, $g_j^2$ we find, for $\tau +i\eta\in \Omega_N$:
\begin{align}
T_{2,1}(z)=& -\frac{2\i x}{\pi}\frac{\ee [e (x)]}{N} \sum_{j=1}^N t^2 \int_{\Omega_N}g_j(z)^2g_j(z') \dvarp(z') \partial_s\frac{ m_{\mathrm{fc},t}(\tau) - m_{\mathrm{fc},t}(s) }{\tau-s +\i(\eta-\eta')}\mathrm{d}z' \label{eqn: T2-main}\\
&-\frac{2\i x}{\pi} \sum_{j=1}^N \frac{t^2}{N}(\mathbb{E}[e_j(x)-e(x)]) \int_{\Omega_N}g_j(z)^2g_j(z') \i\eta'\varphi_N''(s) \partial_s\frac{ m_{\mathrm{fc},t}(\tau) - m_{\mathrm{fc},t}(s) }{\tau-s +\i(\eta-\eta')}\mathrm{d}z' \label{eqn: T2-ej-e}\\
&+ \frac{1}{N}\sum_{j=1}^N\frac{|x|}{|\mathbb{E}[A_j(\tau+\i\eta)]|^2}\int_{\Omega_N} \frac{1}{\mathbb{E}|A_j(s+\i\eta')|}\cdot\O_\prec(t^2N^{-1}|\eta'|^{-2}|\eta|^{-1})|\eta'||\varphi_N''(s)|\,\mathrm{d}z' \label{eqn: T2-cubic}\\
&+ \frac{1}{N}\sum_{j=1}^N\frac{t^2|x|}{|\mathbb{E}[A_j(\tau+\i\eta)]|^2}\int_{\Omega_N} \frac{1}{\mathbb{E}|A_j(s+\i\eta')|}\cdot(\O_\prec((N|\eta'|)^{-1})+\O_\prec(t^{-1}(N|\eta|)^{-1})) \label{eqn: T2-thirderror}\\
&\quad \times |\eta'||\varphi_N''(s)|\left|\partial_s\frac{ m_{\mathrm{fc},t}(\tau) - m_{\mathrm{fc},t}(s) }{\tau-s +\i(\eta-\eta')}\right|\,\mathrm{d}z' 
\end{align}
Note that above, we have omitted the terms with support in the region $\{\chi(\eta')\neq 0\}$, as they are smaller than the terms displayed.

For the term \eqref{eqn: T2-cubic}, we use \eqref{eqn: R2sum-bound} and \eqref{eqn: stability} to find an estimate
\begin{equation}
|x|\O_\prec(\log N (N|\eta|)^{-1})\|\varphi_N''\|_{L^1}.
\end{equation}

To deal with the remaining terms, we use the following estimates:
\begin{prop}\label{EAmscBprop-2}
If $\eta, \eta'\in \Omega_N$ and $\eta\eta' >0$, then
\begin{equation}
\partial_s \frac{m_{\mathrm{fc},t}(\tau)-m_{\mathrm{fc},t}(s)}{\tau-s+\i(\eta-\eta')}= \O(|\eta|^{-1}|\eta'|^{-1}), \label{eqn: negligible3}
\end{equation}
If $ \eta\eta'<0$, then
\begin{equation}
\partial_s \frac{\mfct(\tau)- m_{\mathrm{fc},t}(s) }{ \tau-s +\i(\eta-\eta')}=  \O(|\eta-\eta'|^{-1}|\eta'|^{-1}). \label{eqn: negligible4}
\end{equation}
\end{prop}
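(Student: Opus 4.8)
\textbf{Proof proposal for Proposition \ref{EAmscBprop-2}.}

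The plan is to estimate the quantity $\partial_s\frac{\mfct(\tau)-\mfct(s)}{\tau-s+i(\eta-\eta')}$ directly, case by case according to the sign of $\eta\eta'$ and the relative sizes of $|\eta|$, $|\eta'|$, $|\eta-\eta'|$, using only the a priori bounds on $\mfct$ and its derivative that are available from Section \ref{sec:rigid} (in particular $|\partial_z\mfct(z)|\leq C/\max(t,|\eta|)$ and $|\mfct(z)|\leq C$ on $\Omega_N$, together with $\Im\mfct(z)\asymp 1$ in the bulk). The point is purely elementary: this is a holomorphic function of the two spectral parameters, and we only need crude bounds, so Cauchy estimates on well-chosen circles will suffice.

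First, for the case $\eta\eta'>0$ with $|\eta-\eta'|\leq\max(|\eta|,|\eta'|)/2$: here I would use the identity \eqref{eqn: f-diffquot} with $f=\mfct$, writing
\[
\partial_{z'}\frac{\mfct(z)-\mfct(z')}{z-z'}=\int_0^1(1-\alpha)\mfct''(z'+\alpha(z-z'))\,\d\alpha,
\]
and then bound $|\mfct''(w)|\leq C/(\Im w)^2$ by a Cauchy estimate (differentiating the bound $|\mfct'|\leq C/\Im w$ on a circle of radius $\Im w/2$). Since on the segment from $z'$ to $z$ one has $\Im(z'+\alpha(z-z'))\asymp |\eta|\asymp|\eta'|$ under the assumption $|\eta-\eta'|\leq\max(|\eta|,|\eta'|)/2$, this gives the bound $C/|\eta|^2\leq C/(|\eta||\eta'|)$, which is \eqref{eqn: negligible3}. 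For the case $\eta\eta'>0$ but $|\eta-\eta'|>\max(|\eta|,|\eta'|)/2$, I would instead differentiate the quotient explicitly,
\[
\partial_s\frac{\mfct(\tau)-\mfct(s)}{\tau-s+i(\eta-\eta')}=\frac{-\partial_s\mfct(s)}{(\tau-s)+i(\eta-\eta')}+\frac{\mfct(\tau)-\mfct(s)}{((\tau-s)+i(\eta-\eta'))^2},
\]
and use $|(\tau-s)+i(\eta-\eta')|\geq|\eta-\eta'|\geq c\max(|\eta|,|\eta'|)$ together with $|\partial_s\mfct(s)|\leq C/|\eta'|$ and $|\mfct(\tau)-\mfct(s)|\leq C$; the first term is $\leq C/(|\eta'||\eta-\eta'|)\leq C/(|\eta||\eta'|)$ and the second is $\leq C/|\eta-\eta'|^2\leq C/(|\eta||\eta'|)$, again giving \eqref{eqn: negligible3}. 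For the case $\eta\eta'<0$, the denominator satisfies $|(\tau-s)+i(\eta-\eta')|\geq|\eta-\eta'|\geq\max(|\eta|,|\eta'|)$, so differentiating the quotient as above and using $|\partial_s\mfct(s)|\leq C/|\eta'|$ gives the first term $\leq C/(|\eta'||\eta-\eta'|)$ and the second $\leq C/|\eta-\eta'|^2\leq C/(|\eta'||\eta-\eta'|)$, which is \eqref{eqn: negligible4}.

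Since the argument is entirely a matter of bookkeeping with elementary bounds, there is no real obstacle here; the only mild subtlety is to make sure that in the regime $\eta\eta'>0$, $|\eta-\eta'|$ small, the imaginary part stays comparable along the whole interpolation segment so that the Cauchy estimate for $\mfct''$ applies uniformly — but this is exactly what the smallness hypothesis on $|\eta-\eta'|$ guarantees. One should also record that the implied constants depend only on the constants in the $(\g,\G)$-regularity of $V$ and on $q$, which is immediate from the local law input.
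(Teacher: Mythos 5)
Your proof is correct and rests on essentially the same ingredients as the paper's: the integral representation \eqref{eqn: f-diffquot} for the same-sign case and explicit differentiation of the difference quotient for the opposite-sign case, with the same inputs $|\partial_z\mfct(z)|\leq C/(t+|\eta|)$ and $|\mfct|\leq C$ on the relevant domain. The only organizational difference is that you split $\eta\eta'>0$ according to whether $|\eta-\eta'|\leq\max(|\eta|,|\eta'|)/2$ (the split the paper makes in Proposition \ref{EAmscBprop} but not here), whereas the paper bounds the whole same-sign case by $\max_{\zeta\in[z,z']}|\mfct''(\zeta)|$ — a one-liner that is immediate only when $|\eta|\asymp|\eta'|$; your explicit-differentiation branch (or, equivalently, retaining the weight in the integral of \eqref{eqn: f-diffquot} and integrating it out) is what makes the claim airtight when $|\eta|$ and $|\eta'|$ are of very different sizes.
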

\begin{proof}
By the representation \eqref{eqn: f-diffquot} the left side of \eqref{eqn: negligible3} is
\begin{equation}
\int_0^1 \alpha \mfct^{(2)}(z'+\alpha(z-z'))\,\mathrm{d}\alpha \label{eqn: mfct-1der}.
\end{equation}
This is bounded by \[\max_{\zeta\in [z,z']}|m''_{\mathrm{fc},t}(\zeta)|\le C|\eta|^{-1}|\eta'|^{-1}.\]

For \eqref{eqn: negligible4}, we simply perform the differentiation:
\begin{align*}
&\del_{s}\frac{\mfct(\tau)-m_{\mathrm{fc},t}(s) }{ \tau-s +\i(\eta-\eta')}\\
=&-\frac{\del_{s}m_{\mathrm{fc},t}(s) }{ \tau-s +\i(\eta-\eta')}+\frac{\mfct(\tau)-m_{\mathrm{fc},t}(s) }{(\tau-s +\i(\eta-\eta'))^2} = \O(|\eta'|^{-1}|\eta-\eta'|^{-1})+\O(|\eta-\eta'|^{-2}).
\end{align*}
\end{proof}\qed

By \eqref{eqn: negligible3}, \eqref{eqn: negligible4}, and \eqref{eqn: R2sum-bound} the term \eqref{eqn: T2-thirderror} is bounded by
$|x|O_\prec(\log N (N|\eta|)^{-1})\|\varphi_N''\|_{L^1}.$
For the term \eqref{eqn: T2-ej-e}, we use \eqref{eqn: ejminuse}, \eqref{eqn: negligible3}, \eqref{eqn: negligible4}, and integrate by parts in $s'$ when $|\eta'|\le \|\varphi_N''\|_{L^1}$ to find an error
\[|x|(1+|x|)\O(\log N |\eta|^{-1}\|\varphi_N''\|_{L^1}^{1/2}\|\varphi_N'\|_{L^1}^{3/2}N^{-1/2})\]

We have shown the main term in \eqref{eqn: T2-display} is
\[-\frac{2\i x}{\pi}\ee [e (x)]\int_{\Omega_N} S_{2,1}(z,z') \dvarp(z') \mathrm{d}z' + |x|(1+|x|)\O(\log N N^{-1/2}|\eta|^{-1}) \|\varphi_N''\|^{1/2}_{L^1}\|\varphi_N'\|^{1/2},\]
with
\begin{equation}
S_{2,1}(z,z') = \frac{t^2}{N}\sum_{j=1}^N g_j(z)^2g_j(z') \partial_s \frac{\mfct(z)-\mfct(z')}{z-z'}.
\end{equation}

Multiplying $T_{2,1}(z)/(1-t\tilde{R}(z))$ by $\dvarp(z)$, and integrating we have:
\begin{align}
\int_{\Omega_N} \dvarp(z) \frac{T_{2,1}(z)}{1-t\tilde{R}_2(z)}\mathrm{d}z&=-2\i x\frac{\mathbb{E}[e(x)]}{\pi} \int_{\Omega_N} \frac{1}{1-t\tilde{R}_2(z)} \dvarp(z) \int_{\Omega_N}\dvarp(z') S_{2,1}(z,z')\,\mathrm{d}z'\mathrm{d}z \nonumber \\
&+ \int_{\Omega_N} \frac{1}{1-t\tilde{R}_2(z)}\i\eta \chi(\eta)\varphi''_N(\tau) \Delta_{2,1} (z)\,\mathrm{d}z,\label{eqn: Delta}
\end{align}
where
\[\Delta_{2,1}(z):= T_{2,1}+\frac{\i\mathbb{E}[e(x)]}{\pi}\int \dvarp(z')S_{2,1}(z,z')\,\mathrm{d}z'\]
is analytic in $\Im z>0$ and $\Im z<0$ and 
\[\frac{\Delta_{2,1}(z)}{1-\tilde{R}_2(z)} = \O(|\eta|^{-1}\log N/N^{1/2})(|x|(1+|x|)\|\varphi''_N\|_{L^1}^{1/2}\|\varphi_N'\|_{L^1}^{3/2}.\]
Integrating by parts in $\tau$ in the integral \eqref{eqn: Delta} and using
\[\partial_\tau \frac{\Delta_{2,1}(z)}{1-t\tilde{R}_2(z)}=\O(|\eta|^{-2}\log N /N^{1/2})(|x|(1+|x|))\|\varphi''_N\|^{1/2}_{L^1}\|\varphi_N'\|_{L^1}^{3/2},\]
we find
\begin{align}
\int_{\Omega_N} \dvarp(z) \frac{T_{2,1}(z)}{1-t\tilde{R}_2(z)}\mathrm{d}z&=-\frac{2\i x}{\pi}\mathbb{E}[e(x)] \int_{\Omega_N} \frac{1}{1-t\tilde{R}_2(z)} \dvarp(z) \int_{\Omega_N}\dvarp(z') S_{2,1}(z,z')\,\mathrm{d}z'\mathrm{d}z \nonumber \\
&+ \O((\log N)^2/N^{1/2})|x|(1+|x|) \|\varphi''_N\|^{1/2}_{L^1}\|\varphi_N'\|^{5/2}_{L^1}. \label{eqn: Delta21-final}
\end{align}

\subsection{Computation $T_{2,2}$, $T_{2,3}$, $T_{2,4}$}\label{sec: comp-T22}

The computation of $T_{2,2}$ is almost identical (but simpler) to that in Proposition \ref{EAmscBprop}.
\begin{prop}\label{EAmscAprop}
There are constants for $t+\i\eta, s+\i\eta' \in \Omega_N$,
\begin{equation}
\begin{split}
&N\ee_j [ A^\circ_j(\tau+\i\eta) (1+\mathbb{E}B_j(z'))A_j^\circ(s+\i\eta') ]\\
=& \,  (1+t\partial_z m_{\mathrm{fc},t}(z')) \cdot \left(  2t^2  \frac{m_{\mathrm{fc},t}(\tau)-m_{\mathrm{fc},t}(s)}{\tau-s+\i(\eta-\eta')}\cdot(1+\O(tN^{-1}|\eta'|^{-2})) + t \right)\\
+&\O_\prec(t^2N^{-1}|\eta|^{-1}|\eta'|^{-1}).
\end{split}
\end{equation}
\end{prop}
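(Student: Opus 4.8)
The plan is to follow the same strategy that worked for Proposition~\ref{EAmscBprop}, namely reduce everything to a trace computation via the Schur complement formula and then use the local law.  First I would recenter: replace the full expectations $\ee[A_j]$, $\ee[B_j]$ by the conditional expectations $\ee_j[A_j]$, $\ee_j[B_j]$.  Using the elementary identity $\ee_j[(A-\ee A)(B-\ee B)] = \ee_j[(A-\ee_j A)(B-\ee_j B)] + (\ee_j A - \ee A)(\ee_j B - \ee B)$ together with \eqref{eqn: EA}, \eqref{eqn: Ahalf}, \eqref{eqn: Bbbound} and the stability bound \eqref{eqn: stability}, the cross term produces an admissible error of size $\O_\prec(t^2 N^{-1}|\eta|^{-1}|\eta'|^{-1})$; there is also a deterministic piece coming from $1+\mathbb{E}B_j(z') = 1 + t\partial_z m_{\mathrm{fc},t}(z') + \O_\prec(\dots)$ which factors out and accounts for the $(1+t\partial_z m_{\mathrm{fc},t}(z'))$ prefactor.

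Next I would compute the conditionally-centered quantity $N\ee_j[(A_j(z)-\ee_j A_j(z))(A_j(z')-\ee_j A_j(z'))]$.  By the Schur complement formula \eqref{eqn: justA}, $A_j(z) - \ee_j A_j(z) = \sum^{(j)}_{ik} G^{(j)}_{ik}(z)(h_{ji}h_{kj} - N^{-1}t\delta_{ik}) - \sqrt{t}w_{jj}$.  Multiplying the expressions at $z$ and $z'$ and taking $\ee_j$, the Gaussian Wick/large-deviation computation (exactly as in \eqref{eqn: conjugate}, but now with a single resolvent at each point rather than a square at one of them) yields $\frac{2t^2}{N}\tr\big(G^{(j)}(z) G^{(j)}(z')\big)$ plus the $\sqrt t w_{jj}$ contribution, which is where the additive $+\,t$ term comes from.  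Then I would rewrite $\frac{1}{N}\tr(G^{(j)}(z)G^{(j)}(z')) = \frac{m_N^{(j)}(z) - m_N^{(j)}(z')}{z'-z} = \frac{m_N^{(j)}(\tau)-m_N^{(j)}(s)}{\tau - s + i(\eta-\eta')}$ (up to sign conventions), and replace $m_N^{(j)}$ by $m_{\mathrm{fc},t}$ using the local law.  The resulting error has to be tracked in the three regimes $\eta\eta'>0$ with $|\eta-\eta'|$ small, $\eta\eta'>0$ with $|\eta-\eta'|$ comparable to $\max(|\eta|,|\eta'|)$, and $\eta\eta'<0$, exactly as in the proof of Proposition~\ref{EAmscBprop}; in all cases it is bounded by $\O_\prec(t^2 N^{-1}|\eta|^{-1}|\eta'|^{-1})$, which after absorbing the $g_j^2 \asymp 1/\max(t,|\eta'|)^2$ normalization is of the claimed size.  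Collecting the deterministic main term $2t^2\frac{m_{\mathrm{fc},t}(\tau)-m_{\mathrm{fc},t}(s)}{\tau-s+i(\eta-\eta')}$, the small multiplicative correction $(1+\O(tN^{-1}|\eta'|^{-2}))$ coming from the local-law replacement in the conditionally-centered trace, and the additive $t$ from $w_{jj}$, all multiplied by $(1+t\partial_z m_{\mathrm{fc},t}(z'))$, gives the statement.

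The main obstacle, as in Proposition~\ref{EAmscBprop}, is the careful bookkeeping of the error term through the three $\eta,\eta'$ regimes: one cannot simply differentiate the difference quotient blindly when $|\eta-\eta'|$ is large, and one must use the representation $\partial_{z'}\frac{f(z)-f(z')}{z-z'} = \int_0^1 (1-\alpha) f''(z'+\alpha(z-z'))\,\d\alpha$ (here actually only the zeroth-order difference quotient, so this is slightly simpler than the $\partial_s$ version needed for $T_{2,1}$) and the bound $|m_{\mathrm{fc},t}''(\zeta)| \le C|\Im\zeta|^{-2}$ to control the segment $[z,z']$.  Everything else is a routine repetition of the Wick calculus and local-law inputs already established, so I expect this proof to be strictly shorter than that of Proposition~\ref{EAmscBprop}; indeed the paper itself remarks that it is ``almost identical (but simpler).''
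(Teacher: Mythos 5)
Your proposal is correct and follows exactly the route the paper intends: the paper gives no separate argument for this proposition, remarking only that the computation is ``almost identical (but simpler)'' to that of Proposition \ref{EAmscBprop}, and your sketch — recentering at the conditional expectation, the Schur/Wick computation producing $\frac{2t^2}{N}\tr\big(G^{(j)}(z)G^{(j)}(z')\big)$ plus the $w_{jj}$ term giving the additive $t$, the resolvent identity turning the trace into the difference quotient, and the local-law replacement with the same three-regime error bookkeeping — is precisely that adaptation. The only stray remark is the comment about absorbing a $g_j^2$ normalization, which belongs to the application in $T_{2,2}$ rather than to this proposition itself, but it does not affect the argument.
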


We have shown that 
\begin{align*}
T_{2,2}(z) &=\frac{2\i x}{\pi}\mathbb{E}[e(x)]\int_{\Omega_N} \dvarp(z') \int(S_{2,2}(z,z')+ S_{2,3}(z,z')) \,\mathrm{d}z'\mathrm{d}z + \Delta_{2,2}(z)
\end{align*}
where 
\begin{align*}
S_{2,2}(z,z')&= \frac{t^2}{N}\sum_{j=1}^Ng_j(z)^2 g_j(z')^2(1+t\partial_z \mfct(z'))\frac{\mfct(z)-\mfct(z')}{z-z'},\\
S_{2,3}(z,z')&= \frac{t}{N}\sum_{j=1}^Ng_j(z)^2 g_j(z')^2(1+t\partial_z \mfct(z')).
\end{align*}
$\Delta_{2,2}(z)$ is analytic in $\Im z\neq 0$ and
\begin{align*}
& |\Delta_{2,2}(z)| =\frac{|x|}{N}\sum_{j=1}^N \frac{1}{|\mathbb{E}[A_j(z)]|^2}\int_{\Omega_N}\frac{1}{|\mathbb{E}[A_j(z')]^2|}\O(t^2N^{-1}|\eta|^{-1}|\eta'|^{-1})|\eta'||\varphi''(s)|\,\mathrm{d}z'\\
&+\frac{|x|}{N}\sum_{j=1}^N \frac{1}{|\mathbb{E}[A_j(z)]|^2}\int_{\Omega_N}\frac{1}{|\mathbb{E}[A_j(z')]^2|}(t^2\min(|\eta|^{-1},|\eta'|^{-1})+t)\O((tN)^{-1}(|\eta|^{-1}+|\eta'|^{-1}))|\eta'||\varphi''(s)|\,\mathrm{d}z'\\
&+ \left|\frac{x}{N}\sum_{j=1}^N \mathbb{E}[e(x)-e_j(x)]g_j(z)^2\int_{\Omega_N}g_j(z')^2\eta'\varphi_N''(s)\left( 2it^2\frac{\mfct(z)-\mfct(z')}{z-z'}+t \right)\,\mathrm{d}z'\right|\\
&= |x|(1+|x|)\O(|\eta|^{-1}\log N N^{-1/2}\|\varphi_N''\|_{L^1}^{1/2}\|\varphi_N'\|_{L^1}^{3/2}).
\end{align*}
We have used \eqref{eqn: R2sum-bound} and \eqref{eqn: stability}.

Using the derivative bound
\[\partial_\tau \frac{\Delta_{2,2}(z)}{1-t\tilde{R}_2(z)} =|x|(1+|x|)\O(|\eta|^{-2}\log N/N^{1/2})\|\varphi''_N\|^{1/2}_{L^1}\|\varphi_N'\|_{L^1}^{3/2}, \]
we have 
\begin{equation}\label{eqn: Delta22-final}
\begin{split}
\int_{\Omega_N} \dvarp(z) \frac{\Delta_{2,2}(z)}{1-t\tilde{R}_2(z)} \,\mathrm{d}z &= \int_{\Omega_N} \i\varphi_N'(\tau)\eta\chi(\eta)\partial_\tau \frac{\Delta_{2,2}(z)}{1-t\tilde{R}_2(z)}\mathrm{d}z\\
&= |x|(1+|x|)\O((\log N)^2/N^{1/2})\|\varphi'_N\|_{L^1}^{5/2}\|\varphi_N''\|_{L^1}^{1/2}.
\end{split}
\end{equation}

 For $T_{2,3}$, we use \eqref{eqn: Ahalf}, \eqref{eqn: Bbbound}, to estimate the integrand by
\begin{equation}
\label{eqn: T23-bound}
\frac{1}{N}\sum_{j=1}^N\frac{|x|}{|\mathbb{E}[A_j(z)]|^2}\frac{t}{|\mathbb{E}[A_j(z')]|^2}\O\left(t^2|\eta'|^{-1/2}|\eta|^{-1/2}+t^{3/2}|\eta'|^{-1/2}+t^{3/2}|\eta|^{-1/2}+t\right)N^{-1/2}|\eta'|^{-3/2}.
\end{equation}

The terms to estimate are
\begin{align}
T_{2,3}(z)&=\sum_{j=1}^N\frac{x}{\mathbb{E}[A_j(z)]^2} \int_{\Omega_N}  \i\eta'\chi(\eta')\varphi_N''(s)\frac{1}{\mathbb{E}[(A_j(z')]^2}\mathbb{E}[A^\circ_j(z')B^\circ_j(z') A_j^\circ(z)] \mathrm{d}z' \label{eqn: T23-ddphi-term}\\
&+\sum_{j=1}^N\frac{x}{\mathbb{E}[A_j(z)]^2} \int_{\Omega_N}  \i\varphi_N(s)\chi'(\eta')\frac{1}{\mathbb{E}[(A_j(z')]^2}\mathbb{E}[A^\circ_j(z')B^\circ_j(z') A_j^\circ(z)] \mathrm{d}z' \label{eqn: T23-phi-term}\\
&-\sum_{j=1}^N\frac{x}{\mathbb{E}[A_j(z)]^2} \int_{\Omega_N}  \eta'\varphi_N'(s)\chi'(\eta')\frac{1}{\mathbb{E}[(A_j(z')]^2}\mathbb{E}[A^\circ_j(z')B^\circ_j(z') A_j^\circ(z)] \mathrm{d}z'. \label{eqn: T23-dphi-term}
\end{align}
Use $\{z':|\Im z'|\ge N^{10C_V}-1\}$ on the support of the integrands, and \eqref{eqn: T23-bound} to estimate the terms \eqref{eqn: T23-phi-term}, \eqref{eqn: T23-dphi-term} by
\begin{equation}\label{eqn: T23-bound-z-1}
N^{-10C_V}\sum_{j=1}^N \frac{|x|}{|\mathbb{E}[A_j(z)]|^2}|\eta|^{-1/2}(\|\varphi_N\|_{L^1}+\|\varphi_N'\|_{L^1}).
\end{equation}
Inserting the bound \eqref{eqn: T23-bound} into \eqref{eqn: T23-ddphi-term}, using $|\mathbb{E}[A_j(z)]|\ge ct$, for $|\eta|\le t$ and $|\mathbb{E}[A_j(z)]|\ge|\eta|$ for $|\eta|\ge t$, we find
\begin{equation}\label{eqn: T23-bound-z-2}
\frac{1}{N}\sum_{j=1}^N \frac{t^{-1}|x|}{|\mathbb{E}{[A_j(z)]|^2}} N^{-1/2}\O(t^2|\eta|^{-1/2}\log N+t^{3/2}\log N+t^2|\eta|^{-1/2}+t^{3/2}) \|\varphi_N''\|_{L^1}.
\end{equation}
Integrating $(\dvarp(z) T_{2,3}(z))/(1-t\tilde{R}_2(z))$ over $\Omega_N$, and integrating by parts:
\begin{align*}
\int_{\Omega_N} \i\eta\chi(\eta) \varphi_N''(\tau) \frac{T_{2,3}(z)}{1-t\tilde{R}_2(z)} \,\mathrm{d}z=&\int_{\Omega_N} \i\partial_\eta(\eta\chi(\eta)) \varphi_N'(\tau) \frac{T_{2,3}(z)}{1-t\tilde{R}_2(z)} \,\mathrm{d}z\\
 -&N^{-1+\xi}\int  \i\chi(\eta) \varphi_N'(\tau) \frac{T_{2,3}(\tau+\i N^{-1+\xi})}{1-t\tilde{R}_2(\tau+\i N^{-1+\xi})} \,\mathrm{d}\tau
\end{align*}
Using \eqref{eqn: T23-bound-z-1}, \eqref{eqn: T23-bound-z-2}, these terms are bounded by
\begin{equation}\label{eqn: T23-final}
|x|\O(t^{1/2}N^{-1/2}\log N \|\varphi_N''\|_{L^1}\|\varphi_N\|_{L^1})= |x|\O(N^{-1/2+(1/2)\omega_0-\omega_1}\log N)\|\varphi_N''\|_{L^1}.
\end{equation}
The contribution to
\[\int_{\Omega_N} \dvarp(z) \frac{T_{2,3}(z)}{1-t\tilde{R}_2(z)} \,\mathrm{d}z\]
of the terms involving $\chi'(\eta)\varphi_N(\tau)$, $\chi'(\eta)\varphi_N(\tau)$ is easily estimated using the support property of $\chi'$, and found to be 
\[|x|\O(N^{-2C_V})\|\varphi_N''\|_{L^1}.\]

For $T_{2,4}$, we again have three terms
\begin{align}
T_{2,4} &= \sum_{j=1}^N\frac{x}{\mathbb{E}[A_j(z)]^2} \int_{\Omega_N}  \frac{\i \varphi_N''(s)\eta'\chi(\eta')}{\mathbb{E}[A_j(z')]^2}\mathbb{E}\left[\frac{(A_j^\circ(z'))^2}{A_j(z')}(1+B_j(z')) A_j^\circ(z)\right]\mathrm{d}z' \label{eqn: T24-ddphi-term}\\
&+ \sum_{j=1}^N\frac{x}{\mathbb{E}[A_j(z)]^2} \int_{\Omega_N}  \frac{\i\varphi_N(s)\chi'(\eta')}{\mathbb{E}[A_j(z')]^2}\mathbb{E}\left[\frac{(A_j^\circ(z'))^2}{A_j(z')}(1+B_j(z')) A_j^\circ(z)\right]\mathrm{d}z' \label{eqn: T24-phi-term}\\
&- \sum_{j=1}^N\frac{x}{\mathbb{E}[A_j(z)]^2} \int_{\Omega_N}  \frac{\i\varphi_N'(s)\eta'\chi'(\eta')}{\mathbb{E}[A_j(z')]^2}\mathbb{E}\left[\frac{(A_j^\circ(z'))^2}{A_j(z')}(1+B_j(z')) A_j^\circ(z)\right]\mathrm{d}z'. \label{eqn: T24-dphi-term}
\end{align}

We use \eqref{eqn: Ahalf}, \eqref{eqn: R2sum-bound} to find that the integrand in \eqref{eqn: T24-ddphi-term}-\eqref{eqn: T24-dphi-term} is bounded by
\begin{align}
&\frac{|x|}{N}\sum_{j=1}^N\frac{t}{\mathbb{E}[|A_j(z)]|^2}\frac{1}{|\mathbb{E}[A_j(z')]|^2}\mathbb{E}\frac{1}{|A_j(z')|}\notag\\
\times &\O\left(t^2|\eta'|^{-1}(N|\eta|)^{-1/2}+t^{3/2}|\eta'|^{-1}N^{-1/2}+t(N|\eta|)^{-1/2}+t^{1/2}N^{-1/2}\right). \label{eqn: T24-kernel-bound}
\end{align} 

For \eqref{eqn: T24-ddphi-term}, we first integrate by parts to write this term as
\begin{equation}\label{eqn: T24-bound}
\begin{split}
&\sum_{j=1}^N\frac{x}{\mathbb{E}[A_j(z)]^2} \int_{\Omega_N}  \frac{\i\varphi_N'(s)\partial_{\eta'}(\eta'\chi(\eta'))}{\mathbb{E}[A_j(z')]^2}\mathbb{E}\left[\frac{(A_j^\circ(z'))^2}{A_j(z')}(1+B_j(z')) A_j^\circ(z)\right]\mathrm{d}z'\\
&-\sum_{j=1}^N\frac{xN^{-1+\xi}}{\mathbb{E}[A_j(z)]^2} \int_{\Omega_N}  \frac{\i\varphi_N'(s)\chi(\eta')}{\mathbb{E}[A_j(s+\i N^{-1+\xi})]^2}\mathbb{E}\left[\frac{(A_j^\circ(s+\i N^{-1+\xi}))^2}{A_j(s+\i N^{-1+\xi})}(1+B_j(s+\i N^{-1+\xi})) A_j^\circ(z)\right]\mathrm{d}s.
\end{split}
\end{equation}

Using \eqref{eqn: T24-kernel-bound} in \eqref{eqn: T24-bound}, together with the estimate \eqref{eqn: stability} when $|\eta'|\le t$ and $|A_j(z')|\ge c|\eta'|$ when $|\eta'|\ge t$, \eqref{eqn: T24-ddphi-term} is bounded by
\begin{equation}\label{eqn: T24-main-estimate}
\frac{1}{N}\sum_{j=1}^N \frac{|x|}{|\mathbb{E}[A_j(z)]|^2}\O(\log N (N|\eta|)^{-1/2}+ t^{-1/2}N^{-1/2}\log N+ t^{-1/2}N^{-1/2}) \|\varphi_N'\|_{L^1}
\end{equation}
Using \eqref{eqn: T24-kernel-bound} again and $\{\eta:\chi'(\eta)\neq 0\}\subset \{|\eta|\ge N^{C_V}-1\}$, the terms \eqref{eqn: T24-phi-term} and \eqref{eqn: T24-dphi-term} are estimated by
\[|x|\O(N^{-2C_V}\|\varphi_N''\|_{L^1}).\]

Using the bound \eqref{eqn: T24-main-estimate}, we now conclude as in the case of $T_{2,3}$, by integrating by parts in $\tau$:
\begin{equation}\label{eqn: T24-final}
\begin{split}
&\int_{\Omega_N} \dvarp(z) \frac{T_{2,4}(z)}{1-t\tilde{R}_2(z)} \,\mathrm{d}z\\
=&\frac{|x|}{N}\sum_{j=1}\int_{\Omega_N} \partial_\eta(\eta\chi(\eta)) \varphi_N'(\tau)\frac{|T_{2,4}(z)|}{|1-t\tilde{R}_2(z)|}  \,\mathrm{d}z+ \O(N^{-2})\\
=& |x|\O((\log N) N^{-1/2}t^{-1/2})\|\varphi_N'\|_{L^1}^2.
\end{split}
\end{equation}

Collecting the error terms \eqref{eqn: Delta11-error}, \eqref{eqn: Delta21-final}, \eqref{eqn: Delta22-final}, \eqref{eqn: T23-final}, \eqref{eqn: T24-final}, we obtain
\begin{align}
I_1'&=-\frac{2\i x}{\pi}\mathbb{E}[e(x)]\int_{\Omega_N}\int_{\Omega_N}\dvarp(z)\dvarp(z')\frac{1}{1-tR_2(z)} S_{2,1}(z,z')\,\mathrm{d}z\mathrm{d}z' \label{eqn: I2-1}\\
&\quad +\frac{2\i x}{\pi}\mathbb{E}[e(x)]\int_{\Omega_N}\int_{\Omega_N}\dvarp(z)\dvarp(z')\frac{1}{1-tR_2(z)}(S_{2,2}(z,z')+S_{2,3}(z,z'))\,\mathrm{d}z\mathrm{d}z' \label{eqn: I2-2}\\
&\quad + |x|\O(t^{1/2}N^{-1/2+2\xi})\|\varphi''_N\|_{L^1}\|\varphi_N'\|_{L^1}\nonumber\\
&\quad +|x|(1+|x|)\O(N^{-1/2}(\log N)^2)\|\varphi_N'\|^{5/2}_{L^1}\|\varphi_N''\|^{1/2}_{L^1} \nonumber \\
&\quad +|x|\O(N^{-1/2}\log N)(t^{1/2}\|\varphi_N''\|_{L^1}\|\varphi_N\|_{L^1}+ t^{-1/2}\|\varphi_N'\|_{L^1}^2). \nonumber
\end{align}
This ends the proof of Proposition \ref{prop: I2prop}.
\qed

\subsection{Variance term}\label{sec: variance}
In this section, we give an asymptotic approximation of the expression $V(\varphi_N)$ defined in  \eqref{eqn: Vvarphidef}. This quantity represents the variance of the limiting random variable for the linear statistics of $\varphi_N$. The result is as follows
\begin{prop}\label{prop: variance-comp}
Recall the definition of $V(\varphi_N)$ in \eqref{eqn: Vvarphidef}. Then
\begin{equation}\label{eqn: Vvarphi-size}
V(\varphi_N) = -\frac{1}{\pi^2}\int_{-Ct}^{Ct}\varphi_N(\tau) (H\varphi_N')(\tau)\,\mathrm{d}\tau +\O(1).
\end{equation}
Here, $Hf$ denotes the Hilbert transform:
\begin{equation}\label{eqn: HT-def}
(Hf)(x)=\lim_{\epsilon\rightarrow 0} \int f(y) \Re \frac{1}{(x-y) +\i\epsilon}\,\mathrm{d}y.
\end{equation}
In particular, for 
\[\varphi_N(x) = \int_0^x \chi(y/(t_1N^\alpha)) p_{t_1}(0,y)\mathrm{d}y,\]
we have
\[V(\varphi_N) \ge c\log(t/t_1) \cdot (1+o(1)).\] 

Moreover, if 
\[\mathrm{supp}\varphi_N \subset (-N^rt_1,N^rt_1),\]
then
\[V(\varphi_N) = -\frac{1}{\pi^2}\int\varphi_N(\tau)(H\varphi'_N)(\tau)\,\mathrm{d}\tau+\O(N^{\omega_0/2-\omega_1+\xi})\]
for any $\xi>0$.
\end{prop}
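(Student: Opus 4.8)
\textbf{Proof plan for Proposition \ref{prop: variance-comp}.}

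The starting point is the explicit formula \eqref{eqn: Vvarphidef} for $V(\varphi_N)$ as a double contour integral over $\Omega_N \times \Omega_N$ of $\dvarp(z)\dvarp(z')$ against the kernels $S_{2,1}, S_{2,2}, S_{2,3}$, weighted by $(1-tR_2(z))^{-1}$. The first task is to replace all the $j$-sums $\frac{1}{N}\sum_j g_j(z)^a g_j(z')^b$ appearing in $S_{2,1}, S_{2,2}, S_{2,3}$ and in $R_2(z) = \frac{1}{N}\sum_j g_j(z)^2$ by quantities involving only $\mfct$ and its derivatives. The key algebraic identity is that, by differentiating the defining equation \eqref{eqn:mfcdef} for $\mfct$ and the definition \eqref{eqn: gj-def} of $g_j$, one has $\frac{1}{N}\sum_j g_j(z)^2 = \del_z \mfct(z)/(1 + t \del_z \mfct(z))$ and similar expressions for the mixed sums $\frac1N\sum_j g_j(z)^2 g_j(z')$, $\frac1N\sum_j g_j(z)^2 g_j(z')^2$ in terms of divided differences of $\mfct$. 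After this substitution the integrand becomes an explicit meromorphic function of $(z,z')$ built from $\mfct(z), \mfct(z'), \del_z\mfct(z), \del_{z'}\mfct(z')$ and $\frac{\mfct(z)-\mfct(z')}{z-z'}$. A straightforward but bookkeeping-heavy simplification (combining $S_{2,1}+S_{2,2}+S_{2,3}$ with the two signs coming from \eqref{eqn: Vvarphidef}, using the identity \eqref{eqn: f-diffquot}) should collapse the combination into the classical variance kernel; one expects the leading contribution to be of the Hermite/Pastur–Shcherbina type $\int\int \bigl(\frac{\varphi_N(\tau)-\varphi_N(s)}{\tau-s}\bigr)^2$, up to a lower-order piece coming from the bounded part of $\del_z\mfct$ away from the edge.

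The second task is to turn the contour-integral expression back into a real-variable integral. Here I would deform the $z$ and $z'$ contours in $\Omega_N$ down toward the real axis and apply the Helffer--Sj\"ostrand machinery in reverse: since $\dvarp(z) = \chi(\eta)(\varphi_N(\tau)+i\eta\varphi_N'(\tau))$ and $\mfct$ extends continuously to the real line with $\Im\mfct \asymp 1$ on $\I_q$ (by \eqref{eqn:immVassump} and the regularity of the free convolution on scale $t_0$), the double integral converges as $\eta,\eta'\to 0$ to a principal-value integral on $\rr^2$. The pairing $\dvarp(z)\dvarp(z')$ against $\frac{\mfct(z)-\mfct(z')}{z-z'}$, evaluated on the boundary, produces exactly $-\frac{1}{\pi^2}\int \varphi_N(\tau)(H\varphi_N')(\tau)\,d\tau$ plus an $\O(1)$ remainder coming from the $\Re\mfct$ part and from the region $|\tau| \gtrsim t$, where $\varphi_N'$ may still be nonzero but $\mfct$ is smooth on scale $t$ so the kernel is harmless. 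The restriction of the $\tau$-integral to $[-Ct,Ct]$ in \eqref{eqn: Vvarphi-size} comes from the fact that outside this window $\mfct$ varies on scale $\gg t_1$, so the contribution of $\supp\varphi_N'$ there is $\O(1)$ (one uses \eqref{eqn: offIq-decay}/\eqref{eqn: varphiLinfty} and $\|\varphi_N'\|_{L^1}\le C$).

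The third task is the lower bound $V(\varphi_N)\ge c\log(t/t_1)$ for the specific $\varphi_N(x) = \int_0^x \chi(y/(t_1 N^\alpha)) p_{t_1}(0,y)\,dy$. Having reduced to $-\frac{1}{\pi^2}\int \varphi_N (H\varphi_N')$, equivalently $\frac{1}{2\pi^2}\int\int\bigl(\frac{\varphi_N(\tau)-\varphi_N(s)}{\tau-s}\bigr)^2$, I would insert the bounds on $p_{t_1}$ from Lemma \ref{lem: Ketalemma} (so $\varphi_N'(y) \asymp p_{t_1}(0,y) \asymp \frac{t_1}{y^2+t_1^2}$ on $|y|\le t_1 N^\alpha$, making $\varphi_N$ an approximate smoothed indicator of scale $t_1$): then $\widehat{\varphi_N'}(\xi)$ is $\asymp 1$ for $|\xi| \lesssim 1/(t_1 N^\alpha)$ and decays for $|\xi|\gtrsim 1/t_1$, so by Plancherel $\int\int\bigl(\frac{\varphi_N(\tau)-\varphi_N(s)}{\tau-s}\bigr)^2 \asymp \int |\widehat{\varphi_N'}(\xi)|^2 \frac{d\xi}{|\xi|} \gtrsim \int_{1/(t_1N^\alpha)}^{1/t_1} \frac{d\xi}{\xi} = \log(N^\alpha) \asymp \log(t/t_1)$ for the relevant range of parameters (using $t = N^{\omega_0}/N$, $t_1 = N^{\omega_1}/N$, and $\alpha$ comparable to $\omega_0-\omega_1$ in the application). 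Finally, the compactly supported refinement: if $\supp\varphi_N\subset(-N^rt_1,N^rt_1)$ then on that window $\mfct(z) = \mfct(0) + \O(|z|/t_0)$, so replacing $\mfct$ by its value at $0$ in the reduced kernel costs $\O(N^rt_1/t_0)$ per integration; tracking the two integrations and using $\|\varphi_N'\|_{L^1}\le C$ gives the error $\O(N^{\omega_0/2-\omega_1+\xi})$ (here $t_0 \asymp t$), and the main term becomes the full-line integral $-\frac{1}{\pi^2}\int\varphi_N(H\varphi_N')$ since the kernel is now genuinely that of the Hilbert transform.

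\textbf{Main obstacle.} The delicate point is the second task: controlling the contour deformation down to the real axis and the interchange of the double integral with the boundary limit, since the kernel $\frac{\mfct(z)-\mfct(z')}{z-z'}$ is singular on the diagonal $z=z'$ and the weight $(1-tR_2(z))^{-1}$ must be shown to stay bounded and to have a continuous boundary value (this uses $|1-t\tilde R_2|\ge c$ from \cite[Eqn (7.10)]{landonyau} together with Theorem \ref{thm: ABthm}, but transferring it to $R_2$ and to the real axis requires the regularity of $\mfct$ on scale $t_0$ from Lemma \ref{lem:interprho}-type estimates). Getting the $\O(1)$, resp. $\O(N^{\omega_0/2-\omega_1+\xi})$, error bounds uniform requires carefully splitting the $\tau,s$ integrations at scales $t$ and $t_1$ and using the derivative bounds $\|\varphi_N^{(k)}\|_{L^\infty}\le C t_1^{-k+1}$ on the near-diagonal piece and $\|\varphi_N'\|_{L^1}\le C$ on the far piece — routine in spirit but where all the numerology of \eqref{eqn: omega-condition-1} is actually consumed.
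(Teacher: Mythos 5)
Your overall route coincides with the paper's: the partial-fraction identity in your first task is exactly Proposition \ref{prop: sum-gg} (via \eqref{eqn: t-shcherbina}), your ``reverse Helffer--Sj\"ostrand'' deformation is the paper's application of Green's theorem reducing the double integral in \eqref{eqn: Vvarphidef} to line integrals on the horizontal segments at height $\pm iN^{-1+\xi}$, and the Plancherel argument for the lower bound is a reasonable way to supply a step the paper itself leaves implicit. So this is not a different method; the question is whether the plan closes, and there is a genuine gap in the central step.

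The split ``main term plus an $\O(1)$ remainder coming from the $\Re\mfct$ part'' is not the decomposition that works, and the claim that the boundary pairing ``produces exactly'' the Hilbert-transform term is unjustified as stated. What actually generates the logarithmically large term is the pairing of boundary values taken from \emph{opposite} sides of the real axis: only there does the divided difference have the nonvanishing limit $\mfct(\tau\pm i0)-\mfct(s\mp i0)=2\pi i\rhofct(\tau)+\O(|\tau-s|/t)$, so the kernel behaves like $\Re\,(\tau-s+2iN^{-1+\xi})^{-2}$ and pairs with $\varphi_N$ to give $-\frac{1}{\pi^2}\int\varphi_N H\varphi_N'$; the same-side pairings are $\O(1)$ because there the divided difference is $\O(|\tau-s|/t)$ and the kernel is bounded by $Ct^{-2}$ near the diagonal (this is the content of Proposition \ref{prop: const-bd} together with \eqref{eqn: dfracmfct-bound} and \eqref{eqn: dmfct-diffquot}). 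Moreover, not every boundary term is harmless under absolute-value estimates: the corner contributions pairing the vertical segments at $\Re z=\pm Mt$ with the opposite-side horizontal segment (the terms $I_{2,\pm,2,\mp,1}$ in the paper) are individually of size $\log N$ --- the same order as the main term --- and become $\O(1)$ only after a cancellation obtained by performing the $s$-integration first (the principal-value structure of $\int(z-z')^{-2}\,\mathrm{d}s$) combined with a second-order Taylor expansion of $\mfct$ across the cut. Your plan files all of this under ``bookkeeping,'' but without the same-side/opposite-side separation and the corner cancellation the $\O(1)$ error cannot be closed. Two smaller points: the naive $\eta,\eta'\to0$ principal-value limit should be replaced, as in the paper, by keeping $\eta=N^{-1+\xi}$ and trading $\Re(\tau-s+2iN^{-1+\xi})^{-1}$ for $H$ via the H\"older interpolation in $\|\varphi_N'\|_{C^\alpha}$ --- you correctly anticipated that this is where \eqref{eqn: omega-condition-1} is consumed; and in the compact-support case the paper does not ``freeze $\mfct$ at $0$'' but rather bounds $I_{1,2},I_{1,3}$ directly by $Ct^{-1}\|\varphi_N\|_{L^1}\log N$ and reruns Green's theorem on $I_{1,1}$, so your proposed error bookkeeping there would need to be redone to match the stated rate.
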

We begin by reducing the domain of integration. Define 
\begin{align*}
\Omega_N^* = \{z=E+\i\eta: E\in \mathcal{I}_q,\, N^{-1+\xi}<|\eta|\le N^{10C_V}\}.
\end{align*}

Note that
\begin{equation}\label{eqn: high-region}
\{\partial_{\bar{z}}\tilde{\varphi}_N \neq 0\}\cap (\Omega_N\setminus \Omega_N^*)\subset \{N^{10C_V}-1<|\eta|<N^{10C_V}\}.
\end{equation}
If either $z$ or $z'$ lies in the latter region, then
\begin{equation}\label{eqn: high-region-bound}
\begin{split}
\frac{1}{1-tR_2(z)}S_{2,1}(z,z') &= \frac{t^2}{N(1-tR_2(z))}\sum_{j=1}^N g_j(z)^2g_j(z') \partial_{z'}\frac{\mfct(z)-\mfct(z')}{z-z'}= \O_\prec(t^2N^{-4C_V}).
\end{split}
\end{equation}
Similarly ,
\begin{equation}\label{eqn: high-region-bound2}
\frac{1}{1-tR_2(z)}S_{2,2}(z,z') =\O(t^2N^{-4C_V}).
\end{equation}
With \eqref{eqn: high-region-bound}, \eqref{eqn: high-region-bound2}, it is easy to show that the domain of integration $\Omega_N\times \Omega_N$ in \eqref{eqn: I2-1}, \eqref{eqn: I2-2} can be replaced by $\Omega_N^*\times \Omega_N^*$ with an error $\O(N^{-2})$.

Next, we have the following:
\begin{prop}\label{prop: sum-gg}
\begin{equation}
\frac{1}{N}\sum_{j=1}^N \frac{g_j(z)^2}{1-tR_2(z)}g_j(z') = \partial_z \frac{\mfct(z)-\mfct(z')}{z-z'+t(\mfct(z)-\mfct(z'))}.
\end{equation}
\end{prop}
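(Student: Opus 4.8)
The plan is to reduce the identity to elementary algebra, using only the definition $g_j(z)=(V_j-z-t\mfct(z))^{-1}$, the self-consistent equation $\mfct(z)=\frac1N\sum_j g_j(z)$, the definition $R_2(z)=\frac1N\sum_j g_j(z)^2$, and the differentiated form of the self-consistent equation. Since $R_2(z)$ does not depend on $j$, the factor $(1-tR_2(z))^{-1}$ pulls out of the sum, so it is enough to evaluate $\frac1N\sum_j g_j(z)^2 g_j(z')$ in closed form and compare it with $\partial_z\!\big(\tfrac{\mfct(z)-\mfct(z')}{D(z,z')}\big)$, where I abbreviate $D(z,z'):=(z-z')+t(\mfct(z)-\mfct(z'))$.

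First I would record the resolvent-type identity
\[
g_j(z)\,g_j(z')\,D(z,z') = g_j(z)-g_j(z'),
\]
which follows by writing $g_j(z)-g_j(z')=g_j(z)g_j(z')\big[(V_j-z'-t\mfct(z'))-(V_j-z-t\mfct(z))\big]$ and simplifying the bracket to $D(z,z')$. Multiplying through by $g_j(z)$ gives $g_j(z)^2g_j(z') = g_j(z)^2/D - g_j(z)g_j(z')/D = g_j(z)^2/D - (g_j(z)-g_j(z'))/D^2$, and averaging over $j$ yields
\[
\frac1N\sum_j g_j(z)^2 g_j(z') = \frac{R_2(z)}{D(z,z')} - \frac{\mfct(z)-\mfct(z')}{D(z,z')^2}.
\]

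Next I would differentiate the self-consistent equation: since $g_j'(z)=g_j(z)^2(1+t\mfct'(z))$, summing over $j$ gives $\mfct'(z)=(1+t\mfct'(z))R_2(z)$, hence $\mfct'(z)=R_2(z)/(1-tR_2(z))$ and $1+t\mfct'(z)=(1-tR_2(z))^{-1}$; here $1-tR_2(z)\neq 0$ on the relevant domain $\Omega_N$ by the stability bound \cite[Eqn (7.10)]{landonyau} together with Proposition~\ref{thm: ABthm}. Computing the right-hand side of the claimed identity directly,
\[
\partial_z\frac{\mfct(z)-\mfct(z')}{D(z,z')} = \frac{\mfct'(z)}{D} - \frac{(\mfct(z)-\mfct(z'))(1+t\mfct'(z))}{D^2} = \frac{1}{1-tR_2(z)}\left(\frac{R_2(z)}{D} - \frac{\mfct(z)-\mfct(z')}{D^2}\right),
\]
which is exactly $(1-tR_2(z))^{-1}$ times the averaged sum above, i.e. precisely $\frac1N\sum_j\frac{g_j(z)^2}{1-tR_2(z)}g_j(z')$. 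This proves the proposition. There is no genuine obstacle in the argument; the only points requiring (minor) care are the non-vanishing of $1-tR_2(z)$ and of $D(z,z')$ on $\Omega_N$, so that the meromorphic identity is being asserted away from its poles, and both facts are available from the stability estimates already quoted in the paper.
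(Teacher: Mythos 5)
Your proof is correct and uses the same two ingredients as the paper — the partial-fraction (resolvent) identity $g_j(z)g_j(z')\bigl[(z-z')+t(\mfct(z)-\mfct(z'))\bigr]=g_j(z)-g_j(z')$ and the differentiated self-consistent equation $1+t\partial_z\mfct(z)=(1-tR_2(z))^{-1}$ from \cite{landonyau}. The only (cosmetic) difference is that the paper recognizes $g_j(z)^2/(1-tR_2(z))=\partial_z g_j(z)$, pulls the derivative outside the sum, and then applies the partial-fraction identity once, whereas you expand both sides in closed form and compare; the computations are otherwise identical.
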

\begin{proof}
By \cite[Eqn. (7.24)]{landonyau}
\begin{align}
\partial_z \mfct(z)&= \frac{R_2(z)}{1-tR_2(z)}, \quad 1+t\partial_z \mfct(z)= \frac{1}{1-tR_2(z)}.\label{eqn: t-shcherbina}
\end{align}
By partial fractions, we have
\begin{align*}
&\frac{1}{N}\sum_{j=1}^N\frac{g_j(z)^2}{1-tR_2(z)}g_j(z') =\partial_z \frac{1}{N}\sum_{j=1}^N g_j(z)g_j(z') =\frac{1}{N} \partial_z \frac{\mfct(z)-\mfct(z')}{z-z'+t(\mfct(z)-\mfct(z'))}.
\end{align*}
\end{proof}
\qed

The integrals appearing in the definition of $V(\varphi_N)$ are
\begin{align}
I_{1,1}&:=\int_{\Omega_N^*}\int_{\Omega_N^*}\dvarp(z)\dvarp(z')\frac{1}{1-tR_2(z)} S_{2,1}(z,z')\,\mathrm{d}z\mathrm{d}z', \label{eqn: I2-11}\\
I_{1,2}&:=\int_{\Omega_N^*}\int_{\Omega_N^*}\dvarp(z)\dvarp(z')\frac{1}{1-tR_2(z)} S_{2,2}(z,z')\,\mathrm{d}z\mathrm{d}z' \label{eqn: I2-22}\\
I_{1,3}&:= \int_{\Omega_N^*}\int_{\Omega_N^*}\dvarp(z)\dvarp(z')\frac{1}{1-tR_2(z)} S_{2,3}(z,z')\,\mathrm{d}z\mathrm{d}z. \label{eqn: I2-23}
\end{align}

By Proposition \ref{prop: sum-gg},
\begin{equation}\label{eqn: I2-display}
I_{1,1}= \frac{t^2}{N}\int_{\Omega_N^*}\int_{\Omega_N^*} \dvarp(z)\dvarp(z') \partial_z \sum_{j=1}^N g_j(z)g_j(z') \partial_s \frac{\mfct(z)-\mfct(z')}{z-z'}\,\mathrm{d}z\mathrm{d}z'.
\end{equation}
Similarly, we have:
\begin{align*}
\frac{S_{2,2}(z,z')}{1-tR_2(z)}&=\frac{t^2}{N}\partial_z\partial_{z'} \sum_{j=1}g_j(z)g_j(z')\frac{\mfct(z)-\mfct(z')}{z-z'},\\
\frac{S_{2,3}(z,z')}{1-tR_2(z)}&=\frac{t}{N}\partial_z\partial_{z'} \sum_{j=1}g_j(z)g_j(z').
\end{align*}

Integrating by parts in $s=\Re z'$. The boundary term is only non-zero in the region \eqref{eqn: high-region}, where we can use \eqref{eqn: high-region-bound}.
\begin{align}
I_{1,2}=&-\frac{t^2}{N}\int_{\Omega_N^*}\int_{\Omega_N^*} \dvarp(z)\partial_{s}\dvarp(z') \partial_z \sum_{j=1}^N g_j(z)g_j(z')  \frac{\mfct(z)-\mfct(z')}{z-z'}\,\mathrm{d}z\mathrm{d}z' \label{eqn: I2-d-onphi}\\
&-\frac{t^2}{N}\int_{\Omega_N^*}\int_{\Omega_N^*} \dvarp(z)\dvarp(z') \partial_z \sum_{j=1}^N g_j(z)g_j(z') \partial_s \frac{\mfct(z)-\mfct(z')}{z-z'}\,\mathrm{d}z\mathrm{d}z' \label{eqn: I2-d-onzz'}
\end{align}
Note that the second integral \eqref{eqn: I2-d-onzz'} is equal to $I_{1,1}$.

We begin by computing the $z$ integral in \eqref{eqn: I2-d-onphi}. The integrand is $\dvarp(z)$ multiplying a function analytic in each of $\{\Im z>0\}$ and $\{\Im z<0\}$. Let $\Omega\subset \mathbb{C}$ be a domain. For $F$ a $C^1(\Omega)$ function, Green's theorem in complex notation is 
\begin{equation}\label{eqn: greensthm}
\int_\Omega \bar{\partial}_zF(z) \,\mathrm{d}z = -\frac{\i}{2}\int_{\partial \Omega}F(z) \,\mathrm{d}z.
\end{equation}

 We split the integral \eqref{eqn: I2-d-onphi} into the two regions $\Omega_N^*\cap \{\Im z>0\}$, $\Omega_N^*\cap\{\Im z<0\}$ and apply Green's theorem to each. The first region is a rectangle in the upper half-plane. The integrand in the resulting line integral, $\tilde{\varphi}_N$, is zero on the ``top'' segment $[-qG+\i N^{10C_V},qG+\i N^{10C_V}]$. 

We label the terms corresponding to three other boundary line integrals by $(+)$ to denote $\Im z>0$ and number them according to the corresponding boundary segments as $(1)$ for $[-qG+\i N^{-1+\xi},qG+\i N^{-1+\xi}]$; $(2)$ for $[qG+\i N^{-1+\xi},qG+\i N^{10C_V}]$;  and $(3)$ for $[-qG+iN^{10C_V},-qG+iN^{10C_V}]$:
\begin{align}
&2\i \int_{\Omega_N^*\cap\{\Im z>0\}} \dvarp(z) \frac{t^2}{N} \partial_z \sum_{j=1}^N g_j(z)g_j(z')  \frac{\mfct(z)-\mfct(z')}{z-z'}\,\mathrm{d}z \nonumber \\
=&t^2\int_{-qG}^{qG} (\varphi_N(\tau)+\i N^{-1+\xi}\varphi_N'(\tau)) \partial_\tau\frac{\mfct(\tau+\i N^{-1+\xi})-\mfct(z')}{\tau+\i N^{-1+\xi}-z'+t(\mfct(\tau+\i N^{-1+\xi})-\mfct(z'))} \label{eqn: I2+1} \\
&\quad \times  \frac{\mfct(\tau+\i N^{-1+\xi})-\mfct(z')}{\tau+\i N^{-1+\xi}-z'}\,\mathrm{d}\tau \nonumber\\
+&t^2\varphi_N(qG)\int_{N^{-1+\xi}}^{N^{10C_V}+1}\chi(\eta) \partial_\tau \frac{\mfct(qG+\i\eta)-\mfct(z')}{qG+\i\eta-z'+t(\mfct(qG+\i\eta)-\mfct(z'))}  \frac{\mfct(qE+\i\eta)-\mfct(z')}{qG+\i\eta-z'}\,\mathrm{d}\eta \label{eqn: I2+2}\\
+&t^2\varphi_N(-qG)\int_{N^{10C_V}+1}^{N^{-1+\xi}}\chi(\eta) \partial_\tau \frac{\mfct(-qG+\i\eta)-\mfct(z')}{-qG+\i\eta-z'+t(\mfct(-qG+\i\eta)-\mfct(z'))}   \notag\\
&\times \frac{\mfct(-qG+\i\eta)-\mfct(z')}{-qG+\i\eta-z'}\,\mathrm{d}\eta \label{eqn: I2+3}\\
&:= I_{1,+,1}+I_{1,+,2}+I_{1,+,3}.
\end{align}
Similarly, the second region $\Omega_N^*\cap\{\Im z <0\}$ is labelled by $(-)$ in indices. The sides are labelled in counter-clockwise orientation as $(1)$, $[qG-\i N^{-1+\xi},-qG-\i N^{-1+\xi}]$; $(2)$, $[-qG-\i N^{-1+\xi},-qG-\i N^{10C_V}]$; $(3)$, $[qG-\i N^{10C_V},qG-\i N^{-1+\xi}]$. Applying Green's theorem to \eqref{eqn: I2-d-onphi} over this region:
\begin{align}
&2\i \int_{\Omega_N^*\cap\{\Im z<0\}} \dvarp(z) \frac{t^2}{N} \partial_z \sum_{j=1}^N g_j(z)g_j(z')  \frac{\mfct(z)-\mfct(z')}{z-z'}\,\mathrm{d}z \nonumber \\
=&t^2\int_{qG}^{-qG} (\varphi_N(\tau)-\i N^{-1+\xi}\varphi_N'(\tau)) \partial_z\frac{\mfct(\tau-\i N^{-1+\xi})-\mfct(z')}{\tau-z'+t(\mfct(\tau-\i N^{-1+\xi})-\mfct(z'))} \label{eqn: I2minus1}\\
&\quad \times  \frac{\mfct(\tau-\i N^{-1+\xi})-\mfct(z')}{\tau-\i N^{-1+\xi}-z'}\,\mathrm{d}x\\
+&t^2\varphi_N(-qG)\int_{-N^{-1+\xi}}^{-N^{10C_V}-1}\chi(\eta) \partial_z \frac{\mfct(-qG+\i\eta)-\mfct(z')}{-qE+i\eta-z'+t(\mfct(-qG+\i\eta)-\mfct(z'))} \notag\\
& \times  \frac{\mfct(-qG+\i y)-\mfct(z')}{-qG+\i\eta-z'}\,\mathrm{d}\eta \label{eqn: I2minus2}\\
+&t^2\varphi_N(qG)\int_{-N^{10C_V}-1}^{-N^{-1+\xi}}\chi(\eta) \partial_z \frac{\mfct(qG+\i\eta)-\mfct(z')}{qE+i\eta-z'+t(\mfct(qG+\i y)-\mfct(z'))}  \frac{\mfct(qE+\i\eta)-\mfct(z')}{qG+\i\eta-z'}\,\mathrm{d}\eta \label{eqn: I2minus3}\\
&:= I_{1,-,1}+I_{1,-,2}+I_{1,-,3}.\nonumber
\end{align}

We now insert $-(\i/2)I_{1,\pm,k}(z')$, $k=1,2,3$, into the integral \eqref{eqn: I2-d-onphi}, and apply Green's theorem in each of the regions $\Omega_N^*\cap \{\Im z'>0\}$ and $\Omega_N^*\cap \{\Im z'<0\}$. We label the oriented sides of that region as previously:
\begin{align}
&-(\i/2)\int_{\Omega_N^* \cap \{\Im z'>0\}} \partial_s\dvarp(z') I_{1,\pm,k}(z')\,\mathrm{d}z' \nonumber \\
&= \frac{1}{4}\int_{-qG}^{qG}(\varphi_N'(s)+\i N^{-1+\xi}\varphi_N''(s)) I_{1,\pm,k}(s+\i N^{-1+\xi})\,\mathrm{d}s \label{eqn: I2pm1} \\
&+ \frac{1}{4}\int_{N^{-1+\xi}}^{N^{10C_V}}\chi(\eta')(\varphi_N'(qG)+\i\eta'\varphi''_N(qG))I_{1,\pm,k}(qG+\i\eta')\,\mathrm{d}\eta' \label{eqn: I2pm-vanishing-1}\\
&+ \frac{1}{4}\int_{N^{10C_V}}^{N^{-1+\xi}}\chi(\eta')(\varphi_N'(-qG)+\i\eta'\varphi''_N(-qG))I_{1,\pm,k}(qG+\i\eta')\,\mathrm{d}\eta' \label{eqn: I2pm-vanishing-2}
\end{align}
By the support condition \eqref{eqn: offIq-decay}, the terms \eqref{eqn: I2pm-vanishing-1} and \eqref{eqn: I2pm-vanishing-2} are 0 for any $k$ and choice of $\pm$. We denote the remaining term \eqref{eqn: I2pm1} by $I_{1,\pm,k,+}$. Similarly, applying Green's theorem to $\Omega_N^*\cap \{\Im z'<0\}$:
\begin{align}
&-(\i/2)\int_{\Omega_N^* \cap \{\Im z'>0\}}\dvarp(z') I_{1,\pm,j}(z')\,\mathrm{d}z' \nonumber \\
&= \frac{1}{4}\int_{qG}^{-qG}(\varphi_N'(s)-\i N^{-1+\xi}\varphi_N''(s)) I_{2,\pm,k}(x'-\i N^{-1+\xi})\,\mathrm{d}x' \label{eqn: I2pm1-2}\\
&:= I_{1,\pm,k,-} \nonumber
\end{align}
To summarize, we have shown so far
\begin{align*}
&\frac{t^2}{N}\int_{\Omega_N^*}\int_{\Omega_N^*} \dvarp(z)\partial_{z'}\dvarp(z') \partial_z \sum_{j=1}^N g_j(z)g_j(z')  \frac{\mfct(z)-\mfct(z')}{z-z'}\,\mathrm{d}z\mathrm{d}z = \sum_{k=1}^3\sum_{\alpha,\beta \in \{\pm\}}I_{1,\alpha,k,\beta}.
\end{align*}

Only the terms $I_{1,\pm,1,\pm}$ contribute to the variance. This is the content of the following.
\begin{prop}\label{prop: I2-bound-prop}
Recall the parameter $\sigma>0$ in the local law, Lemma \ref{lem: gDd}. For any choice of $I_{2,\pm,k,\pm}$ with $k \neq 1$, we have
\[|I_{1,\pm,k,\pm}|\le \O(t^\sigma \log N (\|\varphi_N'\|_{L^1}+N^{-1+\xi}\|\varphi_N''\|_{L^1})).\]
\end{prop}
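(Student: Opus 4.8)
\textbf{Proof plan for Proposition \ref{prop: I2-bound-prop}.} The plan is to exploit that, for $k\in\{2,3\}$, the $z$-line-integral defining $I_{1,\pm,k}(z')$ runs along a \emph{vertical} side $\Re z=\pm q\G$ of the rectangle $\Omega_N^*$, i.e.\ through the bulk of $\rho_V\boxplus\rho_{\mathrm{sc},t}$ and at distance $\asymp q\G$ from the real part of every $z'$ that matters. First I would record what survived the two applications of Green's theorem: the weight on the segment $\Re z=\pm q\G$ is $\tilde\varphi_N(\pm q\G+i\eta)=\chi(\eta)(\varphi_N(\pm q\G)+i\eta\,\varphi_N'(\pm q\G))$, and by \eqref{eqn: offIq-decay} the derivatives $\varphi_N'$, $\varphi_N''$ vanish in a neighbourhood of $\pm q\G$ (the real parts of all $z'$ contributing to the outer integral are confined to $|\Re z'|\le t_1N^r\ll q\G$ by that same condition), so this weight is simply $\chi(\eta)\varphi_N(\pm q\G)$ with $|\varphi_N(\pm q\G)|\le\|\varphi_N\|_{L^\infty}\le C$; the outer $z'$-integral runs over $z'=s\pm iN^{-1+\xi}$ against $\varphi_N'(s)\pm iN^{-1+\xi}\varphi_N''(s)$ and contributes a factor $\le C(\|\varphi_N'\|_{L^1}+N^{-1+\xi}\|\varphi_N''\|_{L^1})$. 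Thus it remains only to bound $\sup_{z'}|I_{1,\pm,k}(z')|$, the supremum over the relevant $z'$ (so $|\Re z'|\ll q\G$).

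Next I would establish the bulk estimates along $\Re z=\pm q\G$. Since $\pm q\G\in\mathcal{I}_q$ lies in the bulk, $\mfct$ is analytic there and obeys, uniformly for $N^{-1+\xi}\le|\eta|\le N^{10C_V}$, the bounds $|\mfct(\pm q\G+i\eta)|\le C(1\vee|\eta|)^{-1}$ and $|\partial_z\mfct(\pm q\G+i\eta)|\le C(1\vee|\eta|)^{-2}$ (boundedness and regularity of the free convolution in the bulk for $|\eta|\lesssim1$, together with the standard $|\eta|^{-1}$, $|\eta|^{-2}$ large-$|\eta|$ decay of a Stieltjes transform); also $|\mfct(z')|\le C$. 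Because $|t(\mfct(z)-\mfct(z'))|\le Ct\le CN^{-\sigma}\G^2\ll q\G\le c|z-z'|$ on this domain, the denominator satisfies $|z-z'+t(\mfct(z)-\mfct(z'))|\ge\tfrac12|z-z'|\ge c(q\G\vee|\eta|)$ (this is the content of the stability estimate \eqref{eqn: stability} in the present regime), and likewise $|z-z'|\ge c(q\G\vee|\eta|)$. Substituting these into the integrand of \eqref{eqn: I2+2}, \eqref{eqn: I2+3} — namely $t^2\chi(\eta)\varphi_N(\pm q\G)\,\partial_\tau\big[\tfrac{\mfct(z)-\mfct(z')}{z-z'+t(\mfct(z)-\mfct(z'))}\big]\cdot\tfrac{\mfct(z)-\mfct(z')}{z-z'}$ at $z=\pm q\G+i\eta$ — yields the pointwise bound $\le Ct^2(q\G\vee|\eta|)^{-3}$, whence
\[
\sup_{z'}|I_{1,\pm,k}(z')|\ \le\ Ct^2\int_{N^{-1+\xi}}^{N^{10C_V}}\frac{\mathrm d\eta}{(q\G\vee\eta)^3}\ \le\ \frac{Ct^2}{(q\G)^2}\ \le\ C_q\,N^{-2\sigma}\ \le\ Ct^\sigma ,
\]
using $t\le N^{-\sigma}\G^2$ and $\G\le1$ in the third step and $t\ge \g N^\sigma>N^{-1}$ in the last. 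Multiplying by the outer factor of the first paragraph gives, for each $k\in\{2,3\}$ and each choice of signs,
\[
|I_{1,\pm,k,\pm}|\ \le\ Ct^\sigma\log N\,\big(\|\varphi_N'\|_{L^1}+N^{-1+\xi}\|\varphi_N''\|_{L^1}\big),
\]
which is the assertion (the harmless $\log N$ may simply be kept); the six cases are interchanged by the reflections $z\mapsto\bar z$, $z'\mapsto\bar z'$ and $q\G\mapsto-q\G$, and the analogous vertical-boundary terms produced from $S_{2,3}$ are treated identically and are even smaller, carrying only one power of $t$.

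The main obstacle, and essentially the only non-bookkeeping point, is the uniform control — all the way down to $|\eta|=N^{-1+\xi}$ — of $\mfct$, $\partial_z\mfct$ and the denominator $z-z'+t(\mfct(z)-\mfct(z'))$ along the vertical lines $\Re z=\pm q\G$; once these are in hand (they follow from bulk regularity of $\rho_V\boxplus\rho_{\mathrm{sc},t}$ and the stability estimate \eqref{eqn: stability}), the $t^\sigma$ gain is forced by the factor $t^2$ against the $(q\G)^{-2}$ produced by the convergent $\eta$-integral, together with $t\le N^{-\sigma}\G^2$. The remaining work — tracking which Green's-theorem boundary segments survive the support condition \eqref{eqn: offIq-decay} and the cutoff $\chi$ — has already been carried out in the text preceding the proposition.
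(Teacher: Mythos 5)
Your route is essentially the paper's: for $k\neq 1$ the inner $z$-integral runs along the vertical segments $\Re z=\pm qG$, so $|z-z'|\ge c\,qG\ge c\,t^{1/2}N^{\sigma/2}$ for every $z'$ with $\Re z'\in\supp\varphi_N'$, the stability bound \eqref{eqn: stability} gives the same lower bound for $|z-z'+t(\mfct(z)-\mfct(z'))|$, the prefactor $t^2$ against $(qG)^{-2}$ forces the $t^\sigma$ gain, and the outer horizontal integral supplies the factor $\|\varphi_N'\|_{L^1}+N^{-1+\xi}\|\varphi_N''\|_{L^1}$. However, one step as written is false: you claim $|\partial_z\mfct(\pm qG+i\eta)|\le C(1\vee|\eta|)^{-2}$ uniformly down to $|\eta|=N^{-1+\xi}$, attributing this to ``bulk regularity of the free convolution''. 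The free convolution $\rho_V\boxplus\rho_{\mathrm{sc},t}$ is regular only on scale $t$, not on scale one: the correct, and in general sharp, bound is $|\partial_z\mfct(E+\i\eta)|\le C/(t+|\eta|)$ (see \eqref{eqn:oldmder}, or the bound \eqref{eqn: dmfct-bound} used in the paper), since $\rho_{\mathrm{fc},t}'$ is typically of size $1/t$. Consequently your pointwise integrand bound $Ct^2(qG\vee|\eta|)^{-3}$ fails for $|\eta|\lesssim qG$: in the regime $|\eta|\lesssim t$ the contribution $\partial_z\mfct(z)/(z-z'+t(\mfct(z)-\mfct(z')))$ times $(\mfct(z)-\mfct(z'))/(z-z')$ can be of order $t^{-1}(qG)^{-2}$, i.e.\ larger than your claim by a factor of order $qG/t\gg1$.

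The gap is repairable without a new idea, and the repair is what the paper actually does: using $|\partial_z\mfct|\le Ct^{-1}$ together with the separation $|z-z'+t(\mfct(z)-\mfct(z'))|\ge c\,qG\ge c\,t^{1/2}N^{\sigma/2}$, the integrand is bounded by $C\min\left(t^{-2+\sigma},|\eta|^{-3}\log N\right)$ (the paper's version), or in your bookkeeping by $C(t+|\eta|)^{-1}(qG)^{-2}$ for $|\eta|\le qG$ and $C|\eta|^{-3}$ beyond; the $\eta$-integral then costs at most an extra $\log N$, and $t^2\log N/(qG)^2\le N^{-2\sigma}\log N\le t^\sigma\log N$, so the stated estimate still follows. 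In short, the structure of your argument and its conclusion survive, but the $O(1)$ regularity claim for $\partial_z\mfct$ near the real axis must be replaced by the scale-$t$ bound, with the bookkeeping adjusted accordingly.
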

\begin{proof}
By \cite[Eqn (7.25)]{landonyau}, 
\begin{equation}\label{eqn: dmfct-bound}
|\partial_z \mfct(z)|\le C t^{-1}.
\end{equation}
Compute the derivative:
\begin{align*}
\partial_z\frac{\mfct(z)-\mfct(z')}{z-z'+t(\mfct(z)-\mfct(z'))}&=-\frac{\mfct(z)-\mfct(z')}{(z-z'+t(\mfct(z)-\mfct(z')))^2}(1+t\partial_z\mfct(z))\\
&+ \frac{\partial_z \mfct(z)}{z-z'+t(\mfct(z)-\mfct(z'))}.
\end{align*}
Note that $t(\mfct(z)-\mfct(z'))=\O_\prec(t)$ for $z,z'\in \mathcal{D}_{\epsilon,q}$. So if $|\Re z|\ge qG\ge t^{1/2}N^{\sigma/2}$ and $|\Re z'| =\O(t_1N^r)$, then
\begin{equation}\label{eqn: t-3/2}
\partial_z\frac{\mfct(z)-\mfct(z')}{z-z'+t(\mfct(z)-\mfct(z'))} \prec t^{-1}t^{-1/2}N^{-\sigma/2}\le t^{-3/2+\sigma/2}.
\end{equation}
Similarly:
\[\left|\frac{\mfct(z)-\mfct(z')}{z-z'}\right|\prec \min(t^{-1/2+\sigma/2},|\eta|^{-1}).\]

By \eqref{eqn: stability}, and $\frac{1}{N}\sum |g_j(z)|\le C\log N$ \cite[Eqn. (7.36)]{landonyau}, so
\[
\left|\frac{1}{N}\sum_{j=1}^N g_j(z)^2(1+t\partial_z \mfct(z))g_j(z')\right|\le C|\eta|^{-2} \log N.
\]
Combining this with \eqref{eqn: t-3/2}, we have 
\[\left|\frac{1}{N}\sum_{j=1}^N g_j(z)^2(1+t\partial_z \mfct(z))g_j(z')\right|\prec \min(|\eta|^{-2} \log N, t^{-3/2+\sigma/2}).\]
Inserting this into \eqref{eqn: I2+2}, \eqref{eqn: I2+3}, \eqref{eqn: I2minus2}, \eqref{eqn: I2minus3}, for $z'$ such that $s=\Re z'\in \mathrm{supp}\varphi_N'$ and $k=2,3$:
\begin{align*}
|I_{1,\pm,k,\pm}|&\le t^2\log N(\|\varphi_N'\|_{L^1}+N^{-1+\xi}\|\varphi_N''\|_{L^1})\int_{N^{-1+\xi}}^{N^{10C_V}}\min(t^{-2+\sigma},|\eta|^{-3})\mathrm{d}\eta\\
&\le t^{\sigma} \log N (\|\varphi_N'\|_{L^1}+N^{-1+\xi}\|\varphi_N''\|_{L^1}).
\end{align*}
\end{proof}\qed

For brevity of notation, we let $s^\pm =s\pm \i N^{-1+\xi}$, $\tau^\pm = \tau\pm \i N^{-1+\xi}$. We have so far shown that 
\begin{align}
&\int\int_{\Omega_N \times \Omega_N} \dvarp(z)\dvarp(z') \partial_z \sum_{j=1}^N g_j(z)g_j(z')  \frac{\mfct(z)-\mfct(z')}{z-z'}\,\mathrm{d}z'\mathrm{d}z \label{eqn: mainvariance}\\
=&\frac{t^2}{4}\int_{-qG}^{qG}\int_{-qG}^{qG}\tilde{\varphi}_N(\tau)\tilde{\varphi}_N'(s)\partial_\tau \frac{\mfct(\tau^+)-\mfct(s^+)}{\tau-s+t(\mfct(\tau^+)-\mfct(s^+))} \frac{\mfct(\tau^+)-\mfct(s^+)}{\tau-s}\,\mathrm{d}\tau\mathrm{d}s \label{eqn: plusvariance-1}\\
+&\frac{t^2}{4}\int_{-qG}^{qG}\int_{-qG}^{qG}\tilde{\varphi}_N(\tau)\tilde{\varphi}_N'(s)\partial_\tau \frac{\mfct(\tau^-)-\mfct(s^-)}{\tau-s+t(\mfct(\tau^-)-\mfct(s^-))} \frac{\mfct(\tau^-)-\mfct(s^-)}{\tau-s}\,\mathrm{d}\tau\mathrm{d}s \label{eqn: plusvariance-2}\\
-&\frac{t^2}{4}\int_{-qG}^{qG}\int_{-qG}^{qG}\tilde{\varphi}_N(\tau)\tilde{\varphi}_N'(s)\partial_\tau \frac{\mfct(\tau^+)-\mfct(s^-)}{\tau-s+t(\mfct(\tau^+)-\mfct(s^-))} \frac{\mfct(\tau^+)-\mfct(s^-)}{\tau-s+2iN^{-1+\xi}}\,\mathrm{d}\tau\mathrm{d}s \label{eqn: mainvariance-1a}\\
-&\frac{t^2}{4}\int_{-qG}^{qG}\int_{-qG}^{qG}\tilde{\varphi}_N(\tau)\tilde{\varphi}_N'(s)\partial_\tau \frac{\mfct(\tau^-)-\mfct(s^+)}{\tau-s+t(\mfct(\tau^-)-\mfct(s^+))} \frac{\mfct(\tau^-)-\mfct(s^+)}{\tau-s-2iN^{-1+\xi}}\,\mathrm{d}\tau\mathrm{d}s \label{eqn: mainvariance-1b}\\
+& \O(t^\sigma \log N (\|\varphi_N'\|_{L^1}+N^{-1+\xi}\|\varphi_N''\|_{L^1})).
\end{align}
The main terms are \eqref{eqn: mainvariance-1a}, \eqref{eqn: mainvariance-1b}. These are of order $\log N$ for the functions we are interested in. The other two terms are bounded by a constant:
\begin{prop}\label{prop: const-bd}
Let $s^\pm = s\pm \i N^{-1+\xi}$, $\tau^\pm = \tau \pm \i N^{-1+\xi}$.
There is a constant $C$ such that
\begin{align}
\left|t^2\int_{-qG}^{qG}\int_{-qG}^{qG}\tilde{\varphi}_N(\tau)\tilde{\varphi}_N'(s)\partial_\tau \frac{\mfct(\tau^+)-\mfct(s^+)}{\tau-s+t(\mfct(\tau^+)-\mfct(s^+))} \frac{\mfct(\tau^+)-\mfct(s^+)}{\tau-s}\,\mathrm{d}\tau\mathrm{d}s\right| &\le C \label{eqn: mfct-frac-1}\\
\left|t^2\int_{-qG}^{qG}\int_{-qG}^{qG}\tilde{\varphi}_N(\tau)\tilde{\varphi}_N'(s)\partial_\tau \frac{\mfct(\tau^-)-\mfct(s^-)}{\tau-s+t(\mfct(\tau^-)-\mfct(s^-))} \frac{\mfct(\tau^-)-\mfct(s^-)}{\tau-s}\,\mathrm{d}\tau\mathrm{d}s\right| &\le C.
\end{align}
\end{prop}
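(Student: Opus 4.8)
\textbf{Proof proposal for Proposition \ref{prop: const-bd}.}

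The plan is to treat both integrals in the same way, so I will focus on \eqref{eqn: mfct-frac-1}; the estimate for the conjugate variables is identical after complex conjugation. First I would replace the regularized functions by their bulk values: since $\tilde\varphi_N(\tau) = \chi(\eta)(\varphi_N(\tau) + i\eta\varphi_N'(\tau))$, on the real axis (i.e.\ after we have already collapsed the contour down to $\Im z' = N^{-1+\xi}$) the term $\tilde\varphi_N(\tau)$ contributes $\varphi_N(\tau) + \O(N^{-1+\xi}\|\varphi_N'\|_{L^\infty})$ and similarly $\tilde\varphi_N'(s) = \varphi_N'(s) + \O(N^{-1+\xi}\|\varphi_N''\|_{L^\infty})$. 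The error terms generated this way are multiplied by $t^2$ and by an integrable-in-$(\tau,s)$ kernel (see below), so they produce a contribution bounded by $C t^2 N^{-1+\xi}\|\varphi_N''\|_{L^1}(\cdots)$, which under our assumptions $t = N^{\omega_0}/N$, $\|\varphi_N''\|_{L^1} \le N/N^{\omega_1}$, $\omega_0 < \omega_1$ is $o(1)$. Thus it suffices to bound
\[
t^2\int\!\!\int \varphi_N(\tau)\varphi_N'(s)\,\partial_\tau\!\left(\frac{\mfct(\tau^+)-\mfct(s^+)}{\tau-s+t(\mfct(\tau^+)-\mfct(s^+))}\right)\frac{\mfct(\tau^+)-\mfct(s^+)}{\tau-s}\,\mathrm{d}\tau\,\mathrm{d}s
\]
by a constant.

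The key is a pointwise bound on the kernel. Using the stability/regularity estimates imported from \cite{landonyau} --- specifically $|\partial_z \mfct(z)| \le Ct^{-1}$ (Eqn (7.25) there, quoted as \eqref{eqn: dmfct-bound} in the previous section) and $|V_i - z - t\mfct(z)| \ge c\max(t,|\eta|)$ --- together with the representation \eqref{eqn: f-diffquot}, one obtains that for $z = \tau^+$, $z' = s^+$ in $\mathcal D_{\xi,q}$,
\[
\left|\frac{\mfct(z)-\mfct(z')}{z-z'}\right| \le C\!\left(\frac{1}{t} \wedge \frac{1}{|\tau-s|}\right), \qquad
\left|\partial_z\frac{\mfct(z)-\mfct(z')}{z-z'+t(\mfct(z)-\mfct(z'))}\right| \le \frac{C}{t}\!\left(\frac{1}{t}\wedge\frac{1}{|\tau-s|}\right).
\]
The first follows from $\frac{\mfct(z)-\mfct(z')}{z-z'} = \int_0^1 \mfct'(z'+\alpha(z-z'))\,\mathrm d\alpha$ and $|\mfct'|\le Ct^{-1}$ for the $t^{-1}$ bound, and from direct division for the $|\tau-s|^{-1}$ bound (the denominator $z-z'$ has modulus $\asymp|\tau-s|$ once $|\tau - s| \gg N^{-1+\xi}$, and the numerator is bounded). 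For the derivative of the ``dressed'' fraction, one differentiates the quotient as in Proposition \ref{prop: I2-bound-prop}, uses that $1 + t\partial_z\mfct$ is bounded, and that the ``dressed'' denominator $z - z' + t(\mfct(z) - \mfct(z'))$ is also of modulus $\gtrsim \max(t, |\tau - s|)$ by \eqref{eqn: stability}. Multiplying the two bounds, the full kernel is at most $Ct^{-1}(\,t^{-1}\wedge|\tau-s|^{-1}\,)^2$, and after the prefactor $t^2$ we are left with $t\,(t^{-1}\wedge|\tau-s|^{-1})^2 \le C\,\frac{1}{|\tau - s|}\wedge \frac{1}{t}$ up to constants, i.e.\ essentially $C\frac{t}{t^2 + (\tau-s)^2}$.

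Finally I would conclude by the convolution-type estimate: with $K(\tau,s) := t^2 \times (\text{kernel}) = \O\!\big(\tfrac{t}{t^2+(\tau-s)^2}\big)$, we have $\sup_s\int_{-qG}^{qG}|K(\tau,s)|\,\mathrm d\tau \le C$ and $\sup_\tau\int|K(\tau,s)|\,\mathrm d s\le C$, so by Schur's test (or directly by Young's inequality applied after bounding $|\varphi_N(\tau)| \le \|\varphi_N\|_{L^\infty} \le C$)
\[
\left|\,t^2\!\int\!\!\int \varphi_N(\tau)\varphi_N'(s) (\text{kernel})\,\mathrm d\tau\,\mathrm d s\,\right|
\le C\|\varphi_N\|_{L^\infty}\int_{-qG}^{qG}|\varphi_N'(s)|\left(\int \frac{t}{t^2+(\tau-s)^2}\,\mathrm d\tau\right)\mathrm d s
\le C\|\varphi_N'\|_{L^1} \le C,
\]
using \eqref{eqn: varphiLinfty}. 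The same argument with $s^{\pm}\to s^{\mp}$, $\tau^{\pm}\to\tau^{\mp}$ gives the second inequality.

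The main obstacle I anticipate is the careful verification of the derivative bound on the dressed difference quotient $\partial_\tau\big[(\mfct(\tau^+)-\mfct(s^+))/(\tau-s+t(\mfct(\tau^+)-\mfct(s^+)))\big]$ in \emph{all} regimes of $|\tau - s|$ relative to $t$ and to $N^{-1+\xi}$ --- in particular making sure that when $|\tau - s|$ is comparable to the regularization scale $N^{-1+\xi}$ the bound $C t^{-1}(t^{-1}\wedge|\tau - s|^{-1})$ still holds and does not blow up, which requires using that the imaginary part $N^{-1+\xi}$ of $\tau^+, s^+$ keeps the relevant denominators away from zero, exactly as in the proof of Proposition \ref{prop: I2-bound-prop}. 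Everything else is a routine application of Schur's test.
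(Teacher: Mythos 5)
Your overall architecture (split at $|\tau-s|\asymp t$, bound the kernel pointwise by roughly $t/(t^2+(\tau-s)^2)$, and finish with $\|\varphi_N'\|_{L^1}\le C$) is exactly the paper's, but the one step that carries all the weight is not justified correctly. You claim that the dressed denominator satisfies $|z-z'+t(\mfct(z)-\mfct(z'))|\gtrsim \max(t,|\tau-s|)$ ``by \eqref{eqn: stability}''. This is false as stated: at $\tau=s$ (same imaginary part $N^{-1+\xi}$) the dressed denominator vanishes identically, so no lower bound of order $t$ can hold near the diagonal; and \eqref{eqn: stability} bounds $|V_i-z-t\mfct(z)|$ from below, which is a different quantity and does not control $z-z'+t(\mfct(z)-\mfct(z'))$. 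As a consequence your quotient-rule derivation of the derivative bound $Ct^{-2}$ in the regime $|\tau-s|\le Ct$ does not go through: the term $\partial_z\mfct(z)/(z-z'+t(\mfct(z)-\mfct(z')))$ is not individually controllable as $\tau\to s$, and the needed bound only emerges after a cancellation between the two quotient-rule terms. This is precisely the regime you flagged as the ``main obstacle,'' and the proposal as written does not resolve it.

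The paper supplies the two missing mechanisms. Near the diagonal it uses the partial-fraction identity of Proposition \ref{prop: sum-gg}, namely $\frac{\mfct(z)-\mfct(z')}{z-z'+t(\mfct(z)-\mfct(z'))}=\frac1N\sum_j g_j(z)g_j(z')$, whose $\tau$-derivative is \eqref{eqn: mfct-partialfrac-alt}; combined with \eqref{eqn: gj2-bound}, \eqref{eqn: stability} (now applied to the $g_j$'s, where it belongs) and $|1+t\partial_z\mfct|\le C$ from \eqref{eqn: t-shcherbina}, this gives \eqref{eqn: dfracmfct-bound}, i.e.\ the $Ct^{-2}$ bound for $|\tau-s|\le Ct$ with no denominator lower bound needed. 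Away from the diagonal ($|\tau-s|\ge Mt$) the paper proves the genuine lower bound $|z-z'+t(\mfct(z)-\mfct(z'))|\ge c|\tau-s|$ in \eqref{eqn: zeta-lwr-bound} by writing $\zeta(z)=z+t\mfct(z)$, integrating $\partial_x\zeta=(1-tR_2)^{-1}$ along the horizontal segment, and using $\Re(1-tR_2)\ge c$ (alternatively, in this far regime one can get away with the triangle inequality $|t(\mfct(\tau^+)-\mfct(s^+))|\le Ct$ and $M$ large, but some such argument must be made explicit). With these two inputs your kernel bound $Ct/(t^2+(\tau-s)^2)$ and the concluding Schur/Young step are fine; without them the central estimate of the proposition is only asserted, not proved.
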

\begin{proof}
First note the estimate
\begin{equation}\label{eqn: dfracmfct-bound}
\left|\partial_\tau \frac{\mfct(\tau^+)-\mfct(s^+)}{\tau-s+t(\mfct(\tau^+)-\mfct(s^+))}\right|\le Ct^{-2},
\end{equation}
for $|\tau-s|\le Ct$, which follows from the alternate representation 
\begin{equation}\label{eqn: mfct-partialfrac-alt}
\frac{1}{N}\sum_{j=1}^N g_j(s)^2(1+t\partial_s \mfct(s))g_j(\tau),
\end{equation}
\eqref{eqn: dmfct-bound}, \eqref{eqn: stability}, and the bound \cite[Eqn (7.24)]{landonyau}
\begin{equation}\label{eqn: gj2-bound}
\frac{1}{N}\sum_{j=1}^N \frac{1}{|V_j-z-t\mfct(z)|^2}\le Ct^{-1}.
\end{equation}

Define $\zeta(z):=z+t\mfct(z)$. We begin by noting that \eqref{eqn: t-shcherbina} implies, for $z=\tau \pm \i N^{-1+\xi}, z'=s+\i N^{-1+\xi}\in \Omega_N$
\begin{equation}
\begin{split}\label{eqn: zeta-lwr-bound}
\left|\Re(\zeta(z)-\zeta(z'))\right|  &= \left|\Re \int_\tau^{s}\partial_x \zeta(x+\i N^{-1+\xi})\,\mathrm{d}x\right| = \left|\int_{\tau}^{s}\Re \frac{1}{1-tR_2(x+\i N^{-1+\xi})}\,\mathrm{d}x\right|\\
&= \left|\int\int_\tau^z \frac{\Re(1-tR_2(x-\i N^{-1+\xi})}{|1-tR_2(x+\i N^{-1+\xi})|^2}\,\mathrm{d}x\right| \geq C|\tau-s|.
\end{split}
\end{equation}
In the second to last step we have used \eqref{eqn: gj2-bound} as well as the lower bound
$\Re(1-tR_2(z))\geq c,$
(see \cite[Lemma 7.2]{landonyau}).

We then estimate the integral in \eqref{eqn: mfct-frac-1} as
\begin{equation}\label{eqn: mfct-frac-split}
\begin{split}
&\int_{-qG}^{qG}|\varphi_N'(s)|\int_{\tau:|\tau-s|< Mt}\frac{|\mfct(\tau^+)-\mfct(\tau^+)|}{|\tau-s|}\,\mathrm{d}\tau\mathrm{d}s\\
+&t^2 \int_{-qG}^{qG}|\varphi_N'(s)|\int_{\tau:|\tau-s|\ge Mt}\left|\partial_\tau \frac{\mfct(\tau^+)-\mfct(s^+)}{\tau-s+t(\mfct(\tau^+)-\mfct(s^+))}\right|\frac{|\mfct(\tau^+)-\mfct(\tau^+)|}{|\tau-s|}\,\mathrm{d}\tau\mathrm{d}s,
\end{split}
\end{equation}
where $M$ is some constant.
In the range $\{\tau: |\tau-s|< Mt\}$, we use \eqref{eqn: dmfct-bound} in the inner integral, to obtain a bound of constant order. Using that $|\mfct|\le C$ in $\mathcal{I}_q$, and \eqref{eqn: zeta-lwr-bound}, we have for $|\tau-s|\ge Mt$:
\begin{equation}
\left|\partial_\tau \frac{\mfct(\tau^+)-\mfct(s^+)}{\tau-s+t(\mfct(\tau^+)-\mfct(s^+))}\right|\le C\frac{t^{-1}}{|s-\tau|},
\end{equation}
while 
\[\frac{|\mfct(\tau^+)-\mfct(\tau^-)|}{|\tau-s|}\le \frac{C}{|s-\tau|}.\]
Integrating over $\{\tau:|\tau-s|\ge Mt\}$ then again gives a constant bound for the $\tau$ integral. Since $\|\varphi_N'\|_{L^1}\le C$ by assumption, we are done.
\end{proof}\qed

Summing the two terms \eqref{eqn: mainvariance-1a}, \eqref{eqn: mainvariance-1b}, we find a kernel multiplying $\tilde{\varphi}_N'(s)\tilde{\varphi}_N(\tau)$, equal to
\begin{align*}
&2t^2\Re\, \partial_\tau \frac{\mfct(\tau^+)-\mfct(s^-)}{\tau-s-2\i N^{-1+\xi}+t(\mfct(\tau^+)-\mfct(s^-))} \frac{\mfct(\tau^+)-\mfct(s^-)}{\tau-s-2\i N^{-1+\xi}}\\
=&2t^2\Re \, \frac{\partial_\tau \mfct(\tau^+)(\tau -s+2\i N^{-1+\xi})-\mfct(\tau^+)+\mfct(s^-)}{(\tau-s-2\i N^{-1+\xi}+t(\mfct(\tau^+)-\mfct(s^-)))^2} \frac{\mfct(\tau^+)-\mfct(s^-)}{\tau-s-2\i N^{-1+\xi}}.
\end{align*}
Recall:
\begin{equation*}
\lim_{\epsilon \rightarrow 0} \mfct(x+\i\epsilon) = (H\rhofct)(x) + \i\pi \rho(x),
\end{equation*}
so
\begin{equation}\label{eqn: taylor}
\mfct(\tau^+)-\mfct(s^-) = 2\i\pi \rhofct(\tau)+ \max_z|\mfct'(z)|\cdot \O(|\tau-s|),
\end{equation}
so the kernel is 
\[- 2\Re \frac{1}{\tau-s+2\i N^{-1+\xi}} +\max_z|\mfct'(z)|\frac{t^2}{t^2+|\tau-s|^2}\O\left(\frac{|\tau-s|}{|\tau-s+2\i N^{-1+\xi}|}\right),\]
when $|\tau-s|\le Mt$, provided $M$ is sufficiently small.
Integrating $\tilde{\varphi}_N(\tau)\tilde{\varphi}_N'(s)$ against the error term, using \eqref{eqn: dmfct-bound}, and splitting the $\tau$ integral according to $|\tau-s|\le Mt$ and $|\tau-s|> Mt$, we find an error term of 
\[C(\|\varphi_N'\|_{L^1}+N^{-1+\xi}\|\varphi_N''\|_{L^1})\|\varphi_N\|_{L^1}\le C.\]
The main term of \eqref{eqn: mainvariance} is then
\begin{equation}\label{eqn: hilbert-transform}
\begin{split}
&\frac{1}{2}\int_{-tM<\tau<tM}\tilde{\varphi}_N(\tau)\int\tilde{\varphi}_N'(s)\Re  \frac{1} {\tau-s+2\i N^{-1+\xi}}\,\mathrm{d}s\mathrm{d}\tau\\
=&\frac{1}{2}\int_{-tM<\tau<tM}\varphi_N(\tau)\int\varphi_N'(s)\Re  \frac{1} {\tau-s+2\i N^{-1+\xi}}\,\mathrm{d}s\mathrm{d}\tau + \O(1)\\
=&\frac{1}{2}\int_{-tM<\tau<tM}\varphi_N(\tau)(H\varphi_N')(\tau)\,\mathrm{d}\tau + \O(1).
\end{split}
\end{equation}
For the second step, we have used
\begin{align*}
&\Re \int_{|\tau-s|\le N^{10}} \frac{1}{\tau-s+\i 2N^{-1+\xi}}\,\mathrm{d}\tau = \int_{|\tau-s|\le N^{10}} \frac{\tau -s}{(\tau-s)^2+4N^{-2+2\xi}}\,\mathrm{d}\tau = 0,
\end{align*}
to write:
\begin{align*}
&\Re \int_{-tM<\tau<tM} \varphi_N(\tau)\int \varphi_N'(s)\frac{1}{\tau-s+\i 2N^{-1+\xi}}\,\mathrm{d}s\mathrm{d}\tau\\
=&\Re \int_{-tM<\tau<tM} \varphi_N(\tau)\int (\varphi_N'(s)-\varphi_N'(\tau))\frac{1}{\tau-s+\i 2N^{-1+\xi}}\,\mathrm{d}s\mathrm{d}\tau
\end{align*}
The difference between the last expression and $\int_{-tM<\tau<tM}\varphi_N(\tau)(H\varphi_N')(\tau)\,\mathrm{d}\tau$ is
\begin{equation*}
\Re \int_{-tM<\tau<tM} \varphi_N(\tau)\int (\varphi_N'(s)-\varphi_N'(\tau))\frac{2 \i N^{-1+\xi}}{(\tau-s+\i 2N^{-1+\xi})(\tau-s)}\,\mathrm{d}s\mathrm{d}\tau
\end{equation*}
We split the inner integral into $|\tau-s|\le \delta$ and $|\tau-s|>\delta$ to find the estimate
\[t\|\varphi_N'\|_{C^\alpha}\delta^\alpha + \frac{N^{-1+\xi}}{\delta}\|\varphi_N'\|_{L^1}.\]
Optimizing in $\delta$, and using $\|\varphi_N'\|_{L^1}\le C$ we get a bound of
\[ (t\|\varphi_N'\|_{C^\alpha})^{\frac{1}{1+\alpha}}N^{(-1+\xi)\frac{\alpha}{1+\alpha}}.\]
The right side is bounded by
\[CN^{\frac{\omega_0}{1+\alpha}-\omega_1+\frac{\alpha}{1+\alpha}\xi}.\]
Using condition \eqref{eqn: omega-condition-1}, for $\alpha>0$ small enough, this is $\O(N^{-c})$.

We now proceed to computing $I_{1,1}$ \eqref{eqn: I2-display}. Recall the definition of the constant $M$ introduced in Proposition \ref{prop: const-bd}. Note first that by the support assumption on $\varphi_N$, $\dvarp=0$ in $\{|E|> Mt\}$ we can replace the integration domain $\Omega_N^*$ by:
\[\Omega_N^{**}=\Omega_N^* \cap \{E+\i \eta: |E|\le Mt\}.\]
We use Green's theorem \eqref{eqn: greensthm} on $\Omega_N^{**}\cap \{\Im z>0\}$. The boundary of this region consists of four segments. The function $\tilde{\varphi}_N(z)$ is zero on the top segment is $[-tM+\i N^{10C_V},tM+ \i N^{10C_V}]$. We number the remaining parts of the boundary as: (1) $[-tM+\i N^{-1+\xi},tM
+\i N^{-1+\xi}]$; (2) $[-tM+\i N^{10C_V},-tM+\i N^{-1+\xi}]$; and (3) $[tM+\i N^{-1+\xi},tM+\i N^{10C_V}]$.
The result of our application of Green's theorem to
\begin{equation}
\frac{t^2}{N}\int_{\Omega_N^{**}\cap \{\Im z >0\} }\dvarp(z) \partial_z\sum_{j=1}^Ng_j(z)g_j(z')\partial_{z'}\frac{\mfct(z)-\mfct(z')}{z-z'}\,\mathrm{d}z\mathrm{d}z'
\end{equation}
is a sum of line integrals which we label according to the corresponding parts of the boundary. For (1), we have:
\begin{equation}
\begin{split}
I_{2,+,1}&=t^2 \int_{-Mt}^{Mt} (\varphi_N(\tau)+\i N^{-1+\xi}\varphi'_N(\tau))\partial_\tau\frac{\mfct(\tau+\i N^{-1+\xi})-\mfct(z')}{\tau+\i N^{-1+\xi}-z'+t(\mfct(\tau+\i N^{-1+\xi})-\mfct(z'))}\\
&\quad \times \partial_{z'}\frac{\mfct(\tau+\i N^{-1+\xi})-\mfct(z')}{\tau+\i N^{-1+\xi}-z'}\,\mathrm{d}\tau;
\end{split}
\end{equation}
for (2), we have the sum:
\begin{equation}\label{eqn: I2p2}
\begin{split}
I_{2,+,2}&= -t^2\varphi_N(-tM)\int^{N^{10C_V}}_{N^{-1+\xi}} \partial_\tau\frac{\mfct(-tM+\i\eta)-\mfct(z')}{-tM+\i\eta-z'+t(\mfct(-tM+\i\eta)-\mfct(z'))}\\
&\quad \times \partial_{z'}\frac{\mfct(-tM+\i\eta)-\mfct(z')}{-2tM+\i\eta-z'} \mathrm{d}\eta,
\end{split}
\end{equation}
for (3):
\begin{equation}
\begin{split}
I_{2,+,3}&= t^2\varphi_N(tM)\int^{N^{10C_V}}_{N^{-1+\xi}} \partial_\tau\frac{\mfct(tM+\i\eta)-\mfct(z')}{tM+\i\eta-z'+t(\mfct(tM+\i\eta)-\mfct(z'))}\\
&\quad \times \partial_{z'}\frac{\mfct(tM+\i\eta)-\mfct(z')}{tM+\i\eta-z'} \mathrm{d}\eta.
\end{split}
\end{equation}

We similarly define $I_{2,-,k}$, $k=1,2,3$ as the line integrals along the boundary of $\Omega_N^{**}\cap \{\Im z<0\}$. We now insert $I_{2,\pm,k}(z')$ into the integral \eqref{eqn: I2-d-onzz'} and apply Green's theorem to obtain
\begin{equation}
\begin{split}
&\frac{1}{2i}\int_{\Omega_N^*\cap\{\Im z'>0\}}\dvarp(z')I_{2,\pm,k}(z')\,\mathrm{d}z'\\
=&\frac{1}{4}\int_{-qG}^{qG}(\varphi_N(s)+\i N^{-1+\xi}\varphi_N'(s))I_{2,\pm,k}(s+\i N^{-1+\xi})\mathrm{d}s\\
+&\frac{1}{4}\int^{N^{-1+\xi}}_{N^{10C_V}}\varphi_N(-qG)I_{2,\pm,k}(-qG+\i\eta')\,\mathrm{d}\eta'\\
+&\frac{1}{4}\int_{N^{-1+\xi}}^{N^{10C_V}}\varphi_N(qG)I_{2,\pm,k}(qG+\i\eta')\,\mathrm{d}\eta'\\
:=&I_{2,\pm,k,+,1}+I_{2,\pm,k,+,2}+I_{2,\pm,k,+,3}.
\end{split}
\end{equation}

Applying Green's theorem to the $z'$ integral in the region $\Omega_N^*\cap \{\Im z'<0\}$, we obtain:
\begin{equation}
\begin{split}
&\frac{1}{2i}\int_{\Omega_N^*\cap\{\Im z'>0\}}\dvarp(z')I_{2,\pm,k}(z')\,\mathrm{d}z'\\
=&\frac{1}{4}\int_{qG}^{-qG}(\varphi_N(s)-\i N^{-1+\xi}\varphi_N'(s))I_{2,\pm,k}(s-\i N^{-1+\xi})\mathrm{d}s\\
+&\frac{1}{4}\int_{-N^{-1+\xi}}^{-N^{10C_V}}\varphi_N(-qG)I_{2,\pm,k}(-qG-\i\eta')\,\mathrm{d}\eta'\\
+&\frac{1}{4}\int^{-N^{-1+\xi}}_{-N^{10C_V}}\varphi_N(qG)I_{2,\pm,k}(qG-\i\eta')\,\mathrm{d}\eta'\\
:=&I_{2,\pm,k,-,1}+I_{2,\pm,k,-,2}+I_{2,\pm,k,-,3}.
\end{split}
\end{equation}
So far, we have
\begin{align*}
&\frac{t^2}{N}\int_{\Omega_N^*}\int_{\Omega_N^{**}} \dvarp(z)\dvarp(z') \partial_z \sum_{j=1}^N g_j(z)g_j(z') \partial_{z'} \frac{\mfct(z)-\mfct(z')}{z-z'}\,\mathrm{d}z\mathrm{d}z'\\
=&\sum_{k=1}^3\sum_{j=1}^3 \sum_{\alpha,\beta\in\{\pm\}}I_{2,\alpha,k,\beta,j}.
\end{align*}

The main contribution comes from the terms $I_{2,\pm,1,\pm,1}$. The remaining terms are polynomially smaller.
\begin{prop} \label{prop:revd2}
For any choice of $k, j$ with $(k,j)\neq (1,1)$, $\alpha,\beta \in \{\pm\}$,
\[|I_{2,\alpha,k,\beta,j}|=\O(1).\]
\end{prop}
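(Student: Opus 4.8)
\textbf{Proof proposal for the estimate $|I_{2,\alpha,k,\beta,j}| = \O(1)$ when $(k,j) \neq (1,1)$.}

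The plan is to exploit the geometry of the boundary segments entering each $I_{2,\alpha,k,\beta,j}$, exactly as in the proof of Proposition \ref{prop: I2-bound-prop}. The index $k \neq 1$ means that at least one of $z = \pm tM + i\eta$ lies on a vertical segment of $\partial(\Omega_N^{**} \cap \{\pm \Im z > 0\})$, so $\Re z = \pm tM$ is pinned at the edge of $\supp \varphi_N'$, while the index $j \neq 1$ means the same for $z' = \pm tM + i\eta'$. First I would record the two uniform bounds already available: from \eqref{eqn: dmfct-bound} we have $|\partial_z \mfct(z)| \leq Ct^{-1}$ on $\mathcal{I}_q$, and from the alternate partial-fraction representation \eqref{eqn: mfct-partialfrac-alt} together with \eqref{eqn: gj2-bound} and the stability bound \eqref{eqn: stability} we get
\[
\left| \partial_\tau \frac{\mfct(\tau + i\eta) - \mfct(z')}{\tau - z' + i\eta + t(\mfct(\tau+i\eta) - \mfct(z'))} \right| \leq C \min\{ t^{-2}, |\eta|^{-2}, |\eta'|^{-2} \},
\]
and similarly $|\mfct(z) - \mfct(z')|/|z - z'| \leq C \min\{ t^{-1}, |\eta|^{-1}, |\eta'|^{-1}\}$.

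Next, the key observation: when $k \neq 1$, the outer $\tau$- (or $\eta$-) integral in $I_{2,\alpha,k,\beta}$ is over a vertical line $\{\pm tM + i\eta : N^{-1+\xi} < \eta < N^{10C_V}\}$, and the integrand is bounded by $t^2 \cdot |\partial_\tau(\cdots)| \cdot |(\mfct - \mfct)/(z-z')|$ which, by the bounds above, is $\leq C t^2 \min\{ t^{-2}, |\eta|^{-2}\} \cdot \min\{t^{-1},|\eta|^{-1}\}$; integrating $d\eta$ over $(N^{-1+\xi}, N^{10C_V})$ gives $\int_{N^{-1+\xi}}^{\infty} \min\{t^{-3}, |\eta|^{-3}\} \, d\eta \leq C t^{-2}$. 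Hence $|I_{2,\alpha,k,\beta}(z')| \leq C |\varphi_N(\pm tM)| \leq C$ for such $k$, uniformly in $z'$, with no dependence on $z'$ beyond the trivial one. The subsequent $z'$-integration contributes at most a factor bounded (after the same Green's-theorem bookkeeping and, when $j \neq 1$, using that the $z'$-contour is also vertical at $\pm tM$) by $\|\varphi_N\|_{L^1}$ or $\|\varphi_N'\|_{L^1}$ on $[-tM, tM]$ against a kernel integrable in $\eta'$ by the same $\min\{t^{-3}, |\eta'|^{-3}\}$ estimate — again $\O(1)$. For the segments with $\eta$ or $\eta'$ near $N^{10C_V}$ one uses additionally the cutoff $\chi$ and the fact that $\varphi_N$ is supported in $[-N^{C_V}-2, N^{C_V}+2]$, which makes those contributions $\O(N^{-2})$.

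The one case requiring slightly more care is $k = 1$ but $j \neq 1$ (and its mirror), where the outer contour is the horizontal line $\Im z = N^{-1+\xi}$ but the inner $z'$-contour is vertical at $\Re z' = \pm tM$. Here I would first bound the inner $z'$-integral: since $\Re z' = \pm tM$ is at the edge of the support of $\varphi_N'$, the $z'$-integrand carries a factor $|\varphi_N(\pm tM)|$ or $|\varphi_N'(\pm tM)|$ and is integrated against $\min\{t^{-3}, |\eta'|^{-3}\} \, d\eta'$, giving $\O(t^{-2})$ times a bounded prefactor; this leaves a function of $z$ bounded by $C t^2 \cdot t^{-2} = C$ uniformly, so that the outer $d\tau$-integral over $[-tM, tM]$ against $|\varphi_N(\tau)|$ (width $\asymp t$, $\varphi_N$ bounded) contributes $\O(t) = \O(1)$. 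The main obstacle I anticipate is purely bookkeeping: there are $3 \times 3 \times 2 \times 2$ terms and one must keep straight which contour is vertical versus horizontal in each, and make sure the $\min\{t^{-3}, |\eta|^{-3}\}$ (respectively $|\eta'|^{-3}$) decay is genuinely available — this relies on \eqref{eqn: stability} giving $|V_j - z - t\mfct(z)| \geq c\max(t, |\eta|)$, which holds throughout $\Omega_N^{**} \supset \Omega_N^{**} \cap \{|E| \leq Mt\}$. Once the decay is in hand, every term is dominated by a product of a bounded prefactor and an integral of the form $\int_{N^{-1+\xi}}^{\infty} \min\{t^{-3}, \eta^{-3}\}\, d\eta \leq Ct^{-2}$ or a $d\tau$-integral of width $\asymp t$ against a $t^{-1}$-bounded integrand, in all cases yielding $\O(1)$, which completes the proof of the proposition and hence of Proposition \ref{prop: variance-comp}.
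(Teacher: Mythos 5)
Your proposal has a genuine gap, and it sits exactly where the real work of this proposition lies. First, the kernel you bound is not the one that appears in $I_{2,\alpha,k,\beta,j}$: these terms come from $I_{1,1}$, whose integrand carries $\partial_{z'}\frac{\mfct(z)-\mfct(z')}{z-z'}$, not the undifferentiated quotient, and the derivative is one power more singular. Second, and more seriously, your claimed uniform bounds $\bigl|\partial_\tau(\cdots)\bigr|\leq C\min\{t^{-2},|\eta|^{-2},|\eta'|^{-2}\}$ and $|\mfct(z)-\mfct(z')|/|z-z'|\leq C\min\{t^{-1},|\eta|^{-1},|\eta'|^{-1}\}$ are false precisely in the dangerous configuration: when $z$ and $z'$ lie on \emph{opposite} sides of the real axis with nearly equal real parts, $\mfct(z)-\mfct(z')\to 2\pi i\rhofct\neq 0$ across the cut, so the quotient is of size $|z-z'|^{-1}$ (as large as $N^{1-\xi}$, far exceeding $t^{-1}$) and its $z'$-derivative of size $|z-z'|^{-2}$; the representation \eqref{eqn: f-diffquot} that would give your $t^{-2}$-type bounds is only available when the segment $[z,z']$ avoids the cut. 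This configuration is exactly what occurs in $I_{2,\alpha,k,\beta,1}$ with $k\in\{2,3\}$ and $\alpha\neq\beta$: there $z=\mp Mt+i\eta$ runs down to $\eta=N^{-1+\xi}$ on the vertical segment while $z'=s\mp iN^{-1+\xi}$ sweeps past $s\approx\mp Mt$ on the horizontal one, so $|z-z'|$ drops to order $N^{-1+\xi}$. Even with the correct pointwise bounds, taking absolute values in this region (cf. \eqref{eqn: key-region}) yields only $O(\log N)$, not $O(1)$ — and since the main variance term is itself only of order $\log N$, an uncontrolled $O(\log N)$ error is not acceptable here. The paper's proof gets $O(1)$ by expanding $\mfct(z')$ around $\bar z$ to second order (\eqref{eqn: 2nd-order-exp}) and exploiting cancellation from performing the $s$-integral first (the exact evaluation of $\int ds/(z-z')^2$ over the segment and the logarithm estimate \eqref{eqn: I2+2+1-log-term}); your proposal contains no substitute for this cancellation and in fact identifies the wrong case ($k=1$, $j\neq1$) as the delicate one.

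A secondary but consequential error is the geometry: the $z'$-integration runs over $\Omega_N^*$, whose vertical boundary segments lie at $\Re z'=\pm qG$ (carrying the factor $\varphi_N(\pm qG)$), not at $\pm tM$ as you assume; only the $z$-domain was cut down to $|E|\leq Mt$. For the $j\in\{2,3\}$ terms the mechanism that actually makes them small is the large separation $|z-z'|\gtrsim qG\geq t^{1/2}N^{\sigma/2}$, which via the stability bound gives the kernel factor the extra smallness $t^{1/2+\sigma/2}$ as in \eqref{eqn: t-3/2}; with your (incorrect) contours at $\pm tM$ this smallness is not available and your bookkeeping of widths and prefactors does not reproduce the needed estimates. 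For the same-sign $k\in\{2,3\}$, $j=1$ terms your approach could in principle work, but only after you add the hypothesis $\Im z\,\Im z'>0$ so that \eqref{eqn: f-diffquot} applies and the $t^{-2}$ bounds on both derivative factors are legitimate; as written, the proposal does not distinguish the sign cases at all.
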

\begin{proof}
We start with $k=1$, $j=2,3$. By symmetry, it suffices to deal with $j=2$. That is, we estimate
\begin{align}
&I_{2,\pm,1,\pm,2} \nonumber \\
=&\pm\frac{\varphi_N(qG)}{4}\int_{N^{-1+\xi}}^{N^{10C_V}} \int_{-Mt}^{Mt}\frac{\varphi_N(\tau)\pm \i N^{-1+\xi}\varphi_N'(\tau)}{1-tR_2(\tau\pm \i N^{-1+\xi})}S_{2,1}(\tau\pm \i N^{-1+\xi},qG+\i\eta') \,\mathrm{d}\tau\mathrm{d}\eta'. \label{eqn: I2pm1pm2}
\end{align}
Compute the kernel
\begin{align*}
&\frac{1}{1-tR_2(\tau\pm \i N^{-1+\xi})}S_{2,1}(\tau\pm \i N^{-1+\xi},qG+\i\eta')\\
=&-t^2 \frac{\mfct(z)-\mfct(z')-\mfct'(z)(z-z')}{(z-z'+t(\mfct(z)-\mfct(z'))^2}\frac{\mfct(z)-\mfct(z')-\mfct'(z')(z-z')}{(z-z')^2}
\end{align*}
where $z=\tau\pm \i N^{-1+\xi}$, $z'=qG+\i\eta'$. Since $|\tau|\le 2Mt$, we have
\[|\tau-qG|\ge (1/2)qG\ge t^{1/2}N^{\sigma/2},\]
and so, as in Proposition \ref{prop: I2-bound-prop},
\[\left|t^2 \frac{\mfct(z)-\mfct(z')-\mfct'(z)(z-z')}{(z-z'+t(\mfct(z)-\mfct(z'))^2}\right| \le Ct^{1/2+\sigma/2}.\]
Inserting this into \eqref{eqn: I2pm1pm2}, we find
\begin{align*}
&|I_{2,\pm,1,\pm,2}|\\
\le &C(1+\|\varphi_N'\|_{L^\infty}N^{-1+\xi})t^{1/2+\sigma/2}\int_{N^{-1+\xi}}^{N^{10C_V}} \int_{-Mt}^{Mt}\frac{1}{|\tau+\i N^{-1+\xi}-qG-i\eta|^2}+\frac{|\mfct'(z')|}{|\tau+\i N^{-1+\xi}-qG-i\eta|}\mathrm{d}\tau\mathrm{d}\eta'.
\end{align*}
Recalling that $|\mfct'(z)'|\le Ct^{-1}$, this quantity is bounded by
\[C\log N (1+\|\varphi_N'\|_{L^\infty}N^{-1+\xi}) t^{1/2+\sigma}N^\sigma.\]
We have shown
\begin{equation}
|I_{2,\pm,1,\pm,2}|, |I_{2,\pm,1,\pm,3}| = \O(\log N (1+\|\varphi_N'\|_{L^\infty}N^{-1+\xi}) t^{1/2+\sigma}N^\sigma).
\end{equation}

We now estimate $I_{2,\pm,2,\pm,1}$:
\begin{equation} \label{eqn: I2pm2pm1}
t^2\varphi_N(-Mt)\int^{N^{10C_V}}_{N^{-1+\xi}} \int_{-qG}^{qG}(\varphi_N(s)\pm \i N^{-1+\xi}\varphi_N'(s))\frac{1}{1-tR_2(-Mt\pm \i\eta)}S_{2,1}(-Mt+ \i\eta,s\pm \i N^{-1+\xi} )\,\mathrm{d}s \mathrm{d}\eta.
\end{equation}
The kernel is
\begin{equation}\label{eqn: I2+2+1-kernel}
\begin{split}
&\frac{1}{1-tR_2(-tM \pm i\eta)}S_{2,1}(-tM + \i\eta ,s \pm \i N^{-1+\xi})\\
=&-t^2\frac{\mfct(z)-\mfct(z')-\mfct'(z)(z-z')}{(z-z'+t(\mfct(z)-\mfct(z'))^2} \frac{\mfct(z)-\mfct(z')-\mfct'(z')(z-z')}{(z-z')^2},
\end{split}
\end{equation}
with $z=-qG \pm \i\eta$ and $z'=s \pm \i N^{-1+\xi}$. The $I_{2,\pm,2,\pm,1}$ can be performed in the same way whether $\Im z\Im z' >0$ or $\Im z\Im z'<0$, except in the region 
\begin{equation}\label{eqn: key-region}
\{(z,z'):|s+tM|\le Mt/2, N^{-1+\xi}<\eta<tM/10\}.
\end{equation}
If $|s+tM|\ge Mt/2$ and $\eta\ge Mt/10$, we use the estimate \eqref{eqn: zeta-lwr-bound}
\[\frac{1}{|z-z'+t(\mfct(z)-\mfct(z'))|}\le \frac{C}{|tM+s|},\]
and 
\begin{equation}\label{eqn: I2+2+1-eta}
|z-z'|=|tM-s+\i N^{-1+\xi}+\i\eta+\tau|\ge \eta
\end{equation}
to find the bound
\begin{equation}
\begin{split}
&t^2\int_{\{\eta:\eta \ge Mt/10\}}\int_{\{s:|tM+s|\ge Mt/2\}}\frac{(1+|\mfct'(z)||z-z'|)(1+|\mfct'(z')||z-z'|)}{|z-z'+t(\mfct(z)-\mfct(z'))|^2|z-z'|^2}\,\mathrm{d}s\mathrm{d}\eta\\
\le &Ct^2\int_{\{N^{-1+\xi}<\eta \le Mt/10\}}t^{-1}(\eta^{-2}+|\mfct'(z)|\eta^{-1}+ |\mfct'(z')|\eta^{-3/2}t^{1/2})\,\mathrm{d}s\\
 =&\O(1).
\end{split}
\end{equation}
To pass to the last line, we have used $|\mfct'(z)|\le C\eta^{-1}$. For the case $|s+tM|\le Mt/2$ and $\eta\ge Mt/10$, we have the bound
\begin{equation}
\begin{split}
&t^2\int_{\{\eta \ge Mt/10\}}\int_{\{s:|tM+s|\le Mt/2\}}\frac{(1+|\mfct'(z)||z-z'|)(1+|\mfct'(z')||z-z'|)}{\eta^2|z-z'|^2}\,\mathrm{d}s\mathrm{d}\eta\\
\le &Ct^2\int_{\{s:|s+tM|\le tM/2\}}t^{-3}\,\mathrm{d}\eta\\
 =&\O(1).
\end{split}
\end{equation}

If $|s+tM|\ge Mt/2$ and $\eta\le Mt/10$, use $|z-z'|\ge |s+tM|$ to find the estimate
\begin{equation}
\begin{split}
&t^2\int_{\{\eta:N^{-1+\xi}<\eta \le Mt/10\}}\int_{\{s:|tM+s|\ge Mt/2\}}\frac{(1+|\mfct'(z)||z-z'|)(1+|\mfct'(z')||z-z'|)}{|tM+s|^2|z-z'|^2}\,\mathrm{d}s\mathrm{d}\eta\\
\le &Ct^{-1}\int_{\{\eta:N^{-1+\xi}<\eta \le Mt/10\}}(1+t|\mfct'(z)|+t|\mfct'(z')|)\,\mathrm{d}\eta\\
 =&\O(1).
\end{split}
\end{equation}
At this point, we have obtained estimates for $I_{2,\pm,2,\pm,1}$ in the complement of \eqref{eqn: key-region}

We now estimate the contribution to $I_{2,\alpha,2,\beta,1}$ from the region \eqref{eqn: key-region}, when $\alpha$ and $\beta$ are of opposite signs. This term is somewhat delicate. It will suffice to deal with $I_{2,+,2,-,1}$.
We split the $s$ integral into the regions $\{s: |s+Mt|\le Mt/2\}$ and its complement. In the first region $\varphi_N(s) \equiv \varphi_N(-Mt)$. When $\eta < Mt/10 $ as well, we expand the kernel to second order. For this, $\Im z\Im z' <0$, so we have the expansion
\begin{equation}\label{eqn: 2nd-order-exp}
\mfct(z')= \mfct(\bar{z})+\mfct '(\bar{z})(z'-\bar{z})+\O(\max_{z}|\mfct''(z)||z-\bar{z}|^2)
\end{equation}
Using \eqref{eqn: 2nd-order-exp} and the lower bound
\[|\mfct(z)-\mfct(\bar{z})|= 2\pi \rhofct(z)\ge c >0,\]
the kernel \eqref{eqn: I2+2+1-kernel} is given by:
\begin{align}
-&\frac{1}{(z-z')^2}\Big(1-\frac{\mfct'(\bar{z})}{2\pi \i \rhofct(z)}(z-\bar{z})-\frac{\mfct'(z)}{2\pi \i \rhofct(z)}(z-z')-\frac{\mfct'(\bar{z})}{2\pi \i \rhofct(z)}(z'-\bar{z}) \label{eqn: I2+2+1-cancel1}\\
&\quad +\frac{(\mfct'(\bar{z}))^2}{4\pi^2\rhofct(z)}(z'-\bar{z})(z-\bar{z})+\frac{\mfct'(\bar{z})\mfct'(z)}{4\pi^2\rhofct(z)}(z'-\bar{z})(z-z')\Big) \nonumber \\
&\times\left(1-\frac{\mfct'(\bar{z})}{\pi \i\rhofct(z)}(z'-\bar{z})+\frac{z-z'}{\pi \i t\rhofct(z)}\right) \label{eqn: I2+2+1-cancel2} \\
+&\O\left(\frac{\max_z|\mfct''(z)|(|z-z'|^2+|\bar{z}-z'|^2)}{|z-z'|^2}\right), \label{eqn: I2+2+1-kernel-error}
\end{align}
for $|tM-s|\le Mt/2$ and $\eta\le Mt/10$.

The cancellation that arises from performing the $s$ integral first in \eqref{eqn: I2+2+1-cancel1}, \eqref{eqn: I2+2+1-cancel2} is crucial.
For example, the contribution to $I_{2,+,2,-,1}$ from $\{|s-Mt|\le Mt/2\}$, $\eta <Mt/10$ of the term $1/(z-z')^2$ is
\begin{align*}
&\varphi_N(-Mt)^2\int^{Mt/10}_{N^{-1+\xi}} \int_{s:|s+Mt|\le Mt/2}\frac{1}{(-Mt+ \i\eta-s+  \i N^{-1+\xi})^2}\,\mathrm{d}s\mathrm{d}\eta\\
\le&C\varphi_N(-Mt)^2\int^{Mt/10}_{N^{-1+\xi}}\frac{tM}{(tM/2)^2+(\eta+N^{-1+\xi})^2} \mathrm{d}\eta.\\
=&\O(1).
\end{align*}
To estimate the remaining terms, letting $z=-tM+\i\eta$ and $z'=s-\i N^{-1+\xi}$, we compute:
\begin{align}
\int_{s: |s+Mt|\le Mt/2} \frac{z'-\bar{z}}{(z-z')^2}\mathrm{d}s &= -\int_{s:|s-Mt|\le Mt/2} \frac{1}{z-z'}\mathrm{d}z + \int_{s:|s-Mt|\le Mt/2}\frac{z-\bar{z}}{(z-z')^2}\,\mathrm{d}s,\\
\int_{s:|s+Mt|\le Mt/2} \frac{1}{z-z'}\mathrm{d}s &= \log\left(\frac{\frac{tM}{2}+\i\eta+\i N^{-1+\xi}}{\frac{tM}{2}-\i\eta-\i N^{-1+\xi}}\right)= \O(\eta/t),\quad  |\eta|\le Mt/10, \label{eqn: I2+2+1-log-term}\\
\int_{s:|s+Mt|\le Mt/2}\frac{z-\bar{z}}{(z-z')^2}\,\mathrm{d}s&=  \frac{2\i\eta tM}{(tM/2)^2+(\eta+N^{-1+\xi})^2}. \label{eqn: I2+2+1-sq-term}
\end{align}
We have used the principal determination of the logarithm in \eqref{eqn: I2+2+1-log-term}. 

Using \eqref{eqn: I2+2+1-log-term}, \eqref{eqn: I2+2+1-sq-term}, and \eqref{eqn: dmfct-bound},
\begin{align*}
&\int_{N^{-1+\xi}}^{Mt/10}\int_{s:|s+tM/2|\le Mt/2} \left(1+\frac{1}{2\pi \i \rhofct(z)}\right) \frac{\mfct'(\bar{z})(z'-\bar{z})}{(z-z')^2}\mathrm{d}s\mathrm{d}\eta\\
=&\int_{N^{-1+\xi}}^{Mt/10} \frac{\O(t^{-1}|\eta|tM)}{(tM/2)^2+(\eta+N^{-1+\xi})^2}+\O(t^{-2}|\eta|)\mathrm{d}\eta=\O(1),\\
&\int_{N^{-1+\xi}}^{Mt/10}\int_{s:|s+tM/2|\le Mt/2}\frac{\mfct'(\bar{z})(z-\bar{z})}{(z-z')^2}\mathrm{d}s\mathrm{d}\eta\\
= &\int^{Mt/10}_{N^{-1+\xi}}\O(\eta)\frac{tM}{(tM/2)^2+(\eta+N^{-1+\xi})^2} \mathrm{d}\eta=\O(1),\\
&\int_{N^{-1+\xi}}^{Mt/10}\int_{s:|s+tM/2|\le Mt/2}\frac{1}{t(z-z')}\mathrm{d}s\mathrm{d}\eta=\O(1).\\
\end{align*}
Moreover, for $z=-tM+\i\eta$, $z'=s-\i N^{-1+\xi}$, we have
\begin{equation}\label{eqn: I2+2+1-quad}
|\bar{z}-z'|, |z-\bar{z}| \le 2|z-z'|,
\end{equation}
so
\begin{equation}
\begin{split}
&\left|\int_{N^{-1+\xi}}^{Mt/10}\int_{s:|s+tM/2|\le Mt/2}\frac{(\mfct'(\bar{z}))^2}{4\pi^2\rhofct(z)}\frac{(z'-\bar{z})(z-\bar{z})}{(z-z')^2}\left(1+\mfct'(\bar{z})(z'-\bar{z})-\frac{z-z'}{t}\right) \mathrm{d}s\mathrm{d}\eta\right|\\
\le& \int_{N^{-1+\xi}}^{Mt/10}\int_{s:|s+tM/2|\le Mt/2} t^{-2}(1+t^{-1}(|z-z'|+|\bar{z}-z'|)\,\mathrm{d}s\mathrm{d}\eta=\O(1).
\end{split}
\end{equation}
Similar estimates hold for the other terms containing a quadratic expression in $z'-z$, $\bar{z}-z'$ or $z-\bar{z}$ in \eqref{eqn: I2+2+1-cancel1}, \eqref{eqn: I2+2+1-cancel2}.

For the error term \eqref{eqn: I2+2+1-kernel-error}, we use \eqref{eqn: I2+2+1-quad} and the estimate
\begin{equation}\label{eqn: mfct-2nd-derivative}
\begin{split}
|\partial_z^2 \mfct(z)|&\le \left|\frac{1}{N}\sum_{j=1}^N \frac{g_j(z)^3(1+t\partial_z \mfct(z))}{(1-tR_2(z))^2}\right|\\
&\le Ct^{-2}.
\end{split}
\end{equation}
The last step in \eqref{eqn: mfct-2nd-derivative} follows from \eqref{eqn: gj2-bound} and \eqref{eqn: stability}. (See also \cite[Lemma 7.2]{landonyau}.) The result is
\begin{align*}
&\int_{N^{-1+\xi}}^{Mt/10}\int_{s:|s+tM/2|\le Mt/2}\frac{\max_z|\mfct''(z)|(|z-z'|^2+|\bar{z}-z'|^2)}{|z-z'|^2} \mathrm{d}s\mathrm{d}\eta\\
\le  & C\int_{N^{-1+\xi}}^{Mt/10}\int_{s:|s+tM/2|\le Mt/2}\mathrm{d}s\mathrm{d}\eta=\O(1).
\end{align*}
At this point all terms in the expansion  \eqref{eqn: I2+2+1-cancel1}, \eqref{eqn: I2+2+1-cancel2} are accounted for.

To estimate the contribution from the region \eqref{eqn: key-region} to $I_{2,\alpha,2,\beta,1}$ when $\alpha$ and $\beta$ have the same sign,
we use (see \eqref{eqn: dfracmfct-bound})
\[\left|\partial_z \frac{\mfct(z)-\mfct(z')}{z-z'+t(\mfct(z)-\mfct(z'))}\right|\le Ct^{-2}, \]
and the estimate
\begin{equation}\label{eqn: dmfct-diffquot}
\left|\partial_{z'} \frac{\mfct(z)-\mfct(z')}{z-z'}\right|\le Ct^{-2},
\end{equation}
which follows from \eqref{eqn: f-diffquot} and the estimate \eqref{eqn: mfct-2nd-derivative}. We have:
\begin{align*}
|I_{2,+,2,+,1}|&\le Ct^{-2}\varphi(-Mt)\int_{N^{-1+\xi}}^{Mt/10}\int_{s:|s+tM|\le Mt/2}|\tilde{\varphi}(s)|\,\mathrm{d}\eta\mathrm{d} \leq C.
\end{align*}
The same bound holds for $I_{2,-,2,-,1}$.

Replacing $z=-Mt+\i\eta$ by $z=Mt+ \i\eta$, we obtain the bounds
$|I_{2,\pm,3,\pm,1}|=\O(1). $
Turning to $I_{2,\pm,2,\pm,2}$, we have to estimate:
\[\pm \varphi(-Mt)\varphi_N(-qG)\int_{-N^{C10_V}}^{-N^{-1+\xi}}\int_{N^{-1+\xi}}^{N^{10C_V}}\frac{1}{1-tR_2(-Mt+ \i\eta)}S_{2,1}(-tN^\sigma\pm \i\eta, -qG \pm \i\eta')\mathrm{d}\eta\mathrm{d}\eta'.\]
Note that
\begin{equation}\label{eqn: I2pm2pm2-lwr-bd}
|z-z'-t(\mfct(z)-\mfct(z'))| \ge cqG
\end{equation}
for $z=-Mt+i\eta$ and $z'=-qG+i\eta'$, so by \eqref{eqn: t-3/2}, \eqref{EAmscBprop-2} the integrand is bounded by
$ Ct^{1/2+\sigma/2}|\eta|^{-1}|\eta'|^{-1}.$ 
Performing the double integration, we obtain a bound of
\[Ct^{1/2+\sigma/2}(\log N)^2.\]
This last estimate depended only on the lower bound \eqref{eqn: I2pm2pm2-lwr-bd}, so we have the same estimate for $I_{2,\pm,2,\pm,3}$, $I_{2,\pm,3,\pm,2}$, $I_{2,\pm,3,\pm,3}$. \qed
\end{proof}

Denote $\tau^\pm = \tau\pm \i N^{-1+\xi}$ and $s^\pm = s\pm \i N^{-1+\xi}$. We have shown
\begin{align}
&\int\int_{\Omega_N \times \Omega_N} \dvarp(z)\dvarp(z') \frac{1}{1-tR_2(z)}S_{2,1}(z,z')\,\mathrm{d}z'\mathrm{d}z \label{eqn: mainvariance-2}\\
=&\frac{t^2}{4}\int_{-qG}^{qG}\int_{-Mt}^{Mt}\tilde{\varphi}_N(\tau)\tilde{\varphi}_N(s)\partial_\tau \frac{\mfct(\tau^+)-\mfct(s^+)}{\tau-s+t(\mfct(\tau^+)-\mfct(s^+))}\partial_s \frac{\mfct(\tau^+)-\mfct(s^+)}{\tau-s}\,\mathrm{d}\tau\mathrm{d}s \label{eqn: plusvariance-1-2}\\
+&\frac{t^2}{4}\int_{-qG}^{qG}\int_{-Mt}^{Mt}\tilde{\varphi}_N(\tau)\tilde{\varphi}_N(s)\partial_\tau \frac{\mfct(\tau^-)-\mfct(s^-)}{\tau-s+t(\mfct(\tau^-)-\mfct(s^-))} \partial_s \frac{\mfct(\tau^-)-\mfct(s^-)}{\tau-s}\,\mathrm{d}\tau\mathrm{d}s \label{eqn: plusvariance-2-2}\\
-&\frac{t^2}{4}\int_{-qG}^{qG}\int_{-Mt}^{Mt}\tilde{\varphi}_N(\tau)\tilde{\varphi}_N(s)\partial_\tau \frac{\mfct(\tau^+)-\mfct(s^-)}{\tau-s+t(\mfct(\tau^+)-\mfct(s^-))} \partial_s \frac{\mfct(\tau^+)-\mfct(s^-)}{\tau-s+2iN^{-1+\xi}}\,\mathrm{d}\tau\mathrm{d}s \label{eqn: mainvariance-1-2}\\
-&\frac{t^2}{4}\int_{-qG}^{qG}\int_{-Mt}^{Mt}\tilde{\varphi}_N(\tau)\tilde{\varphi}_N(s)\partial_\tau \frac{\mfct(\tau^-)-\mfct(s^+)}{\tau-s+t(\mfct(\tau^-)-\mfct(s^+))} \partial_s \frac{\mfct(\tau^-)-\mfct(s^+)}{\tau-s-2iN^{-1+\xi}}\,\mathrm{d}\tau\mathrm{d}s \label{eqn: mainvariance-2-2}\\
+& \O(1).
\end{align}
The main terms here are \eqref{eqn: mainvariance-1-2} and \eqref{eqn: mainvariance-2-2}. For the remaining terms we have
\begin{prop}
We have the estimate:
There is a constant $C$ such that
\begin{align}
\left|t^2\int_{-qG}^{qG}\int_{-Mt}^{Mt}\tilde{\varphi}_N(\tau)\tilde{\varphi}_N(s)\partial_\tau \frac{\mfct(\tau^+)-\mfct(s^+)}{\tau-s+t(\mfct(\tau^+)-\mfct(s^+))} \partial_s \frac{\mfct(\tau^+)-\mfct(s^+)}{\tau-s}\,\mathrm{d}\tau\mathrm{d}s\right| &\le C, \label{eqn: mainvariance-2-same-side} \\
\left|t^2\int_{-qG}^{qG}\int_{-Mt}^{Mt}\tilde{\varphi}_N(\tau)\tilde{\varphi}_N(s)\partial_\tau \frac{\mfct(\tau^-)-\mfct(s^-)}{\tau-s+t(\mfct(\tau^-)-\mfct(s^-))} \partial_s\frac{\mfct(\tau^-)-\mfct(s^-)}{\tau-s}\,\mathrm{d}\tau\mathrm{d}s\right| &\le C.
\end{align}
\end{prop}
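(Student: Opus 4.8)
The claim is that the two ``same-side'' double integrals appearing in \eqref{eqn: mainvariance-2-same-side} are bounded by a constant. By symmetry (complex conjugation $z \mapsto \bar z$ swaps the two integrals) it suffices to treat the first one, with $z = \tau^+$ and $z' = s^+$. The plan is to reduce everything to a single pointwise kernel estimate and then integrate, exactly as in Proposition \ref{prop: const-bd}, but now with the extra $s$-derivative on the second factor.

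\textbf{Key steps.} First I would establish the two pointwise bounds on the kernel factors. For the first factor, $\partial_\tau\frac{\mfct(\tau^+)-\mfct(s^+)}{\tau-s+t(\mfct(\tau^+)-\mfct(s^+))}$, I would use the alternate partial-fractions representation \eqref{eqn: mfct-partialfrac-alt}, i.e.\ $\frac1N\sum_j g_j(z)^2(1+t\partial_z\mfct(z))g_j(z')$, together with \eqref{eqn: dmfct-bound}, \eqref{eqn: stability}, and \eqref{eqn: gj2-bound}, to get $\bigl|\partial_\tau(\cdots)\bigr|\le Ct^{-2}$ whenever $|\tau-s|\le Ct$; this is precisely \eqref{eqn: dfracmfct-bound}. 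Then I would use the lower bound \eqref{eqn: zeta-lwr-bound} on $\Re(\zeta(z)-\zeta(z'))$ (valid for $z,z'\in\Omega_N$), which gives $|z-z'+t(\mfct(z)-\mfct(z'))|\ge c|\tau-s|$, hence $\bigl|\partial_\tau(\cdots)\bigr|\le C t^{-1}|\tau-s|^{-1}$ when $|\tau-s|\ge Ct$. For the second factor I would invoke the estimate \eqref{eqn: dmfct-diffquot}, namely $\bigl|\partial_s\frac{\mfct(z)-\mfct(z')}{z-z'}\bigr|\le Ct^{-2}$, which follows from the difference-quotient identity \eqref{eqn: f-diffquot} and the bound \eqref{eqn: mfct-2nd-derivative} on $\mfct''$; and for the complementary range I would combine $|\mfct'(z)|\le Ct^{-1}$ with $|z-z'|\ge c|\tau-s|$ to get $\bigl|\partial_s\frac{\mfct(z)-\mfct(z')}{z-z'}\bigr|\le C|\tau-s|^{-1}\min(t^{-1},|\tau-s|^{-1})$, or more crudely $\le C|\tau - s|^{-2}$ for $|\tau-s|\ge Ct$ together with the uniform bound $|\mfct(z)-\mfct(z')|\le C$ in $\mathcal{I}_q$.

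\textbf{Assembling.} With $\|\varphi_N\|_{L^\infty}\le C$ and $N^{-1+\xi}\|\varphi_N'\|_{L^\infty}\le N^{-1+\xi}t_1^{-1}\ll 1$, the almost-analytic extensions satisfy $|\tilde\varphi_N(\tau)|,|\tilde\varphi_N(s)|\le C$ on the domain of integration, and moreover (using the support condition \eqref{eqn: offIq-decay}) both $\varphi_N$ factors are essentially supported in $|\tau|,|s|\lesssim t$, so after fixing $s$ the $\tau$-integral runs over $|\tau-s|\lesssim t$. I would split the $\tau$-integral at $|\tau-s|= Mt$: in the region $|\tau-s|\le Mt$ the product of kernels is $\le C t^{-4}$, and $\int_{|\tau-s|\le Mt}t^{-4}\,\d\tau = O(t^{-3})$, which after multiplying by the prefactor $t^2$ and integrating $|\varphi_N(s)|$ over $s$ (using $\|\varphi_N\|_{L^1}=O(t)$ since $\varphi_N$ is bounded and supported on a set of measure $O(t)$) gives $t^2\cdot t^{-3}\cdot t = O(1)$; in the region $|\tau-s|\ge Mt$ the product of kernels is $\le C t^{-1}|\tau-s|^{-3}$, and $\int_{|\tau-s|\ge Mt}|\tau-s|^{-3}\,\d\tau = O(t^{-2})$, giving again $t^2\cdot t^{-1}\cdot t^{-2}\cdot t = O(1)$. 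Summing over the two $\pm$ versions yields the constant bound. The only subtlety — and the main obstacle — is being careful about the region where $|\tau-s|$ is small but $\eta=N^{-1+\xi}$ is even smaller; there the $t^{-2}$ bounds \eqref{eqn: dfracmfct-bound}, \eqref{eqn: dmfct-diffquot} are still valid (they do not degenerate as $\eta\downarrow 0$ as long as $|\tau-s|\lesssim t$), so no additional logarithm appears, in contrast to the genuinely logarithmic ``opposite-side'' terms \eqref{eqn: mainvariance-1-2}, \eqref{eqn: mainvariance-2-2} where the factor $(\tau-s\pm 2iN^{-1+\xi})^{-1}$ forces a $\log$; verifying this non-degeneracy carefully is the one place where the argument needs attention rather than routine estimation.
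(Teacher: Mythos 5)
Your kernel estimates and the near/far split at $|\tau-s|=Mt$ are exactly the paper's: in the region $|\tau-s|\le Mt$ it uses \eqref{eqn: dfracmfct-bound} and \eqref{eqn: dmfct-diffquot} to bound the product of kernels by $Ct^{-4}$, and in the far region it bounds it by $(1+|\mfct'(\tau)||\tau-s|)(1+|\mfct'(s)||\tau-s|)/|\tau-s|^4\le Ct^{-2}|\tau-s|^{-2}$ (via $|\mfct'|\le Ct^{-1}$ and the lower bound \eqref{eqn: zeta-lwr-bound} on the denominator), then integrates; see \eqref{eqn: constant-bd}. Your observation that these bounds do not degenerate as $\eta=N^{-1+\xi}\downarrow 0$ is also correct, since \eqref{eqn: dfracmfct-bound}, \eqref{eqn: dmfct-diffquot} come from \eqref{eqn: gj2-bound}, \eqref{eqn: stability}, \eqref{eqn: mfct-2nd-derivative}, which are uniform in $\eta$.

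There is, however, a genuine flaw in your assembly. You justify the final integration by claiming that, thanks to \eqref{eqn: offIq-decay}, ``both $\varphi_N$ factors are essentially supported in $|\tau|,|s|\lesssim t$'' and hence $\|\varphi_N\|_{L^1}=O(t)$. This is false: \eqref{eqn: offIq-decay} constrains the support of $\varphi_N'$, not of $\varphi_N$, and \eqref{eqn: varphisupport} only places $\varphi_N$ inside an interval of length $O(N^{C_V})$; the case the theorem is really about is a $\varphi_N$ with no spatial decay (this is exactly why the variance is logarithmic). With $\varphi_N(s)$ only bounded on $[-qG,qG]$, your far-region tally $t^2\cdot t^{-1}\cdot t^{-2}\cdot\|\varphi_N\|_{L^1}$ becomes of order $G/t\gg1$, so the argument as written does not close. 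The correct mechanism, which is the paper's, is that only the $\tau$-variable is confined to $[-Mt,Mt]$ — this restriction is already built into the statement and comes from applying Green's theorem on $\Omega_N^{**}$, i.e.\ from the support of $\partial_{\bar z}\tilde\varphi_N$, not of $\varphi_N$ — while the $s$-integral over $[-qG,qG]$ is controlled purely by kernel decay: for fixed $\tau$, $\int_{|s-\tau|\le Mt}Ct^{-4}\,\mathrm{d}s+\int_{|s-\tau|\ge Mt}Ct^{-2}|s-\tau|^{-2}\,\mathrm{d}s\le Ct^{-3}$, and then $t^{2}\int_{-Mt}^{Mt}|\tilde\varphi_N(\tau)|\,\mathrm{d}\tau\cdot Ct^{-3}\le C$ using only $\|\varphi_N\|_{L^\infty}\le C$ (in the near region the constraint $|s-\tau|\le Mt$, $|\tau|\le Mt$ already forces $|s|\le 2Mt$, so no support assumption is needed there either). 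A second, minor slip: your far-field bound for the second factor, $|\partial_s\tfrac{\mfct(\tau^+)-\mfct(s^+)}{\tau-s}|\le C|\tau-s|^{-1}\min(t^{-1},|\tau-s|^{-1})\le C|\tau-s|^{-2}$, is too strong — the term $\mfct'(s^+)/(\tau-s)$ only gives $Ct^{-1}|\tau-s|^{-1}$, which dominates $|\tau-s|^{-2}$ when $|\tau-s|\ge Mt$; the correct product bound $Ct^{-2}|\tau-s|^{-2}$ still yields the constant once the bookkeeping above is used.
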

\begin{proof}
We deal with the first quantity. The second quantity is estimated similarly. The kernel part of the integrand is
\[-t^2\frac{\mfct(\tau^+)-\mfct(s^+)-\partial_\tau \mfct(\tau^+)(\tau-s)}{(\tau-s+t(\mfct(\tau^+)-\mfct(s^+)))^2}\frac{\mfct(\tau^+)-\mfct(s^+)-\partial_s\mfct(s^+)(\tau-s)}{(\tau-s)^2}.\]
In the region $\{\tau: |\tau-s|\le Mt\}$, we use \eqref{eqn: dfracmfct-bound}, and \eqref{eqn: dmfct-diffquot}.
So \eqref{eqn: mainvariance-2-same-side} is bounded by
\begin{equation}\label{eqn: constant-bd}
t^2\int_{-Mt}^{Mt}\left(\int_{\tau:|\tau-s|\le Mt} \frac{C}{t^4}\mathrm{d}\tau+ \int_{\tau:|\tau-s|\ge Mt} \frac{(1+|\mfct'(\tau)||\tau-s|)(1+|\mfct'(s)||\tau-s|)}{|\tau-s|^4}\,\mathrm{d}\tau\right)\mathrm{d}s\le C.
\end{equation}
\end{proof}\qed

The sum of the remaining terms \eqref{eqn: mainvariance-1-2}, \eqref{eqn: mainvariance-2-2} is
\begin{equation*}
-\frac{t^2}{2}\int_{-qG}^{qG}\int_{-Mt}^{Mt}\tilde{\varphi}_N(\tau)  \tilde{\varphi}_N(s) \Re\,\partial_\tau \frac{\mfct(\tau^-)-\mfct(s^+)}{\tau-s+t(\mfct(\tau^-)-\mfct(s^+))} \partial_s \frac{\mfct(\tau^-)-\mfct(s^+)}{\tau-s-2iN^{-1+\xi}}\,\mathrm{d}\tau\mathrm{d}s.
\end{equation*}
Using the expansion \eqref{eqn: I2+2+1-cancel1}, \eqref{eqn: I2+2+1-cancel2} in the region $\{\tau: |\tau|\le 2Mt\}$:
\begin{align*}
&t^2\partial_\tau \frac{\mfct(\tau^-)-\mfct(s^+)}{\tau-s+t(\mfct(\tau^-)-\mfct(s^+))}\partial_s \frac{\mfct(\tau^-)-\mfct(s^+)}{\tau-s-2\i N^{-1+\xi}}\\
=&-\frac{1}{(\tau-s- 2\i N^{-1+\xi})^2}+\Delta_3,
\end{align*}
where $\Delta_3(z,z')$ is an error term. The most serious terms in $\Delta_3$ are handled using the computation
\begin{equation}
\label{eqn: hilbert-fourier}
\begin{split}
&\int_{-2tM}^{2tM} \int_{-tM}^{tM} \frac{\mfct'(z)}{\rhofct(z)}\tilde{\varphi}_N(s)\tilde{\varphi}_N(\tau)\frac{1}{s-\tau-2\i N^{-1+\xi}}\,\mathrm{d}s\mathrm{d}\tau\\
=&c\int\int f(\xi)\overline{g(\lambda)}K(\xi-\lambda)\,\mathrm{d}\xi\mathrm{d}\lambda,
\end{split}
\end{equation} 
where $f$ and $g$ are the inverse Fourier transforms of $\mathbf{1}_{[-2tM,2tM]}(\tau)\frac{\mfct'(z)}{\rhofct(z}\tilde{\varphi}(\tau)$ and $\mathbf{1}_{[-tM,tM]}(s)\tilde{\varphi}_N(s)$, respectively, and
\begin{align*}
K(\xi,\lambda)&= K(\xi-\lambda)\\
&:=\i\mathbf{1}_{(-\infty,0]}(\xi-\lambda)\e^{-2N^{-1+\xi}|\xi-\lambda|},
\end{align*}
so that
\[\widehat{K}(x)=\frac{1}{x-2\i N^{-1+\xi}}.\]
From the Fourier representation, the Plancherel theorem and the simple estimates
\[\|f\|_{L^2}=\O(t^{-1/2}), \|g\|_{L^2}=\O(t^{1/2}),\]
the term \eqref{eqn: hilbert-fourier} is $O(1)$. All other error terms are then easily estimated, using $z-\bar{z}=\i 2N^{-1+\xi}$ and the trivial bound
\[\int_{-2tM}^{2tM}\int_{-tM}^{tM}|\tilde{\varphi}_N(s)||\tilde{\varphi}_N(\tau)|\frac{1}{|s-\tau-2\i N^{-1+\xi}|^2}\,\mathrm{d}s\mathrm{d}\tau=\O(\log N).\]

As in \eqref{eqn: constant-bd} contribution from the region $\{|\tau|\ge 2Mt\}\subset \{\tau: |\tau-s|\ge Mt\}$, as well as the error terms, are $\O(1)$.
Adding the contributions from the two main terms, we find:
\begin{align}
&\int\int_{\Omega_N \times \Omega_N} \dvarp(z)\dvarp(z') \frac{1}{1-tR_2(z)}S_{2,2}(z,z')\,\mathrm{d}z'\mathrm{d}z  \nonumber \\
=&\frac{1}{2}\int_{-tM}^{tM}\int_{-2tM}^{2tM}\varphi_N(\tau)\varphi_N(s)\frac{(\tau-s)^2-N^{-2+2\xi}}{((\tau-s)^2+N^{-2+2\xi})^2}\mathrm{d}\tau\mathrm{d}s+\O(1) \nonumber \\
=&\frac{1}{2} \int_{-tM}^{tM}\int_{-2tM}^{2tM}\varphi_N(\tau)\varphi_N(s) \Re \partial_s \frac{1}{\tau-s+\i N^{-1+\xi}}\,\mathrm{d}\tau\mathrm{d}s+\O(1) \nonumber\\
=&-\frac{1}{2} \int_{-tM}^{tM}\int\varphi_N(\tau)\varphi_N'(s) \Re \frac{1}{\tau-s+\i N^{-1+\xi}}\,\mathrm{d}\tau\mathrm{d}s+\O(1). \label{eqn: hilbert-transform2}
\end{align}
This is the same quantity as in \eqref{eqn: hilbert-transform}, and so this ends the computation of the term \eqref{eqn: I2-d-onzz'}.

It remains to estimate $I_{1,3}$. Integrating by parts in $z$ and $z'$, we have
\begin{equation}
I_{1,3}= \frac{t}{N}\sum_{j=1}^N\int_{\Omega_N^*}\int_{\Omega_N^*} g_j(z)g_j(z') \partial_\tau\tilde{\varphi}(z)\partial_s \tilde{\varphi}(z') \,\mathrm{d}z\mathrm{d}z'+\O(N^{-2}).
\end{equation}
As for $I_{1,1}$ and $I_{1,2}$, we use Green's theorem to the domains $\Omega_N \cap \{\Im z > 0\}$, $\Omega_N \cap \{\Im z < 0\}$, $\Omega_N \cap \{\Im z' > 0\}$, $\Omega_N \cap \{\Im z' < 0\}$. By the support properties of $\partial_s \tilde{\varphi}(\tau)$, we only find contributions from the segments $[-qG\pm \i N^{-1+\xi},qG \pm \i N^{-1+\xi}]$. Denoting $\tau^\pm = \tau\pm \i N^{-1+\xi}$, $s^\pm = s\pm \i N^{-1+\xi}$, the result is
\begin{equation}\label{eqn: I13-4terms}
\begin{split}
I_{1,3}&= \frac{t}{4N}\sum_{j=1}^N \int_{-qG}^{qG}\int_{-qG}^{qG}g_j(\tau^+)g_j(s^+) \partial_s \tilde{\varphi}_N(\tau^+) \partial_\tau \tilde{\varphi}_N(s^+)\,\mathrm{d}s\mathrm{d}\tau\\
&+ \frac{t}{4N}\sum_{j=1}^N \int_{-qG}^{qG}\int_{-qG}^{qG}g_j(\tau^-)g_j(s^-) \partial_s \tilde{\varphi}_N(\tau^-) \partial_\tau \tilde{\varphi}_N(s^-)\,\mathrm{d}s\mathrm{d}\tau\\
&- \frac{t}{4N}\sum_{j=1}^N \int_{-qG}^{qG}\int_{-qG}^{qG}g_j(\tau^+)g_j(s^-) \partial_s \tilde{\varphi}_N(\tau^+) \partial_\tau \tilde{\varphi}_N(s^-)\,\mathrm{d}s\mathrm{d}\tau\\
&- \frac{t}{4N}\sum_{j=1}^N \int_{-qG}^{qG}\int_{-qG}^{qG}g_j(\tau^-)g_j(s^+) \partial_s \tilde{\varphi}_N(\tau^-) \partial_\tau \tilde{\varphi}_N(s^+)\,\mathrm{d}s\mathrm{d}\tau+\O(N^{-2}).
\end{split}
\end{equation}
By \eqref{eqn: gj2-bound}, we have
\[\left|\frac{t}{N}\sum_{j=1}^Ng_j(z)g_j(z')\right|\le C,\]
so from \eqref{eqn: I13-4terms}, we obtain
\begin{equation}\label{eqn: I13-final}
|I_{1,3}|\le C\|\varphi_N'\|_{L^1}^2\le C.
\end{equation}

Combining the results \eqref{eqn: hilbert-transform}, \eqref{eqn: hilbert-transform2}, \eqref{eqn: I13-final} we find
\begin{align*}
V(\varphi_N)&=\frac{2}{\pi^2}(-I_{1,1}+I_{1,2}+I_{1,3})\\
&= \frac{2}{\pi^2}(-2\cdot \eqref{eqn: I2-display} + \eqref{eqn: I2-d-onzz'})\\
&= \frac{1}{\pi^2}\int_{-Mt}^{Mt}\varphi_N(\tau)H\varphi_N'(\tau)\,\mathrm{d}\tau+\O(1).
\end{align*}

If the function $\varphi_N$ is compactly supported:
\[\mathrm{supp} \varphi_N \subset (-t_1N^1,t_1N^r),\]
the terms $I_{1,2}$, $I_{1,3}$ are small for large $N$. Indeed, by \eqref{eqn: gj2-bound}, \eqref{eqn: stability}:
\begin{align*}
|I_{1,2}|&\le Ct^{-1}\int_{\Omega_N}\int_{\Omega_N}|\dvarp(z)||\dvarp(z')|\left|\frac{\mfct(z)-\mfct(z')}{z-z'}\right|\mathrm{d}z\mathrm{d}z'\\
&\le Ct^{-1}\|\varphi_N\|_{L^1}\log N\\
&\le CN^rN^{\omega_1-\omega_0}\log N,
\end{align*}
with a similar bound holding for $I_{1,3}$. For $I_{1,1}$, the support of $\tilde{\varphi}_N$ means that we can apply Green's theorem to find
\begin{equation}
\label{eqn: I11-compact}
\begin{split}
I_{1,1}=&\frac{t^2}{2}\Re \int_{-N^rt_1}^{N^rt_1}\int_{-N^rt_1}^{N^rt_1}\tilde{\varphi}_N(s)\tilde{\varphi}_N(\tau)\partial_\tau \frac{\mfct(\tau^+)-\mfct(s^+)}{\tau-s+t(\mfct(\tau^+)-\mfct(s^+))} \partial_s \frac{\mfct(\tau^+)-\mfct(s^+)}{\tau-s}\,\mathrm{d}\tau\mathrm{d}s\\
&-\frac{t^2}{2}\Re \int_{-N^rt_1}^{N^rt_1}\int_{-N^rt_1}^{N^rt_1}\tilde{\varphi}_N(s)\tilde{\varphi}_N(\tau) \frac{\mfct(\tau^+)-\mfct(s^-)}{\tau-s+t(\mfct(\tau^+)-\mfct(s^-))} \partial_s \frac{\mfct(\tau^+)-\mfct(s^-)}{\tau-s+2 \i N^{-1+\xi}}\,\mathrm{d}\tau\mathrm{d}s
\end{split}
\end{equation}
By \eqref{eqn: dfracmfct-bound}, \eqref{eqn: dmfct-bound}, the first term in \eqref{eqn: I11-compact} is bounded by
\[Ct^{-1}\|\varphi_N\|_{L^1}^2.\]
By \eqref{eqn: 2nd-order-exp}, the kernel in the second term in \eqref{eqn: I11-compact} is
\[-\frac{1}{(\tau-s+2iN^{-1+\xi})^2}+\O(\max_z|\mfct'(z)|+1/t)\frac{1}{|\tau-s+2\i N^{-1+\xi}|},\]
so that
\begin{equation}\label{eqn: I11-compact-main}
I_{1,1}=\frac{1}{2}\Re\int_{-N^rt_1}^{N^rt_1}\int_{-N^rt_1}^{N^rt_1}\varphi_N(\tau)\varphi_N(s)\frac{1}{(\tau-s+2\i N^{-1+\xi})^2}\,\mathrm{d}s\mathrm{d}\tau +\O(t^{-1}\log N\|\varphi_N\|_{L^1}).
\end{equation}
After integration by parts in $s$, the main term in \eqref{eqn: I11-compact-main} is
\begin{equation} \label{eqn: I11-ibp}
-\frac{1}{2}\Re\int \int_{-N^rt_1}^{N^rt_1}\varphi_N'(s)\frac{\varphi_N(\tau)-\varphi_N(s)}{\tau-s+2\i N^{-1+\xi}}\,\mathrm{d}s\mathrm{d}\tau.
\end{equation}
We have added in the term
\[-\lim_{R\rightarrow \infty} \Re \int \varphi_N'(s)\varphi_N(s) \int_{\{\tau:1/R<|\tau-s|\le R\}}\frac{1}{\tau-s+2\i N^{-1+\xi}}\,\mathrm{d}\tau\mathrm{d}s=0.\]

By the same computation as for \eqref{eqn: hilbert-transform}, this quantity is
\[-\frac{1}{2}\int\varphi_N(\tau)H(\varphi_N')(\tau)\,\mathrm{d}\tau + (t\|\varphi_N'\|_{C^\alpha})^{\frac{1}{1+\alpha}}N^{(-1+\xi)\frac{\alpha}{1+\alpha}} .\]
Reversing the integration by parts in $s$ in \eqref{eqn: I11-ibp}, we obtain the second expression on the right side in \eqref{eqn: Vvar-compact}.

\subsection{Mean}
In this section, we compute the next order correction to the deformed (average) semicircle law:
\begin{thm}
Let $\varphi_N$ be a sequence of functions as in Theorem \ref{thm: linstat}. Let $\lambda_i$ denote the eigenvalues of the deformed model $H_t=V+\sqrt{t}W$. Then
\begin{equation}\label{eqn: mean-difference}
\begin{split}
\mathbb{E}\sum_{j=1}^N \varphi_N(\lambda_i) - N\int \varphi_N(x)\,\rhofct(x)\,\mathrm{d}x&= -t^2\int\varphi_N(x) ((R_2\mfct')(x+i0)-(R_2\mfct')(x-i0))\,\mathrm{d}x \\
&+ \O(tN^{-\xi}+N^{-1/2}t^{-1/2})\|\varphi_N'\|_{L^1} + \O(\|\varphi_N''\|_{L^1}/N)^{1/2}.
\end{split}
\end{equation}
\end{thm}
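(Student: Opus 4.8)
The plan is to reduce the statement, via the Helffer--Sj\"ostrand formula \eqref{eqn: HS-formula}, to a sharp expansion of the \emph{expected} resolvent trace, and then to carry out the same contour/Green's-theorem manipulations as in Section \ref{sec: characteristic} but with the random factor $e^\circ(x)$ set equal to $1$. Put $E_{\mathrm{mean}}(z):=\ee[\tr G(z)]-N\mfct(z)=N\,\ee[(m_N-\mfct)(z)]$. Applying \eqref{eqn: HS-formula} to $\varphi_N$ and, separately, to $\varphi_N$ integrated against $\rhofct(x)\,\mathrm{d}x$ gives
\begin{equation}
\ee\sum_i\varphi_N(\lambda_i)-N\int\varphi_N(x)\rhofct(x)\,\mathrm{d}x=\frac1\pi\int_{\cc}\dvarp(z)\,E_{\mathrm{mean}}(z)\,\mathrm{d}z .
\end{equation}
As for the term $I_2$ in \eqref{eqn: I2error}, the part of this integral over $z\in\Omega_N^c\cap\supp\chi$ is $\O_\prec(N^{-1+\xi}\|\varphi_N''\|_{L^1})$ by the monotonicity bound $|\Im m_N^\circ|\prec(N|\eta|)^{-1}$, which with $\|\varphi_N''\|_{L^1}\le N^{1-\omega_1}$ is negligible; so it suffices to control $E_{\mathrm{mean}}$ uniformly on the spectral domain $\Omega_N$.

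\textbf{Expansion of the expected Stieltjes transform.} Start from the Schur identity $Nm_N(z)=-\sum_j A_j(z)^{-1}$ and expand each $A_j^{-1}$ about $\ee[A_j]$ as in \eqref{eqn: Aexp} with $k=3$. The first-order term vanishes in expectation, so the leading contribution is $-\sum_j\ee[A_j]^{-3}\,\ee[(A_j^\circ)^2]$, the third-order remainder being strictly smaller by Theorem \ref{thm: ABthm}. The second moment is evaluated by the conditional computation \eqref{eqn: Acirc2}--\eqref{eqn: Acirc2var} together with the local law; moreover $\ee[A_j]=-g_j(z)^{-1}+t\,\ee[m_N^{(j)}-\mfct]$, and $\ee[m_N^{(j)}-m_N]$ is controlled via the identity \eqref{eqn: ABidentity} and the estimate \eqref{eqn: AinverseB-estimate}. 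Since $\ee[A_j]$ depends on $\ee[m_N]$, inserting these relations and averaging over $j$ (using $\frac1N\sum_j g_j=\mfct$ and $R_2=\frac1N\sum_j g_j^2$) produces, rather than a direct expansion, a scalar self-consistent equation $\big(1-tR_2(z)\big)E_{\mathrm{mean}}(z)=\Theta(z)+(\text{error})$, with $\Theta$ an explicit rational expression in $R_2$, $R_3=\frac1N\sum_j g_j^3$ and $\mfct'$ and with error of the usual local-law size. Solving and simplifying with the identities \eqref{eqn: t-shcherbina} ($1+t\partial_z\mfct=(1-tR_2)^{-1}$, $\partial_z\mfct=R_2/(1-tR_2)$) and $\partial_z R_2=2(1+t\partial_z\mfct)R_3$ identifies $E_{\mathrm{mean}}(z)=h(z)+\Delta_{\mathrm{mean}}(z)$, where $h$ is an explicit multiple of $t^2R_2(z)\mfct'(z)$ (analytic in $z\notin\supp\rhofct$, the numerical constant being the one that reproduces the stated right-hand side under \eqref{eqn: greensthm}) and $\Delta_{\mathrm{mean}}(z)=\O_\prec\big(t(N|\eta|)^{-1}+t^{1/2}N^{-1/2}+t(N|\eta|)^{-1/2}\big)$ is analytic on $\Omega_N$.

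\textbf{Contour deformation and error control.} Insert $E_{\mathrm{mean}}=h+\Delta_{\mathrm{mean}}$ into the Helffer--Sj\"ostrand integral. The contribution of $h$ is computed by Green's theorem \eqref{eqn: greensthm} applied to $\Omega_N\cap\{\Im z>0\}$ and $\Omega_N\cap\{\Im z<0\}$, exactly as in Section \ref{sec: characteristic}: $\tilde\varphi_N$ vanishes on the top segments, the vertical segments contribute a negligible error, and the two surviving bottom segments combine to $-t^2\int\varphi_N(x)\big((R_2\mfct')(x+i0)-(R_2\mfct')(x-i0)\big)\,\mathrm{d}x$, the term holomorphic across $\mathbb{R}$ giving no jump. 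For the remainder one integrates $\dvarp(z)=i\eta\varphi_N''(\tau)\chi(\eta)+i\varphi_N(\tau)\chi'(\eta)-\eta\varphi_N'(\tau)\chi'(\eta)$ against $\Delta_{\mathrm{mean}}$; integrating by parts in $\tau=\Re z$ on the region $|\eta|\ge\|\varphi_N''\|_{L^1}^{-1}$ (as in \eqref{eqn: ibp-example}) and using $|\Im m_N^\circ|\prec(N|\eta|)^{-1}$ near the edge of $\Omega_N$ converts the negative $|\eta|$-powers in $\Delta_{\mathrm{mean}}$ into the stated error $\O(tN^{-\xi}+N^{-1/2}t^{-1/2})\|\varphi_N'\|_{L^1}+\O(\|\varphi_N''\|_{L^1}/N)^{1/2}$.

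\textbf{Main obstacle.} The crux is the uniform control of $E_{\mathrm{mean}}(z)$, and of the associated error integrals, down to $|\eta|\sim N^{-1+\xi}$, where the local-law error $(N|\eta|)^{-1}$ is only $N^{-\xi}$: the promotion to an honest negative power of $N$ must come entirely from the $\eta$-decay of $\Delta_{\mathrm{mean}}$ together with the integration by parts in $\tau$ --- precisely the mechanism used for $T_1$ and for the term \eqref{eqn: Delta11-error} --- and is made delicate by the fact that the Helffer--Sj\"ostrand weight carries a $\varphi_N''$ of large norm $\|\varphi_N''\|_{L^1}\sim N^{1-\omega_1}$. Everything else is bookkeeping parallel to Section \ref{sec: characteristic}; in particular no reverse-heat-flow or homogenization input enters, the mean correction being the $x=0$ ``skeleton'' of the characteristic-function computation.
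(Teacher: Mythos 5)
Your overall architecture coincides with the paper's: the Helffer--Sj\"ostrand reduction of the statement to an estimate for $N\ee[m_N-\mfct]$ on $\Omega_N$ (with the region $\Omega_N^c$ costing $\O(\|\varphi_N''\|_{L^1}/N)^{1/2}$), an expansion based on the Schur/$A_j$--$B_j$ machinery of Section 6 together with the local law, a scalar self-consistent equation whose solution is identified with a multiple of $t^2R_2(z)\mfct'(z)$, Green's theorem \eqref{eqn: greensthm} to convert this into the boundary-value difference $(R_2\mfct')(x+i0)-(R_2\mfct')(x-i0)$, and integration by parts in $\tau$ plus the monotonicity bound near the real axis to obtain exactly the stated error terms. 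The only organizational difference is the centering of the expansion: the paper expands $G_{jj}$ around $g_j$ and extracts the main term from $-t\sum_j g_j\,\ee[\moj-m_N]$, evaluated through \eqref{eqn: ABidentity} and \eqref{eqn: AinverseB-estimate}, which produces the prefactor $(1+t\mfct)$; you expand $A_j^{-1}$ around $\ee[A_j]$ and obtain the prefactor $(1-tR_2)$. Both bookkeepings are in principle legitimate, and your treatment of the errors matches the paper's.

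The genuine gap is in the identification of the main term, which you assert rather than derive. The claim that ``the leading contribution is $-\sum_j\ee[A_j]^{-3}\ee[(A_j^\circ)^2]$'' is not correct as stated: by \eqref{eqn: Acirc2}--\eqref{eqn: Acirc2var} that term is $\approx \sum_j g_j^3\,\tfrac{t}{N}(1+t\partial_z\mfct)=tR_3(1+t\partial_z\mfct)=\tfrac{t}{2}\partial_zR_2$, and it is cancelled to leading order by the piece $t\sum_j g_j^2\,\ee[\moj-m_N]$ hidden inside the difference $-\sum_j\ee[A_j]^{-1}-N\mfct$ (recall $\ee[A_j]=z-V_j+t\,\ee[\moj]$ contains $\ee[\moj]$, not $\mfct$, so $-1/\ee[A_j]$ differs from $g_j$ by exactly such a contribution, evaluated again via \eqref{eqn: AinverseB-estimate}). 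In particular $\tfrac{t}{2}\partial_zR_2$ is \emph{not} a multiple of $t^2R_2\mfct'=t^2R_2^2/(1-tR_2)$, so the asserted form of your $h(z)$ does not follow from the term you single out; the coefficient $-t^2R_2\mfct'$ of the theorem emerges only after this cancellation is tracked together with the residual second-order pieces (the $w_{jj}$ contribution, the replacement of $\partial_z\moj$ by $\partial_z\mfct$ in the conditional variance, and the $tR_2\,E_{\mathrm{mean}}$ self-consistent term), and your remark that ``the numerical constant is the one that reproduces the stated right-hand side'' presupposes the answer instead of establishing it. This algebra is precisely the content of the paper's proof of the mean; to complete your write-up you must either carry it out in your centering or switch to the paper's expansion around $g_j$, where the main term is read off directly from \eqref{eqn: AinverseB-estimate} after the self-consistent term is isolated.
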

\begin{proof}
Using the Helffer-Sj\"ostrand representation \eqref{eqn: HS-formula}, the difference \eqref{eqn: mean-difference} can be rewritten as
\[\frac{N}{2\pi}\int_{\mathbb{C}} \dvarp(z) \mathbb{E}[m_N(z)-\mfct(z)]\mathrm{d}z.\]
Proceeding as in Section \ref{sec: characteristic}, we replace the domain of integration by $\Omega_N$:
\begin{equation}
\begin{split}\label{eqn: mean-domain}
N\int_{\mathbb{C}} \dvarp(z) \mathbb{E}[m_N(z)-\mfct(z)]\mathrm{d}z=&N\int_{\Omega_N} \dvarp(z) \mathbb{E}[m_N(z)-\mfct(z)]\mathrm{d}z+\O( \|\varphi_N''\|_{L^1}/N)^{1/2}.
\end{split}
\end{equation}
We now compute
\begin{equation}\label{eqn: mean-mN-diff}
N(m_N(z)-\mfct(z))= \sum_{j=1}^N \left(G_{jj}(z) - \frac{1}{V_j-z-t\mfct(z)}\right)
\end{equation}
for $z\in \Omega_N$.
By \eqref{eqn: justA}, we have
\begin{align*}
G_{jj}(z)& = \frac{1}{V_j-z-\mfct(z)}-\frac{t(\mfct(z)-\moj(z))+\sqrt{t}w_{jj}}{V_j-z-t\mfct(z)}\\
&+ \frac{(t(\mfct(z)-\moj(z))+\sqrt{t}w_{jj})^2}{(V_j-z-t\mfct(z))^2} - \frac{(t(\mfct(z)-\moj(z))+\sqrt{t}w_{jj})^3}{(V_j-z-t\mfct(z))^2A_j}.
\end{align*} 
Putting this in \eqref{eqn: mean-mN-diff} and taking expectations, we obtain:
\begin{align}
N\mathbb{E}[m_N(z)-\mfct(z)] &= -t\sum_{j=1}^N g_j(z) \mathbb{E}[\moj(z)-\mfct(z)] \label{eqn: mean-expansion-1}\\
&+ \frac{t}{N}\sum_{j=1}^N g_j(z)^2 +\Delta_{\text{mean}},
\end{align}
where the error
\begin{align*}
\Delta_{\text{mean}}(z) &= \sum_{j=1}^N \mathbb{E}\frac{(t(\mfct(z)-\moj(z)))^2}{(V_j-z-t\mfct(z))^2}\\
&- \sum_{j=1}^N\mathbb{E}\frac{(t(\mfct(z)-\moj(z)+\sqrt{t}w_{jj})^3}{(V_j-z-t\mfct(z))^2A_j}
\end{align*}
is analytic in $\Omega_N$ with
\begin{equation} \label{eqn: mean-error}
\begin{split}
|\Delta_{\text{mean}}(z)|&\le C\sum_{j=1}^N \frac{t^2|\mfct(z)-\moj(z)|^2}{|V_j-z-t\mfct(z)|^2 }\\
&+ C\sum_{j=1}^N\mathbb{E}\frac{t^3|\mfct(z)-\moj(z)|^3}{|V_j-z-t\mfct(z)|^2|A_j|}\\
&+ C\sum_{j=1}^N \mathbb{E}\frac{t^{3/2}|w_{jj}|^3}{|V_j-z-\mfct(z)|^2|A_j|}
\end{split}
\end{equation}

Next, we write
\begin{align*}
t\sum_{j=1}^N g_j(z) \mathbb{E}[\moj(z)-\mfct(z)]&=t\sum_{j=1}^Ng_j(z) \mathbb{E}[\moj(z)-m_N(z)] +Nt\mfct(z)\mathbb{E}[m_N(z)-\mfct(z)].
\end{align*}

Using this in \eqref{eqn: mean-expansion-1}, we find
\begin{equation*}
N(1+t\mfct(z))\mathbb{E}[m_N(z)-\mfct(z)]= -t\sum_{j=1}^N g_j(z) \mathbb{E}[\moj(z)-m_N(z)] + \frac{t}{N}\sum_{j=1}^N g_j(z)^2 +\Delta_{\text{mean}}(z).
\end{equation*}
By \eqref{eqn: ABidentity}, \eqref{eqn: AinverseB-estimate}:
\begin{equation}
N\mathbb{E}[\moj(z)-m_N(z)]= \frac{1+t\partial_z\mfct(z)}{\mathbb{E}[A_j(z)]} +\O(|\eta|^{-1}(N|\eta|)^{-1}).
\end{equation}
Together with \eqref{eqn: EA}, this shows
\begin{equation}\label{eqn: mean-main-term}
\begin{split}
N(1+t\mfct(z))\mathbb{E}[m_N(z)-\mfct(z)]&= -\frac{t^2}{N}\sum_{j=1}^N g_j^2(z)\partial_z \mfct(z) +\Delta_{\text{mean}}.
\end{split}
\end{equation}
Since $|\mfct(z)|\le Ct^{-1/2}$, we have
$1+t\mfct(z)= 1+\O(t^{1/2}), $
and so we may divide both sides of \eqref{eqn: mean-main-term} by $1-t\mfct(z)$:
\begin{equation}
N\mathbb{E}[m_N(z)-\mfct(z)]= -t^2\partial_z \mfct(z)R_2(z)+\frac{\Delta_{\text{mean}}(z)}{1+\mfct(z)}
\end{equation}
for $z\in \Omega_N$.
We have so far shown that
\begin{align}
N\int_{\Omega_N}\dvarp(z)\mathbb{E}[m_N(z)-\mfct(z)] \,\mathrm{d}z &= -t^2\int_{\Omega_N}\dvarp(z) R_2(z)\partial_z\mfct(z)\,\mathrm{d}z\\
&+\int_{\Omega_N} \dvarp(z) \frac{\Delta_{\text{mean}}(z)}{1+\mfct(z)}\,\mathrm{d}z. \label{eqn: mean-final-error}
\end{align}
By the local law, on $\Omega_N$, we have
\begin{equation}
\begin{split}
\sum_{j=1}^N\frac{t^2|\mfct(z)-\moj(z)|^2}{|V_j-z-\mfct(z)|^2}&=\O(t(N|\eta|)^{-1}|\eta|^{-1}),\\
\sum_{j=1}^N\mathbb{E}\frac{t^3|\mfct(z)-\moj(z)|^3}{|V_j-z-t\mfct(z)|^2|A_j|}&= \O(t (N|\eta|)^{-2}|\eta|^{-1}),\\
\sum_{j=1}^N\mathbb{E}\frac{t^{3/2}|w_{jj}|^{3/2}}{|V_j-z-t\mfct(z)|^2||A_j|} &= \frac{t^{3/2}}{N}\sum_{j=1}^N |g_j(z)|^3 \O(N^{-1/2}).
\end{split}
\end{equation}
It follows:
\begin{align*}
&\int_{\Omega_N}\i\eta\chi(\eta)\varphi''(\tau) \frac{\Delta_{\text{mean}}(z)}{1+t\mfct(z)} \,\mathrm{d}z = -\int_{\Omega_N}\i\eta\chi(\eta)\varphi'(\tau) \partial_\tau \frac{\Delta_{\text{mean}}(z)}{1+t\mfct(z)} \,\mathrm{d}z
\end{align*}
The last quantity is bounded by
\begin{equation}\label{eqn: mean-varphi''-bound}
\|\varphi'_N\|_{L^1} \int_{N^{-1+\xi}}^{10}(t|\eta|^{-1}(N|\eta|)^{-1}+\frac{t^{3/2}}{N}\sum_{j=1}^N |g_j(z)|^3 \O(N^{-1/2}))\mathrm{d}\eta \le C(tN^{-\xi}+ N^{-1/2}t^{-1/2})\|\varphi_N'\|_{L^1}.
\end{equation}
The remaining part of the error term \eqref{eqn: mean-final-error} is
\begin{equation}\label{eqn: mean-error-2}
\int_{\Omega_N} \i\chi'(\eta)(\varphi_N(\tau)+\i\eta\varphi_N'(\tau))\frac{\Delta_{\mathrm{mean}}(z)}{1+t\mfct(z)}\,\mathrm{d}z
\end{equation}
Since $\{|\eta|:\chi'\neq 0\}\subset [N^{10C_V}-1, N^{10C_V}]$, \eqref{eqn: mean-error-2} gives an error of
\begin{equation}\label{eqn: mean-error-3}
C\|\varphi_N\|_{L^1}N^{-20C_V}+\|\varphi_N'\|_{L^1}N^{-10C_V} = CN^{-2}\|\varphi_N'\|_{L^1}.
\end{equation}
Adding the errors \eqref{eqn: mean-domain}, \eqref{eqn: mean-varphi''-bound} and \eqref{eqn: mean-error-3}, we find
\begin{align}
N\int\dvarp(z)\mathbb{E}[m_N(z)-\mfct(z)] \,\mathrm{d}z &= -t^2\int_{\Omega_N}\dvarp(z) R_2(z) \partial_z\mfct(z)\,\mathrm{d}z\\
&+\O(tN^{-\xi}N^{-1/2}t^{-1/2})\|\varphi_N'\|_{L^1}+\O( \|\varphi_N''\|_{L^1}/N)^{1/2}.
\end{align}
A simple computation using $|t\mfct'(z)|\le C$ shows that
\[t^2\int_{\Omega_N^c}\dvarp(z)R_2(z) \partial_z\mfct(z)\,\mathrm{d}z =\O(N^{-1+\xi})\|\varphi_N''\|_{L^1}+\O(tN^{-C_V})(1+\|\varphi_N'\|_{L^1}).\]
The desired result is now obtained by applying Green's theorem to 
\begin{equation*}
t^2\int \dvarp(z) R_2(z)\partial_z\mfct(z)\,\mathrm{d}z = t^2\int\varphi_N(x) ((R_2\mfct')(x+\i 0)-(R_2\mfct')(x-\i 0))\,\mathrm{d}x.
\end{equation*}
\end{proof}\qed

\subsection{$\beta$-ensembles}

In this section we consider the mesoscopic central limit theorem for $\beta$-ensembles.  
The proof is close to the argument that appears in \cite[Theorem 5.4]{homogenization}, except that we consider general potentials $V$. Our statement is somewhat simpler because we are considering functions with small support. The main input is a result of the loop equation \eqref{eqn: first-loop}, first introduced in this context by K. Johansson \cite{johansson}. This provides us with a quadratic relation for the difference between a deformed resolvent $m_{N,\varphi_N}(z)$ 
and the resolvent of the limiting density associated to $V$, with a precision of order $o(1/N)$. Combined with the Helffer-Sj\"ostrand formula \eqref{eqn: HS-formula}, this allows us to obtain a relation for the characteristic function for the linear statistics (see \eqref{eqn: dZ}).

\begin{thm} \label{thm: linstat-beta}
Let $\supp \rho_V=[A,B]$. Let $\varphi_N$ be a sequence of real-valued $C^2$ functions on $\mathbb{R}$ with support in $[2A-B,2B-A]$, satisfying \eqref{eqn: varphiLinfty}, \eqref{eqn: offIq-decay}, in addition to the following growth conditions on the derivatives:
\begin{align*}
\|\varphi_N^{(k+1)}\|_{L^\infty}&\le C_kt_1^{-k}, \quad k= 0,1.
\end{align*}
Let the parameters $t,t_1$ be chosen as in Theorem \ref{thm: linstat}. There is an $\eps>0$ such that uniformly in $|x|\le N^\epsilon$:
\[\mathbb{E}[\e^{\i x[(\mathrm{tr}\varphi_N)(H_t)-N\int \varphi_N(x)\rho_V(x)\mathrm{d}x]}] = \exp\left(-\frac{x^2}{2}V(\varphi_N)+ \i x\delta(\varphi_N)\right)+ \O_\prec(N^{-1+20\epsilon})\|\varphi_N''\|_{L^1}+\O(N^{-2\epsilon}),\]
where the variance $V(\varphi_N)$ is given by
\begin{equation*}
V(\varphi_N) = \frac{1}{2\beta\pi^2}\int_A^B\int_A^B \left(\frac{\varphi_N(x)-\varphi_N(y)}{x-y}\right)^2\frac{-AB-xy-\frac{1}{2}(A+B)(x+y)}{\sqrt{(x-A)(B-x)}\sqrt{(y-A)(B-y))}}\,\mathrm{d}x\mathrm{d}y,
\end{equation*}
and 
\begin{equation*}
\delta(\varphi_N) = \frac{1}{2\pi^2}\left(\frac{2}{\beta}-1\right) \int_A^B \varphi_N(x)  \left(\frac{(H\rho_V)'(\tau+\i 0)}{\rho_V(\tau+\i 0)}- \frac{(H\rho_V)'(\tau-\i 0)}{\rho_V(\tau-\i 0)}\right)\,\mathrm{d}x.
\end{equation*}
\end{thm}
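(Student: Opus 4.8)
The plan is to rerun the strategy of Section~\ref{sec: characteristic} — Helffer--Sj\"ostrand together with a self-consistent identity for $\ee[e(x)\,m_N(z)^\circ]$ — but with the Schur-complement relation \eqref{eqn: ABidentity}, which is special to the matrix model, replaced by the first loop (Dyson--Schwinger) equation of the $\beta$-ensemble. Put $L_N^\circ := \tr\varphi_N(H_t) - N\int\varphi_N\,\rho_V$, $e(x):=\exp(\i x L_N^\circ)$, $\psi(x):=\ee[e(x)]$, and write $\varphi_N$ via \eqref{eqn: HS-formula}; as in \eqref{eqn: drunkenman} it then suffices to compute $E(z):=N\,\ee[e(x)(m_N(z)-\ee m_N(z))]$. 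Rigidity and the local law for one-cut regular $\beta$-ensembles (with error $\O_\prec((N|\eta|)^{-1})$, available from \cite{bourgade2014universality, generic} and related works) restrict $z$ to a spectral region $\Omega_N$ hugging $\supp\rho_V=[A,B]$, and the support hypothesis on $\varphi_N$ together with \eqref{eqn: offIq-decay} dispose of the far region exactly as in Lemma~\ref{lem: ejminuse}.

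The first loop equation reads, for $h$ analytic near $[A,B]$,
\[
0 = \ee\!\left[\frac1\beta\sum_i h'(\lambda_i) + \frac12\sum_{i\neq j}\frac{h(\lambda_i)-h(\lambda_j)}{\lambda_i-\lambda_j} - \frac N2\sum_i h(\lambda_i)V'(\lambda_i)\right],
\]
and it remains valid with the weight $e(x)$ inserted once one adds the term coming from $\del_x$ acting on $e(x)$; using $\del_x e(x)=\i L_N^\circ e(x)$, the Helffer--Sj\"ostrand form of $L_N^\circ$, and the choice $h(\,\cdot\,)=(z-\,\cdot\,)^{-1}$ — for which $h'(\lambda)=(z-\lambda)^{-2}$ so $\sum_i h'(\lambda_i)=\tr G(z)^2$ and $\tfrac12\sum_{i\neq j}\tfrac{h(\lambda_i)-h(\lambda_j)}{\lambda_i-\lambda_j}=\tfrac12\big((Nm_N)^2-\tr G(z)^2\big)$ — one obtains after centering an identity of the schematic shape
\[
\Xi_z\big(\ee[e(x)\,m_N(\cdot)^\circ]\big) = \frac{\i x}{N}\,\ee\!\left[e(x)\!\int\del_{\bar z'}\tilde\varphi_N(z')\,\mathcal{K}(z,z')\,\d z'\right] + \frac{1}{2N}\Big(\frac2\beta-1\Big)\ee\big[e(x)\,\tr G(z)^2\big] + (\mathrm{errors}).
\]
Here $\Xi_z$ is the master operator of the one-cut equilibrium problem (the linearization of the self-consistent/loop equation around $\rho_V$); the net coefficient $\tfrac1\beta-\tfrac12=\tfrac12(\tfrac2\beta-1)$ on $\tr G(z)^2$ is produced by combining $\tfrac1\beta\sum_ih'(\lambda_i)$ with the $-\tfrac12\tr G(z)^2$ from the double sum, and is the source of the $\tfrac2\beta-1$ in $\delta(\varphi_N)$; and $\mathcal{K}(z,z')$ is a two-point kernel built from $m_V(z),m_V(z')$ and $\del_{z'}\tfrac{m_V(z)-m_V(z')}{z-z'}$ (the Stieltjes transform $m_V$ of $\rho_V$), entirely analogous to $S_{2,1},S_{2,2},S_{2,3}$ of Proposition~\ref{prop: I2prop}. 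The error terms are controlled by the $\beta$-ensemble local law and the standard self-improving (bootstrap) argument for loop equations — the substitute for Theorem~\ref{thm: ABthm} — together with the same integration-by-parts and monotonicity estimates used for the errors in Propositions~\ref{T1prop} and \ref{prop: I2prop}.

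Since $V$ is one-cut and regular, $\Xi_z$ is boundedly invertible on the relevant function class and $\Xi_z^{-1}$ is the explicit operator whose kernel carries the weight $\big((x-A)(B-x)\big)^{-1/2}$; this is the only place the potential $V$ (through $A,B$) enters the final answer. Substituting $\Xi_z^{-1}$, integrating the two Helffer--Sj\"ostrand parameters by Green's theorem, and performing the edge analysis near $A,B$ exactly as in the variance computation in the proof of Proposition~\ref{prop: variance-comp} (the square-root singularity of $\Xi_z^{-1}$ is integrable; the $\Im z>0$ and $\Im z<0$ boundary segments at $\pm qG$ are treated verbatim) converts the quadratic kernel $\mathcal{K}$ into $-\tfrac{x^2}{2}V(\varphi_N)$ in the stated double-integral (equivalently Hilbert-transform) form — the extra weight relative to Section~\ref{sec: characteristic} being precisely $\Xi_z^{-1}$ on the diagonal, i.e.\ the $\big(AB+xy-\tfrac12(A+B)(x+y)\big)/\sqrt{(x-A)(B-x)(y-A)(B-y)}$ factor — while the $\tfrac12(\tfrac2\beta-1)\tr G(z)^2$ term converts into the bias $\delta(\varphi_N)$, which vanishes at $\beta=2$. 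One thereby arrives at a first-order linear ODE for $\psi$ whose two coefficients are $-xV(\varphi_N)$ and a constant-in-$x$ imaginary part accounting for $\delta(\varphi_N)$, valid up to an additive error $\O_\prec(N^{-2\epsilon}+N^{-1+20\epsilon}\|\varphi_N''\|_{L^1})$ on $|x|\le N^\epsilon$; integrating from $x=0$ with $\psi(0)=1$ gives the claimed $\exp(-\tfrac{x^2}{2}V(\varphi_N)+\i\delta(\varphi_N))$.

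The main obstacle, as in the DBM case, is the inversion of $\Xi_z$ together with the subsequent edge analysis: the error terms must be carried through $\Xi_z^{-1}$, which is singular like $(x-A)^{-1/2}(B-x)^{-1/2}$ at the endpoints, and through two Green's-theorem integrations, without losing the power of $N$ needed for the stated rate, and one must verify that the cancellations which collapse the variance computation to a Hilbert-transform identity persist after insertion of the $V$-dependent weight. A secondary point is re-establishing the quantitative loop equation with $\O_\prec((N|\eta|)^{-1})$-type control on the two- and three-point functions appearing above; here this rests on the $\beta$-ensemble local law and the bootstrap for loop equations rather than on the Schur complement available for $H_t$.
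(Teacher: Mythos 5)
Your proposal is correct in outline and, at its core, follows the same route as the paper: both rest on the first loop equation for the exponentially tilted $\beta$-ensemble, a Helffer--Sj\"ostrand representation, the one-cut inversion that produces the $\big((x-A)(B-x)\big)^{-1/2}$ weight, the $(\tfrac{2}{\beta}-1)$ term producing $\delta(\varphi_N)$, and a first-order ODE in $x$ integrated from $x=0$. The execution differs in two respects worth noting. First, the paper never inverts an abstract master operator: it works with the normalized tilted measure $\mu_{V,\varphi_N}$, solves the quadratic loop equation for $m_{N,\varphi_N}-m_V$ with linear coefficient $b(z)=2m_V(z)-\partial_E\tilde V(z)$, obtains the leading deviation of the tilted one-point function explicitly by Tricomi's inversion of the finite Hilbert transform, $(\rho_1^{(N,\varphi_N)}-\rho_V)(\tau)=\tfrac{ix}{N\beta\,\sigma(\tau)}\int_A^B\tfrac{\varphi_N'(s)}{\tau-s}\,\sigma(s)\,\mathrm{d}s$ with $\sigma(\tau)=\sqrt{(\tau-A)(B-\tau)}$, and then cancels the $V$-dependent coupling term $\int\tfrac{\partial_E\tilde V(z)-V'(s)}{z-s}\big(\rho_1^{(N,\varphi_N)}-\rho_V\big)(s)\,\mathrm{d}s$ exactly via the convolution identity $H[\phi_1H\phi_2+\phi_2H\phi_1]=H\phi_1\,H\phi_2-\phi_1\phi_2$; this is the concrete form of your ``$\Xi_z^{-1}$'' step and of the cancellation you correctly flag as needing verification. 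Second, because the paper linearizes around the tilted one-point function, its error estimates require rigidity under the complex tilted measure (imported from \cite{bourgade2014edge}) and so carry $1/|Z(x)|$ factors; these are eliminated at the end not by integrating $\psi$ directly but by integrating $g(x)=e^{x^2(V(\varphi_N)+i\delta(\varphi_N))}Z(x)^2$. Your variant---staying with unnormalized weighted expectations, bounding fluctuations by $|e(x)|\le 1$ and the untilted local law, and integrating the ODE for $\psi$ directly---buys freedom from the $|Z(x)|$ bookkeeping, at the price of having to close the coupled system of weighted linear statistics by the explicit one-cut inversion, which is precisely the Tricomi computation above. One small correction: the extra term in the weighted loop equation arises from $\partial_{\lambda_i}$ hitting the tilt $e^{ix\sum_j\varphi_N(\lambda_j)}$ in the integration by parts, giving $ix\,\mathbb{E}\big[e(x)\sum_i\varphi_N'(\lambda_i)h(\lambda_i)\big]$, not from ``$\partial_x$ acting on $e(x)$''; the schematic term you subsequently write down is nonetheless the right one.
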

\begin{proof}
The proof uses the loop-equation computation in \cite{homogenization}. It was carried out there for the special case $V(x)=\frac{x^2}{2}$.

For a sequence of functions $\varphi_N$, consider the complex weighted measures
\begin{align*}
\mu_{V,\varphi_N}(\mathrm{d}\mathbf{x}) &= \frac{1}{Z(x)}\e^{\i xS_N(\varphi_N)}\mu_V(\mathrm{d}\mathbf{x}),\\
Z(x) &= \mathbb{E}^{\mu_V}[\e^{\i xS_N(\varphi_N)}],\\
S_N(\varphi_N) &= \sum_{j=1}^N \varphi_N(x_j) - N\int \varphi_N(x)\,\rho_V(x)\,\mathrm{d}x.
\end{align*}
We denote by $\rho_1^{(N,h)}$ the 1-point function of $\mu_{V,h}$.

Define the Stieltjes transforms
\begin{align}
m_{N,h}(z) &= \int \frac{\rho_1^{(N,h)}(x)}{x-z}\,\mathrm{d}x, \qquad m_V(z) = \int \frac{\rho_V(x)}{x-z}\,\mathrm{d}x.
\end{align}

We study the asymptotic behavior of $m_{N,\varphi_N}(z)-m_V(z)$. Define the quantities:
\begin{align}
b(z) &= 2m_V(z)- \partial_E \tilde{V}(z):=2m_V(z)-\partial_E\left(V(E)+\i\eta V'(E)-\frac{\eta^2}{2}V''(E)\right)\\
c_N(z) &= -\frac{2\i x}{\beta N}\int \frac{\varphi_N'(s)}{z-s}\,\rho_V(s)\mathrm{d}s+\frac{1}{N}\left(\frac{2}{\beta}-1\right)m'_{N, \varphi_N}(z)\\
&\quad+ \int \frac{\partial_E \tilde{V}(z)-V'(s)}{z-s}(\rho^{(N,\varphi_N)}_1(s)-\rho_V(s)) \,\mathrm{d}s.
\end{align}

\begin{lem}
Let $\supp \rho_V = [A,B]$, and let $\kappa>0$ small, $\xi>0$ be arbitrary.
Uniformly in 
\[\Omega_N:=\{z= E+\i \eta: N^{-1+\xi}\le |\eta| \le N^{-\xi}, E\in (A+\kappa,B-\kappa)\},\]
we have 
\begin{equation}\label{eqn: betam}
m_{N,\varphi_N}(z)-m_V(z)=-\frac{c_N(z)}{b(z)}+\O\left(\frac{N^\xi \omega_N(z)}{|b_N(z)|}\right),
\end{equation}
where the error $\omega_N(z)$ is given by
\begin{equation}\label{eqn: error-omega}
\omega_N(z) = \frac{N^{-2+\xi/20}}{|Z(x)|^2}\left(\frac{1}{|\eta|}\|\varphi_N''\|_{L^1}+|\eta|^{-2}\|\varphi_N'\|_{L^1}\right).
\end{equation}
\end{lem}
\begin{proof}
The proof in the case of quadratic $V$ is given in \cite[Theorem 5.4]{homogenization}. The same proof can be applied in our case, with minor modifications. Here, we merely point out these differences. 

The main difference with the argument in \cite{homogenization} is that we are dealing with the case of general $V$, not just the Gaussian case. In particular, for non-analytic $V$, we use the analytic extension $\tilde{V}$. From the equilibrium relation, we have
\[V'(E) = -2\Re m_V(E+\i 0),\]
so 
\begin{equation}
|\Im b(z)| = |2\Im m(z)|+\O(N^{-\xi})>c \label{eqn: b-lower-bound}
\end{equation}
in $\Omega_N$. The lower bound \eqref{eqn: b-lower-bound} is essential to the rest of the argument, and explains our choice of upper bound for $|\eta|$ in the definition of the region $\Omega_N$.

First, we have the rigidity estimate
\begin{equation}\label{eqn: wgted-betarigidity}
|\mu_{V,\varphi_N}|\left(|x_k-\gamma_k^{(V)}|>N^{-\frac{2}{3}+\xi}(\widehat{k}^{-1/3}\right)\le \frac{e^{-cN}}{|Z(x)|},
\end{equation}
where $\gamma_k$ is the $k$-th classical location for $\rho_V$. This follows from the result for general potential in \cite{bourgade2014edge}. From this, we have the following estimates as in \cite[Lemma 5.3]{homogenization}: for each $\epsilon>0$, and $0<|\eta|<1$,
\begin{equation}\label{eqn: rho_1torhoV}
\begin{split}
\int \frac{\varphi_N'(s)}{z-s}\,\rho_1^{(N,\varphi_N)}(s)\mathrm{d}s -\int \frac{\varphi_N'(s)}{z-s}\,\rho_V(s)\mathrm{d}s=\,\frac{N^{-1+\epsilon}}{|Z(x)|}\O\left(\int \frac{|\varphi_N''(s)|}{|z-s|}\,\mathrm{d}s+\int\frac{|\varphi_N'(s)|}{|z-s|^2}\,\mathrm{d}s\right),
\end{split}
\end{equation}
\begin{align}
m_{N,\varphi_N}(z)-m_V(z)&= \O_\prec\left(\frac{N^{-1+\epsilon}}{\eta|Z(x)|^2}\right), \label{eqn: m-diff-bound}\\
m_{N,\varphi_N}'(z)-m_V'(z)&= \O_\prec\left(\frac{N^{-1+\epsilon}}{\eta^2|Z(x)|^2}\right),\label{eqn: mprime-bound}\\
\frac{1}{N^2}\mathrm{Var}_{\mu^{V,\varphi_N}}\left(\sum_{k=1}^N\frac{1}{z-x_k}\right) &= \O\left(\frac{N^{-2+2\epsilon}}{\eta^2|Z(x)|^2}\right). \label{eqn: VarV}
\end{align}
The proof given there depends only on \eqref{eqn: wgted-betarigidity}. 

Next, we have the \emph{loop equation} \cite[Eqn. (6.18)]{bourgade2014edge}:
\begin{equation}\label{eqn: first-loop}
\begin{split}
&(m_{N,\varphi_N}(z)-m_V(z))^2+b_N(z)\cdot (m_{N,\varphi_N}(z)-m_V(z))\\
+&\frac{2\i x}{\beta N}\int \frac{\varphi_N'(s)}{z-s}\,\rho_1^{(N,\varphi_N)}(s)\mathrm{d}s-\frac{1}{N}\left(\frac{2}{\beta}-1\right)m'_{N, \varphi_N}(z) \\
+& \int \frac{\partial_E \tilde{V}(z)-V'(s)}{z-s}(\rho^{(N,\varphi_N)}_1(s)-\rho_V(s)) \,\mathrm{d}s \\ 
=&\,\frac{1}{N^2}\mathrm{Var}_{\mu^{V,\varphi_N}}\left(\sum_{k=1}^N\frac{1}{z-x_k}\right)+ \O(e^{-cN}).
\end{split}
\end{equation}

Using the estimates \eqref{eqn: rho_1torhoV}, \eqref{eqn: VarV} in \eqref{eqn: first-loop} leads to following equation for $X_N:= m_{N,\varphi_N}(z)-m_V(z)$:
\begin{equation}\label{eqn: quadratic}
X_N^2(z)+b_N(z)X_N(z) +c_N(z) = \O(\omega_N(z)).
\end{equation}
We can now argue as in \cite{homogenization} that $X_N(z)$ is the root of \eqref{eqn: quadratic} corresponding to $X_N(z)\rightarrow 0$. We choose $\epsilon < \xi/100$ to obtain the error bound \eqref{eqn: error-omega}.
\end{proof}
\qed

We use the Helffer-Sj\"ostrand formula:
\begin{equation}\label{eqn: HS-V-section}
\varphi_N(\lambda) = \frac{1}{\pi}\int_{\mathbb{R}^2} \frac{\bar{\partial}_z \tilde{\varphi}_N(z)}{\lambda-\tau-\i \eta}\,\mathrm{d}z, \quad z=\tau+\i\eta
\end{equation}
where $\bar{\partial}_z = \frac{1}{2}(\partial_\tau +i\partial_\eta)$ and $\tilde{\varphi}_N$ is the almost-analytic extension of $\varphi_N$:
\[\tilde{\varphi}_N(z) = (\varphi_N(\tau)+\i\eta\varphi_N'(\tau))\chi(\eta),\]
where $0\leq \chi(y) \leq 1$ is a cutoff function with $\chi(\eta)=1$, $|\eta|\leq N^{-\xi}/2$, and $\chi(\eta)=0$, $|\eta|>N^{-\xi}$.

The representation \eqref{eqn: HS-V-section} allows us to derive the following
\begin{prop}  We have,
\begin{equation}
\begin{split}
\i\mathbb{E}[\e^{\i xS(\varphi_N)}S(\varphi_N)]&= -x(V(\varphi_N)+\i\delta(\varphi_N))\mathbb{E}[\e^{\i xS(\varphi_N)}]\\
&+|x|\O\left(\frac{N^{-1+20\xi}}{|Z(x)|^2}\right)(1+\|\varphi_N'\|_{L^1})\|\varphi_N''\|_{L^1}+|x|\O\left(\frac{N^{-2\xi}}{|Z(x)|^2}\right)\|\varphi_N'\|_{L^1}^2
\end{split}
\end{equation}
\end{prop}
\begin{proof}

We compute the quantity
\begin{equation}\label{eqn: Svarphi-int}
S(\varphi_N)= \frac{1}{\pi}\int_{\mathbb{R}^2} (\i\eta\varphi_N''(\tau)\chi(\eta) +\i(\varphi_N(\tau)+\i\eta\varphi_N'(\tau))\chi'(\eta))N(m_{N,\varphi_N}(z)-m_V(z))\,\mathrm{d}\eta\mathrm{d}\tau.
\end{equation}
We let $\epsilon> 10 \xi$, and split the integral \eqref{eqn: Svarphi-int} into two regions, $\Omega_N\cap\{|\eta|>N^{-1+\epsilon}\}$ and its complement.

For the integral over $(\Omega_N\cap\{|\eta|>N^{-1+\epsilon}\})^c$, note that $\chi'=0$ in this region, and use \eqref{eqn: m-diff-bound}:
\begin{equation}
\label{eqn: m-diff-outside}
\begin{split}
&\int_{(\Omega_N\cap\{|\eta|>N^{-1+\epsilon})^c} i\eta\varphi_N''(\tau)\chi(\eta)N(m_{N,\varphi_N}(z)-m_V(z))\,\mathrm{d}\tau\mathrm{d}\eta\\
=&\int_{|\eta|<N^{-1+\epsilon} }\int |\eta||\varphi_N''(\tau)|\chi(\eta)\O_\prec\left(\frac{N^{\xi}}{|\eta||Z(x)|^2}\right)\,\mathrm{d}\tau\mathrm{d}\eta\\
=&\O_\prec\left(\frac{N^{-1+20\xi}}{|Z(x)|^2}\right)\|\varphi_N''\|_{L^1}
\end{split}
\end{equation}

We let 
\[\Delta_V(z) = m_N(z)-m_{V,\varphi_N}(z)+\frac{c(z)}{b(z)}.\]
That is, $\Delta_V(z)$ is the error term in \eqref{eqn: betam}. It is analytic in $\Omega_N$, and $|\Delta_V(z)|=\O_\prec(N^{\xi}\omega_N(z)/|b_N(z)|)$.

Thus, integrating by parts:
\begin{equation}\label{eqn: Vclt-error-1}
\begin{split}
&\int_{\Omega_N\cap \{|\eta|>N^{-1+\epsilon}\}} \eta\varphi_N''(\tau)\chi(\eta)\Delta_V(z)\,\mathrm{d}z\\ 
=&\int_{\Omega_N\cap \{|\eta|>N^{-1+\epsilon}\}} \partial_\eta(\eta \chi(\eta)) \varphi_N'(\tau) \Delta_V(z)\,\mathrm{d}z+\O(N^{-1+\xi})\int |\varphi_N'(\tau)||\Delta_V(\tau+\i N^{-1+\xi})|\,\mathrm{d}\tau\\
=&\frac{\|\varphi_N'\|_{L^1}}{|Z(x)|^2}\O_\prec(N^{-1+\xi+\xi/20})\int_{N^{-1+\epsilon}}^{N^{-\xi}} |\partial_\eta(\eta\chi(\eta))|(\|\varphi''\|_{L^1}|\eta|^{-1}+\|\varphi_N'\|_{L^1}|\eta|^{-2}) \,\mathrm{d}\eta\\
=&\O_\prec\left(\frac{N^{-1+2\xi}}{|Z(x)|^2}\right)\|\varphi_N'\|_{L^1}\|\varphi_N''\|_{L^1}+\O_\prec\left(\frac{N^{-2\xi}}{|Z(x)|^2}\right)\|\varphi_N'\|_{L^1}^2.
\end{split}
\end{equation}

For the remainder of the error term, we have
\begin{equation}\label{eqn: Vclt-error-2}
\begin{split}
&\int_{\Omega_N\cap \{|\eta|>N^{-1+\epsilon}\}}(\varphi_N(\tau)+\i\eta\varphi_N'(\tau))\chi'(\eta)\Delta_V(z) \mathrm{d}z\\
=& \int_{\Omega_N\cap \{|\eta|>N^{-1+\epsilon}\}} (|\varphi_N(x)|+|\varphi_N'(x)|)|\chi'(y)|\frac{N^{1+\xi}\omega_N(z)}{|b(z)|}\,\mathrm{d}z\\
=&\O_\prec\left(\frac{N^{-1+3\xi}}{|Z(\lambda)|^2}\right)\|\varphi_N''\|_{L^1}(1+\|\varphi_N'\|_{L^1}).
\end{split}
\end{equation}

Finally, 
\begin{equation}\label{eqn: coverb-outside}
\begin{split}
&\int_{(\Omega_N\cap\{|\eta|>N^{-1+\epsilon}\})^c}\dvarp(z)\frac{c(z)}{b(z)}\,\mathrm{d}z =\int_{(\Omega_N\cap\{|\eta|>N^{-1+\epsilon}\}}\eta\varphi''_N(\tau) \frac{c(z)}{b(z)}\,\mathrm{d}z =\O(N^{-1+\xi}|x|)\|\varphi_N\|_{L^1}.
\end{split}
\end{equation}

Combining the estimates \eqref{eqn: m-diff-outside}, \eqref{eqn: Vclt-error-1}, \eqref{eqn: Vclt-error-2}, \eqref{eqn: coverb-outside}, we have:
\[S(\varphi_N) = -\frac{1}{2\pi}\int_{\mathbb{C}} \dvarp(z)\frac{c(z)}{b(z)}\,\mathrm{d}z+\O\left(\frac{N^{-1+20\xi}}{|Z(x)|^2}\right)(1+\|\varphi_N'\|_{L^1})\|\varphi_N''\|_{L^1}+\O\left(\frac{N^{-2\xi}}{|Z(x)|^2}\right)\|\varphi_N'\|_{L^1}^2.\]

To compute the main term, we use:
\begin{align}
\lim_{\epsilon\rightarrow 0} b(\tau\pm \i\epsilon) &= \pm 2\pi \i \rho_V(\tau),\\
\lim_{\epsilon\rightarrow 0}\int \frac{\varphi_N'(s)}{\tau\pm \i \epsilon -s}\,\rho_V(s)\mathrm{d}s&=\mp \i\pi \varphi_N'(\tau)+ (H_V\varphi_N')(\tau),\\
\end{align}
where
\begin{equation}
H_Vf(\tau) := \mathrm{p.v.} \int \frac{f(s)}{\tau-s}\,\rho_V(s)\mathrm{d}s.
\end{equation}
Applying Green's theorem to the region $\{z: |\Im z|>\epsilon\}$, we obtain
\begin{equation}\label{eqn: V-after-green}
\begin{split}
&\lim_{\epsilon \rightarrow 0} -\frac{1}{\pi}\int_{\{|\Im z|>\epsilon\}} \dvarp(z)\frac{c(z)}{b(z)}\,\mathrm{d}z\\
=&-\frac{x}{\beta \pi^2}\int \varphi_N(\tau) (H_V\varphi_N')(\tau) \frac{1}{\rho_V(x)}\,\mathrm{d}\tau\\
&- \frac{1}{2\pi^2 \i}\int\varphi_N(\tau) \int \frac{V'(\tau)-V'(s)}{\tau-s} (\rho_1^{(N,\varphi_N)}(s)-\rho_V(s)) \,\mathrm{d}s\frac{1}{\rho_V(\tau)}\,\,\mathrm{d}x\\
+&\frac{\i}{2\pi^2}\left(\frac{2}{\beta}-1\right) \int \varphi_N(\tau) \left((H\rho_V)'(\tau+\i 0)\frac{1}{\rho_V(\tau+\i 0)}- (H\rho_V)'(\tau-\i 0)\frac{1}{\rho_V(\tau-\i 0)}\right)\,\mathrm{d}\tau.
\end{split}
\end{equation}

To simplify this expression, write
\[\rho_V(\tau) = \sigma(\tau)\omega(\tau),\]
where
$ \sigma(\tau)=\sqrt{(\tau-A)(B-\tau)}\mathbf{1}_{[A,B]}(\tau), $
and $\omega(\tau)>0$ on $\mathrm{supp}\rho_V$. Using this notation, we write:
\begin{equation}\label{eqn: finite-hilbert}
\begin{split}
(H_V\varphi_N')(\tau) &= \mathrm{p.v.} \int \frac{\varphi_N'(s)}{\tau-s} \rho_V(s) \,\mathrm{d}s = \omega(\tau)\mathrm{p.v.}\int_A^B\frac{\varphi_N'(s)}{\tau-s}\sigma(s)\,\mathrm{d}s - \int_A^B \varphi_N'(s) \frac{\omega(\tau)-\omega(s)}{\tau-s}\sigma(s)\,\mathrm{d}s.
\end{split}
\end{equation}
For a $C^1$ function on $[A,B]$, we define the finite Hilbert transform by
\[H_{AB}f(\tau) = \mathrm{p.v.}\int \frac{f(s)}{\tau-s}\,\mathrm{d}s.\]
The quantity $\omega(\tau)$ can be expressed in terms of $V$ using Tricomi's inversion formula for $H_{AB}$ \cite[p. 179]{tricomi} (see also \cite[Eqn. (3.9)]{johansson}):
\begin{equation} \label{eqn: tricomi-inversion}
\omega(\tau)=\frac{1}{2\pi}\int_A^B \frac{V'(\tau)-V'(s)}{\tau-s}\frac{1}{\sigma(s)}\,\mathrm{d}s.
\end{equation}
An alternative formulation of the relation \eqref{eqn: tricomi-inversion} is
\begin{equation}\label{eqn: HAB-V}
H_{AB}(\omega\sigma)(s)= -\frac{V'(s)}{2}.
\end{equation}
By the equilibrium relation, we have
\begin{align*}
 &H_{AB}(\rho_1^{(N,\varphi_N)}-\rho_V)(\tau)\\
=&\mathrm{p.v.}\int_A^B \frac{1}{\tau-s}(\rho_1^{(N,\varphi_N)}(s)-\rho_V(s))\,\mathrm{d}s\\
=&-\frac{\i x}{\beta N}\varphi'_N(\tau).\\
\end{align*}
This relation can be inverted as \cite[p. 178]{tricomi}:
\[(\rho_1^{(N,\varphi_N)}-\rho_V)(\tau)= \frac{1}{\sigma(\tau)}\frac{\i x}{N\beta}\int_A^B \frac{\varphi_N'(s)}{\tau-s}\sigma(s)\,\mathrm{d}s.\]

With this notation, we write \eqref{eqn: finite-hilbert} as:
\begin{align}
&\omega(\tau)\sigma(\tau)\cdot \frac{1}{\sigma(\tau)}\mathrm{p.v.}\frac{\i x}{N\beta}\int_A^B\frac{\varphi_N'(s)}{\tau-s}\sigma(s)\,\mathrm{d}s-\frac{\i x}{\beta N}\int_A^B \frac{\varphi_N'(s)\omega(s)}{\tau-s}\sigma(s)\,\mathrm{d}s \nonumber \\
=&\omega(\tau)\sigma(\tau)\cdot (\rho_1^{(N,\varphi_N)}-\rho_V)(\tau)+H_{AB}[\omega\sigma H_{AB}(\rho_1^{(N,\varphi_N)}-\rho_V)](\tau). \label{eqn: hilbert-conv-1}
\end{align}
Returning to \eqref{eqn: V-after-green}, we compute, using \eqref{eqn: HAB-V}
\begin{align}
&\frac{1}{2}\int \frac{V'(\tau)-V'(s)}{\tau-s} (\rho_1^{(N,\varphi_N)}(s)-\rho_V(s)) \,\mathrm{d}s \nonumber\\
=& \frac{V'(\tau)}{2}H_{AB}(\rho_1^{(N,\varphi_N)}-\rho_V)(\tau) - H_{AB}[(V'/2)(\rho_1^{(N,\varphi_N)}-\rho_V)](\tau) \nonumber\\
=& -H_{AB}(\omega\sigma)(\tau)H_{AB}(\rho_1^{(N,\varphi_N)}-\rho_V)(\tau)+ H_{AB}[H_{AB}(\omega\sigma)(\rho_1^{(N,\varphi_N)}-\rho_V)](\tau). \label{eqn: hilbert-conv-2}
\end{align}
Using the general \emph{convolution relation} \cite[Eqn. (4), p. 174]{tricomi}:
\[H[\phi_1 H\phi_2 + \phi_2H\phi_1]=H\phi_1 H\phi_2 -\phi_1\phi_2,\]
the two terms \eqref{eqn: hilbert-conv-1} and \eqref{eqn: hilbert-conv-2} sum to zero. Using this in \eqref{eqn: V-after-green}, we find:
\begin{align}
&\lim_{\epsilon \rightarrow 0} -\frac{1}{\pi}\int_{\{|\Im z|>\epsilon\}} \dvarp(z)\frac{c(z)}{b(z)}\,\mathrm{d}z \nonumber\\
=&-\frac{x}{\beta \pi^2}\int_A^B \varphi_N(\tau)\mathrm{p.v.}\int_A^B \frac{\varphi_N'(s)}{\tau-s}\sigma(s) \,\mathrm{d}s \frac{1}{\sigma(\tau)}\,\mathrm{d}\tau \label{eqn: V-quad-term}\\
+&\frac{\i}{2\pi^2}\left(\frac{2}{\beta}-1\right) \int_A^B \varphi_N(\tau)  \left((H\rho_V)'(\tau+\i 0)\frac{1}{\rho_V(\tau+\i 0)}- (H\rho_V)'(\tau-\i 0)\frac{1}{\rho_V(\tau-\i 0)}\right)\,\mathrm{d}\tau. \nonumber
\end{align}
For the term \eqref{eqn: V-quad-term}, a final simplification is possible. Using
\[\mathrm{p.v.}\int_A^B \frac{1}{\tau-s}\frac{1}{\sqrt{(\tau-A)(B-\tau)}}\,\mathrm{d}\tau =0, \quad s\in [A,B],\]
the integral in this term is rewritten as
\[\int_A^B \varphi_N'(s)\int_A^B \frac{\varphi_N(\tau)-\varphi_N(s)}{\tau-s}\sigma(s) \,\mathrm{d}s \frac{1}{\sigma(\tau)}\,\mathrm{d}\tau.\]
Integrating by parts in $s$, we obtain the expression $V(\varphi_N)$.
\end{proof}\qed

To calculate the characteristic function, differentiate $Z(x)$ and use 
\begin{equation}\label{eqn: dZ}
\begin{split}
\frac{\mathrm{d}}{\mathrm{d}x}Z(x) &= \mathbb{E}^{\mu_V}[\e^{\i x S(\varphi_N)} \i S(\varphi_N)] \\
&= -x(V(\varphi_N)+\i\delta(\varphi_N)) Z(x)\\
&+ \O\left(\frac{N^{-1+20\xi}}{|Z(x)|^2}\right)(1+\|\varphi_N'\|_{L^1})\|\varphi_N''\|_{L^1}+\O\left(\frac{N^{-2\xi}}{|Z(x)|^2}\right)\|\varphi_N'\|_{L^1}^2.
\end{split}
\end{equation}

To avoid the extra factors of $|Z(x)|$ appearing in the error term, we consider
\[g(x) = \e^{x^2(V(\varphi_N)+\i\delta(\varphi_N))}Z(x)^2.\] 
Then from \eqref{eqn: dZ}, and recalling the assumptions \eqref{eqn: offIq-decay}, \eqref{eqn: varphiLinfty} on $\varphi_N$, we obtain
\[g'(x) = \e^{x^2(V(\varphi_N)+\i\delta(\varphi_N))}\O(N^{-1+20\xi}\|\varphi_N''\|_{L^1}+N^{-2\xi}).\]
Integrating $g'(x)$ from $0$ to $x$ with $|x|\leq N^{\xi}$, we obtain the theorem.
\end{proof}\qed

\section{Proof of main results} \label{sec:fixed}
\subsection{Proof of Theorem \ref{thm:fixed}} \label{sec:mainproof}
In this section we prove Theorem \ref{thm:fixed},  fixed energy universality for Dyson Brownian motion.  We follow closely Section 4 of \cite{homogenization}, taking advantage of the new input of the mesoscopic CLT of Section \ref{sec: mesosection}.  Let $V$ be $(g , \G)$-regular and fix $t_0 = N^{\om_0}/N$ with $N^{\sigma} \g \leq t_0 \leq N^{ - \sigma} \G^2$.  Let $t_1 = N^{\om_1}/N$ with $\om_1 < \om_0/3$.  Let $|E| < q \G$ be given and let $i_0$ be the index s.t. the classical eigenvalue $\gamma_{i_0} (t_0 + t_1)$ is closest to $E$.

Let $x_i (t)$ denote Dyson Brownian motion with initial data $V$.  Consider the auxilliary process
\beq
\hat{x}_i (t) := a ( x_i (t/a^2 ) - b)
\eeq
where
\beq
a = \rhosc (0) / \rho_{\fc, t_0 } (\gamma_{i_0} (t_0 ) ) , \qquad b= \gamma_{i_0} (t_0 ).
\eeq
Then the process $\hat{x}_i (t)$ is DBM started from initial data $a( V-b)$.  Note that since $V$ is $(g, \G)$-regular, the initial data $a (V-b)$ is $(c g, c \G)$-regular for some $c>0$.  Define $\hat{t}_0 := t_0 / a^2$ and $\hat{t}_1 := t_1 / a^2 $.  At time $\hat{t}_0$ we have that the free convolution law for $\hat{x}_i$ satisfies $\hat{\rho}_{\fc, \hat{t}_0} (0) = \rhosc (0)$ and $\hat{\gamma}_{i_0 } ( \hat{t}_0 ) = 0$. The process $\{ \hat{x}_i (t)\}_i$ satisfies the hypotheses of Theorem \ref{thm:mainhomog}.   Therefore, we have a coupling to a process $\{ y_i (t) \}_i$ that is a DBM started from initial data $y_i (0)$ a GOE matrix independent from $\{ x_i (t) \}_i$.  By definition of $a$, and the rigidity estimates of Theorem \ref{thm:rigidity} we have for $|j| \leq N^{62 \om_1/60}$ that
\beq \label{eqn:revb2}
 |a ( x_{i_0+j} (t_0)  - \gamma_{i_0} (t_0 )) - y_{N/2+j } (0)| \leq \frac{N^{\eps}}{N}
\eeq
with overwhelming probability, where $y_i$ is  GOE ensemble at time $0$.
We further have the estimate that
\begin{align}
&\hat{x}_{i_0+i} (\hat{t}_0 + \hat{t}_1) - \hat{\gamma}_{i_0} ( \hat{t}_0 + \hat{t}_1 ) \notag\\
=& y_{N/2+i} ( \hat{t}_1 ) + \frac{1}{N}\sum_{ |j| \leq N^{ 61 \om_1/60 } } p((i-j)/N) ( \hat{x}_{i_0+j} (\hat{t}_0 ) - y_{N/2+j} (0)  ) + \frac{1}{N}\O ( N^{-c_1} ),
\end{align}
for some $c_1 >0$.  We defined,
\beq
p((i-j)/N ) = \zeta ((i-j)/N, \hat{t}_1 )
\eeq
for notational simplicity, and $\zeta$ is defined in \eqref{eqn:zetarev1}.
 From Proposition \ref{prop:zetaprop} we see that the function $p (x)$ satisfies
\beq \label{eqn:revb1}
|p (x) | \leq C \frac{ t_1}{ t_1^2 + x^2}, \qquad |p' (x) | \leq C \frac{1}{ t_1^2 + x^2}.
\eeq
The constant $c_1 >0$ is fixed for the purposes of this section and depends only on $\om_0$ and $\om_1$.  Hence, we see that for $|i| \leq N^{\om_1/2}$,
\begin{align}
&a \left( x_{i_0 + i} (t_0 + t_1 ) - \gamma_{i_0 } (t_0 + t_1 ) \right) \notag\\
= & y_{N/2 +i  } ( t_1 / a^2 ) + \frac{1}{N} \sum_{|j| \leq N^{ 61 \om_1/60 } } p (  j / N) \left( a ( x_{i_0+j} (t_0)  - \gamma_{i_0} (t_0 )) - y_{N/2+j } (0) \right) + \frac{1}{N} \O ( N^{-c_1} ).
\end{align}
We used \eqref{eqn:revb1} and \eqref{eqn:revb2} to replace $p ((i-j)/N)$ by $p(i/N)$, i.e., 
\begin{align}
&\left| \frac{1}{N} \sum_{|j| \leq N^{ 61 \omega_1/50 } } (p ( (i-j)/N) - p (i/N))  \left( a ( x_{i_0+j} (t_0)  - \gamma_{i_0} (t_0 )) - y_{N/2+j } (0) \right) \right| \notag\\
 \leq & \frac{N^{\eps+ \om_1/2}}{N^3 t_1} \sum_j \frac{t_1}{ t_1^2 + (j/N)^2} \leq C \frac{N^{ \eps-\omega_1/2}}{N}
\end{align}
for $|i| \leq N^{\omega_1/2}$ with overwhelming probability.   

Note that
\beq
a ( \gamma_{i_0 + k} (t_0 ) - \gamma_{i_0} (t_0 ) ) = \gsc_{N/2+k} - \gsc_{N/2} + \frac{1}{N} \O ( N^{-\om_0/10} ) = \frac{k}{N} \frac{1}{ \rhosc (0) } +  \frac{1}{N} \O ( N^{-\om_0/10} )
\eeq
for $|k| \leq N^{62 \om_1/60}$.  With overwhelming probability we have 
\begin{align}
 &  \sum_{|j| \leq N^{ 61 \om_1/60 } } p (  j / N) \left( a ( x_{i_0+j} (t_0)  - \gamma_{i_0} (t_0 )) - y_{N/2+j } (0) \right) \notag\\
=& \sum_{ |j| \leq N^{ \om_1 ( 1+ 1/300)} }  p (  j / N) \left( a ( x_{i_0+j} (t_0)  - \gamma_{i_0} (t_0 )) - y_{N/2+j } (0) \right) + \O ( N^{-\om_1/350} ) \notag\\
=& \sum_{ j }  p ( j / N) \chi_1 \left[ (j/N) / ( t_1 N^{2 \om_1/300} ) \right]  \left( a ( x_{i_0+j} (t_0)  - \gamma_{i_0} (t_0 )) - y_{N/2+j } (0) \right) + \O ( N^{-\om_1/350 } ) ,
\end{align}
where $\chi_1 (x)$ is a smooth cut-off function identically $1$ for $|x| \leq 1$ and $0$ for $|x| > 2$.

Arguing as in \cite{homogenization} with overwhelming probability we can rewrite
\beq
 \sum_{ j } \chi_1 \left[ (j/N) / ( t_1 N^{2 \om_1/300} ) \right]  p (  j / N) \left( a ( x_{i_0+j} (t_0)  - \gamma_{i_0} (t_0 )) - y_{N/2+j } (0) \right) = \left( \zeta_x - \zeta_y + \O ( N^{-c_2} ) \right)
\eeq
for a constant $c_2 >0$ and some mesoscopic linear statistics $\zeta_x$ and $\zeta_y$.  The functions $\zeta_x$ and $\zeta_y$ are of the form 
\beq \label{eqn:zetadeff1}
\zeta_x = \sum_j G ( a( x_j (t_0 ) - \gamma_{i_0 } ( t_0 ) ) ) - G (a ( \gamma_j (t_0 ) - \gamma_{i_0} (t_0)) ), \qquad \zeta_y = \sum_j G ( y_j (0 ) ) - G ( \gsc_j )
\eeq
for a function $G$ which is defined by
\beq \label{eqn:zetadeff2}
G (x) := \int_{0}^x \chi_1 (  s / ( t_1 N^{2 \om_1/300} )) p (s) \d s .
\eeq
Note that by rigidity we have
\beq
| \zeta_x | \leq C  \frac{N^\eps}{N} \sum_j \frac{t_1}{ t_1^2 + (j/N)^2 } \leq C N^\eps
\eeq
for any $\eps >0$, with overwhelming probability.

For simplicity we will only consider the $2$-point function.  It suffices to calculate
\beq
\sum_{i, j} \ee [ Q (  N a(x_i (t_0 + t_1 ) - E), N a( x_j (t_0 + t_ 1)  - x_i (t_0 + t_1 ) ) ) ]
\eeq
for compactly supported smooth $ Q : \rr^2 \to \rr$. 

\subsubsection{Reduction to observables with small Fourier support}
 By the homogenization result and rigidity we can write for any sufficiently small $\delta_R >0$,
\begin{align}
&\sum_{i, j} \ee [ Q (  N a(x_i (t_0 + t_1 ) - E), N a  ( x_j (t_0 + t_ 1)  - x_i (t_0 + t_1 ) ) ) ] \notag\\
=& \sum_{ |i|, |j| \leq N^{\delta_R} }\ee [ Q ( N (y_{N/2+i} ( \hat{t}_1 ) - a (E - \gamma_{i_0} (t_0 + t_1 ) ) ) + \zeta_x - \zeta_y , N (y_{N/2+j} (\hat{t}_1 ) - y_{N/2+i} (\hat{t}_1 )  ))] \notag\\
+& \O ( N^{2 \delta_R-c_3} ),
\end{align}
for $c_3 = \min \{ c_1 , c_2, \om_1/350 \}$.  For simplicity denote
$
d = a(  E - \gamma_{i_0} (t_0 + t_1 ) ) .
$
Note $|d| \leq C$. 
We now make a Fourier cut-off of $Q$.  We denote by $\hat{Q} (\lambda, y)$ the Fourier transform of $Q$ in the first variable.  We let $\psi ( \lambda)$ be the Fourier transform of $\zeta_x$.  Since $\zeta_x$ is independent of $\{ y_i (t) \}_i$, we can write 
\begin{align}
& \sum_{ |i|, |j| \leq N^{\delta_R} }\ee [ Q ( N (y_{N/2+i} (\hat{t}_1 ) - d ) + \zeta_x - \zeta_y , N (y_{N/2+j} (\hat{t}_1) - y_{N/2+i} (\hat{t}_1 )  ))] \notag\\
= & \sum_{ |i|, |j| \leq N^{\delta_R} } \int \d \lambda \psi ( \lambda ) \ee \left[ \hat{Q} ( \lambda, N ( y_{N/2+j} ( \hat{t}_1  ) - y_{N/2+i} ( \hat{t}_1  ) ) ) \e^{ \i \lambda [ N ( y_{N/2+i } - d ) + \zeta_y ] } \right] .
\end{align}
The particles $x_i (t_0 )$ are distributed as the eigenvalues of
\beq
V + \sqrt{t_0} W \equald V + \sqrt{t_0-t_3}W + \sqrt{t_3} W' =: \hat{V} + \sqrt{t_3} W',
\eeq
where $W$ and $W'$ are independent GOE matrices.  We choose $t_3 = N^{\om_3}/N$ with $\om_1 < \om_3 < 2 \om_1$.   We take $\om_3 = \om_1 (2 - 1/5)$.  We calculate
\beq
\left| \psi (\lambda )  \right| = \left| \ee [ \e^{ \i \lambda \zeta_x } ] \right| = \left| \ee \left[ \ee \left[ \e^{ \i \lambda \zeta_x } \vert \hat{V} \right] \right] \right|.
\eeq
The matrix $\hat{V}$ is $(q g, q \G)$-regular with overwhelming probability for any $0 < q < 1$. By our choice of $\om_3$ we can apply Section \ref{sec: mesosection} and conclude that with overwhelming probability,
\beq \label{eqn:fixbd1}
\left|  \ee \left[ \e^{ \i \lambda \zeta_x } \vert \hat{V} \right] \right| \leq \e^{ - c_x \lambda^2 \log (N) } + N^{ - c_4}, \qquad |\lambda | \leq N^{c_4}
\eeq
for some constants $c_4, c_x >0$.   Fix  $\delta_F >0$ and let $\chi_2$ be a smooth compactly supported function s.t. $\chi_2 ( \lambda ) = 1$ for $| \lambda | \leq \delta_F/2$ and $\chi_2 (\lambda ) = 0$ for $|\lambda | > \delta_F$.  For any $\eps >0$ we have,
\begin{align}
&\sum_{ |i|, |j| \leq N^{\delta_R} } \int \d \lambda \psi ( \lambda ) \ee \left[ \hat{Q} ( \lambda, N ( y_{N/2+j} (\hat{t}_1  ) - y_{N/2+i} (\hat{t}_1  ) ) ) \e^{ \i \lambda [ N ( y_{N/2+i } - d ) + \zeta_y ] } \right] \notag\\
= &\sum_{ |i|, |j| \leq N^{\delta_R} } \int \d \lambda \psi ( \lambda ) \ee \left[ \chi_2 ( \lambda ) \hat{Q} ( \lambda, N ( y_{N/2+j} (\hat{t}_1  ) - y_{N/2+i} (\hat{t}_1  ) ) ) \e^{ \i \lambda [ N ( y_{N/2+i } - d ) + \zeta_y ] } \right] \notag\\
 +& \O ( N^{2 \delta_R + \eps} ( N^{ - c_x \delta_F^2/4} + N^{-c_4} ) ).
\end{align}
Above we estimated the region $\delta_F /2 < | \lambda | \leq N^\eps$ using \eqref{eqn:fixbd1}.  The region $| \lambda | > N^\eps$ is estimated using the fact that $Q$ is Schwartz and so $|\hat{Q} ( \lambda, y ) | \leq C_M / ( 1 + | \lambda |^M)$ for any $M >0$.  

Let $Q_1 : \rr^2 \to \rr$ be the function with Fourier transform in the first variable $\hat{Q}_1 ( \lambda, y) = \hat{Q} ( \lambda, y) \chi_1 (\lambda )$.  We see that we have proven 
\begin{align}
&\sum_{i, j} \ee [ Q (  N ( x_i (t_0 + t_1 ) - E ), N ( x_j (t_0 + t_1 ) - x_i ( t_0 + t_ 1 ) ) ) ] \notag\\
=&  \sum_{ |i| , |j| \leq N^{ \delta_R} } \ee [ Q_1 ( N ( y_{N/2+i} (\hat{t}_1 ) - d ) + \zeta_x - \zeta_y , N ( y_{N/2+j} (\hat{t}_1  ) - y_{N/2+i } (\hat{t}_1 ) ) ) ] \notag\\
+ & \O \left(  N^{ 2 \delta_R + \eps} ( N^{-c_3} +  N^{-c_4} + N^{ - c_x \delta_F^2 /4 } ) \right). \label{eqn:Qx}
\end{align}

Note that we have the following bound for $Q_1$. For any $M>0$ there is a constant $C (M, Q, \delta_F )$ s.t.
\beq \label{eqn:Q1fixbd}
| \del_x^\alpha Q_1 (x, y) | + | \del_x^\alpha \del_y Q_1 (x, y ) | \leq C (Q, M, \delta_F ) \frac{1}{ 1 + |x|^M } ( \delta_F)^\alpha
\eeq
for every $\alpha$.

\subsubsection{Reduction to constancy of $G$}
If we repeat the same argument with an auxillliary DBM started from a GOE ensemble, which we denote by $z_i (t)$ then we see that
\begin{align} \label{eqn:Qz}
&\sum_{i, j} \ee [ Q (  N ( z_i ( \hat{t}_1  )  ), N ( z_j (  \hat{t}_1  ) - z_i (  \hat{t}_1  ) ) ) ] \notag\\
=&  \sum_{ |i| , |j| \leq N^{ \delta_R} } \ee [ Q_1 ( N ( y_{N/2+i} (\hat{t}_1 ) ) + \zeta_z - \zeta_y , N ( y_{N/2+j} ( \hat{t}_1  ) - y_{N/2+i } (\hat{t}_1 ) ) ) ] \notag\\
+ & \O \left(  N^{ 2 \delta_R+\eps} ( N^{-c_3} +  N^{-c_4} + N^{ - c_z \delta_F^2 /4 } ) \right),
\end{align}
for a constant $c_z >0$.  Define the function 
\beq
F(s) :=  \sum_{ |i| , |j| \leq N^{ \delta_R} } \ee [ Q_1 ( N  y_{N/2+i} (\hat{t}_1 )  + s - \zeta_y , N ( y_{N/2+j} ( \hat{t}_1  ) - y_{N/2+i } (\hat{t}_1 ) ) ) ].
\eeq
From \eqref{eqn:Qx} and \eqref{eqn:Qz} and the fact that $|\zeta_x| + | \zeta_z | \leq N^\eps$ with overwhelming probability we see that in order to prove fixed energy universality it suffices to show that $|F(s) - F(0) | = o(1)$ for $|s| \leq N^{\delta_R/2}$.

\subsubsection{Preliminary estimates for reverse heat flow}
Define
\beq
F_h (s) := F(s+h) - F(s).
\eeq
We will eventually expand $F_h (s)$ in a power series.  In this section we establish estimates on the terms in that power series.   We follow closely \cite{homogenization}.

For $\alpha \in \nn_{\geq 0 }$ define
\beq
F^\alpha (s) := \sum_{ |i|, |j| \leq N^{ \delta_R } } \ee [ \left( \del_x^\alpha Q_1 \right) ( N  y_{N/2 + i } (\hat{t}_1  )  +s - \zeta_y , N ( y_{j} (\hat{t}_1 ) - y_i (\hat{t}_1  ) ) ) ],
\eeq
and 
\beq
F^\alpha_h (s) := F^\alpha (s+h) - F^\alpha (s).
\eeq

First, the argument of \cite{homogenization} using the translation invariance of the GOE statistics gives
\begin{align} \label{eqn:fixa1}
&\bigg| \sum_{i, j} \ee [ ( \del_x^\alpha  Q_1 ) ( N z_{i} ( \hat{t}_1  ) + s + h , N ( z_j (\hat{t}_1  ) - z_i (\hat{t}_1  ) ) ) ] \notag\\
-& \bigg| \sum_{i, j} \ee [ ( \del_x^\alpha  Q_1 ) ( N z_{i} ( \hat{t}_1  ) + s  , N ( z_j (\hat{t}_1  ) - z_i (\hat{t}_1  ) ) ) ] \bigg| \leq C(Q, \delta_F ) ( \delta_F )^\alpha N^{-1/4}
\end{align}
for $|s|, |h| \leq N^{1/4}$.

By rigidity or the local law for the GOE (see, e.g., Theorem 2.2 of \cite{erdos2012rigidity}) with overwhelming probability, only the eigenvalues $z_i (\hat{t}_1)$ with $|i-N/2| \leq N^{\delta_R}$ close to the point $0$, and so using \eqref{eqn:Q1fixbd} we have
\begin{align} \label{eqn:fixa2}
&\bigg| \sum_{i, j} \ee [ ( \del_x^\alpha  Q_1 ) ( N z_{i} (\hat{t}_1  ) + s  , N ( z_j (\hat{t}_1  ) - z_i (\hat{t}_1  ) ) ) ] \notag\\
-& \sum_{ |i|, |j| \leq N^{ \delta_R} } \ee [ ( \del_x^\alpha  Q_1 ) ( N z_{N/2+i} ( \hat{t}_1  ) + s  , N ( z_{N/2+j} (\hat{t}_1  ) - z_{N/2+i} (\hat{t}_1  ) ) ) ] \bigg| \leq C(Q, \delta_F , M) ( \delta_F )^\alpha N^{- M \delta_R}
\end{align}
for any $M >0$ and $|s| \leq 10 N^{\delta_R/2}$.  By Theorem \ref{thm:mainhomog} applied to $z_i$ and $y_i$,
\begin{align} \label{eqn:fixa3}
&\bigg| \sum_{ |i|, |j| \leq N^{ \delta_R} }  \ee [ ( \del_x^\alpha  Q_1 ) ( N z_{N/2+i} (\hat{t}_1  ) + s  , N ( z_{N/2+j} (\hat{t}_1  ) - z_{N/2+i} (\hat{t}_1 2 ) ) ) ] \notag\\
-& \sum_{ |i|, |j| \leq N^{ \delta_R} } \ee [ ( \del_x^\alpha  Q_1 ) ( N y_{N/2+i} ( \hat{t}_1  ) + s + \zeta_z - \zeta_y  , N ( y_{N/2+j} (\hat{t}_1  ) - y_{N/2+i}(\hat{t}_1  ) ) ) ] \bigg|  \notag\\
\leq & C (Q, \delta_F ) ( \delta_F)^\alpha N^{2 \delta_R - c_3}.
\end{align}
We have the estimate
\beq
\hat{ \zeta_z}  ( \lambda ) = \e^{ - c_z \lambda^2 \log(N) + \i \lambda b_N } + \O ( N^{ - c_5} ), \qquad |\lambda| \leq N^{c_5}
\eeq
for constants $c_z , c_5 > 0$ and $b_N$ satisfying $|b_N| \leq N^{\delta_R/100}$.  Let $\zeta$ be a Gaussian with variance $c_z \log(N)$ and mean $b_N$.  We see that
\beq \label{eqn:fixa4}
\left| \ee[ F^\alpha_h ( s + \zeta ) ] - \ee[ F^\alpha_h (s + \zeta_z ) ] \right| \leq C(Q, \delta_F ) ( \delta_F )^\alpha N^{ 2 \delta_R - c_5 }.
\eeq
Collecting \eqref{eqn:fixa1} \eqref{eqn:fixa2} \eqref{eqn:fixa3} and \eqref{eqn:fixa4}, we see that
\beq
\left| \ee[ F^\alpha_h (s + \zeta ) \right| \leq C(Q, M, \delta_F ) ( \delta_F)^\alpha N^{2 \delta_R} ( N^{-c_5} + N^{-c_3} + N^{-M \delta_R} )
\eeq
for any $M>0$ and $|s|, |h| \leq 5 N^{\delta_R/2}$.

\subsubsection{Reverse heat flow}
Following \cite{homogenization} we can write
\beq
F_h (0) = \sum_{ \alpha=1}^\infty \frac{   ( c_z \log (N)  )^\alpha}{ \alpha!} \ee[ F_h^\alpha ( \zeta + b_N ) ].
\eeq
Hence,
\beq
| F_h (0) | \leq  C N^{ \delta_F^2 c_z} N^{ 2 \delta_R}  ( N^{-c_5} + N^{-c_3} + N^{-M \delta_R} ),
\eeq
for $|h| \leq N^{ \delta_R/2}$.

Take $\delta_F >0$  and small enough so that $\delta_F^2 c_z  \leq  \min\{ c_5, c_3, 1/10 \} / 10$.  Now take $\delta_R >0$ as
\beq
\delta_R = \min\{ c_3, c_4, c_x \delta_F^2/4, c_z \delta_F^2/4 , c_5 \}/10.
\eeq
Now take $M$ large enough so that $M \delta_R > 10$.  We see that we have proven that there is an $\fa >0$ so that
\beq
|F_h (0) | \leq N^{ - \fa}
\eeq 
for $|h| \leq \fa $, and 
\begin{align}
&\sum_{i, j} \ee [ Q (  N ( x_i (t_0 + t_1 ) - E ), N ( x_j (t_0 + t_1 ) - x_i ( t_0 + t_ 1 ) ) ) ] \notag\\
=&  \sum_{ |i| , |j| \leq N^{ \delta_R} } \ee [ Q_1 ( N ( y_{N/2+i} (t_1 / a^2) - d ) + \zeta_x - \zeta_y , N ( y_{N/2+j} ( t_1 / a^2 ) - y_{N/2+i } (t_1 / a^2) ) ) ]  + \O \left(  N^{-\fa} \right), \notag
\end{align}
and
\begin{align}
&\sum_{i, j} \ee [ Q (  N ( z_i ( t_1/a^2 ) - E ), N ( z_j ( t_1 /a^2 ) - z_i (  t_ 1 / a^2 ) ) ) ] \notag\\
=&  \sum_{ |i| , |j| \leq N^{ \delta_R} } \ee [ Q_1 ( N ( y_{N/2+i} (t_1/a^2) ) + \zeta_z - \zeta_y , N ( y_{N/2+j} ( t_1/a^2 ) - y_{N/2+i } (t_1 / a^2) ) ) ]  +  \O \left(  N^{-\fa} \right). \notag
\end{align}
This yields fixed energy universality.

\subsection{Multitime correlation functions}

The proof of Theorem \ref{thm:multitime} is nearly identical to that of Theorem \ref{thm:fixed}.  It suffices to calculate observables of the form, e.g.,
\beq \label{eqn:Qmultitime}
\sum_{i, j} \ee [ Q ( x_i( t_a ) - E (t_a ), (  ( x_j (t_b ) - E(t_b ) ) - ( x_i (t_a ) - E (t_a ) ) ) ].
\eeq
with the energies $E (t)$ defined as in the theorem statement.  Since the mesoscopic part $\zeta_x, \zeta_y$ of the homogenization estimates
\beq
x_i (t)  - E (t) = y_i (t) + \zeta_x - \zeta_y + o (1)
\eeq
are the same for $t_a$ and $t_b$, the proof given above applies to observables of the form \eqref{eqn:Qmultitime}.

\section{General $\beta$-ensembles} \label{sec:beta}
In this section we prove fixed energy universality for $\beta$-ensembles, $\beta \geq 1$.  The strategy is similar to the case of classical DBM; however the coupling must change as we lack a suitable matrix model representation for the DBM flow on $\beta$-ensembles.

\subsection{DBM flow for general $\beta$}
We let $x_i$ be a general $\beta$-ensemble with potential $V$ satisfying the hypotheses in Section \ref{sec:betaresult}.  We let $y_i$ be an independent Gaussian $\beta$-ensemble.  We consider the coupled flows
\beq
\d x_i = \frac{ \sqrt{2} \d B_i } { \sqrt{ \beta N}} + \frac{1}{N} \sum_{j} \frac{1}{ x_i - x_j } \d t - \frac{V' (x_i )}{2} \d t
\eeq
and
\beq
\d y_i = \frac{ \sqrt{2} \d B_i } { \sqrt{ \beta N }} + \frac{1}{N} \sum_j \frac{1}{ y_i - y_j } \d t - \frac{ y_i }{2} \d t.
\eeq
These flows leave the distribution of $\{ x_i\}_i$ and $\{ y_i\}_i$ invariant (however, they obviously do not leave the joint distribution of $\{x_i, y_i \}_i$ invariant).  For notational simplicity we only consider eigenvalues near the index $i_0 = N/2$; the general case proceeds via the same proof.  Note that we do not need to perform the re-indexing argument of Section \ref{sec:resc} because we are in the one-cut case:  we only ever consider $i_0 \in [[ \alpha N,  (1 - \alpha ) N]]$ for a fixed $\alpha >0$, where both the $\beta$-ensemble $V$ and the Gaussian $\beta$-ensemble both exhibit bulk statistics.

We can re-scale and translate the $x_i$ so that the equilibrium density satisfies
\beq
\int_{-\infty}^0 \rho_V (x) \d x = 1/2, \qquad \rho_V (0) = \rhosc (0).
\eeq
If $\gV$ and $\gsc$ are the classical eigenvalue locations of $\rho_V$ and $\rhosc$ respectively, then we have
\beq
| \gV_i - \gsc_i | \leq \frac{C}{N}, \qquad \mbox{for } |i| \leq \sqrt{N}
\eeq
where we have once again shifted indices so that $\gV_0 = \gsc_0=0$ and the indices run over $[[-N/2, N/2]]$.

Recall the equilibrium equation
\beq
\frac{V'(x)}{2} = - \int \frac{ \rho_V (y) \d y }{ y-x}, \qquad \frac{x}{2} = - \int \frac{ \rhosc (y) \d y }{ y-x} .
\eeq

Fix now a parameter $\ell = N^{\om_\ell }$ with
$
\om_\ell < 1/2 
$
 and a $t_1 = N^{\om_1}/N$ with 
$
\om_1 \leq \om_\ell / 4
$. 
We define the following short-range index set $\E$ by
\beq
\E := \{ |i-j| \leq \ell \} \cup \{ ij > 0, |i| \geq N/4,  |j| \geq N/4 \}.
\eeq
We introduce the notation
\beq
\sumEi_j := \sum_{j : (i, j ) \in \E }, \qquad \sumEic_j := \sum_{ j : (i, j) \notin \E }.
\eeq
We consider the process $\xhat_i$ defined by
\beq
\d \xhat_i = \frac{ \sqrt{2} \d B_i } { \sqrt{ \beta N }} + \frac{1}{N} \sumEi_j \frac{1}{ x_i - x_j + \eps_{ij}} \d t + \1_{\{ |i| > N/5\}} \left( \sumEic_j \frac{1}{ x_i - x_j }  - \frac{ V' (x_i ) }{2} \right) \d t
\eeq
where
\beq
\eps_{ij} = N^{-500}, i >j \qquad \eps_{ij} = - N^{-500}, i < j.
\eeq
We will need some level repulsion estimates and the following event $\F_\eps$.  For $\eps >0$ we let $\F_\eps (t)$ be the event that for all $i$ we have
\beq
| x_i (t) - \gV_i | + | y_i (t) - \gsc_i | \leq \frac{ N^\eps}{ N^{2/3} (N/2-|i| +1)^{1/3} }
\eeq
and let
\beq
\F_\eps := \bigcap_{ 0 \leq t \leq 1 } \F_\eps (t).
\eeq
By the rigidity estimates from \cite{bourgade2014universality} and the argument in Appendix \ref{a:cont} we see that $\F_\eps$ holds with overwhelming probability.   The following level repulsion estimates follow from \cite{Gap}.
\bel \label{lem:lr}
There is an $\eps_{LR} >0$ and a small $\delta >0$ so that the following holds.  For $ s \geq \e^{ - N^{\eps_{LR}}}$ we have for any $\eps >0$,
\beq
\pp [ \F_\delta (t) \cap |x_i (t) - x_{i+1} (t) | \leq s/N ] \leq C N^\eps s^2, \qquad\pp [ \F_\delta ( t) |y_i (t) - y_{i+1} (t)  | \leq s/N ] \leq C N^\eps s^2.
\eeq
For any $s >0$ we have
\beq
\pp [ \F_\delta (t) \cap |x_i (t) - x_{i+1} (t)  | \leq s/N ] \leq C N^3 s^2, \qquad\pp [ \F_\delta (t)  |y_i (t) - y_{i+1} (t)  | \leq s/N ] \leq C N^3 s^2.
\eeq
\eel
We have the following estimate.  It is proven by working on the event $\F_\eps$ and combining the proof of (3.7) of \cite{homogenization} (see also the related Lemma 4.4 of \cite{landonyau}) with the proof of Lemma \ref{lem:shortrange}.
\bel  \label{lem:xhatxest} Let $\eps >0$. There is an event with probability at least $1- N^{-300}$ on which
\beq
\sup_{ 0 \leq t \leq 10 t_1} \sup_i | \xhat_i - x_i | \leq t_1N^\eps \left( \frac{\ell}{N} + \frac{1}{ \ell } \right).
\eeq
\eel

\subsubsection{Finite speed estimates and profile of $\UB$}
We define now the operator $\B$ by 
\beq
( \B u )_i := \frac{1}{N} \sumEi_j \frac{u_j - u_i }{ ( x_i - x_j + \eps_{ij} ) ( y_i - y_j + \eps_{ij} ) }.
\eeq
Let
\beq
B_{ij} := \frac{1}{N} \frac{1}{ ( x_i - x_j + \eps_{ij} ) ( y_i - y_j + \eps_{ij} ) }.
\eeq
We need a finite speed estimate analogous to Theorem \ref{thm:finitespeed}.  The main difference is that we can only prove the estimate on an event of polynomially high probability, instead of overwhelming probability.
\bel \label{lem:betashort}
Let $0 < q < 1$ and let $|a| \leq q N /4$.  Let $\delta >0$.  There is an event $\F_a$ with $\pp [ \F_a ] \geq 1 - N^{ -c \delta }$ on which the following holds.  For all $0 \leq s \leq t \leq \ell/N$, we have
\beq
\UB_{ij} (s, t) + \UB_{ji} (s, t) \leq \e^{ - N^{ c' \delta } }, \qquad i \leq a - \ell N^\delta, \mbox{ and } j \geq a + \ell N^{\delta}.
\eeq
\eel
\proof It is more convenient to adapt the proof of \cite{Gap} instead of the proof of Theorem \ref{thm:finitespeed}.   We can assume that the event $\F_{\eps_1}$ holds with a small $\eps_1 >0$.  Fix $\delta>0$.  Let 
$
b := a - \ell N^{  \delta }. 
$
Define
\beq
\phi_j := \e^{  \nu \psi_j }, \qquad \psi_j := \frac{1}{N} \min \{ (j-b)_+, (a-b) \}.
\eeq
%
Note that
\beq
| \psi_j - \psi_k | \leq \frac{ |j-k|}{N}
\eeq
for any $j, k$.  Let $r_j (u)$ satisfy
\beq
\del_u r = \B r
\eeq
with initial condition $r_j (s) = \1_{ \{ j \leq b \} }$, i.e.,
\beq
r_j (u) = \sum_{ k \leq b } \UB_{jk} (s, u).
\eeq
Define $f(u)$ by
\beq
f (u) = \sum_j \phi_j r_j^2 (u).
\eeq
Following \cite{Gap}, we differentiate $f$ and obtain
\begin{align}
f' (u) &= 2 \sum_i \phi_i \sumEi_j r_j (u) B_{ij} (r_j-r_i )  = \sum_{ (i, j) \in \E} B_{ij} (r_j - r_i ) [ r_i \phi_i - r_j \phi_j ] \notag\\
&= \sum_{ (i, j) \in \E } B_{ij} (r_j - r_i ) \phi_i (r_i - r_j ) + \sum_{ (i, j) \in \E } B_{ij} (r_j - r_i ) [ \phi_i-\phi_j ] r_j.
\end{align}
As in \cite{Gap}, we use Schwarz on the second sum, absorbing the part quadratic in $r$ into the first sum which is negative.  We obtain
\beq
f' (u) \leq C \sum_i \sumEi_j B_{ij} \phi_i^{-1} r^2_i (\phi_i - \phi_j )^2.
\eeq
By definition of $\psi$ we see that $\psi_j = \psi_i$ if $|i- b | \geq 2 \ell N^\delta$ and $(i, j) \in \E$.  Hence,
\beq
f' (u) \leq C \sum_{ i : |i-b| \leq 2 \ell N^\delta }  \sum_{j : |i-j | \leq \ell }  B_{ij} \phi_i r_i^2 \left( \frac{ \phi_j }{ \phi_i } -1 \right)^2.
\eeq
On the condition that $ \nu \ell \leq C N$ 
we get for any $\eps >0$
\begin{align}
f' (u) &\leq C \sum_{ i : |i-b| \leq 2 \ell N^\delta } \sum_{j : |i-j| \leq \ell } B_{ij} \frac{ |i-j|^2 }{ N^2} \phi_i r_i^2 \nu^2 \notag\\
&= C \nu^2 \sum_{ i : |i-b| \leq 2 \ell N^\delta}  \phi_i r_i^2 \sum_{ j : |j-i | \leq N^\eps } B_{ij} \frac{ |i-j|^2}{N^2} +  C \nu^2 \sum_{ i : |i-b| \leq 2 \ell N^\delta}  \phi_i r_i^2 \sum_{ j : \ell \geq |j-i | > N^\eps } B_{ij} \frac{ |i-j|^2}{N^2}.
\end{align}
The second term we can estimate by rigidity and obtain
\beq
 \nu^2 \sum_{ i : |i-b| \leq 2 \ell N^\delta}  \phi_i r_i^2 \sum_{ j : \ell \geq |j-i | > N^\eps } B_{ij} \frac{ |i-j|^2}{N^2} \leq C \frac{ \nu^2 \ell}{N} f(u).
\eeq
The first term we estimate by
\beq
\nu^2 \sum_{ i : |i-b| \leq 2 \ell N^\delta}  \phi_i r_i^2 \sum_{ j : |j-i | \leq N^\eps } B_{ij} \frac{ |i-j|^2}{N^2} \leq f (u) \nu^2 \left( \sum_{ i : |i-b| \leq 2 \ell N^\delta }  \sum_{ |i-j| \leq N^\eps }\frac{ B_{ij} |i-j|^2}{N^2} \right).
\eeq
We obtain by Gronwall that, using $t \leq \ell/N$,
\beq
f(u) \leq \exp\left[ C \frac{ \nu^2 \ell^2}{N^2}  + C \nu^2 \int_{0}^{\ell/N} \sum_{ i : |i-b| \leq 2 \ell N^\delta }  \sum_{ |i-j| \leq N^\eps }\frac{ B_{ij} |i-j|^2}{N^2} \d t \right] f(s).
\eeq
By the level repulsion estimates of Lemma \ref{lem:lr}, the fact that we are working on $\F_{\eps_1}$, and Markov's inequality there is an event with probability at least $1 - N^{-\eps/2}$ on which
\beq
\int_{0}^{\ell/N} \sum_{ i : |i-b| \leq 2 \ell N^\delta }  \sum_{ |i-j| \leq N^\eps }\frac{ B_{ij} |i-j|^2}{N^2}  \leq C \frac{N^{4 \eps}  \ell^2 N^{ \delta}}{N^2}.
\eeq
Hence, if we take
\beq
\nu = \frac{ N}{ \ell N^{2 \eps} N^{\delta/2} }
\eeq
we see that 
\beq
f(u) \leq C f(s) \leq C N
\eeq
with probability at least $1 - N^{-\eps/2}$.  By definition we have for $ j \geq b$ and by our choice of $\nu$,
\beq
\nu \psi_j = \nu \frac{ \ell N^\delta}{N} =  N^{\delta/2 - 2 \eps }
\eeq 
Taking $\eps = \delta / 8$ we get the claim. \qed

It is not too hard to adapt the arguments of Section \ref{sec:duham} to prove the following.
\bel 
Let $\eps >0$ and $\delta >0$.  Fix $a, b$ with $a \leq b$ and $|a|, |b| \leq q_1 N /4$ with $0 < q_1 < 1$.  Assume $N^{\eps_1} < b - a < N^{1- \eps_1}$.  There is an event $\F_{ab}$ with $\pp[ \F_{ab}] \geq 1 - N^{ - c \eps} + N^{ - c \delta}$ on which the following holds.
\beq
\UB_{ij} (s, t) + \UB_{ji} (s, t) \leq \frac{N^\eps}{N} \frac{ t -s + 1/N}{  ((b-a)/N )^2 }
\eeq
for every $i \leq a$ and $j \geq b$ and $0 \leq s \leq t \leq \min\{ |a-b| N^{-\delta}/N, 10 t_1 \} $.
\eel
Arguing in a dyadic fashion, this implies the following estimate.
\bel \label{lem:betaprofile3}
Let $\eps >0$ and $\delta >0$.  Let $|a| < q_1 N/4$ with $0 < q_1 < 1$.  There is an event $\F_a$ with $\pp[ \F_a] \geq 1 - N^{ - c \eps} + N^{ - c \delta }$ on which the following estimates hold.
\beq
\UB_{ij} (s, t) + \UB_{ji} (s, t) \leq \frac{N^\eps}{N} \frac{ t -s + 1/N}{  ((a-j)/N )^2 }
\eeq
for any $(i, j, s, t)$ satisfying $i \leq a$ and $j \geq a + N^\eps$ and $0 \leq s \leq t \leq \min\{ N^{-\delta} |i-j| / N, 10 t_1 \}$.
\eel
The proof of the following is a straightforward modification of Lemma \ref{lem:energ}.
\bel \label{lem:betasupbd}
Let $\eps >0$.  Let $0 < q < 1$.  There is an event $\F$ with $\pp [ \F ] \geq 1 - N^{- c \eps }$ on which we have the following estimates.  For all $|a| \leq q N/4$ and $|b| \leq q N/ 4$, and $0 \leq s \leq t \leq \ell/N$,
\beq
\UB_{ab} (s, t) \leq \frac{ N^\eps}{ N (t-s) }.
\eeq
\eel
Combining the previous two lemmas yields the following estimate.
\bel \label{lem:betaprofile}
Let $\eps >0$ and $0 <q_1 < q_2 <1$.  Fix $a$ with $|a| \leq q_1 N/4$.  Fix $0 \leq s \leq 10 t_1$.  There is an event $\F_a (s)$ with probability $\pp [ \F_a (s)] \geq 1 - N^{- c \eps}$ on which the following estimates hold. 
\beq
\UB_{ja} (s, t) + \UB_{aj} (s, t) \leq \frac{N^\eps}{N} \frac{ t-s+1/N}{ (( a-j )/N)^2 + ( t -s )^2 + 1/N^2 } 
\eeq
for every $|j| \leq q_2 N /4$ and $t$ satisfying $s \leq t \leq 10 t_1$.  
\eel
\remark Alternatively, one may fix $t$ and let $s$ vary instead.
We will also later need the following slight variant.
\bel \label{lem:betaprofile2}
Let $\eps >0$, and fix $ 0 < q_1 < q_2 < 1$.  Fix $a$ with $|a| \leq q_1 N/4$.  Fix a scale $t_3 = N^{\om_3}/N$, with $\om_3 \leq \om_1$.  Fix $s$.  There is an event with probability at least $1-N^{-c \eps}$ on which
\beq
\UB_{ja} (u, t) + \UB_{aj} (u, t) \leq \frac{N^\eps}{N} \frac{ t_3}{ (( a-j )/N)^2 + t_3^2 } 
\eeq
for every $j$ and $u, t$ satisfying $|j| \leq q_2 N/4$ and  $s \leq u \leq t \leq s + 10  t_3$, and $|t-u | \geq t_3/10$.
\eel

\subsubsection{Parabolic equation}
Let $u_i = \xhat_i - \yhat_i$.  Then $u_i$ satisfies the equation
\beq
\del_t u_i = \sumEi_j B_{ij} (u_j - u_i ) + \xi_i + A_i
\eeq
where
\beq
\xi_i = \sumEi_j B_{ij} (  ( x_i - x_j ) - ( \xhat_i - \xhat_j ) - (y_i - y_j ) + ( \yhat_i - \yhat_j ) ) 
\eeq
and
\beq
A_i := \1_{\{ |i| > N/5\}} \left( \sumEic_j \frac{1}{ x_i - x_j }  - \frac{ V' (x_i ) }{2} \right)  - \1_{\{ |i| > N/5\}} \left( \sumEic_j \frac{1}{ y_i - y_j }  - \frac{ y_i }{2} \right) .
\eeq
Write $\xi_i$ as
\beq
\xi_i = \xione_i + \xitwo_i
\eeq
where 
\beq
\xione_i := \sumEi_{ j : |i-j| \leq N^\eps} B_{ij}(  ( x_i - x_j ) - ( \xhat_i - \xhat_j ) - (y_i - y_j ) + ( \yhat_i - \yhat_j ) ) ,
\eeq
and
\beq
\xitwo_i := \sumEi_{ j : |i - j | > N^\eps } B_{ij} (  ( x_i - x_j ) - ( \xhat_i - \xhat_j ) - (y_i - y_j ) + ( \yhat_i - \yhat_j ) ) .
\eeq
Due to rigidity and Lemma \ref{lem:xhatxest} we have with probability at least $1 - N^{-290}$ that
\beq \label{eqn:xi2estbeta}
|| \xitwo ||_\infty \leq N^\eps N t_1 \left( \frac{\ell}{N} + \frac{1}{\ell} \right).
\eeq
Using Lemmas \ref{lem:betaprofile3} and \ref{lem:betasupbd} we see that with probability at least $1 - N^{- c \eps } $, 
\beq
\sup_{ 0 \leq t \leq 10 t_1 } \left| \int_0^{t} \sum_j \UB_{aj} (s, t) \xi_j (s) \d s \right| \leq \sum_{ |j-a| \leq N t_1 N^\eps } \int_0^{ 10 t_1} | \xi_j (s)| \d s  + \sum_{|j-a | > N t_1 N^\eps} \int_0^{ 10 t_1} \frac{ N t_1 | \xi_j (s) | }{  ( j -a)^2 }.
\eeq
Hence, using Lemma \ref{lem:xhatxest}, the inequality \eqref{eqn:xi2estbeta} and Markov inequality to deal with the $\xione$ part we see that there is, for each index $a$, an event with probability at least $\pp [ \F_a ] \geq 1 - N^{-c \eps}$ on which
\beq \label{eqn:ubxi}
\sup_{0 \leq t \leq 10 t_1} \left| \int_0^{t} \sum_j \UB_{aj} (s, t) \xi_j (s) \d s \right| \leq \frac{N^{4 \eps}}{N} ( N t_1)^3 \left( \frac{\ell}{N} + \frac{1}{ \ell } \right).
\eeq
Define $v_i$ by
\beq
\del_t v = \B v, \qquad v(0) = u (0).
\eeq
For each $|a| \leq N/6$ we see that by \eqref{eqn:ubxi}  (using Lemma \ref{lem:betashort} to deal with the $A_i$ term) and the Duhamel formula that there is an event $\F_a$ with probability at least $1 - N^{- c \eps}$ on which
\beq
\sup_{0 \leq t \leq t_1} |v_a (t) - u_a (t) | \leq \frac{N^\eps}{N} ( N t_1 )^3 \left( \frac{ \ell}{N} + \frac{1}{ \ell } \right).
\eeq

\subsubsection{Initial data cut-offs}
As in Section \ref{sec:homog}, we can perform initial data cut-offs.  Let $\eps_a > 0$.  We have the following.
\bel \label{lem:betainitcutoff}
Let $\eps >0$. Let $|a| \leq \sqrt{N}$.  Let $\eps > 0$.   There is an event $\F_a$ with probability $\pp [ \F_a ] \geq 1 - N^{-c \eps}$ on which the following estimates hold.  For all $ 0 \leq t \leq t_1$,
\beq
x_a (t) - y_a (t) = \sum_{ |j - a | \leq N t_1 N^{\eps_a} } \UB_{aj} (0, t) ( x_j (0) - y_j (0) ) +\frac{ N^\eps}{N} \O \left( \frac{1}{ N^{\eps_a}} +  ( N t_1 )^3 \left( \frac{\ell}{N} + \frac{1}{ \ell } \right) \right).
\eeq
\eel

\subsubsection{Homogenization}
We now proceed as in Section \ref{sec:actualhomog}.  Fix an $\eps_B >0$.  Fix an index $a$ s.t. 
\beq
|a| \leq N^{1/2-\eps_B}.
\eeq
Define $w_i$ by
\beq
\del_t w = \B w , \qquad w_i (0) = N \delta_a (i).
\eeq
Let $p_t (x, y)$ be as in Section \ref{sec:actualhomog}.  Fix 
\beq
N^{-1} \ll s_0 \ll s_1 \ll t_1 .
\eeq
Recall the flat eigenvalue locations $\gf_j$.  We define $f (x, t)$ by
\beq
f (x, t) = \sum_j \frac{1}{N} p_{s_0 + t - s_1 -s_2 } (x, \gf_j ) w_j (s_1  )
\eeq
and $f_i (t)$ by
$
f_i (t) := f ( y_i (t), t ). 
$
Note that here it ends up being more convenient to use $y_i$ and not $\yhat_i$ above, as opposed to in Section \ref{sec:homog}.

The proof of Lemma \ref{lem:initell2} goes through without change and we have the following.
\bel \label{lem:betainit}  Let $\eps >0$ and $\eps_B > 0$ and $\eps_1 >0$.  There is an event $\F_a$ with $\pp [ \F_a ] \geq 1 - N^{-c \eps}$ on which the following estimate holds.
\begin{align} \label{eqn:initell2}
& ||w (s_1) - f (s_1) ||_2^2 \notag\\
\leq &s_0C  \sum_{|i|\leq N^{1/2-\eps_B}+N^{\om_\ell+\eps_1}} \sum_{|i-j| \leq \ell} \frac{ (w_i (s_1) - w_j (s_1 ))^2}{ (i-j)^2} + N^{\eps} \left( \frac{1}{ (N s_0)^2} + \frac{ (N s_0)^2}{\ell^2} \right) \frac{1}{s_1}.
\end{align}
\eel
Due to the lack of overwhelming probability in our events, we need to argue somewhat differently in order to prove the analog of Lemma \ref{lem:maincalc}.  In particular, we will take the expectation of a martingale, and so we need to introduce the following stopping time denoted by $\tau$.   It is constructed as the minimum of the following $6$ stopping times.    The definition is a little complicated as we need a variety of estimates to hold for the calculations of Lemma \ref{lem:maincalc}.

  Let $\eps_\tau >0$ and $\eps_1 >0$.  First we define the stopping time $\tau_1$ by
\beq
\tau_1 := \inf \{ u \geq s_1  : \exists i, |i| \geq N^{1/2} : w_i > \e^{ - N^{\eps_1/2}} \mbox{ or } f_i > \e^{ - N^{ \eps_1/2}} \}
\eeq
and then the stopping time $\tau_2$ by
\beq
\tau_2 := \inf \left\{ u \geq s_1  : \exists i : w_i (u) \geq \frac{ N^{\eps_\tau} (u - s_1 + 1/N ) }{ ( (a-i)/N)^2 + ( u-s_1 )^2 + 1/N^2 }  \right\}.
\eeq
We define the stopping time
\begin{align}
\tau_3 := &\inf \bigg\{ u \geq s_1 : \exists i :  | \yhat_i (u) - \gsc_i |  \mbox{ or } | y_i (u) - \gsc_i| \mbox{ or } \notag\\
& | \xhat_i (u) - \gV_i | \mbox{ or } | x_i (u) - \gV_i | >\frac{ N^{\eps_\tau/10}} {  ( N/2 - |i|)^{1/3} N^{2/3} }  \bigg\}
\end{align}
and the stopping time 
\begin{align}
\tau_4 := &\inf \bigg\{ u \geq s_1  : \int_{s_1}^u \frac{1}{ N (t - s_1 + s_0 )^2 (t + s_1 ) } \frac{1}{N^2} \sum_{ |j-i| \leq N^{\epst}, |i| \leq N^{1/2} } \frac{1}{ |y_i - y_j + \eps_{ij} | } + \frac{1}{ |x_i -x_j +\eps_{ij} | }\notag\\
 &\times\left( \frac{ t }{ t^2 + (( i-a)/N)^2 } + \frac{ ( s_1  ) \vee ( t -s_1  + s_0 ) }{ ( (i-a)/N)^2 +( s_1 \vee ( t-s_1  + s_0 ) )^2 } \right) \d t \geq \frac{ N^{2 \epst}}{ N s_0 s_1 }  \bigg\}
\end{align}
and the stopping time
\beq
\tau_5 = \inf \bigg\{ u:   u \geq s_1  : \int_{s_1}^u \sum_{ i, j}  \frac{1}{ | x_i - x_j  | } + \frac{1}{ |y_i - y_j | }  + \frac{1}{ |y_i - y_j + \eps_{ij} ||x_i - x_j + \eps_{ij} | } \d t \geq N^{10}   \bigg\},
\eeq
and finally
\beq
\tau_6 = \inf\bigg\{ u : u \geq s_1 : \int_{s_1 } \sum_{i,j} \left|  \frac{1}{ y_i - y_j }  - \frac{1}{ y_i - y_j +\eps_{ij} } \right| \d t \geq \frac{1}{ N^{50}} \bigg\}
\eeq
We set
\beq
\tau := \tau_1 \wedge \tau_2 \wedge \tau_3 \wedge \tau_4 \wedge \tau_5 \wedge \tau_6 \wedge 10 t_1.
\eeq
Lemmas \ref{lem:betashort} and \ref{lem:betaprofile} (for $\tau_6$ see the proof of (3.7) of \cite{homogenization}) imply that
\beq
\pp [ \tau < 10 t_1 ] \leq N^{ -c \eps_1} + N^{ -c \eps_\tau }.
\eeq

For $s_1  \leq t \leq \tau$ we have
\begin{align}
f (x, t) &\leq \frac{C N^\epst }{N} \sum_j \frac{ (t + s_0 ) } { (j/N - x)^2 + ( t + s_0 )^2 } \frac{ s_1  }{ j^2/N^2 + (s_1  )^2 } \notag\\
&\leq C N^\epst\int \frac{ t+s_0 } { (y-x)^2 + ( t+ s_0 )^2 } \frac{ s_1 }{ y^2 + (s_1  )^2 } \d y  \leq C N^\epst \frac{ ( t+s_0 ) \vee (s_1  ) }{ x^2 + ( ( t + s_0 )\vee (s_1  ) )^2 }.
\end{align}
Here we used that for $s \leq t$ and $x >0$, that if $x > t$,
\begin{align}
\int \frac{ t}{ (x-y)^2 + t^2} \frac{ s}{ y^2 + s^2} \d y &= \int_{ y \leq x/2} + \int_{ y \geq 3x/2} + \int_{ x/2 < y < 3x/2}  \frac{ t}{ (x-y)^2 + t^2} \frac{ s}{ y^2 + s^2} \d y \notag\\
&\leq C \frac{t}{x^2} \left( \int_{ y < x/2} + \int_{y > 3x/2} \frac{s}{y^2+s^2} \d y \right) + \frac{s}{x^2} \int_{ x/2 < y < 3x/2} \frac{ t}{ (y-x)^2 + t^2 } \d y \notag\\
&\leq C \frac{t}{x^2 + t^2}
\end{align}
and if $|x| \leq t$,
\beq
\int \frac{ t}{ (x-y)^2 + t^2} \frac{ s}{ y^2 + s^2} \d y \leq \frac{C}{t} \int \frac{s}{y^2 + s^2} \d y \leq \frac{ C t}{ t^2 + x^2}.
\eeq

\bel \label{lem:betamain}
We have
\beq
\d \frac{1}{N} \sum_i (w_i - f_i )^2 = - \frac{1}{2} \langle w -f, \B ( w - f ) \rangle + X_t \d t + \d M_t.
\eeq
The term $M_t$ is a martingale.  For $X_t$ we have
\beq
\int_{s_1 }^{ u} X_t \d t \leq \frac{1}{5} \int_{s_1}^u \langle (w-f), \B (w-f ) \rangle \d t + \frac{N^{8 \epst}}{ s_1} \left( \frac{1}{ (N s_0 )^{1/2} }\right),
\eeq
for any $u$  with $ s_1 \leq u \leq \tau$.
\eel
\proof For $t \leq \tau$ the bounds \eqref{eqn:fders} to \eqref{eqn:finf} hold, and we also have 
\beq
|f_i | + w_i \leq \e^{ - N^{\eps_1/2}}, \qquad |i| \geq N^{1/2}.
\eeq
We will use these tacitly in the proof.  Using the Ito formula we calculate
\begin{align}
&\d \frac{1}{N} \sum_i (w_i - f_i )^2 = \frac{1}{N} \sum_i \left[ (w_i - f_i ) \left( \del_t w_i \d t - \del_t f_i \d t - f_i' \d \haty_i - f''_i \frac{ \d t }{N} \right) - (f_i')^2 \frac{ \d t }{N} \right] .
\end{align}
The Ito terms are handled as before and we get
\beq
 \int_{s_1+s_2}^{\tau}  \frac{1}{N} \sum_i \left| (w_i - f_i ) f''_i  + (f'_i)^2 \right| \frac{ \d t}{N}  \leq N^{\epst} \frac{1}{ N s_0 s_1 }.
\eeq
We make the same calculation as in Lemma \ref{lem:maincalc} and write
\begin{align} 
\frac{1}{N} \sum_i (w_i - f_i ) ( \del_t w_i - ( \del_t f)_i ) &= - \frac{1}{2} \langle w-f, \B (w-f ) \rangle \notag\\
&+ \frac{1}{N} \sum_i (w_i - f_i ) \Bigg( \sumEi_j B_{ij} (f_j - f_i ) - \int_{ |y - \yhat_i  | \leq \eta_\ell } \frac{ f(y ) - f ( y_i ) }{ (y_i - y )^2} \rhosc (0) \d y \Bigg).\label{eqn:bet1}
\end{align}
Fix $0 < \om_{\ell,2} < \om_\ell$ and define
\beq
\E_2 := \{ (i, j) : |i - j | \leq N^{ \om_{\ell,2}} \} \cup \{ (i, j) : ij > 0, |i| \geq N/4, |j| \geq N/4 \}.
\eeq
We then write the second term in \eqref{eqn:bet1} as 
\begin{align}
&\frac{1}{N} \sum_i (w_i - f_i ) \left( \sumEi_j B_{ij} (f_j - f_i ) - \int_{ |y - \yhat_i  | \leq \eta_\ell } \frac{ f(y ) - f ( y_i ) }{ (y_i - y )^2} \rhosc (0) \d y \right) \notag\\
=& \frac{1}{N} \sum_i (w_i - f_i ) \left( \sumEti_j B_{ij} (f_j - f_i ) \right) \label{eqn:bet2} \\
+& \frac{1}{N} \sum_i (w_i - f_i ) \left( \sumEthi_j B_{ij} (f_j - f_i ) - \int_{ | \yhat_i - y | \leq \eta_\ell } \frac{ f (y) - f ( y_i ) }{ ( y_i - y)^2} \rhosc (0) \d y \right).\label{eqn:bet3}
\end{align}
Let
$
v_i := w_i - f_i.
$
We first turn to \eqref{eqn:bet2}.  For $|i| \leq N^{1/2}$ we have
\begin{align}
  \sumEti_j B_{ij} (f_j - f_i ) &= \frac{1}{N} \sumEti_j \frac{ f'_i}{ (x_j - x_i + \eps_{ij} ) } -  \sumEti_j B_{ij} \eps_{ij} f'_i \notag\\
  &+ \frac{N^{2\epst}}{N (t-s_1  + s_0 )^2 (t+s_1 ) } \O \left(  N^{\om_{\ell,2}} + \frac{1}{N} \sum_{ |j-i| \leq N^{\epst} } \frac{  1}{ |x_i -x_j + \eps_{ij} |}\right) . \label{eqn:bet4}
\end{align}
For $|i| > N^{1/2}$ we use
\begin{align}
\sumEti_j B_{ij} (f_j - f_i ) &= \frac{1}{N} \sumEti_j \frac{ f'_i}{ x_j - x_i }  - \sumEti_j B_{ij} \eps_{ij} f'_i + N^2 \O \left(  N+ \sum_{ |j-i| \leq N^{\epst} } \frac{  1}{ |x_i -x_j + \eps_{ij} |}\right) \label{eqn:bet5}.
\end{align}
Using \eqref{eqn:bet4} and \eqref{eqn:bet5}, and the definition of $\tau$ we see that for the term \eqref{eqn:bet2} we have
\begin{align}
\int_{s_1}^{u} \frac{1}{N} \sum_i (w_i -f_i ) \left( \sumEti_j B_{ij} (f_j - f_i ) \right) \d t &= \int_{s_1}^{u} \frac{1}{N^2} \sum_i v_i \sumEti_j \frac{ f'_i}{ x_i -x_j + \eps_{ij} } \d t \label{eqn:bet6} \\
&+ \O \left( \frac{ N^{\om_{\ell,2} } N^{4 \epst}}{ N s_1 s_0 } \right),
\end{align}
for any $u \leq \tau$.  We now handle \eqref{eqn:bet6}.  We rewrite it as
\begin{align}
\frac{1}{N^2} \sum_i v_i \sumEti_j \frac{ f_i'}{ x_i - x_j + \eps_{ij} } = \frac{1}{2} \frac{1}{N^2} \sum_{ (i, j) \in \E_2} \frac{ (v_i -v_j ) f_i' + v_j ( f_i' -f_j' ) }{ x_j -x_i + \eps_{ij} }  \label{eqn:bet7}.
\end{align}
The second term on the RHS of \eqref{eqn:bet7} is bounded by
\begin{align}
&\left| \frac{1}{2} \frac{1}{N^2} \sum_{ (i, j) \in \E_2} \frac{ v_j (f_i' - f_j')}{ x_i - x_j + \eps_{ij} }  \right| \leq \left| \frac{1}{2} \frac{1}{N^2} \sum_{ | i - j  | \leq N^{ \om_{\ell,2}}, |i| \leq N^{1/2} } \frac{ v_j (f_i' - f'_j ) }{ x_j - x_i + \eps_{ij} } \right|\notag\\
& + \e^{ - N^{\eps_1/2} } N^4 \left(1 + \sum_{|i-j| \leq N^{\epst}} \frac{ |y_i - y_j |}{ |x_i -x_j + \eps_{ij}|} \right) \notag\\
&\leq \frac{  N^{ 2 \epst}}{ N (t - s_1 + s_0 )^2 (t+s_1 ) } \bigg\{ N^{\om_{\ell,2}}  \notag\\
&+\frac{1}{N} \sum_{ |i-j| \leq N^{\epst} , |i| \leq N^{1/2} } \frac{ |y_i -y_j |}{ |x_i -x_j + \eps_{ij} | }  \left( \frac{t}{( (a-i)/N)^2 + t^2 } + \frac{ (t+s_0) \vee (s_1  ) }{ ((a-i)/N)^2 + ( (s_1  )\vee ( t + s_0 ) )^2 } \right) \bigg\} \notag\\
&+ \e^{ - N^{\eps_1/2} } N^4 \left(1 + \sum_{|i-j| \leq N^{\epst}} \frac{ |y_i - y_j |}{ |x_i -x_j + \eps_{ij}|} \right).
\end{align}
Hence, by the definition of $\tau$ we see that for any $u \leq \tau$
\beq
\int_{s_1}^u \left| \frac{1}{2} \frac{1}{N^2} \sum_{ (i, j) \in \E_2} \frac{ v_j (f_i' - f_j')}{ x_i - x_j + \eps_{ij} }  \right| \d t \leq \frac{ N^{5 \epst} N^{\om_{\ell,2}}}{ N s_0 s_1 }.
\eeq
The first term on the RHS of \eqref{eqn:bet7} is bounded using the Schwarz inequality.  We obtain
\begin{align}
&\left| \frac{1}{2} \frac{1}{N^2} \sum_{ (i, j) \in \E_2 } \frac{ (v_i -v_j ) f_i'}{ x_j - x_i + \eps_{ij} } \right| \leq \frac{1}{N} \frac{1}{10} \sum_{ (i, j) \in \E_2 } B_{ij} (v_i - v_j )^2 + \frac{C}{N^2} \sum_i ( f_i')^2 \sumEti_j \frac{ |y_i - y_j + \eps_{ij} | }{ | x_i - x_j + \eps_{ij} | } \notag\\
&\leq \frac{1}{N} \frac{1}{10} \sum_{ (i, j) \in \E_2 } B_{ij} (v_i - v_j )^2 + C \frac{ N^{ 2 \eps_\tau} N^{\om_{\ell,2}}}{ N ( t - s_1  + s_0 )^2 ( t + s_1 ) } \notag\\
&+ \frac{N^{ 3 \epst}}{ N^3 (t -s_1 + s_0)^2 (t + s_1 ) } \sum_{ |i - j | \leq N^{ \epst} , |i| \leq N^{1/2} } \frac{ 1 }{ | x_i - x_j + \eps_{ij} | }   \frac{ (t+s_0) \vee (s_1  ) }{ ((a-i)/N)^2 + ( (s_1  )\vee ( t + s_0 ) )^2 } \notag\\
&+ \e^{ - N^{\eps_1/10}} \sum_{ |i - j | \leq N^{ \epst} , |i| > N^{1/2} } \frac{1 }{ | x_i - x_j + \eps_{ij} | } .
\end{align}
Hence by the definition of $\tau$ we see that for any $u \leq \tau$,
\begin{align}
\int_{s_1}^u \left| \frac{1}{2} \sum_{ (i, j) \in \E_2 } \frac{ (v_i -v_j ) f_i'}{ x_j - x_i + \eps_{ij} } \right| \d t &\leq \frac{1}{10} \int_{s_1}^u \langle w-f, \B (w-f ) \rangle \d t + \frac{ N^{3 \epst} N^{ \om_{\ell,2}}}{ N s_1 s_0 }.
\end{align}
This finishes the estimate for \eqref{eqn:bet2}.  The estimate for \eqref{eqn:bet3} is handled using rigidity in the same manner as the term \eqref{eqn:timederer4} is handled in the proof of Lemma \ref{lem:maincalc} ( Note that for $|i| \leq N^{1/2}$ we have that $| \gsc_i - \gV_i | \leq C/N$ so the change from $(\hatz_i - \hatz_j )^{-2}$ to $(x_i -x_j + \eps_{ij} )^{-1}( y_i - y_j + \eps_{ij} )^{-1}$ does not affect anything; we use the exponential bound for the terms $|i| > N^{1/2}$ so we can discard them).  We obtain
\beq
\int_{s_1+s_2}^\tau \left| \frac{1}{N} \sum_i (w_i - f_i ) \left( \sumEthi_j B_{ij} (f_j - f_i ) - \int_{ | \yhat_i - y | \leq \eta_\ell } \frac{ f (y) - f ( y_i ) }{ ( y_i - y)^2} \rhosc (0) \d y \right) \right| \d t \leq \frac{ N^{2 \epst}}{ s_1 N^{\om_{\ell,2}}}.
\eeq
In summary we have so far proven that
\begin{align}
\frac{1}{N} \sum_i w_i ( \del_t w_i - ( \del_t f ) _i ) = - \frac{1}{2} \langle w-f, \B (w -f ) \rangle + Y_t
\end{align}
where
\beq
\int_{s_1}^u |Y_t| \d t \leq  \int_{ s_1}^u \frac{1}{10} \langle w-f, \B (w-f ) \rangle \d t + N^{6 \epst} \left(  \frac{1}{ N^{\om_{\ell,2}} s_1 } + \frac{N^{\om_{\ell,2}}}{ N s_0 s_1 } \right).
\eeq
The remaining term to deal with is 
\begin{align}
\frac{1}{N} \sum_i (w_i - f_i ) f_i' \d y_i &= \d M_t + \frac{1}{N} \sum_i (w_i - f_i ) f_i' \left( \frac{1}{N} \sum_j \frac{1}{ y_i - y_j } - \frac{ y_i}{2} \right)\d t \notag\\
&= \d M_t + \frac{1}{N} \sum_i (w_i - f_i) f_i' \bigg\{ \frac{1}{N} \sumEti_j \frac{1}{ y_i -y_j + \eps_{ij} }  \label{eqn:bet8} \\
&+ \frac{1}{N} \sumEti_j \frac{1}{ y_i - y_j} - \frac{1}{ y_i - y_j + \eps_{ij} } \label{eqn:bet1a} \\
&+ \frac{1}{N} \sumEtci_j \frac{1}{ y_i - y_j } - \frac{ y_i}{2} \bigg\} \d t. \label{eqn:bet9}
\end{align}
where the Martingale term is
\beq
\d M_t = \frac{1}{N} \sum_i (w_i - f_i ) f_i' \frac{ \d B_i }{ \sqrt{N}}.
\eeq
The term \eqref{eqn:bet1a} can be discarded due to the definition of $\tau_6$.  The term \eqref{eqn:bet8} is similar to \eqref{eqn:bet7}, and a similar argument gives
\beq
\int_{s_1}^u \left| \frac{1}{N} \sum_i (w_i - f_i) f_i'  \frac{1}{N} \sumEti_j \frac{1}{ y_i -y_j  + \eps_{ij}}  \right| \d t \leq \int_{s_1}^u \langle w-f, \B ( w- f ) \rangle \d t + C \frac{ N^{\om_{\ell,2}} N^{5 \epst}}{ N s_0 s_1 } .
\eeq
The term \eqref{eqn:bet9} is easily handled with rigidity and we obtain 
\beq
\int_{s_1}^u \left| \frac{1}{N} \sum_i (w_i - f_i ) f_i' \left( \sumEtci_j \frac{1}{ y_i - y_j } - \frac{ y_i }{2} \d t \right) \right| \d t \leq \frac{N^{3 \epst}}{ s_1 N^{\om_{\ell,2}}} + \frac{ N^{ 3 \epst} N^{\om_{\ell,2}}}{ s_1 N}
\eeq
This completes the proof after taking $ N^{\om_{\ell,2}} = ( N s_0 )^{1/2}$. \qed

With this in hand the completion of the homogenization theorem is very similar to Section \ref{sec:homog}. Recall that we need to average over $[s_1 , 2 s_1]$.  Therefore, as in Section \ref{sec:homog} we introduce $s_1' \in [s_1, 2s_1 ]$ and define the object the function $f$ with $s_1'$ instead of $s_1$. Fix a small $\eps_3 >0$.   First of all, let $\F$ be the event that the bounds of Lemma \ref{lem:betaprofile2} hold at the scales $t_3 = s_1$ and also $t_3 = 8 t_1$, with exponent $\eps_3/2$.  Let us also demand that on the event $\F$ we have  $|\UB_{ia} | \leq \e^{ - N^{\eps_1/10}}$ for $|i| > N^{1/2-\eps_1/4}$.   We have that $\F $ holds with probability $\pp [ \F ] \geq 1- N^{-c \eps_3}$.

By the proof of Theorem \ref{thm:techhomog} we have for $|i-a| \leq \ell/10$, 
\begin{align}
& \frac{1}{t_1} \int_0^{t_1} \1_{ \F } \left( \UB_{t_1+u} (i, a) - \frac{1}{N} p_{t_1+u} ( \gf_i, \gf_a ) \right)^2 \d u \leq \frac{ N^{\eps_3}}{ ( N t_1 )^4 } + \frac{ N^{\eps_3}}{ ( N t_1 )^2} \frac{ s_1^2}{t_1^2} \notag\\
&+ \frac{1}{s_1} \int_{s_1}^{ 2 s_1} \d s_1' \frac{1}{t_1} \left( \1_{\F } \int_{0}^{ t_1 } \d t \left( \frac{1}{N} w_{t_1+u} (i) - \frac{1}{N} f_{t_1+u} (i) \right)^2 \right).
\end{align}
We used the bounds Lemma \ref{lem:betaprofile2} at the scale $t_3 = s_1$ (which hold on $\F$) to bound the term analogous to \eqref{eqn:up2}.   
 Let $\F_\tau$ be the event $\{ \tau \geq 10 t_1 \}$, where $\tau$ is the same stopping time as above.  Here the definition of $\tau$ is with $s_1'$ instead of $s_1$.  Taking expectations we bound
\begin{align}
&\ee\left[ \1_{\F } \frac{1}{t_1} \int_{0}^{ t_1 } \d t \left( \frac{1}{N} w_{t_1+u} (i) - \frac{1}{N} f_{t_1+u} (i) \right)^2 \right] \notag\\
\leq & \ee\left[ \frac{1}{t_1}  \1_{\F \cap \F_\tau }  \int_{t_1}^{ \tau } \d t \left( \frac{1}{N} w_{u} (i) - \frac{1}{N} f_{u} (i) \right)^2 \right] + C \frac{ N^{2 \eps_3}} { ( N t_1)^2 } \pp [ \{ \tau < 10 t_1 \} ] \notag\\
\leq& \ee\left[ \frac{1}{t_1} \1_{ \F } \int_{t_1}^{ \tau } \d t \left( \frac{1}{N} w_{u} (i) - \frac{1}{N} f_{u} (i) \right)^2 \right] + C \frac{ N^{2 \eps_3}} { ( N t_1)^2 } N^{ - c \epst}.
\end{align}
We then bound, as in the proof of Theorem \ref{thm:techhomog}
\begin{align}
 & \ee\left[ \frac{1}{t_1}  \1_{   \F} \int_{t_1}^{ \tau } \d t \left( \frac{1}{N} w_{u} (i) - \frac{1}{N} f_{u} (i) \right)^2 \right] \notag\\
\leq & \ee \left[ \frac{1}{t_1} \1_{ \F } \1_{ \{ \tau > s_1'\} }\int_{s_1'}^{\tau} \d t \frac{ N^\eps}{N^2} \langle w-f, \B (w-f) \rangle \d t \right] + \frac{C N^{ 2 \eps_3}}{ \ell^2 ( N t_1 )^2 } + C \frac{ N^{2 \eps_3} ( N t_1 )^2}{ \ell^4 } .
\end{align}
for any small $\eps >0$.   Let $\F_2$ be the event that $||w(s_1) ||_2^2 \leq N^{\eps_3}/s_1$.   We have that $\F \subseteq \F_2$.  Hence,
\begin{align}
 & \ee \left[ \frac{1}{t_1} \1_{ \F } \1_{ \{ \tau > s_1' \} }\int_{s_1'}^{\tau} \d t \frac{ N^\eps}{N^2} \langle w-f, \B (w-f) \rangle \d t \right] \notag\\
\leq &  \ee \left[ \frac{1}{t_1} \1_{ \F_2 } \1_{ \{ \tau > s_1' \} }\int_{s_1'}^{\tau} \d t \frac{ N^\eps}{N^2} \langle w-f, \B (w-f) \rangle \d t \right]
\end{align}
One can check that on the event that $\{ \tau > s_1' \}$ the bound \eqref{eqn:initell2} holds with error $\eps_\tau$.  Applying first Lemma \ref{lem:betamain} and then the bound \eqref{eqn:initell2} we get,
\begin{align}
 & \ee \left[ \frac{1}{t_1} \1_{ \F_2 } \1_{ \{ \tau > s_1'  \} } \int_{s_1'}^{\tau} \d t \frac{ N^\eps}{N^2} \langle w-f, \B (w-f) \rangle \d t \right] \notag\\
\leq & \ee \left[ \frac{N^\eps}{t_1} \1_{ \F_2} \1_{ \{ \tau > s_1' \} }  \int_{s_1'}^{\tau} d M_t \right] + C \frac{N^\eps}{N^2 t_1} \ee[ \1_{ \F_2} \1_{ \{ \tau > s_1' \} } || (w-f) (s_1' ) ||_2^2 ] + \frac{N^{ 8\epst}}{N^2 t_1 s_1} \frac{1}{ ( N s_0 )^{1/2} } \notag\\
\leq  & \frac{ C N^{2 \eps} s_0}{ N^2 t_1 } \ee\left[ \1_{ \F_2} \langle w (s_1'), \B w (s_1') ) \rangle \right] + N^{2\epst} \left( \frac{1}{ ( N s_0 )^2} + \frac{ ( N s_0 )^2}{ \ell^2} \right) \frac{1}{N^2 t_1 s_1} +  \frac{N^{ 8\epst}}{N^2 t_1 s_1} \frac{1}{ ( N s_0 )^{1/2} } .
\end{align}
Above we used that
\beq
\ee \left[ \frac{N^\eps}{t_1} \1_{ \F_2}  \1_{ \{ \tau > s_1'\} } \int_{s_1'}^{\tau} d M_t \right] =\ee \left[ \frac{N^\eps}{t_1} \1_{ \F_2}   \int_{s_1'}^{\tau} d M_t \right] = 0
\eeq
due to the fact that $\F_2$ is measureable wrt $\sigma ( B_i (s), s \leq s_1 )$.  We now have due to the energy inequality and the definition of $\F_2$,
\beq
\frac{1}{s_1} \int_{s_1}^{2s_1} \ee\left[ \1_{ \F_2} \langle w (s_1'), \B w (s_1' ) \rangle \right] \d s_1' = \ee\left[\1_{\F_2} \frac{1}{s_1} \int_{s_1}^{2 s_1} \langle w (s_1'), \B w (s_1' ) \rangle \d s_1' \right] \leq C N^{\eps_3} \frac{1}{ s_1^2}.
\eeq
Collecting everything we see that
\begin{align}
&\ee\left[ \1_{ \F }  \frac{1}{t_1} \int_0^{t_1} \left( \UB_{t_1+u} (i, a) - \frac{1}{N} p_{t_1+u} (\gf_i, \gf_a ) \right)^2 \right] \notag\\
\leq & \frac{1}{ ( N t_1 )^2 } \bigg\{ \frac{ N^{2 \eps_3}}{ ( N t_1 )^2 } + N^{ 2 \eps_3 } \frac{ s_1^2}{t_1^2 } + N^{2 \eps_3 - c\epst} + \frac{ N^{2 \eps_3}}{ \ell^2} + \frac{ N^{2 \eps_3} ( N t_1)^4}{ \ell^4 } + \frac{ N^{2 \epst} t_1}{s_1 ( N s_0 )^2 } + \frac{ t_1 N^{2 \epst} ( N s_0 )^2}{ s_1 \ell^2 } \notag\\
+ &\frac{ t_1 N^{ 8 \epst}}{ s_1 ( N s_0 )^{1/2} } + \frac{N^{ 2 \eps_3} s_0 t_1}{ s_1^2 }\bigg\}.
\end{align}
This proves the following lemma (first fix $\epst$ small, then $\eps_3$ smaller depending on $\epst$).
\bel \label{lem:betatimeav}
Let $\eps >0$ be small enough.  Let $i$ and $a$ satisfy $|a| \leq N^{1/2-\eps_1}$ and $|i-a| \leq \ell/10$.  There is an event $\F_{ia}$ with probability $\pp [ \F_{ia} ] \geq 1 - N^{ - c \eps } $ on which
\begin{align}
&\frac{1}{t_1} \int_{t_1}^{2 t_1} | \UB_{u} (i, a) - \frac{1}{N} p_{u} ( \gf_i, \gf_a ) | \d u \notag\\
\leq & \frac{ N^{-\eps}}{ N t_1 } + \frac{N^{C \eps}}{  N t_1  } \left( \frac{1}{ N t_1 } + \frac{t_1^{1/2} ( N s_0 )}{ s_1^{1/2} \ell}  + \frac{ t_1^{1/2} }{ s_1^{1/2} ( N s_0)^{1/4} } + \frac{ s_1}{t_1} + \frac{ s_0^{1/2} t_1^{1/2}}{ s_1} \right)
\end{align}
\eel
We can remove the time average in the same way as in Section \ref{sec:homog}.  
\bel
Let $\eps >0$ and $\eps_2 >0$ and $\eps_3 >0$.  Let $t_2 = t_1 N^{-\eps_2}$.  There is an event with probability $\pp [\F_{ia} ] \geq 1 - N^{ - c \eps } - N^{ - c \eps_3}$ on which
\begin{align}
&\left| \UB_{t_1+2t_2} (i, a) - \frac{1}{N} p_{t_1 + 2 t_2} ( \gf_i, \gf_a ) \right| \notag\\
\leq& \frac{ N^{\eps_2} N^{-\eps}}{ N t_1 } + \frac{ N^{\eps_2} N^{C \eps}}{  N t_1  } \left( \frac{1}{ N t_1 } + \frac{t_1^{1/2} ( N s_0 )}{ s_1^{1/2} \ell}  + \frac{ t_1^{1/2} }{ s_1^{1/2} ( N s_0)^{1/4} } + \frac{ s_1}{t_1} + \frac{ s_0^{1/2} t_1^{1/2}}{ s_1} \right) \notag\\
+ & \frac{N^{\eps_3} N^{-\eps_2}}{ N t_1 }.
\end{align}
\eel
The proof is similar to that of Theorem \ref{thm:notime}.   One needs to introduce an additional event $\F_3$ on which the bounds of Lemma \ref{lem:betaprofile} (see also the remark immediately subsequent to it) hold with $\eps_3 >0$.  To get around the fact that the event of Lemma \ref{lem:betatimeav} depends on $\F_{ia}$, one applies Markov inequality on the event $\F_3$ to the term \eqref{eqn:time1}.

By choosing the scales $s_0, s_1$ appropriately, we obtain the following.
\bel \label{lem:betahomog}
Let $\sigma >0$ and let $t_1$ satisfy
\beq
\frac{ N^\sigma}{N} \leq t_1 \leq \frac{ N^{1/4-\sigma}}{N}.
\eeq
Let $\eps >0$.  Let $\eps >0$ be sufficiently small.  There is an event $\F_{ia}$ with $\pp [ \F_{ia} ] \geq 1 - N^{-c \eps}$ on which we have, for every $|t-t_1 | \leq N^{ c \sigma}/N$,
\beq
\left| \UB_{ai} (0, t) - \frac{1}{N} p_{t_1} (\gf_a, \gf_i ) \right| \leq \frac{1}{ N t_1} \left( N^{ -  \eps} + N^{ - c \sigma } \right).
\eeq
\eel

Let now $\F_1$ be the event on which the estimates of Lemma \ref{lem:betasupbd} hold with $\eps_1 >0$, and $\F_2$ the event of Lemma \ref{lem:betainitcutoff} holds with $\eps_2 >0$.  Denote by $\F_{ai}$ the event of Lemma \ref{lem:betahomog}  with $\eps_3 >0$.  We then write
\begin{align}
x_a (t) - y_a (t) &= \sum_{ |j-a| \leq N t_1 N^{\eps_a } } \UB_{aj} (0, t) (x_j (0) - y_j (0) ) + \frac{ N^{\eps_2}}{N} \O \left( \frac{1}{ N^{\eps_a}} + N^{ - c \sigma } \right) \notag\\
&=  \sum_{ |j-a| \leq N t_1 N^{\eps_a } } p_{t_1} (\gf_a, \gf_i ) (x_j (0) - y_j (0) ) \notag\\
&+  \sum_{ |j-a| \leq N t_1 N^{\eps_a } } \1_{ \F_{aj} } \left( \UB_{aj} - p_{t_1} (\gf_a, \gf_j ) \right) (x_j (0) - y_j (0) )  \label{eqn:beta1} \\
&+ \sum_{ |j-a| \leq N t_1 N^{\eps_a } } \1_{ \F_{aj}^c } \left( \UB_{aj} - p_{t_1} (\gf_a, \gf_j ) \right) (x_j (0) - y_j (0) ) \label{eqn:beta2} \\
&+ \frac{ N^{\eps_2}}{N} \O \left( \frac{1}{ N^{\eps_a}} + N^{ - c \sigma } \right) 
\end{align}
For the term \eqref{eqn:beta1} we have
\beq
\left|  \sum_{ |j-a| \leq N t_1 N^{\eps_a } } \1_{ \F_{aj} } \left( \UB_{aj} - p_{t_1} (\gf_a, \gf_j ) \right) (x_j (0) - y_j (0) )  \right| \leq N^{\eps_a+\eps} \frac{1}{N} \left( N^{- c \eps_3} + N^{ - c \sigma } \right).
\eeq
For the term \eqref{eqn:beta2} we have
\begin{align}
\ee \left[ \1_{ \F_1} \left| \sum_{ |j-a| \leq N t_1 N^{\eps_a } } \1_{ \F_{aj}^c } \left( \UB_{aj} - p_{t_1} (\gf_a, \gf_j ) \right) (x_j (0) - y_j (0) )  \right| \right] \leq \frac{ N^\eps}{N} N^{\eps_a} N^{\eps_1} N^{ - c \eps_3}.
\end{align}
Hence, first fixing $\eps_3$ small, then taking $\eps_a$ and $\eps_1$ and $\eps$ small depending on $\eps_3$ and $\sigma$, and then taking $\eps_2$ small, we obtain the following.
\bet  \label{thm:betafinal}
Let $\eps >0$ sufficiently small.  Let $t_1$, $t$ be as above.  There is an event $\F_a$ with $\pp [ \F_a ] \geq 1 - N^{ - c \eps }$ on which
\beq
x_a (t) - y_a (t) = \sum_{ |j-a| \leq N t_1 N^\eps } p_{t_1} ( \gf_a, \gf_j ) ( x_j (0) - y_j (0) ) + \frac{1}{N} \O \left( N^{ - c \eps } + N^{ - c \sigma } \right).
\eeq
\eet

\subsection{Proof of fixed energy universality}
We now prove fixed energy universality for $\beta$-ensembles.  For simplicity we just consider $E=0$.  Let $\eps_h >0$ be as in Theorem \ref{thm:betafinal}.  Suppose that the events $\F_a$ defined in Theorem \ref{thm:betafinal} hold with probablity $\pp[ \F_a] \geq 1 - N^{ c_h \eps_h}$. Denote the error in the estimate by $\O ( N^{-\eps_1-1})$.  Let
\beq
\delta_1 = c_h \eps_h /10.
\eeq
Let $\F = \bigcup_{ |a| \leq N^{ \delta_1 } } \F_a$.  Then $\pp [ \F ] \geq 1 - N^{ c_h \eps_h/2}$.   As in the proof of Theorem \ref{thm:fixed} we consider 
\beq
\sum_{i, j} \ee [ Q ( N x_i (t), N (x_j(t) - x_i (t)) ].
\eeq
We assume that the eigenvalues are labelled by $[[-N/2, N/2]]$ and that $\gV_{N/2} = 0$. Let $\delta_R =  \min\{ \delta_1/10, \eps_1/10 \}$.
 Arguing as in the proof of Theorem \ref{thm:fixed} we can apply rigidity and the homogenization theorem and write
\begin{align}
& \sum_{i, j} \ee [ Q ( N x_i (t), N (x_j(t) - x_i (t)) ]  = \sum_{ |i|, |j| \leq N^{\delta_R} } \ee [ Q ( N x_i (t), N (x_j(t) - x_i (t)) ] + \O ( N^{-D} ) \notag\\
= & \sum_{ |i|, |j| \leq N^{\delta_R} } \ee [ Q ( N y_i (t) + \zeta_x- \zeta_y, N (y_j(t) - y_i (t)) ]  + \O  ( N^{ - c } )
\end{align}
for some $c>0$.  Similarly if $z_i (t)$ is an auxilliary GOE ensemble we can write
\begin{align}
& \sum_{i,j} \ee[ Q ( N (z_i (t) - E), z_j (t) - z_i (t) ) ] \notag\\
= & \sum_{ |i|, |j| \leq N^{\delta_R} } \ee [ Q ( N ( y_i (t) - E) + \zeta_z- \zeta_y, N (y_j(t) - y_i (t)) ]  + \O  ( N^{ - c } ),
\end{align}
for any $|E| \leq N^{ \delta_R/2} /N$.  Denote
\beq
F_1(s) := \sum_{ |i|, |j| \leq N^{\delta_R} } \ee [ Q ( N ( y_i (t)  + s- \zeta_y, N (y_j(t) - y_i (t)) .
\eeq
We have
\begin{align}
& \sum_{ |i|, |j| \leq N^{\delta_R} } \ee [ Q ( N ( y_i (t)  + \zeta_x- \zeta_y, N (y_j(t) - y_i (t)) ] \notag\\
&-  \sum_{ |i|, |j| \leq N^{\delta_R} } \ee [ Q ( N ( y_i (t) - E) + \zeta_z- \zeta_y, N (y_j(t) - y_i (t)) ] \notag\\
&=  \int \d \lambda \hat{F_1} ( \lambda )( \psi_x ( \lambda) - \psi_z ( \lambda ) \e^{ \i \lambda E } )
\end{align}
where $\psi_x$ and $\psi_z$ are the Fourier transforms of $\zeta_x$ and $\zeta_y$.  If we let $E = \ee[\zeta_x] - \ee[ \zeta_z] = \O ( N^\eps)$ for any $\eps >0$ then by Section \ref{sec: mesosection},
\beq
\left|  \psi_x ( \lambda) - \psi_z ( \lambda ) \e^{ \i \lambda E } ) \right| \leq N^{ - c_2}, \qquad |\lambda | \leq N^{c_2}
\eeq
for some $c_2 >0$.  If we take $\delta_R < c_2 /2$, then we see that 
\beq
\left| \int \d \lambda \hat{F_1} ( \lambda )( \psi_x ( \lambda) - \psi_z ( \lambda ) \e^{ \i \lambda E } ) \right| \leq N^{-c_2/3}.
\eeq
This proves fixed energy universality for $\beta$ ensembles.

\appendix

\section{Local laws and properties of free convolution}  \label{a:free}

\subsection{Free convolution properties}

In this section we summarize the local laws for DBM as well as derive some properties of the free convolution.  Recall that $H_t$ is defined as
\beq
H_t : V  +  \sqrt{t} W.
\eeq
where $W$ is a GOE matrix.  The Stieltjes transform of the free convolution is defined as the solution to the fixed point equation
\beq
\mfct (z) = \frac{1}{N} \sum_{i=1}^N \frac{1}{   V_i - z - t \mfct (z) }.
\eeq 
We call $V$ $(g, \G)$-regular if
\beq
c \leq \Im [ m_V (E + \i \eta )] \leq C
\eeq
for $|E| \leq \G$ and 
\beq
g \leq \eta \leq 10
\eeq
and $||V||\leq N^{C_V}$ for some $C_V>0$.  We collect some properties of the free convolution in the following lemma.  It can be found in Section 7 of \cite{landonyau}.
\bel  Let $V$ be $(g, \G)$-regular.  
Let $0 < q < 1$.  Let $\sigma >0$ and let $t$ satisfy
\beq
\g N^\sigma \leq t \leq N^{- \sigma } \G^2.
\eeq
For $|E| \leq q \G$ we have
\beq
c \leq \Im [ \mfct (z) ] \leq C
\eeq
for $0 \leq \eta \leq 10$.  We have
\beq \label{eqn:oldmder}
| \del_z \mfct (z) | \leq \frac{ C}{ t + \eta }.
\eeq
\eel
We also require an estimate for the second derivative of $\mfct (z)$.  We easily calculate
\begin{align}
\del_z \mfct (z) 
&= \left( 1 - t \int \frac{1}{ (x - z - t \mfct (z) )^2} \d \mu_V (x) \right)^{-1} \notag\\
&\times \int \frac{1}{ (x - z - t \mfct (z) )^2} \d \mu_V (x) 
\end{align}
from which we see that
\begin{align}
\del_z^2 \mfct (z) 
 &= \left( 1 - t R_2 \right)^{-2} t R_3 R_2 \left[1 + t \del_z \mfct (z) \right] +\left( 1 - t  R_2 \right)^{-1} R_3 \left[1 + t \del_z \mfct (z) \right],
\end{align}
where
\beq
R_k := 
\int \frac{1}{ ( x - z - t \mfct (z) )^k} \d \mu_V (x) .
\eeq
Since $|R_2| \leq C t^{-1}$ and $|R_3| \leq C t^{-2}$ we see, using that $| 1 - t R_2 | \geq c$ (see \cite{landonyau}) and \eqref{eqn:oldmder} that
\beq \label{eqn:m2der}
| \del_z^2 \mfct (z) | \leq \frac{C}{t^2}.
\eeq
We have the following local law from Section 7 of \cite{landonyau}.
\bel \label{lem: gDd} Let $V$ be $(g, \G)$-regular and let $\sigma >0$, and $\eps, \delta >0$ and let $0 < q < 1$.  Let $t$ satisfy $\g N^{\sigma} \leq t \leq N^{-\sigma} \G^2$.   With overwhelming probability we have for $|E| \leq q \G$ and $\eta \geq N^{\delta}/N$,
\beq
| \mfct (z) - m_N (z) | \leq \frac{ N^\eps}{ N \eta}.
\eeq
The above estimate also holds if $\eta \geq 10$ and for any $E$.
\eel

We have the following rigidity result.
\bel
Fix $0 < q < 1$. Let $\eps >0$. For every $i$ s.t. $| \gamma_i  | \leq q \G$ we have with overwhelming probability
\beq
| x_i - \gamma_i | \leq \frac{ N^\eps}{N}.
\eeq
We have also the estimate 
\beq \label{eqn:vixi}
|x_i - V_i | \leq C \sqrt{t}
\eeq
 for every $i$.
\eel
The estimate \eqref{eqn:vixi} is just a consequence of the perturbation bound $\lambda_j ( A - B) \leq ||A-B||$ and the fact that for a GOE matrix, $||W|| \leq 3$ with overwhelming probability.  We will not use this estimate in this paper.

\subsection{Rescaling and relabelling set-up}
Recall that in Section \ref{sec:homog} we fixed an index $i_0$ with $|\gamma_{i_0} (t_0) | \leq q \G$, and that we have assumed that
\beq
\rho_{\fc, t_0 } (0) = \rhosc (0), \qquad \gamma_{i_0} (t_0 ) = 0.
\eeq
Moreover, we assume $N$ is odd and. that $i_0 = (N+1)/2$.  Finally, we have relabelled the eigenvalues so that $i_0 = 0$ and they run over the index set $[[ - (N-1)/2, (N-1)/2 ]]$.

\subsection{Construction of law of interpolating ensembles}
We now wish to discuss rigidity for the interpolating ensemble.  Define
\beq
z_i ( \alpha ) = (1 - \alpha ) y_i (t_0) + \alpha x_i (t_0)
\eeq
where $x_i (t_0)$ are the eigenvalues of $H_{t_0}$ and $y_i (t_0)$ are the eigenvalues of an independent GOE matrix.   Fix now $0 < q^* < 1$.   Let $k_0$ be the largest possible natural number so that
\beq
| \gamma_{k_0} (t_0) | \leq q^* \G , \qquad |\gamma_{-k_0} (t_0) | \leq q^* \G , \qquad | \gamma_{k_0}^{(\mathrm{sc})} | \leq q^* \G.
\eeq
First of all, extend $\gamma_x (t_0)$ to all $|x| \leq (N-1)/2$ by
\beq
\frac{x}{N} = \int_{0}^{\gamma_x (t_0) } \rho_{\fc, t_0} (E) \d E,
\eeq
and similarly for $\gsc_x$.  We define a function $f(x, \alpha) : [-k_0, k_0 ] \times [0, 1] \to \rr$ as follows.  For $|x| \leq k_0$,
\beq
f (x, \alpha ) = \alpha \gamma_x (t_0) + (1-\alpha ) \gamma^{(sc)}_x.
\eeq
Then $f(0, \alpha ) = 0$.  For each $\alpha$,  $f ( \cdot, \alpha )$ is a bijection from
\beq
f (  \cdot  , \alpha ) : [ - k_0, k_0 ] \to  [ \alpha \gamma_{-k_0} ( t_0 ) + (1 - \alpha ) \gamma^{(\sc)}_{-k_0} , \alpha \gamma_{k_0} ( t_0 ) + (1 - \alpha ) \gamma^{(\sc)}_{k_0} ] =: \Galp
\eeq
Since
\beq
\frac{ \d } { \d x} \gamma_x (t_0) = \frac{1}{N}\frac{1}{ \rhofcto (\gamma_x (t_0 ) )},
\eeq
we have
\beq
\frac{c}{N} \leq f'(x, \alpha ) \leq \frac{C}{N}.
\eeq
Let $g$ be inverse of $f$,
\beq
g ( f (x, \alpha ), \alpha ) = x.
\eeq
For each $\alpha$ we now the function $h(y, \alpha)$ on the interval $\Galp$ by
\beq
h(y, \alpha ) := \frac{1}{N} \frac{ \d }{ \d y} g ( y, \alpha ).
\eeq
By elementary calculations we have the following explicit formula for $h$:
\beq
h (y, \alpha ) = \frac{1}{N} \frac{1}{ f' ( g ( y, \alpha ), \alpha ) } = \left(  \frac{ \alpha}{ \rhofcto ( \gamma (g (y, \alpha ), t_0 ) ) } + \frac{1- \alpha }{ \rhosc ( \gamma^{(\sc)} ( g (y, \alpha ) )}\right)^{-1}
\eeq
In particular we see that
\beq
c \leq h (y, \alpha ) \leq C, \qquad | h' (y, \alpha ) | \leq \frac{C}{t_0}, \qquad | h'' (y, \alpha ) | \leq \frac{C}{ t_0^2} \qquad h (0, \alpha ) = \rhosc (0),
\eeq
and by definition,
\beq
\int_{ \Galp} h (y, \alpha ) \d y = \frac{ 2 k_0}{N}.
\eeq
Note that by definition,
\beq
\int_{0}^{f (x, \alpha ) } h ( x, \alpha ) \d x = \frac{x}{N},
\eeq
and so we can immediately see that
\beq
\left| \frac{1}{N} \sum_{|i| \leq k_0} \frac{1}{ z_i ( \alpha ) - z } - \int_{ \Galp} \frac{1}{ x - z } h (x, \alpha ) \d x \right| \leq \frac{ N^\eps}{N \eta}
\eeq
for any $\eps >0$ and $\eta \geq N^{\delta}/N$ for any $\delta >0$ with overwhelming probability.  We now define the following probability measure on $\rr$.  Let 
\beq
\nu (\d x, \alpha ) = h (x, \alpha ) \d x + \frac{1}{N} \sum_{ |i| > k_0 } \delta_{z_i ( \alpha ) }.
\eeq
Then $\nu (\alpha ) $ is a probability measure and denoting,
\beq
m (z, 0, \alpha ) = \int \frac{1}{x-z} \nu ( \d x)
\eeq
we immediately see that
\beq
\left| \frac{1}{N} \sum_{ i} \frac{1}{ z_i ( \alpha ) - z } - m (z, 0, \alpha ) \right| \leq \frac{ N^\eps}{ N \eta}, \qquad \eta \geq \frac{ N^\delta}{N}
\eeq
for any $\eps , \delta >0$ and with overwhelming probability.  We denote the free convolution of $\nu ( \alpha )$ with the GOE at time $t$ by $\rho (x, t, \alpha ) \d x$.  It is defined through its Stieltjes transform which satisfies the fixed point equation
\beq
m (z, t, \alpha ) = \int \frac{1}{ x - z - t m (z, t, \alpha ) } \d \nu (x, \alpha ).
\eeq
Let now $\eps > 0$ be given and let $0 \leq t_1 \leq N^{-\eps} t_0$.   The proofs in \cite{landonyau} yield the following lemma.
\bel
Let $0 < q < 1$.  Let $\eps >0$.  Let $0 \leq t \leq t_1$.  For $E \in q \Galp$  and $0 \leq \eta \leq 10$ we have the following estimates with overwhelming probability.  First we have,
\beq
c \leq \Im [ m (z, t, \alpha ) ] \leq C.
\eeq
We have
\beq
c \leq \left| 1 - t \int \frac{1}{ (x - z - t m (z, t, \alpha ) )^2 } \d \nu ( x, \alpha ) \right| \leq C
\eeq
and
\beq
\left| t^2 \int \frac{1}{  ( x - z - t ) m (z, t, \alpha ) )^3 } \d \nu ( x, \alpha ) \right| \leq C.
\eeq
\eel
Note that we always have following the a-priori bound for $m (z, t, \alpha )$.  By Cauchy-Schwarz,
\beq
| m (z, t, \alpha ) |^2 \leq \int \frac{1}{ | x -  z - t m (z, t, \alpha ) |^2} \d \nu (x, \alpha ) = \frac{ \Im [ m (z, t, \alpha ) ] }{ \eta + t \Im [ m (z, t, \alpha ) ] } \leq \frac{1}{t}.
\eeq
We have the following improved regularity of $m (z, t, \alpha )$.  
\bel
Let $0 < q < 1$ and $0 \leq t \leq t_1$.  For $E \in q \Galp$ we have with overwhelming probability,
\beq
\left| \del_z m (z, t, \alpha ) \right| \leq \frac{C }{ t_0 + \eta }.
\eeq
\eel
\proof We calculate
\begin{align}
\del_z m (z, t, \alpha  ) &= \left( 1 - t \int \frac{1}{ (  x - z - t m (z, t, \alpha ) )^2} \d \nu (x, \alpha ) \right)^{-1} \notag\\
&\times  \int \frac{1}{ ( x - z - t )^2} \d \nu (x, \alpha ).
\end{align}
Since $|  t m (z, t, \alpha ) | \leq \sqrt{t} \ll \G$, we see that
\beq
\Re [  z + t m (z, t, \alpha )  ] \in q_1 \Galp
\eeq
for any $q_1$ s.t. $q < q_1 < 1$ and $N$ large enough.  It therefore suffices to prove
\beq
\left| \del_z m (z, 0, \alpha ) \right| \leq \frac{C }{t_0}
\eeq
for any $E \in q_1 \Galp$.  We write
\beq
\del_z m (z, 0, \alpha ) = \int \frac{1}{ (x - z )^2} h (x, \alpha ) + \frac{1}{N} \sum_{|i| > k_0 }  \frac{1}{ (z_i ( \alpha ) - z )^2} .
\eeq
Optimal rigidity guarantees that for $|i| > k_0$, $|z_i ( \alpha ) - z | > c \G$ and so with overwhelming probability,
\beq
\frac{1}{N} \sum_{|i| > k_0 }  \frac{1}{ |z_i ( \alpha ) - z |^2} \leq \frac{C}{ \G} \Im [ m (0 + \i \G , 0, \alpha ) ] \leq \frac{C}{ \G}\leq \frac{C}{t_0}.
\eeq
We write the other term as
\begin{align}
\int \frac{1}{ (x - z )^2} h (x, \alpha ) = \int_{ |x - E | \leq t_0 } \frac{1}{ (x - z )^2} h (x, \alpha ) \d x+ \int_{|x-E | > t_0 } \frac{1}{ (x - z )^2} h (x, \alpha ) \d x .
\end{align}
We clearly have
\beq
\left| \int_{|x-E | > t_0 } \frac{1}{ (x - z )^2} h (x, \alpha ) \d x \right| \leq \frac{C}{ t_0}.
\eeq
We integrate the other term by parts and obtain
\begin{align}
 \int_{ |x - E | \leq t_0 } \frac{1}{ (x - z )^2} h (x, \alpha ) \d x = \frac{ h (E + t_0, \alpha )}{ -t_0 - \i \eta } - \frac{ h (E - t_0, \alpha )}{  t_0 - \i \eta } - \int_{ |x - E | \leq t_0 } \frac{1}{ (x - z )} h' (x, \alpha ) \d x.
\end{align}
Clearly
\beq
\left| \frac{ h (E \pm t_0, \alpha )}{  t_0 - \i \eta }  \right| \leq \frac{C}{t_0}
\eeq
We split the last term into its real and imaginary parts
\beq
\int_{ |x - E | \leq t_0 } \frac{1}{ (x - z )} h' (x, \alpha ) \d x = \i \int_{ |x - E | \leq t_0 } \frac{\eta}{ (x- E)^2 + \eta^2} h' (x, \alpha ) \d x + \int_{ |x - E | \leq t_0 } \frac{ x-E}{ (x-E)^2 + \eta^2} h' (x, \alpha ) \d x.
\eeq
The imaginary part is easily bounded by
\beq
\left| \int_{ |x - E | \leq t_0 } \frac{\eta}{ (x- E)^2 + \eta^2} h' (x, \alpha ) \d x  \right| \leq \frac{C}{t_0} \int_\rr \frac{ \eta}{ x^2 + \eta^2 } \d x \leq \frac{C}{t_0}.
\eeq
We use \eqref{eqn:m2der} to bound the real part by
\begin{align}
\left|\int_{ |x - E | \leq t_0 } \frac{ x-E}{ (x-E)^2 + \eta^2} h' (x, \alpha ) \d x \right| &=\left|\int_{ |x - E | \leq t_0 } \frac{ x-E}{ (x-E)^2 + \eta^2} (h' (x, \alpha ) - h' (E, \alpha ) )\d x \right| \notag\\
&\leq \frac{C}{t_0^2} \int_{ |x - E| \leq t_0 } \frac{ (x-E)^2}{ (x-E)^2 + \eta^2 } \d x  \leq \frac{C}{t_0}.
\end{align}
This yields the claim. \qed

This allows us to conclude a few things about the densities $\rho (t, \alpha )$.  

\bel Let $0  < q < 1$.  
We have for $0 \leq t \leq t_1$ and $E \in q \Galp$,
\beq
c \leq \rho (E, t, \alpha ) \leq C, \qquad | \rho' (E, t, \alpha ) | \leq \frac{C}{t_0}, \qquad | \del_t \rho(E, t, \alpha ) | \leq \frac{ C \log (N) }{t_0} 
\eeq
Hence,
\beq
\rho (E, t, \alpha ) = \rhosc (0) + \O \left( \frac{ t \log (N) }{ t_0 } + \frac{ |E| }{ t_0 } \right) .
\eeq
\eel
\proof We only need to prove the statement about the time derivative.  This follows immediately from the equation
\beq
\del_t m (z, t, \alpha ) = \frac{1}{2} m (z, t, \alpha ) \del_z m (z, t, \alpha ). 
\eeq
\qed

The proof of the following result is a minor modification of Section 7 of \cite{landonyau}.  Denote
\beq
m_N (z, t, \alpha ) = \sum_{i=1}^N \frac{1}{ z_i (t, \alpha ) - z }.
\eeq
\bel
Let $0 \leq \alpha \leq 1$ and fix $0 < q < 1$.  Let $\eps, \delta >0$.  Let $0 \leq t \leq t_1$.  The following estimates hold with overwhelming probability.  For $E \in q \Galp$ and $\eta \geq N^{\delta}/N$ we have 
\beq
\left| m_N (z, t, \alpha ) - m (z, t, \alpha ) \right| \leq \frac{N^\eps}{N \eta}.
\eeq
The above estimates also hold if $\eta \geq 10$. 
\eel
From the above estimates we conclude the following rigidity result.  We define the classical eigenvalue locations by
\beq
\frac{i}{N} = \int_0^{\gamma_{i} (t, \alpha ) } \rho (x, t, \alpha ) \d x.
\eeq
Note that they satisfy
\beq
\del_t \gamma_i (t, \alpha ) = \Re [ m ( \gamma_i (t, \alpha ), t, \alpha ) ]
\eeq
and so 
\beq
\left| \del_t \gamma_i (t, \alpha ) \right| \leq C \log (N).
\eeq
\bel \label{lem:wrig} Let $0 < q < 1$ and let $\eps >0$.  Let $i$ be such that $\gamma_i (0, \alpha ) \in q \Galp$.  Then
\beq
| z_i (t, \alpha ) - \gamma_i (t, \alpha ) | \leq \frac{ N^\eps}{N}
\eeq
with overwhelming probability.  We have also with overwhelming probability,
\beq
\left| z_i (t, \alpha ) - ( \alpha V_i + ( 1 - \alpha ) \gamma^{(sc)}_i ) \right| \leq C \sqrt{t_0}.
\eeq
\eel

We now make a slight digression on the classical eigenvalue locations of $\rho (t, \alpha)$ which we denote by $\gamma_i (t, \alpha)$.  We want to elucidate the connecton with the function $f (x, \alpha )$.  Fix $0  < q < 1$.  With overwhelming probability the eigenvalues $\{ z_i (0, \alpha ) : i < - k_0 \}$ are all to the left of the interval $q \Galp$, and the eigenvalues  $\{ z_i (0, \alpha ) : i > k_0 \}$ are all to the right of $q \Galp$.  Hence, with overwhelming probability $\gamma_i (0, \alpha ) = f (i, \alpha )$ for any $i$ s.t. $\gamma_i (, \alpha ) \in q \Galp$.  We therefore also have $\gamma_0 (0, \alpha ) = 0$ with overwhelming probability. 

We have the following lemma.
\bel
For $t \leq 10 t_1$ and $\eps >0$ we have with overwhelming probability
\beq \label{eqn:agamma1}
\sup_{ 0 \leq t \leq 10 t_1 } | \gamma_0 (t_0 + t) - \gamma_0 (t, 1 ) | \leq \frac{1}{N} \frac{ N^\eps N^{\om_1}}{ N^{\om_0/2}}.
\eeq
and
\beq \label{eqn:agamma0}
\sup_{ 0 \leq t \leq 10 t_1 } | 0 - \gamma_0 (t, 0 ) | \leq \frac{1}{N} \frac{ N^\eps N^{\om_1}}{ N^{\om_0/2}}.
\eeq
With overwhelming probability,
\beq \label{eqn:agap}
\gamma_k (t, \alpha ) - \gamma_j (t, \alpha ) = \frac{ j-k}{ N \rhosc (0) } + \O \left( \frac{1}{N} \right)
\eeq
for $|j| + |k| \leq N^{\om_0/2}$ and $t \leq 10 t_1$ with $\om_1 \leq \om_0/2$.
\eel
\proof  We start with \eqref{eqn:agamma1} and \eqref{eqn:agamma0}.  For $L>0$ and small $ c>0$ we define
\beq
I_1 := [ \gamma_{-L} (t, 1) , \gamma_{L} (t, 1) ], \quad I_3 := \rr \backslash [ - \gamma_{ -c \G N } (t, 1), \gamma_{ c \G N } (t, 1 ) ] \quad I_2 := I_3^c \backslash I_1.
\eeq
Take $c$ small enough so that rigidity holds for $|i| \leq c \G N$.
We write,
\begin{align}
\del_t \gamma_0 (t, 1) &=  - \Re [ m ( \gamma_0 (t, 1), t, 1 ) ] = \frac{1}{N}  \sum_{ |i| > L } \frac{1}{ x_0 (t + t_0 ) - x_i (t + t_0 ) } \notag\\
&+  \left( \frac{1}{N}  \sum_{ |i| > L } \frac{1}{ \gamma_0 (t, 1) - x_i (t + t_0 ) } -\frac{1}{N}  \sum_{ |i| > L } \frac{1}{ x_0 (t + t_0 ) - x_i (t + t_0 ) } \right) \notag\\
&+ \left(  \int_{I_3} \frac{ \rho (x, t, 1 ) \d x }{ \gamma_0 (t, 1) - x } - \frac{1}{N} \sum_{ |i| > c \G N }  \frac{1}{ \gamma_0 (t, 1) - x_i (t + t_0 ) } \right) \notag\\
&+ \left( \int_{I_2} \frac{ \rho (x, t, 1 ) \d x }{ \gamma_0 (t, 1) - x } - \frac{1}{N} \sum_{ L < |i| < c \G N }  \frac{1}{ \gamma_0 (t, 1) - x_i (t + t_0 ) }  \right) + \left(   \int_{I_1} \frac{ \rho (x, t, 1 ) \d x }{ \gamma_0 (t, 1) - x } \right) \notag\\
&=:  \frac{1}{N}  \sum_{ |i| > L } \frac{1}{ x_0 (t + t_0 ) - x_i (t + t_0 ) } + A_1 + A_2 + A_3 + A_4
\end{align}
By rigidity we have $|A_1 |  +  |A_3| \leq N^{\eps} / L$ for any $\eps >0$ with overwhelming probability.  The same argument handling the error term $E_4$ in the proof of Lemma \ref{lem:shortrange} yields $|A_4 | \leq N^\eps L / ( N t_0 )$ with overwhelming probability.  The error term $A_2$ is handled in the same way as $E_1$ in the proof of Lemma \ref{lem:shortrange} and we see that $|A_2| \leq N^\eps / \sqrt{ N \G} $ with overwhelming probability.  Choosing $L = \sqrt{ N t_0}$ we see that
\beq
\del_t \gamma_0 (t, 1 ) =  \frac{1}{N}  \sum_{ |i| > \sqrt{ N t_0 } } \frac{1}{ x_0 (t + t_0 ) - x_i (t + t_0 ) } + N^\eps \O \left( \frac{1}{ \sqrt{ N t_0 } } \right)
\eeq
with overwhelming probability.  Similarly we see that
\beq
\del_t \gamma_0 (t_0 + t ) = \frac{1}{N}  \sum_{ |i| > \sqrt{ N t_0 } } \frac{1}{ x_0 (t + t_0 ) - x_i (t + t_0 ) } + N^\eps \O \left( \frac{1}{ \sqrt{ N t_0 } } \right)
\eeq
with overwhelming probability.  Hence, for $|t | \leq 10 t _1$,
\beq
| \gamma_0 (t, 1) - \gamma_0 (t_0 + t ) | \leq \frac{1}{N} \frac{ N^\eps N^{\om_1 }}{ N^{\om_0/2}}.
\eeq
The same argument applies to $\gsc_0$ and $\gamma_0 (t, 0)$.  

We now prove \eqref{eqn:agap}.  We have,
\begin{align}
\frac{k-j}{N} &= \int_{ \gamma_{j} (t, \alpha ) }^{ \gamma_k (t, \alpha ) } \rho (E, t, \alpha ) \d E \notag\\
&= \rhosc (0) ( \gamma_k (t, \alpha ) - \gamma_j (t, \alpha ) ) + \int_{ \gamma_{j} (t, \alpha ) }^{ \gamma_k (t, \alpha ) } \O ( |E| / t_0 ) \d E + \O ( |  \gamma_k (t, \alpha ) - \gamma_j (t, \alpha )  | t/t_0 ).
\end{align}
Hence,
\beq
\gamma_k (t, \alpha ) - \gamma_j (t, \alpha ) = \frac{ j-k}{ N \rhosc (0) } + \O \left( \frac{1}{N} \right)
\eeq
for $|j| + |k| \leq N^{\om_0/2}$, and $t \leq 10 t_1$. \qed

\section{Stochastic continuity} \label{a:cont}
In Appendix \ref{a:free} we proved rigidity for each fixed time $t \geq t_0$ and each fixed $\alpha$ - i.e., Lemma \ref{lem:wrig}.  In this appendix we go from the estimates of Lemma \ref{lem:wrig} to estimates for all time $t$ and $\alpha$ simultaneously.  We continue with the notation of Appendix \ref{a:free}. 
Recall the definition of $\Chat_q$ from Section \ref{sec:homog}.
\bel \label{lem:strrig}
Let $D>0$ and $\eps >0$.  Let $0 < q < q_*$.  We have
\beq
\pp \left[ \sup_{ i \in \Chat_q } \sup_{ 0 \leq \alpha \leq 1 } \sup_{ 0 \leq t \leq 10  t_1 }  \left| z_i (t, \alpha ) - \gamma_i (t, \alpha ) \right| \geq \frac{ N^\eps}{N} \right] \leq \frac{1}{N^D}.
\eeq
\eel
Given $\alpha_1$ and $\alpha_2$, the difference
\beq
u_i := z_i (t, \alpha_1 ) - z_i (t, \alpha_2 )
\eeq
satisfies the parabolic equation
\beq
\del_t u =   L u  
\eeq
where
\beq
( L u)_i := \frac{1}{N} \sum_j \frac{ u_j - u_i }{  ( z_i (t, \alpha_1 ) - z_j (t, \alpha_1 ) ) ( z_i (t, \alpha_2 ) - z_j (t, \alpha_2 ) ) }.
\eeq
Hence,
\beq
\sup_t ||u (t) ||_\infty \leq C || z (0, \alpha_1 ) - z (0, \alpha_2 ) ||_\infty \leq C | \alpha_1 - \alpha_2 | \sup_i \{ |x_i (t_0) | + |y_i (t_0 ) | \}.
\eeq
With overwhelming probability we have that
\beq
\sup_i \{ |x_i ( t_0 ) | + |y_i (t_0 ) | \} \leq  C N^{C_V}
\eeq
for a fixed $C_V>0$ by our assumptions on $V$.  Hence in order to prove Lemma \ref{lem:strrig} we can just prove it for a set of $\alpha $ of at most size $N^{2 C_V}$; i.e., we need only prove the following.
\bel \label{lem:fixrig} Fix $0 \leq \alpha \leq 1$.  We have,
\beq \label{eqn:fixrig1}
\pp \left[ \sup_{ i \in \Chat_q } \sup_{ 0 \leq t \leq 10 t_1 }  \left| z_i (t, \alpha ) - \gamma_i (t, \alpha ) \right| \geq \frac{ N^\eps}{N} \right] \leq \frac{1}{N^D}.
\eeq
\eel 
\proof Consider the equation for $m_N$ for general $\beta \geq 1$,
\beq
\d m_N = \del_z \left( m_N ( m_N ) \right) + \frac{\beta-2}{N} \del_z^2 m_N + \sum_i \frac{1}{ N^{3/2} ( \lambda_i - z)^2} \sqrt{2  \beta} \d B_i
\eeq
For $\eta \geq N^{-2}$ we see that for any $t$ we have by the BDG inequality,
\beq
\pp \left[ \sup_{ 0 \leq s \leq N^{-100} } | m_N (z, t+s) - m_N (t) | \geq N^{-5} \right] \leq N^{-D}
\eeq
for any $D>0$.  From Appendix \ref{a:free} we know that the local law holds on the domain $\D_{\eps, q}$ defined in Section \ref{sec:rigid} on a set of times $t = k N^{-100}$, for $0 \leq k \leq N^{100} 10 t_1$ with overwhelming probability.  Hence we can extend the local law to all times $t$ on the domain $\D_{\eps, q}$.  Rigidity is a consequence of this. \qed

We also want to prove that
\beq
\sup_{ 0 \leq \alpha \leq 1 } \sup_i \sup_{ 0 \leq t \leq 10 t_1 } | z_i ( \alpha, t) | \leq N^C
\eeq
with overwhelming probability for some $C>0$.  This follows from the above argument again.  First it suffices to prove it for fixed $\alpha$.  We can assume that it holds for a mesh of times $t$ with overwhelming probability.  Consider then $m_N (z)$ with $z= N^{2 C} + \i N^{-2}$.  At each time in the mesh we have that $|m_N (z) | \leq N^{-4}$.  By the above argument it then holds for all times $t$.  Therefore a particle cannot cross $E = N^{2C}$ or at some time $t$, we would have $\Im [ m_N ( N^{2 C} + \i N^{-2} )] \geq 1$.

\section{Re-indexing argument}\label{a:reindex}
Recall our set-up in Section \ref{sec:homog}.  We have the process $x_i (t)$ that satisfies
\beq
\d x_i = \sqrt{ \frac{ 2}{N}} \d B_i + \frac{1}{N} \sum_j \frac{1}{ x_i - x_j } \d t
\eeq
We fixed an index $i_0$.  In this appendix we want to show that we can assume that $N$ is odd and $i_0 =(N+1)/2$.  Our method is to construct an auxiliary DBM process $\xstar$ of $2N-1$ particles s.t. for every index $j$ s.t. $i_0 + j \in [[1, N]]$,
\beq \label{eqn:xstarxest}
\sup_{0 \leq t \leq 1 } | \xstar_{N+j} (t) - x_{i_0 + j} (t) | \leq \frac{1}{N^{100}},
\eeq
with overwhelming probability.  Similarly, we construct a process $\ystar$ of $2N$ particles s.t.
\beq
\sup_{t_0 \leq t \leq 1} | \ystar_{N+j}(t) - y_{N/2+j} (t) | \leq \frac{1}{N^{100}}.
\eeq
Then the argument of Section \ref{sec:homog} goes through using the processes $\xstar$ and $\ystar$ instead of $x$ and $y$.  We will also see that the estimate of Lemma \ref{lem:strrig} also holds for the $\xstar$ and $\ystar$ (with an appropriate modification of $\gamma^{(\sc)}$ for the particles added to the GOE flow that has no effect on the rest of the paper).

We now construct $\xstar$.  Let $[[1, 2N-1]] = \C_1 \cup \C_2 \cup \C_3$ where $\C_1 = [[1, N - i_0 ]]$, $\C_2 = [[N-i_0+1, 2 N - i_0 ]]$ and $\C_3 = [[2N - i_0 + 1, 2N]]$.   Recall that by assumption there is a $C_V$ s.t. $|V_i | \leq N^{C_V}$.  Let $B_i$ be the Brownian motions for the $x_i$ as above.  Let $\tilB_i$ be independent standard Brownian motions except that
\beq
\tilB_i = B_{i- ( N - i_0 )}, \qquad i \in \C_2.
\eeq
We let $\xstar$ be the solution to
\beq \label{eqn:xstardef}
\d \xstar_i = \sqrt{ \frac{ 2}{N}} \d B_i + \frac{1}{N} \sum_j \frac{1}{ \xstar_i - \xstar_j } \d t 
\eeq
with initial condition 
\beq
\xstar_i (0) = \tilV_i
\eeq
where
\beq
\tilV_i = \begin{cases} - 2 N^{C_V+200} + i N^5, & i \in \C_1 \\  V_{i-(N-i_0 ) }, & i \in \C_2 \\  N^{2 C_V + 200} + i N^5 ,& i \in \C_3 \end{cases}.
\eeq
Define now the process $\xhat$ by
\beq
\d \xhat_i = \frac{ \d \tilB_i}{ \sqrt{N}} + \1_{ \{ i \in \C_1 \cup \C_3 \} } \frac{1}{N} \sum_{j\in\C_1  \cup \C_3}  \frac{1}{ \xhat_j - \xhat_i } \d t +\1_{ \{ i \in \C_2 \} } \frac{1}{N} \sum_{j\in\C_2}  \frac{1}{ \xhat_j - \xhat_i } \d t - \frac{ \xhat_i}{2} \d t,
\eeq
with initial data
\beq
\xhat_i (0) = \tilV_i.
\eeq
Then the process $\xhat_i$ decomposes into two independent processes $\{ \xhat_i \}_{ i \in \C_1 \cup \C_3 }$ and $\{ \xhat_i \}_{ i \in \C_2}$, and $\xhat_i = x_{i - ( N - i_0 )} (t)$ a.s. for all times $t$.  
For the process $\{ \xhat_i \}_{ i \in \C_1 \cup \C_3 }$  an easy argument using standard large deviations bounds on Brownian motion and the fact that initially the particles are far apart, so the interaction term is negligible gives that
\beq \label{eqn:xhatrig}
\pp [ \sup_{ 0 \leq t \leq 1 } | \xhat_i - \tilV_i | \geq 10 ] \leq N^{-D}
\eeq
for any $D>0$ and $i \in \C_1 \cup \C_3$.  In particular we see that with overwhelming probability, the $\xhat_i$ retain their ordering $\xhat_{i} < \xhat_{i+1}$.

Since with overwhelming probability the $\xhat_i$ retain their ordering, the difference $u_i = \hatx_i - \xstar_i$ satisfies a parabolic equation with overwhelming probability,
\beq
\del_t u_i = \sum_{j=1}^{2N-1} \frac{ u_j - u_i } { ( \xhat_i - \xhat_j ) ( \xstar_i - \xstar_j ) }  + \xi_i
\eeq
where
\beq
\xi_i := \1_{ \{ i \in \C_1 \cup \C_3 \} } \frac{1}{N} \sum_{j\in\C_2}  \frac{1}{ \xhat_j - \xhat_i } \d t +\1_{ \{ i \in \C_2 \} } \frac{1}{N} \sum_{j\in \C_1 \cup \C_3} \frac{1}{ \xhat_j - \xhat_i }.
\eeq
By the estimate \eqref{eqn:xhatrig} we see that if $i \in \C_1 \cup \C_3$ and $j \in \C_2$ then 
\beq
\sup_{0 \leq t \leq 1} \frac{1}{ |\xhat_i (t) - \xhat_j(t) | } \leq \frac{C}{N^{200}}.
\eeq
Therefore
\beq
\sup_{ 0 \leq t \leq 1 } || \xi (t) ||_\infty \leq \frac{1}{ N^{190}}.
\eeq
By the Duhamel formula we conclude that
\beq
\sup_{0 \leq t \leq t_1} || \xstar (t) - \xhat (t) ||_\infty \leq \frac{1}{ N^{180}}
\eeq
with overwhelming probability.  This yields \eqref{eqn:xstarxest}.  We can make a similar construction for $y$.  The new process $\ystar$ satisfies the estimates of Lemma \ref{lem:strrig}, but replacing $\gamma_j^{(\sc)}$ by $\tilV_j$ for the indices $j$ for which $\gamma_j^{(\sc)}$ is undefined. 

We remark that the $N$ appearing in \eqref{eqn:xstardef} and the corresponding definition of $\ystar$ no longer equals the number of particles which is $2N-1$.  However, since $N \asymp 2N-1$ and the same factor appears in the definition of $\xstar$ and $\ystar$, this will not affect any of our methods.

\section{Sobolev inequality} \label{a:sobolev}

Let $M_1 < M_2$ be natural numbers.  Let $u_i : \zz \to \rr$ be a sequence.  By Cauchy-Schwarz,
\begin{align}
&\left( \frac{1}{2M_1+1} \sum_{|i| \leq M_1} u_i - \frac{1}{ 2M_2+1} \sum_{ |j| \leq M_2 } u_j \right)^2 \notag\\
= &\frac{1}{ ( 2 M_1+1)^2 ( 2 M_2 +1 )^2 } \left( \sum_{ \substack{ |i| \leq M_1, |j| \leq M_2 ,\\ i \neq j}} \frac{ (u_i - u_j ) (i-j)}{ (i-j) }\right)^2 \notag\\
\leq & \frac{1}{ ( 2 M_1 +1 )^2 ( 2 M_2 + 1)^2 } \left( \sum_{ \substack{ |i| \leq M_1, |j| \leq M_2 ,\\ i \neq j}}  \frac{ ( u_i - u_j )^2}{ (i - j )^2 } \right) \left( \sum_{ \substack{ |i| \leq M_1, |j| \leq M_2 ,\\ i \neq j}}  (i-j)^2 \right).
\end{align}
Clearly,
\beq
\left( \sum_{ \substack{ |i| \leq M_1, |j| \leq M_2 ,\\ i \neq j}}  (i-j)^2 \right) \leq C \left( \sum_{  |i| \leq M_1}  M_2^3 \right) \leq C M_1 M_2^3.
\eeq
Therefore,
\beq \label{eqn:avgdif}
\left( \frac{1}{2M_1+1} \sum_{|i| \leq M_1} u_i - \frac{1}{ 2M_2+1} \sum_{ |j| \leq M_2 } u_j \right)^2 \leq C \frac{ M_2}{M_1} \left( \sum_{|i| \leq M_1 , |j| \leq M_2 } \frac{ (u_i - u_j )^2}{ ( i - j )^2 } \right).
\eeq
We can iterate the above inequality to prove the following lemma.
\bel \label{lem:sobrev} Let $u_i : \zz \to \rr$ be a sequence and let $N$ be a natural number.  There is a universal constant $C$ s.t.
\beq
\left( u_0 - \frac{1}{ 2N +1} \sum_{ |i| \leq N } u_i \right)^2 \leq C \log (N)^2 \left( \sum_{|i| \leq N, |j| \leq N } \frac{ (u_i - u_j )^2}{ ( i - j )^2 } \right).
\eeq
\eel
\proof Choose  $n \leq C \log (N)$ numbers $M_i$ s.t. $M_{0} = 1$, $M_n = N$ and $M_{i+1} \leq 3 M_i$.  Define $A_i$ by
\beq
A_i := \frac{1}{ 2 M_i +1} \sum_{ |j| \leq M_i } u_j - \frac{1}{ 2 M_{i+1} +1} \sum_{ |j| \leq M_{i+1} } u_j.
\eeq
Then,
\begin{align}
\left( u_0 - \frac{1}{ 2 N + 1 } \sum_{ |i| \leq N } u_i \right)^2 = \left( \sum_{i=1}^{n-1} A_i \right)^2 \leq n^2 \sup_i (A_i )^2.
\end{align}
By \eqref{eqn:avgdif}, 
\beq
(A_i)^2 \leq C \left( \sum_{ |i| \leq N, |j| \leq N} \frac{ (u_i - u_j )^2}{(i-j)^2} \right)
\eeq
which yields the claim. \qed

\section{Fubini lemma} 
\bel \label{lem:fubini}
Let $X ( u)$ for $0 \leq u \leq 1$ s.t. there is an event $\F$ with $\pp [ \F ] \geq 1 - \eps$ on which $\sup_u |X (u) | \leq A$.  Suppose that for each $u$ there is an event $\F_u$ with $\pp [ \F_\alpha ] \geq 1 - \eps$ on which $|X (u) | \leq Y (u)$.  Then,
\beq
\pp\left[ \int_0^1 X (u) > \int_0^1 Y(u) \d u + \delta \right] \leq \eps \left( 1 + \frac{A}{ \delta} \right)
\eeq
\eel
\proof We write 
\begin{align}
\1_{ \F} \int_0^1 | X(u) | \d u &= \1_{ \F} \int_0^1 | X (u) | \1_{ \F_u } \d u + \int_0^1 | X( u) | \1_{ \F \cap \F_u^c } \d u \notag \\
&\leq \int_0^1 Y (u) \d u + A \int_0^1  \1_{  \F_u^c } \d u .
\end{align}
By Markov's inequality,
\beq
\pp \left[ A \int_0^1 \1_{ \F_u^c } \d u > \delta \right] \leq \frac{ A \eps}{ \delta}.
\eeq
The claim follows.
\qed 

\remark We usually apply this with $A = N^C$ for some fixed $C$ and $\eps = N^{-D}$ for a large $D$ and $\delta  = N^{-D/2}$.

\section{Fixed energy universality under relaxed assumptions at intermediate scales}

The purpose of this appendix is to indicate a proof of fixed energy universality under a relaxation on the behavior of the initial data at intermediate scales $1 \geq \eta \geq N^{ - \delta_2}$.  The purpose of this kind of relaxation is in an application to the universality of band matrices \cite{newband}. 

As before, we will denote the initial data by $V$ and consider the matrix ensemble
\beq
H_{t} = V + \sqrt{t} G
\eeq
where $G$ is an independent GOE matrix.  We have the following theorem.
\bet \label{thm:new}
Consider $H_s = V + \sqrt{s} G$ as above.  Let $s = N^{\om_s}/N$, and assume that
\beq
1 > \om_s > \frac{2}{3}.
\eeq
Assume that
\beq
||V||\leq N^{C_V}
\eeq
for some $C_V>0$.  Assume  for some $E_0$ and constants $c_1$ and $C_1$, we have the estimates
\beq \label{eqn:regrev}
c_1 \leq \Im [ m_V (E + \i \eta ) ] \leq C_1
\eeq
for all $z =E + \i \eta$ obeying
\beq
|E_0 - E| \leq N^{ \delta_3 } s, \qquad N^{ - \delta_1} s \leq \eta \leq N^{- \delta_2} =: \eta^*.
\eeq
For the exponents $\delta_1, \delta_2, \delta_3 >0$ we assume that,
\beq \label{eqn:deltas}
0 < \delta_2 < \delta_3, \qquad \delta_2 < \frac{ 1 - \om_s}{4}, \qquad \delta_2 < \mathfrak{a}
\eeq
where $\mathfrak{a}$ is some small universal constant.  Under the above assumptions, the conclusion of Theorem \ref{thm:fixed} holds for $H_{s}$.
\eet

As in Theorem \ref{thm:fixed}, there are three components to the proof of Theorem \ref{thm:new}.  They are:
\begin{enumerate}
\item  Local law and eigenvalue rigidity - the estimates summarized in Section \ref{sec:rigid}.  
\item The homogenization theory of Section \ref{sec:homog}.
\item The mesoscopic central limit theorem of Section \ref{sec: mesosection}. 
\end{enumerate}
Compared to the assumptions of Theorem \ref{thm:fixed}, the assumptions of Theorem \ref{thm:new} have been weakened in two different ways but strengthened in the assumption that $s \gg N^{-1/3}$.  The two weakenings are
\begin{enumerate}[label=(\roman*)]
\item  \label{it:sq}The size of the window of regularity of $V$ has been reduced from about size $\sqrt{s}$ to $s$.
\item \label{it:etast}No bounds on the behavior of $\Im [ m_V (E + \i \eta)]$ have been assumed for intermediate scales $1 \geq \eta \geq N^{-\delta_2}$. 
\end{enumerate}
Let us first discuss how to deal with \ref{it:sq} as it is quite simple.  The work \cite{landonyau} proves the local law and rigidity for initial data $V$ regular in a window of size $s$ and down to scales $\eta \sim s$; that is, the proof of \cite{landonyau} already handles the weakening in \ref{it:sq}.   We will show that the proof given there extends to the assumptions of Theorem \ref{thm:new}; i.e., we will show that for the rigidity and local law,  \ref{it:etast} does not affect the proof.  In particular, with overwhelming probability, the matrix
\beq
V' := H_{s} = V + \sqrt{s} G
\eeq
will obey the estimate \eqref{eqn:regrev} for $|E-E_0| \leq N^{\delta_3} s/2$ and $N^{-\delta_2} \geq \eta \geq N^{\eps}/N$ for any $\eps >0$.  We then fix a time $t_0 = N^{\om_0}/N$ for a fixed
\beq
0 < \om_0 < \frac{1}{10},
\eeq
and consider the fixed energy universality of 
\beq
H'_{t_0} := V' + \sqrt{t_0} G'
\eeq
where $G'$ is an independent GOE matrix.   That is, we will condition on $V'$ and try to apply Theorem \ref{thm:fixed} to $H'_{t_0}$.  By the assumption that $\om_s > 2/3$ we see that
\beq
c_2 \leq \Im [ m_{V'} ( E + \i \eta ) ] \leq C_2
\eeq
for
\beq
|E - E_0 | \leq N^{1/10} \sqrt{ t_0} , \qquad N^{ - \delta_1' } t_0 \leq \eta \leq N^{- \delta_2} 
\eeq
where $\delta_2$ is as above, and $\delta_1'>0$.  The first estimate holds since $s \gg N^{1/10} \sqrt{t_0}$ by assumption.   We are therefore now in the set-up of Theorem \ref{thm:fixed} except for the weakening \ref{it:etast}, and we only need to check how this affects  the proof of homogenization and the mesoscopic central limit theorem.  
This is outlined below, as well the proof of the rigidity and local law under these assumptions.

\subsection{Rigidity and local law}

In this section we give the proof of the following theorem.  We define the spectral domains,
\beq
\D_{1, \eps} := \{ E + \i \eta : |E-E_0| \leq t N^{ \delta_3}/2, N^{\eps}/N \leq \eta \leq 10 \}
\eeq
and
\beq
\D_2 := \{ E + \i \eta : |E-E_0| \leq N^{ 10 C_V} , \frac{1}{10} \leq \eta \leq N^{ 10 C_V} \}.
\eeq
and $\D_\eps = \D_1 \cup \D_2$. 

\bet \label{thm:rigidrev} Let $H_{t} = V + \sqrt{t} G$, and $t = N^{\om}/N$, where $V$ satisfies $||V|| \leq N^{C_V}$ for some $C_V >0$ and
\beq
c_1 \leq \Im [ m_V ( E + \i \eta ) ] \leq C_1
\eeq
for
\beq
|E- E_0 | \leq N^{\delta_3} t, \qquad N^{-\delta_1} t \leq \eta \leq N^{-\delta_2}
\eeq
where 
\beq \label{eqn:deltasrev}
0 < \delta_2 < \delta_3, \qquad \delta_2 < \frac{ 1 - \om}{4}.
\eeq
 Then the local law of Theorem \ref{thm:localrev} holds in the spectral domain $\D_\eps$ for any $\eps >0$, with overwhelming probability.  The following rigidity estimates hold
\beq \label{eqn:rig1}
| \lambda_i - \gamma_i (t) | \leq \frac{N^{\eps}}{N}
\eeq
for $| \gamma_i - E_0 | \leq s N^{\delta_3}/2$, with overwhelming probability and any $\eps >0$.
\eet
\remark Compared to Theorem \ref{thm:new} we have crucially dropped the assumption that $t \gg N^{-1/3}$. \qed

Inspecting the proof of Lemma 7.19 of \cite{landonyau}, we see that in order to prove the estimate \eqref{eqn:rig1} we need only the local law in the spectral domain $\D_\eps$ as well as the estimate $c \leq \rho_t (E) \leq C$ for $E$ near $E_0$.  The latter is the content of Lemma \ref{lem:mrev} below.  The local law in the spectral domain $\D_2$ is proved identically as to \cite{landonyau} as the behavior of $\Im [m_V]$ for $\eta \ll 1 $ is not used.  The local law in $\D_{1, \eps}$ is discussed after the statement and proof of Lemma \ref{lem:mrev}.  This lemma is our version of Lemma 7.2 of \cite{landonyau} and is proven similarly.  The changes are given below.
\bel \label{lem:mrev}
Suppose that $V$ satisfies the assumptions of Theorem \ref{thm:new}.  We have the estimate
\beq \label{eqn:immfctrev}
c \leq \Im [ \mfct (z) ] \leq C, \qquad |E| \leq t N^{\delta_3}/2, \quad 0 \leq \eta \leq N^{ -   \delta_2  }/2.
\eeq
We also have
\beq \label{eqn:R2rev}
c \leq |1 - t  R_2 (z) | \leq C,  \qquad |E| \leq t N^{ \delta_3}/2, \qquad 0 \leq \eta \leq 10.
\eeq
\eel
\remark This immediately implies the estimates
\beq \label{eqn:revm1}
c \leq \rho_t (E) \leq C , \quad | \rho_t' (E) | \leq C/t, \qquad |E| \leq t N^{ \delta_3}/2,
\eeq
as well as
\beq \label{eqn:revm2}
|\del_z \mfc | \leq \frac{C}{t+ \eta}, \qquad | \del_z^2 \mfc | \leq \frac{C}{(t+ \eta)^2}
\eeq
\proof The proof is similar to that of Lemma 7.2 of \cite{landonyau}, and so we address only how the proof changes.  First we prove \eqref{eqn:immfctrev}.  The key point is that instead of the bounds (7.12) of \cite{landonyau} we have only the weaker bound
\beq
| m_V (E + \i \eta ) | \leq C_1 \log(N) N^{\delta_2}, \qquad |E| \leq  3 t N^{ \delta_3}/4, \quad N^{-\delta_1} t \leq \eta \leq 10.
\eeq
Consequently, the definition of $\eta_*$ that is used in the proof in \cite{landonyau} is modified to
\beq
\eta_* := \inf \{ \eta \leq N^{ - \delta_2 }/2 : | \mfct (E + \i \eta ) | \leq 2 C_1 \log(N) N^{ \delta_2} , c \leq \Im [ \mfct (E + \i \eta ) ] \leq C \}.
\eeq
The assumptions \eqref{eqn:deltasrev} assure that $\eta_* < N^{ - \delta_2}/2$ (as $t|\mfc | \leq t \eta^{-1} \ll t N^{\delta_3}$ for $\eta$ close to $N^{-\delta_2}/2$), and then the rest of the argument goes through, proving \eqref{eqn:immfctrev}.

The proof of \eqref{eqn:R2rev} is unchanged in the small $\eta \leq N^{ -\delta_2 }/2$ region.  In the larger region  $\eta \geq N^{- \delta_2}$ we use $|t R_2 | \leq t \eta^{-2} \ll 1$ due to the second assumption of \eqref{eqn:deltasrev}.  \qed 

Finally, we prove the local law in $\D_1$.  For this it is simpler to rely on Theorem 3.1 of \cite{hlmeso}.   This result will    give the local law in the domain
\beq
\D' = \{ E + \i \eta : \eta \Im [ \mfc  (E + \i \eta ) ] \geq \log(N)^{10}/N \}.
\eeq
Since $\eta \Im [ \mfc (E + \i \eta ) ]$ is an increasing function of $\eta$ we see that $\D'$ contains $\D_1$.  

The caveat is that the work \cite{hlmeso} operates under the assumption of bounded initial data. 
This assumption  is due to the fact that \cite{hlmeso} deals with the more general case of DBM flows with a general potential $U(x)$ on the RHS of \eqref{eqn:dlambda} (in the paper \cite{hlmeso} this is denoted $V(x)$ but we instead use here $U(x)$ to avoid conflict with our use of $V$ as the initial data).  Specifically, in the main calculation (Section 3.2 of \cite{hlmeso}) there are error terms involving the potential $U(x)$ that are handled using the assumption of bounded support.  In our case $U(x) =0$, and so these error terms are not present, and moreover the calculations involving the limiting hydrodynamic equation are simplified (i.e., the characteristics $z_t (u)$ of (2.25) of \cite{hlmeso} satisfy $u = z_t(u) + t \mfct (z_t(u))$).  

The changes to the argument of \cite{hlmeso} to the present set-up are otherwise notational.  One has to replace to the $\mathfrak{b}$ of the domain $\D_t$ defined in (3.2) of \cite{hlmeso} with $\mathfrak{b} = 3 N^{C_V}$.  Since $U$ is $0$, it is straightforward to prove the estimates of Proposition 2.7 of \cite{hlmeso}.  The arguments of Section 3.1 of \cite{hlmeso} go through without change.   The key use of the bounded support of the initial data is in Proposition 3.9 of \cite{hlmeso}.  This proposition bounds an error term that is not present in the case that $U(x) = 0$.   The remaining arguments of Section 3.2 directly apply and we find the local law in the domain $\D_{1, \eps}$ as required.

\subsection{Homogenization theory under weakened assumptions at intermediate scales}

Inspecting the arguments of Section \ref{sec:homog} which lead to Theorem \ref{thm:mainhomog}, we find the following theorem.

\bet \label{thm:homorev}
Consider $H'= V'+ \sqrt{t_0} G$.  Assume that $V'$ satisfies $||V'||\leq N^{C_V}$ and
\beq
c_1 \leq \Im [ m_{V'} (E + \i \eta )  ]\leq C_1
\eeq
for some $c_1$, $C_1$ and,
\beq
|E-E_0 | \leq \sqrt{t_0} N^{\delta_3} , \qquad t_0 N^{-\delta_1} \leq \eta \leq N^{-\delta_2},
\eeq
where
\beq \label{eqn:homodelt}
\delta_2 < \frac{1 - \om_0}{2} + \delta_3 , \qquad \delta_2 < \frac{1-\om_0}{4}
\eeq
Then under the assumptions \eqref{eqn:oldrev1} we have that the estimate \eqref{eqn:oldrev2} holds.
\eet
\remark The first assumption of \eqref{eqn:homodelt} is of course redundant but we keep it to match up with \eqref{eqn:deltasrev}. \qed

The key inputs to the proof of Theorem \ref{thm:mainhomog} are the rigidity estimates, the local law and some estimates on the behavior of the free convolution law $\mfct$.  Due to Theorem \ref{thm:rigidrev} we know that the local law and rigidity will hold for the model under consideration (note that \eqref{eqn:homodelt} is the same assumption as \eqref{eqn:deltasrev} after accounting for the $t_0 \to \sqrt{t_0}$ change).  The required behavior of the free convolution law follows from Lemma \ref{lem:mrev} and the estimates \eqref{eqn:revm1},\eqref{eqn:revm2}.  Note that we do not have the boundedness of $\Im [ \mfct]$ all the way up to $\eta \sim 1$.  However, $\mfct$ only enters into the proof with  an argument of $\mfct (E + \i \eta)$ with $\eta \leq N^{-1/2}$ (specifically, Lemma \ref{lem:shortrange} and Appendix \ref{a:free}).  For $\eta \leq N^{-1/2} \ll N^{-\delta_2}$, the estimates for $\mfct$ are unchanged.


\subsection{Mesoscopic estimate under weakened assumptions at intermediate scales}

The estimate we need for the proof of Theorem \ref{thm:new} is the following.
\bet \label{thm:mesorev}
Let $H_{t_m} = V + \sqrt{t_m} G$ where $t_m = N^{\om_m}/N$.  Assume
\beq
\frac{1}{20} < \om_m < \frac{1}{10}. 
\eeq
Assume that $V$ obeys the estimate $||V||\leq N^{C_V}$ for some $C_V >0$ and for some $c_1, C_1 >0$ we have,
\beq \label{eqn:revc1}
c_1 \leq \Im [ m_V (E + \i \eta ) ] \leq C_1
\eeq
for 
\beq
|E - E_0 | \leq \sqrt{t_m} N^{ \delta_3}, \qquad t_m N^{-\delta_1  } \leq \eta \leq N^{- \delta_2}
\eeq
where
\beq
\delta_2 < \frac{ 1 - \om_m}{2} + \delta_3, \qquad \delta_2 < \frac{1- \om_m}{4}.
\eeq
Let $\phi$ be a test function as described in Section \ref{sec: mesosection} on the scale $t_1 = N^{\om_1}/N$ where
\beq
\om_1 = 10^{-5}.
\eeq
There is a constant $\mathfrak{a} >0$ so that if 
\beq \label{eqn:revc3}
\delta_2 < \mathfrak{a}
\eeq
then for some $c, \eps>0$ we have the estimate
\beq
\left| \ee \left[ \e^{ \i \lambda \tr \phi } \right] \right| \leq \e^{ - c \lambda^2 \log(N) } + N^{- \eps}.
\eeq
\eet

The first thing to do is apply the preliminary argument near \eqref{eqn:fixbd1}.  Fix an auxilliary time $t_2 = N^{\om_2}/N$ with $\om_2 = \om_1 (1 + K^{-1})$ where $K$ is constant so large that
\beq
\frac{\om_2}{2} < \om_1 < \om_2 < \om_m.
\eeq
This is done so that the constraints \eqref{eqn: omega-condition-1} of Theorem \ref{thm: linstat} are satisfied with $\om_0=\om_2$.  We then write
\beq
H_{t_m} = V + \sqrt{t_m } G \stackrel{d}{=} (V + \sqrt{t_m-t_2} G) +\sqrt{t_2} G'=: V'' + \sqrt{t_2} G' =: H'_{t_2}
\eeq
where $G'$ is another independent GOE matrix.  As in \eqref{eqn:fixbd1} we will use Theorem \ref{thm: linstat} to calculate 
\beq \label{eqn:revc2}
\ee[ \e^{ \i \lambda \tr \phi } | V'' ],
\eeq
after, of course, checking that the argument goes through under the relaxation of item \ref{it:etast}.  Due to Theorem \ref{thm:rigidrev} we know that with overwhelming probability that $V''$ obeys \eqref{eqn:revc1} for $|E-E_0 | \leq \sqrt{t_2} N^{\delta_3}$ and $t_2 N^{-\delta_1 } \leq \eta \leq N^{-\delta_2}$.   

Conditioning on $V''$ we now repeat the arguments of Section \ref{sec: mesosection} to calculate \eqref{eqn:revc2}.   The main input into the arguments of Section \ref{sec: mesosection} are the local law and the behavior of the free convolution $m_{\fc, t_2}$.  Due to Theorem \ref{thm:rigidrev} we know that the local law holds for $H'_{t_2}$ and we will say nothing further of this.  

On the other hand, the estimates for $m_{\fc, t_2}$ in the regime $1 \geq \eta \geq N^{-\delta_2}$ do in fact degenerate somewhat, and so we need to show how to handle this.  This is where the condition \eqref{eqn:revc3} is required.

First it is important to note that the estimates for denominators appearing in the argument
\beq \label{eqn:revc4}
| 1 - t_2  R_2 | \geq c, \qquad |V''_i -z - t_2 m_{\fc, t_2} | \geq  c \max \{t, \eta \}
\eeq
still hold.  The first was established in Lemma \ref{lem:mrev}.   For the second, we always have that the imaginary part  of $z + t_2 m_{\fc, t_2}$ is larger than $\eta$.  For $\eta> N^{-\delta_2}$, $\eta \geq t$ by assumption.  For $\eta < N^{-\delta_2}$, we have that $\Im [ m_{\fc, t_2 } (E+ \i \eta) ] \geq c$ by Lemma \ref{lem:mrev}.  Therefore, the second estimate of \eqref{eqn:revc4} also holds.

  The estimate \eqref{eqn:revc4} ensures that all denominators involved in the calculations in Section \ref{sec: mesosection} in the present setting obey the same estimates as before, when we did not have the weakening of item \ref{it:etast}.

There are two components of the mesoscopic central limit theorem.  The first is a calculation of the characteristic function, i.e. the arguments of Sections \ref{sec: characteristic}-\ref{sec: comp-T22}, and the second is the calculation of the variance term, Proposition \ref{prop: variance-comp}.  Let us first examine the calculation of the characteristic function. 

In this part of the proof the worst that will happen is that some of the estimates may degenerate, in that the error terms becomes multiplied by a some factor $(\eta^*)^{-k}$ where $k$ is a bounded  constant, typically $k \leq 5$, arising from some of our resolvent expansions.  These resolvent expansions are  always to some low order.  As an example, the estimate \eqref{eqn: Ahalf} will degenerate to
\beq
A_j^{\circ} = \O ( \sqrt{ t} N^{-1/2} + (\eta^*)^{-1} t ( N \eta)^{-1/2} ).
\eeq
Then, this combined with the fact that \eqref{eqn:revc4} still holds will give an error of $\O ( (\eta^*)^{-1} | \eta|^{-1} ( N | \eta | )^{-1/2} )$ in \eqref{eqn: AinverseB-estimate}.  Other estimates in the calculation of the characteristic function will behave similarly.  

With these considerations, the error we will find in the estimate \eqref{eqn:revd1} will be 
\beq
\O ( N^{\om_2 /4  - \om_1/2} (\eta_*)^{-C} )
\eeq
for some constant $C>0$.  If $\mathfrak{a}$ is sufficiently small, then the error here will still be $\O (N^{-c} )$ for some $c>0$, since we have chosen $\om_2/4-\om_1/2 > 0$.  

We have to be more careful in calculating the variance, as this quantity is $\O ( \log (N))$ and so losing a polynomial factor $N^\eps$ would cause trouble.   First, let us examine the proof of Proposition \ref{prop: I2-bound-prop}.  Here, the estimates are unchanged until the last part of the proof, in which the region $\eta^* \leq \eta \leq 1$ is integrated over; here is the only place where our estimates behave differently than before.   In this region, we may simply estimate the integrand by some power of $( \eta^*)^{-1}$ and absorb it into the prefactor $t^2$.  The contribution is then $o(1)$ and so the statement of the proposition is unchanged.  Proposition \ref{prop:revd2} also involves an integration over the region $\eta^* < \eta < 1$, and similar considerations apply here.  Prior to this proposition, one must handle the terms \eqref{eqn: plusvariance-1}-\eqref{eqn: mainvariance-1b}.  However, they involve only the behavior of the free convoluton for small $\eta \ll \eta^*$, and so these terms can be treated in the same manner as before.  The remainder of the proof, i.e., the handling of \eqref{eqn: mainvariance-2}-\eqref{eqn: mainvariance-2-2} is also unchanged, involving only the behavior of the free convolution at small $\eta$.


\bibliography{mybib}{}
\bibliographystyle{abbrv}

\end{document}